\let\urlorig\url
\renewcommand{\url}[1]{%
  \begin{otherlanguage}{english}\urlorig{#1}\end{otherlanguage}%
}
\newtheorem{lemma}{Lemma}
\newtheorem{corollary}{Corollary}
\newtheorem{proposition}{Proposition}
\newtheorem{theorem}{Theorem}
\newtheorem{remark}{Remark}
\newtheorem*{lemma*}{Lemma}
\newcommand{\class}{{\mathcal C}}
\newcommand{\sJ}{{\mathscr J}}
\newcommand{\bJ}{{\bf B}}
\newcommand{\tbJ}{\widetilde{\bf B}}
\newcommand{\dif}{{\rm d}}
\newcommand{\Dif}{{\rm D}}
\newcommand{\vits}{{u}}
\newcommand{\vitsL}{{w}}
\newcommand{\vol}{{v}}
\newcommand{\bu}{{u}}
\newcommand{\ubu}{\underline{u}}
\newcommand{\bv}{{v}}
\newcommand{\ubv}{\underline{v}}
\newcommand{\bbu}{{\bf u}}
\newcommand{\bbv}{{\bf v}}
\newcommand{\bw}{{\bf w}}
\newcommand{\bP}{{\bf P}}
\newcommand{\blambda}{\boldsymbol{\lambda}}
\newcommand{\umu}{{\mu}_{\!\!\!\!_{{\bf -}}}}
\newcommand{\bB}{{\bf B}}
\newcommand{\bU}{{\bf U}}
\newcommand{\ubU}{\underline{\bf U}}
\newcommand{\uubU}{\underline{\underline{\bf U}}}
\newcommand{\ublambda}{\underline{\blambda}}
\newcommand{\uX}{\underline{X}}
\newcommand{\uuXi}{\underline{\Xi}}
\newcommand{\uXi}{{\Xi}}
\newcommand{\uq}{q_{\!\!\!\!_{{\bf -}}}}
\newcommand{\bQ}{{\bf Q}}
\newcommand{\bV}{{\bf V}}
\newcommand{\bM}{{\bf M}}
\newcommand{\Real}{{\rm Re}}
\newcommand{\Imag}{{\rm Im}}
\newcommand{\uL}{\underline{L}}
\newcommand{\uY}{\underline{Y}}
\newcommand{\uvits}{\underline{u}}
\newcommand{\uvitsL}{\underline{w}}
\newcommand{\uvol}{\underline{v}}
\newcommand{\uvoly}{v_{\!\!\!\!_{{\bf -}}y}}
\newcommand{\uvolyy}{v_{\!\!\!\!_{{\bf -}}yy}}
\newcommand{\urho}{\rho_{\!\!\!\!_{{\bf -}}}}
\newcommand{\urhox}{\rho_{\!\!\!\!_{{\bf -}}x}}
\newcommand{\urhoxx}{\rho_{\!\!\!\!_{{\bf -}}xx}}
\newcommand{\UU}{{\mathbb U}}
\newcommand{\uUU}{\underline{\mathbb U}}
\newcommand{\HH}{{\mathbb H}}
\newcommand{\R}{{\mathbb R}}
\newcommand{\C}{{\mathbb C}}
\newcommand{\Z}{{\mathbb Z}}
\newcommand{\W}{{\mathbb W}}
\newcommand{\ee}{{\rm e}}
\newcommand{\Euler}{{\sf E}}
\newcommand{\Legendre}{{\sf L}}
\newcommand{\Hess}{{\sf Hess}}
\newcommand{\Lag}{\mathscr{L}}
\newcommand{\lag}{{\ell}}
\newcommand{\press}{{p}}
\newcommand{\chem}{{g}}
\renewcommand{\Cap}{\mathscr{K}}
\renewcommand{\cap}{{\kappa}}
\newcommand{\ucap}{\underline{\kappa}}
\newcommand{\uCap}{\underline{K}}
\newcommand{\ualpha}{\underline{\alpha}}
\newcommand{\ubeta}{\beta_{\!\!\!\!_{{\bf -}}}}
\newcommand{\ugamma}{{\gamma}_{\!\!\!\!_{{\bf -}}}}
\newcommand{\ugammax}{{\gamma}_{\!\!\!\!_{{\bf -}x}}}
\newcommand{\uM}{{\bf M}}
\newcommand{\action}{\theta}
\newcommand{\Action}{\Theta}
\newcommand{\mineur}{m}
\newcommand{\Mineur}{M}
\newcommand{\En}{\mathscr{E}}
\newcommand{\enred}{\mathscr{U}}
\newcommand{\en}{\mbox{{\begin{cursive}{\emph{e}}\end{cursive}}}}
\newcommand{\Ec}{\mathscr{I}}
\newcommand{\Ham}{\mathscr{H}}
\newcommand{\Potential}{\mathscr{W}}
\newcommand{\tHam}{\widetilde{\mathscr{H}}}
\newcommand{\Impulse}{\mathscr{Q}}
\newcommand{\Impulseflux}{\mathscr{S}}
\newcommand{\imp}{\mbox{{\begin{cursive}{\emph{q}}\end{cursive}}}}
\newcommand{\Linvar}{{\bf A}}
\newcommand{\linvar}{{\bf a}}
\newcommand{\tLin}{{\widetilde{\mathscr{A}}}}
\newcommand{\Lin}{{\mathscr{A}}}
\newcommand{\Linb}{{\mathscr{L}}}
\newcommand{\Evans}{{D}}
\newcommand{\tEvans}{\widetilde{D}}
\newcommand{\evans}{{d}}
\newcommand{\tevans}{\widetilde{d}}
\newcommand{\cO}{\mathcal{O}}
\def\transp#1{{#1}^{{\sf T}}}
\newcommand{\negsign}{{\sf n}}
\newcommand{\ind}{{\sf I}}
\newcommand{\ev}{{z}}
\newcommand{\evr}{{r}}
\newcommand{\disc}{{\rm T}}
\newcommand{\Monod}{{\bf F}}
\newcommand{\monod}{{\bf f}}
\newcommand{\speed}{{c}}
\newcommand{\uspeed}{\underline{c}}
\newcommand{\bC}{{\bf C}}
\newcommand{\bc}{{\bf c}}
\newcommand{\man}{{\mathscr C}}
\newcommand{\funct}{\mathscr{F}}
\title{Co-periodic stability of periodic waves in some Hamiltonian PDEs}
\author{S. Benzoni-Gavage\thanks{Universit\'e de Lyon,
CNRS UMR 5208,
Universit\'e Lyon 1,
Institut Camille Jordan,
43 bd 11 novembre 1918;
F-69622 Villeurbanne cedex}, 
C. Mietka\thanks{Universit\'e de Lyon,
CNRS UMR 5208,
Universit\'e Lyon 1,
Institut Camille Jordan,
43 bd 11 novembre 1918;
F-69622 Villeurbanne cedex}, 
and L.M. Rodrigues\thanks{Universit\'e de Lyon,
CNRS UMR 5208,
Universit\'e Lyon 1,
INRIA \'EP Kaliffe,
Institut Camille Jordan,
43 bd 11 novembre 1918;
F-69622 Villeurbanne cedex}}
\begin{document}

\maketitle

\begin{abstract}
The stability theory of periodic traveling waves is much less advanced than for solitary waves, which were first studied by Boussinesq and have received a lot of attention in the last decades. In particular, despite recent breakthroughs regarding periodic waves in reaction-diffusion equations and viscous systems of conservation laws [Johnson--Noble--Rodrigues--Zumbrun, Invent math (2014)], the stability of periodic traveling wave solutions to dispersive PDEs with respect to `arbitrary' perturbations is still widely open in the absence of a dissipation mechanism.
The focus is put here on \emph{co-periodic} stability of periodic waves, that is, stability with respect to perturbations of the same period as the wave, for KdV-like systems of one-dimensional Hamiltonian PDEs. Fairly general nonlinearities are allowed in these systems, so as to include various models of mathematical physics, and this precludes  
complete integrability techniques. Stability criteria are derived and investigated first in a general abstract framework, and then applied to three basic examples that are very closely related, and ubiquitous in mathematical physics, namely, a quasilinear version of the generalized Korteweg--de Vries equation (qKdV), and the Euler--Korteweg system in both Eulerian coordinates (EKE) and in mass Lagrangian coordinates (EKL). Those criteria consist of a necessary condition for \emph{spectral stability}, and of a sufficient condition for \emph{orbital stability}. Both are expressed in terms of a single function, 
the abbreviated action integral along the orbits of waves in the phase plane, which is the counterpart of the solitary waves moment of instability  
introduced by Boussinesq. However, the resulting criteria are more complicated for periodic waves because they have more degrees of freedom than solitary waves, so that 
the action is a function of $N+2$ variables for a system of $N$ PDEs, while 
the moment of instability is a function of the wave speed 
only once the endstate of the solitary wave is fixed. Regarding solitary waves, the celebrated Grillakis--Shatah--Strauss stability criteria amount to looking for the sign of 
the second derivative of the moment of instability with respect to the wave speed. For periodic waves, stability criteria involve all the second order, partial derivatives of the action. This had already been pointed out by various authors for some specific equations, in particular the generalized Korteweg--de Vries equation --- which is special case of (qKdV) --- but not from a general point of view, up to the authors' knowledge. The most striking results obtained here can be summarized as: an \emph{odd} value for the difference between $N$ and the negative signature of  the Hessian of the action implies spectral instability, whereas a negative signature of the same Hessian being equal to $N$ implies orbital stability. Furthermore, it is shown that, when applied to the Euler--Korteweg system, this approach yields several interesting connexions between (EKE), (EKL), and (qKdV).
More precisely, (EKE) and (EKL) share the same abbreviated action integral, which is related to that of (qKdV) in a simple way. This basically proves simultaneous stability in both formulations (EKE) and (EKL) --- as one may reasonably expect from the physical point view ---, which is interesting to know when these models are used for different phenomena --- \emph{e.g. } shallow water waves or nonlinear optics. In addition, stability in (EKE) and (EKL) is found to be linked to stability in the scalar equation (qKdV). Since the relevant stability criteria are merely encoded by the negative signature of $(N+2)\times (N+2)$ matrices,  they can at least be checked numerically. In practice, when $N=1$ or $2$, this can be done without even requiring an ODE solver. Various numerical experiments are presented, which clearly discriminate between stable cases and unstable cases for (qKdV), (EKE) and (EKL), thus confirming some known results for the generalized KdV equation and the Nonlinear Schr\"odinger equation, and pointing out some new results for more general (systems of) PDEs.

\end{abstract}

{\small \paragraph {\bf Keywords:} 
traveling wave, spectral stability,
orbital stability,
Hamiltonian dynamics, action,
mass Lagrangian coordinates.
}

{\small \paragraph {\bf AMS Subject Classifications:} 35B10; 35B35;  35Q35; 35Q51; 35Q53; 37K05; 37K45.
}

\tableofcontents

\section{Introduction}
Hamiltonian PDEs include a number of model equations in mathematical physics, like the (generalized) 
Korteweg-de Vries equation  
(KdV) or the Non-{L}inear {Schr\"odinger} equation (NLS). These equations and many others are known to admit rich families of planar traveling wave solutions, with more or less degrees of freedom. The most `rigid' traveling waves are the so-called \emph{kinks}, corresponding to heteroclinic orbits of the ODEs governing their profiles. \emph{Periodic} traveling waves, which are the purpose of this paper,  have the highest number of degrees of freedom. In between kinks and periodic waves in terms of degrees of freedom, we can find \emph{solitary} waves, corresponding to homoclinic orbits. 

The actual existence of such waves follows from the Hamiltonian structure of the governing ODEs. We are most interested in their nonlinear stability, even though we can only hope for \emph{orbital} stability, because of \emph{translation invariance}. The most efficient approach to tackle the orbital stability of Hamiltonian traveling waves has been known as the Grillakis--Shatah--Strauss (GSS) theory \cite{GrillakisShatahStrauss}, which provides a way of using a \emph{constrained energy} as a Lyapunov function. This method crucially relies on the conservation of a quantity associated with translation invariance, termed `\emph{impulse}' by Benjamin \cite{Benjamin}, and known as the \emph{momentum} in the NLS literature. For solitary waves, the GSS theory provides a sufficient stability condition in terms of the convexity of the constrained energy as a function of the wave velocity. This constrained energy
happens to correspond to what was called `\emph{moment of instability}' by Boussinesq \cite{Boussinesq} more than 140 years ago.  Resurrected by Benjamin \cite{Benjamin72} in the early '70s, the ideas of Boussinesq have been made rigorous for many types of solitary waves in  \cite{GrillakisShatahStrauss,BonaSouganidisStrauss,BonaSachs,IFB05} (see also \cite{SBG-DIE}, \cite{DebievreGenoudRotaNodari} and references in \cite{AnguloPava}). Together with the Evans functions techniques brought in by Pego and Weinstein \cite{PegoWeinstein}, Kapitula and Sandstede \cite{KapitulaSandstede}, and many others, those pieces of work have led to a clear picture of which solitary waves are stable and which are not.

By contrast, the theory is much less advanced regarding periodic waves. Apart from the higher number of degrees of freedom, the main difficulty comes from the fact that the nice variational framework set up by Grillakis, Shatah, and Strauss does not work for all kinds of perturbations of those waves. As a matter of fact, the theory of \emph{linear} stability of periodic waves under `localized' perturbations~---~that is, perturbations going to zero at infinity~---~ is still in its infancy (see for instance \cite{BoDe,GallayHaragus,BottmanDeconinckNivala} as regards spectral stability for KdV and the cubic NLS, and \cite{R_linKdV} for asymptotic linear stability of KdV waves), and the nonlinear stability under such perturbations is an open problem. In \cite{BNR-JNLS14,BNR-GDR-AEDP}, the authors have contributed to the field 
by exhibiting several necessary conditions for the spectral stability of periodic waves in Hamiltonian PDEs. In particular, they have proved in a rather general setting that the hyperbolicity of the modulated equations `\`a la Whitham' is necessary for the spectral stability of the underlying wave. More precisely, the existence of a nonreal eigenvalue for the modulated equations implies a \emph{sideband instability}, which means that there are unstable modes for arbritrary small nonzero Floquet exponents. We shall not enter into details about these results here, we refer the reader to \cite{BNR-GDR-AEDP} and references therein --- see also the recent related analysis in \cite{JMMP_Klein-Gordon}. 

We are going to concentrate on the somehow easier problem of stability with respect to \emph{co-periodic} perturbations, that is, perturbations of the same period as the wave (or equivalently, corresponding to a zero Floquet exponent).  Our aim is to give as a clear picture as in the case of solitary waves, which by the way may be viewed as a limiting case of periodic waves~---~by letting their wavelength go to infinity. The great advantage of co-periodic perturbations is that they allow us to use the GSS approach in the simplest manner ---~by using basically only one additional conservation law (or constraint) to rule out the `bad' directions from the variational framework~---, and thus achieve \emph{nonlinear} stability results at once\footnote{Even in the simplest context of co-periodic stability, it is indeed still much unclear how spectral stability can imply nonlinear stability in Hamiltonian frameworks. Concerning localized perturbations we even lack a clear notion of \emph{dispersive spectral stability} that would be the analogue of \emph{diffusive spectral stability} \cite{S2,S1,JZ-generic,JNRZ-Inventiones,R}.}. This has been done for the cubic NLS by Gallay and Haragus \cite{GallayHaragus2} --- see also \cite{BottmanDeconinckNivala,GallayPelinovsky} for more recent results, dealing with \emph{subharmonic} perturbations, of which the period is a multiple of the period of the wave ---, and for the generalized KdV (gKdV) by Johnson \cite{Johnson} --- for the classical or the modified KdV, see also \cite{AnguloPava-Bona-Scialom}, \cite{AnguloPava_mKdV-NLS,Arruda} for co-periodic, orbital stability and \cite{Deconinck-Kapitula_KdV}, \cite{Deconinck-Nivala_mKdV} for a more general result, which even handles subharmonic perturbations. Both NLS and gKdV can be viewed as specific cases of the abstract setting we are going to consider. Furthermore, this setting is built to include the Euler--Korteweg system, a fairly general model that is involved in various applications (superfluids, water waves, incompressible fluid dynamics, and nonlinear optics). The abstract systems we consider hold in one space dimension, and read
\begin{equation}\label{eq:absHam}\partial_t\bU = \sJ (\Euler \Ham[\bU])\,,\end{equation}
where the unknown $\bU$ takes values in $\R^N$, $\sJ$ is a skew-adjoint differential operator, and $\Euler \Ham$ denotes the variational derivative of $\Ham=\Ham(\bU,\bU_x)$~---~the letter $\Euler$ standing for the \emph{Euler} operator. In practice, we are most concerned with the case $N=2$, which is the case for the various forms of the Euler--Korteweg system, as well as NLS.
In fact, \eqref{eq:absHam} includes both the original formulation of NLS, with $$\sJ=\left(\begin{array}{cc}0&1\\  -1&0\end{array}\right)$$ merely being the skew-symmetric matrix of Hamiltonian equations in `canonical' coordinates, and its fluid formulation via the Madelung transform. In the latter case, $$\sJ=\bB\partial_x\;  \mbox{with}\; \bB=\left(\begin{array}{cc}0&-1\\  -1&0\end{array}\right)\,.$$ From now on, we assume that $\sJ=\bB\partial_x$  with $\bB$ a \emph{symmetric} and \emph{nonsingular} matrix. This allows the case $N=1$ with $\sJ=\partial_x$, which
includes gKdV, and will enable us to make the connection with earlier results by Bronski, Johnson, and Kapitula.
Furthermore, if $N=2$, we assume that the Hamiltonian $\Ham$ splits as
$$\Ham=\Ham[\bU]=\Ham(\bU,\bU_x)=\Ec(\bv,\bu)+\En(\bv,\bv_x)\,,\;\mbox{with\,}\;\bU=\left(\begin{array}{c} \bv \\ \bu\end{array}\right)\,,$$
and then that 
$$\bB^{-1}=\,\left(\begin{array}{c|c}a& b \\ \hline b &0\end{array}\right)\,,\;b\neq 0\,.$$
Here above and throughout the paper, square brackets $[\cdot]$ signal a function of not only the dependent variable $\bU$ but also of its derivatives $\bU_x$, $\bU_{xx}$, \ldots
In this way, the abstract system in \eqref{eq:absHam} reads as a system of conservation laws
\begin{equation}\label{eq:absHamb}\partial_t\bU = \partial_x (\bB\,\Euler \Ham[\bU])\,.\end{equation}
We recall that, when, as in cases under consideration in the present paper, $\Ham$ depends only on $\bU$ and $\bU_x$, the $\alpha$-th component ($1\leq \alpha\leq N$) of the variational derivative $\Euler \Ham[\bU]$ is $$(\Euler \Ham[\bU])_\alpha\,:=\,\frac{\partial \Ham }{\partial U_\alpha}(\bU,\bU_x)\,-\,\Dif_{x}\left(\frac{\partial \Ham }{\partial U_{\alpha,x}}(\bU,\bU_x)\right)\,,$$
where $\Dif_{x}$ stands for the \emph{total} derivative, so that
$$\Dif_{x}\left(\frac{\partial \Ham }{\partial U_{\alpha,x}}(\bU,\bU_x)\right)\,=\,\frac{\partial^2 \Ham(\bU,\bU_x) }{\partial U_\beta \partial U_{\alpha,x}}\,U_{\beta,x}\,+\,\frac{\partial^2 \Ham(\bU,\bU_x) }{\partial U_{\beta,x} \partial U_{\alpha,x}}\,U_{\beta,xx}\,,
$$
where we have used Einstein's convention of summation over repeated indices.
The main examples that fit the abstract framework in \eqref{eq:absHamb} are, besides the generalized {Korteweg}-{de} {Vries} equation
$$\mbox{(gKdV)}\quad 
\partial_tv+\partial_xp(v)=-\partial_x^3v\,,$$
and its quasilinear counterpart which is written in the more general form
$$\mbox{(qKdV)}\quad \partial_tv=\partial_x(\Euler \en[v])\,,\quad \en=\en(v,v_x)\,,$$
the Euler--Korteweg system in Eulerian coordinates, 
\begin{equation*}\label{eq:EKabs1d}
\mbox{(EKE)}\quad \left\{\begin{array}{l}\partial_t\rho +\partial_x (\rho \vits)\,=\,0\,,\\ [5pt]
\partial_t \vits + \vits\partial_x\vits \,+\,\partial_x(\Euler \En [\rho])\,=\,0\,,\quad \En=\En(\rho,\rho_x)\,,
\end{array}\right.
\end{equation*}
or in mass Lagrangian coordinates, 
\begin{equation*}\label{eq:EKabsLag}
\mbox{(EKL)}\quad \left\{\begin{array}{l}\partial_s{\vol} \,=\,\partial_y{\vits}\,,\\ [5pt]
\partial_s {\vits} \,=\, \partial_y( \Euler \en [\vol])\,,\quad \en=\en(\vol,\vol_y)\,.
\end{array}\right.
\end{equation*}
We invite the reader to take a look at \cite{BNR-GDR-AEDP} for more details.

The main results of the present paper are concerned with periodic traveling wave solutions to  \eqref{eq:absHamb}, with applications to {\rm (qKdV)}, {\rm (EKE)} and (EKL). They consist of
a sufficient condition for their orbital, co-periodic stability,
and a necessary condition for their spectral, co-periodic stability.
Both are expressed in terms of the Hessian of the \emph{constrained energy}~---~ to be defined in Section \ref{s:not} hereafter~---~ viewed as a function of the $N+2$ parameters determining periodic waves. The value of this constrained energy at a given wave profile happens to be interpreted as an \emph{abbreviated action} integral along the corresponding orbit in the phase plane $\{(v,v_x)\}$. Remarkably enough, as far as capillary fluids are concerned, that is for the systems {\rm (EKE)} and {\rm (EKL)} with an energy of the form
\begin{equation}\label{eq:en}
\En = F(\rho)+\frac12 \Cap(\rho) \rho_x^2\,,\quad \en = f(\vol)+\frac12 \cap(\vol) \vol_y^2\,,
\end{equation}
the action integral physically corresponds to \emph{surface tension}. The abbreviated action of periodic wave profiles  
also admits an interpretation in terms of the averaged equations for modulated wavetrains, in that it is dual to the wavenumber in the generalized Gibbs relation satisfied by the averaged energy (see Eqs (63)(64) in \cite{BNR-JNLS14}) ---~this point of view is investigated further in a forthcoming paper.

Co-periodic stability conditions are replacements for the ---~simpler~--- ones known for solitary waves. Indeed, while the abbreviated action depends on $N+2$ parameters for periodic waves, it depends on the sole solitary wave velocity once the endstate of solitary waves is fixed in $\R^N$, in which case the abbreviated action merely coincides with the Boussinesq moment of instability. 

As is often the case, the necessary condition for spectral stability comes from an \emph{Evans function} calculation, see Section \ref{s:spectral}. Phrased explicitely,  it yields a sufficient condition for spectral instability, which is that the difference between $N$ and the negative signature of the Hessian of the abbreviated action be odd.

As to the sufficient condition for orbital stability, it relies on the GSS approach, together with a crucial algebraic relation, analogous to what has been pointed in \cite{PoganScheelZumbrun} (see also \cite[Proposition 5.3.1]{KapitulaPromislow} and references therein). This relation makes the connection between the negative signatures of two sorts of Hessians associated with the constrained energy, namely, the differential operator obtained as the Hessian at the wave's profile of the constrained energy viewed as a functional, and the $(N+1)\times (N+1)$ matrix, corresponding to what Kapitula and Promislow \cite{KapitulaPromislow} call a \emph{constraint matrix}, and arising here as the Hessian of the abbreviated action integral under the constraint that 
the period of waves is fixed. This leads us to introduce a most important \emph{orbital stability index}.
All this is made more precise in Section \ref{s:not} below, and those necessary/sufficient stability criteria are actually derived in an abstract setting in Sections \ref{s:spectral} and \ref{s:orb}. 

Section \ref{s:ex} is then devoted to the application of these abstract results to (qKdV), (EKL), and {\rm (EKE)}, with energies as in \eqref{eq:en}. A striking result is that, in all these cases, our sufficient condition for orbital stability mostly relies on the simple requirement that the negative signature of the Hessian of the abbreviated action be equal to $N$.

In addition, we point out a close connexion between stability criteria for (qKdV) and for the Euler--Korteweg systems {\rm (EKL)} and {\rm (EKE)}.  We show in Section \ref{ss:EK} that 
the systems {\rm (EKL)} and {\rm (EKE)} share the very same abbreviated action integral and constrained energy, in which the parameters of the waves turn out to be pairwise exchanged --- as well as the constraints actually, if the period itself is considered as a constraint. This readily implies that our spectral stability criterion coincides for these systems.
Furthermore,  we prove that {\rm (EKE)} and {\rm (EKL)} actually have the same orbital stability index, equal to 
the negative signature of the Hessian of the abbreviated action minus two,
even though the negative signatures of the unconstrained variational Hessians of respective Lagrangians can differ from each other. Regarding spectral stability with respect to `arbitrary' perturbations --- and not only co-periodic perturbations ---  we even show that the differential operators involved in the linearized systems associated with {\rm (EKE)} and {\rm (EKL)} are \emph{isospectral}. Even though this seems a very natural result, the underlying conjugacy between eigenfunctions is far from being trivial. Moreover, we stress that the spectral conjugacy is not restricted to co-periodic boundary conditions and respects the Floquet exponent by Floquet exponent Bloch-wave decomposition. Both spectral and variational connections are pointed out here for the first time, up to the authors' knowledge.

Some more specific examples --- dictated by more or less classical choices of nonlinearities --- are then investigated numerically in Section \ref{s:ex}. This part relies very much on the fact that all our stability criteria are expressed in terms of the abbreviated action integral, which can be computed in the phase plane without any need of an ODE solver. Its derivatives are then computed by means of finite differences. The coexistence of two grids of discretization --- one for the integral and one for finite differences, and the high condition number of the Hessian matrices that are to be computed, induce some numerical difficulties that have been coped with by a suitable choice of mesh sizes.
Numerous numerical experiments have been conducted, and their results are in accordance with those that can be computed analytically. In particular, our routine for computing the Hessian of the abbreviated action integral enables us to recover in a very precise way --- and up to the small amplitude limit and to the soliton limit --- the eigenvalues of modulated equations associated with some well-known completely integrable PDEs (namely, KdV, mKdV, and cubic NLS), as displayed in a forthcoming paper \cite{BMR2}.

Coming back to the analytical part of our results, let us mention the following, important difficulty.
In order to actually prove some orbital stability, we need a suitable local well-posedness theory for the Cauchy problem. 
This kind of result is of course heavily model-dependent. If there is for instance a huge literature on (g)KdV, it does not seem that anyone ever looked at the Cauchy problem for (qKdV) when the `capillarity' factor $\cap$ in
$$\en=f(v)+\frac12 \cap(v)v_x^2$$
is not constant. This is done in a forthcoming paper \cite{Mietka}. Regarding {\rm (EKE)} and (EKL), still with energies as in
\eqref{eq:en} with variable $\Cap$ and $\cap$, what we need is a basic adaptation to the 1D torus of earlier results dealing with the Cauchy problem on the whole real line \cite{BDD1d,BDDmultiD}.

\section{Summary of main results}\label{s:not}
In order to define the constrained energy, we observe that the system \eqref{eq:absHamb} formally admits the additional conservation law
\begin{equation}
\label{eq:impulsecl}
\partial_t\Impulse(\bU) = \partial_x( \Impulseflux[\bU])\,
\end{equation}
with $$\Impulse(\bU):= \tfrac{1}{2}\, \bU\cdot \bB^{-1} \bU\,,\; \Impulseflux[\bU]\,:=\,\bU \cdot \Euler \Ham[\bU]\,+\,\Legendre \Ham[\bU]\,,$$
$$\Legendre \Ham[\bU]\,:=\,
 U_{\alpha,x}\,\frac{\partial \Ham}{\partial  U_{\alpha,x}}(\bU,\bU_{x}) - \Ham(\bU,\bU_{x})\,.$$
The dots $\cdot$ in the definitions of $\Impulse$ and $\Impulseflux$ are for the `canonical' inner product 
 $\bU\cdot\bV=U_\alpha V_\alpha$ in $\R^N$.
Recall that the sans-serif letter $\Euler$ stands for the Euler operator defining variational derivatives. As to the other sans-serif letter $\Legendre$, it stands for a crude version --- without any change of variables involved --- of the `\emph{Legendre} transform', just defined by the formula above.
We see that $\Impulse$ is associated with the invariance of  \eqref{eq:absHamb} with respect to $x$-translations because of the algebraic relation
\begin{equation}\label{eq:translx}
\partial_x \bU = \partial_x (\bB\,\Euler \Impulse[\bU]) \,.
\end{equation}
As a consequence, for a travelling wave $\bU=\ubU(x-\speed t)$ of speed $\speed$ to be solution to \eqref{eq:absHamb}, one must have  by \eqref{eq:translx}  that
$$\partial_x(\Euler (\Ham+\speed\Impulse)[\ubU])=0\,,$$
or equivalently, there must exist $\blambda\in \R^N$ such that
\begin{equation}
\label{eq:EL}
\Euler (\Ham+\speed\Impulse)[\ubU] \,=\,\blambda\,.
\end{equation}
Equation \eqref{eq:EL} is the \emph{Euler--Lagrange equation} associated with the \emph{Lagrangian}
$$\Lag= \Lag(\bU,\bU_x;\blambda,\speed):= \Ham(\bU,\bU_x)+\speed\Impulse(\bU)\,-\,\blambda\cdot \bU\,.$$
From place to place we shall refer to the components of $\blambda$ as \emph{Lagrange multipliers}.
We thus see that  $\Legendre\Lag$ is  a first integral of the profile ODEs in \eqref{eq:EL}. We also notice that \eqref{eq:EL} implies
$$\Legendre\Lag[\ubU]\,=\, \Impulseflux[\ubU]+\speed \Impulse(\ubU)\,,$$
which is of course consistent with the conservation law in \eqref{eq:impulsecl}.  In this way, the possible profiles $\ubU$ are determined by the equations in \eqref{eq:EL} together with
\begin{equation}
\label{eq:ELham}
\Legendre\Lag[\ubU]\,=\,\mu\,,
\end{equation}
where $\mu$ is a constant of integration, which we shall sometimes refer to as an \emph{energy level}, since 
$\Legendre\Lag$ is the conserved `energy' associated with the  Lagrangian $\Legendre\Lag$.

All this roughly shows that a periodic travelling profile $\ubU$ is `generically' 
parametrized by $(\mu,\blambda,\speed)\in \R^{N+2}$. Furthermore, if such a profile $\ubU$  is of period $\Xi$, we can define the constrained energy of all $\Xi$-periodic, smooth enough functions $\bU$ by
$$\funct[\bU;\mu,\blambda,\speed]\,:=\, \int_{0}^{\Xi} (\Ham(\bU,\bU_x)+\speed\Impulse(\bU)-\blambda\cdot \bU+\mu)\,\dif x\,.$$
Denoting by 
\begin{equation}\label{eq:defTheta}
\Action(\mu,\blambda,\speed):= \funct[\ubU;\mu,\blambda,\speed]\,=\,\int_{0}^{\Xi} 
(\En(\ubv,\ubv_x)+\Ec(\ubv,\ubu) +\speed \Impulse(\ubU) - \blambda\cdot \ubU +\mu)\,\dif x\,,
\end{equation}
we can see by using  \eqref{eq:ELham} and a straightforward change of variable that 
\begin{equation}\label{eq:Theta}\Action(\mu,\blambda,\speed)\,=\,\oint \frac{\partial \En}{\partial v_x}(\ubv,\ubv_x)\,\dif v
\end{equation}
is the \emph{abbreviated action} of $\En$ along the orbit described by $\ubv$ in the $(\bv,\bv_x)$-plane. This is the reason why, as was observed in \cite[Proposition 1]{BNR-GDR-AEDP},  the partial derivatives of $\Action$ are merely given by
\begin{equation}
\label{eq:deraction}
\Action_\mu=\Xi\,,\;\nabla_{\blambda} \Action = -\int_{0}^{\Xi} \ubU\,\dif x\,,\;\Action_\speed=\int_{0}^{\Xi} \Impulse(\ubU)\,\dif x\,.
\end{equation}

We can now state our stability conditions in a more precise way.
\paragraph{Necessary condition for spectral, co-periodic stability.} 
A periodic traveling wave solution to \eqref{eq:absHam}, of profile $\ubU$ and period $\Xi$, is said spectrally stable with respect to co-periodic perturbations if the spectrum of 
the linearized operator $$\Lin:=\sJ \Hess(\Ham + \speed \Impulse)[\ubU]$$ in $L^2(\R/\Xi\Z)$ is purely imaginary. We cannot expect more than this neutral stability, because of symmetries\footnote{The eigenvalues of $\Lin$ outside real and imaginary axes arise as quadruplets $(\ev,\overline\ev,-\ev,-\overline\ev)$, nonzero real or imaginary eigenvalues come in pairs $(\ev,-\ev)$.}.
A necessary condition for spectral, co-periodic stability is 
\begin{itemize}
\item $\det (\Hess\Action)\leq 0$ in the case $N=1$,
\item $\det  (\Hess\Action)\geq 0$ in the case $N=2$.
\end{itemize}
The scalar case $N=1$ is a generalization to quasilinear equations of the results found by Bronski and Johnson \cite{BronskiJohnson}. For both cases, the proof is based on the fact that possible unstable eigenvalues $\ev$ are characterized by
$\Evans(\ev)=0$, with $$\Evans(\ev):= \det (\Monod(\Xi;\ev)-\Monod(0;\ev))$$ where $\Monod(\cdot;\ev)$ denotes the fundamental solution of the ODEs in $\Lin \bU=\ev \bU$, and on asymptotic expansions showing that when $r$ is real
$$D(r)\stackrel{r\to0}{=} (-r)^{N+2}\,|\det(\bB^{-1})|\,\det  (\Hess\Action)\,+\,o(r^{N+2})
\quad\mbox{and}\quad
\;D(r)>0 \mbox{ for } r\gg 1\,.$$
These results are collected in a rigorous manner in Theorem~\ref{thm:EvansEK} (for $N=2$) and Theorem~\ref{thm:EvansKdV} (for $N=1$) in Section \ref{s:spectral}.
 
\paragraph{Sufficient condition for orbital, co-periodic stability.} 
It is obtained through a variational argument. We assume that $\Xi_\mu=\Action_{\mu\mu}\neq 0$, and define the \emph{constraint matrix} as
$$\bC:= \frac{\check\nabla \Action_{\mu}\otimes \check\nabla \Action_{\mu}}{\Action_{\mu\mu}}\,-\,
\check\nabla^2 \Action\,,$$
with $\check\nabla$ being a shortcut for the gradient with respect to $(\blambda,\speed)$ at fixed $\mu$. 
(Note that the coefficients of $\bC$ are made, up to a factor $-1/\Action_{\mu\mu}$, of all the $2\times2$ minors of $\check\nabla^2 \Action$ containing  $\Action_{\mu\mu}$.)
Let us denote by $\Linvar$  the differential operator obtained as the Hessian of the constrained Hamiltonian:
$$\Linvar:=\Hess(\Ham+\speed \Impulse)[\ubU]\,.$$ If $\bC$ is nonsingular, and if the \emph{negative signatures} of the operator $\Linvar$ and of the matrix $\bC$ happen to coincide, then the periodic travelling wave solutions  to \eqref{eq:absHamb} of profile $\ubU$ are orbitally stable in $L^2(\R/\Xi\Z)$. 
This is the purpose of Theorem \ref{thm:orb}  in Section \ref{s:orb}.
Its proof is based on the following algebraic relation, shown in \cite{BNR-GDR-AEDP} (see also \cite{PoganScheelZumbrun}, \cite[Proposition 5.3.1]{KapitulaPromislow} for similar observations),
$$\negsign(\Linvar) = \negsign(\Linvar_{|T_{\ubU}{\man}}) \,+\,\negsign(\bC)\,,$$
where $\negsign$ denotes negative signature, and $T_{\ubU}{\man}$ is the \emph{tangent subspace} to the $(N+1)$ codimensional manifold
$${\man}:=\{\bU\in 
L^2(\R/\Xi\Z)
\,;\;  \textstyle \int_{0}^{\Xi}\Impulse(\bU)\,\dif x=\int_{0}^{\Xi}\Impulse(\ubU)\,\dif x\,,
\int_{0}^{\Xi} \bU\,\dif x=\int_{0}^{\Xi} \ubU\, \dif x\}\,.$$
(The space $T_{\ubU}{\man}$ actually corresponds to what Kapitula and Promislow \cite{KapitulaPromislow} call an \emph{admissible space}.)
According to that relation between negative signatures, the fact that $\negsign(\Linvar)  = \negsign(\bC)$ implies that
the operator $\Linvar_{|T_{\ubU}{\man}}$ is nonnegative, and up to factoring out the null direction $\ubU_x$, this roughly shows that
the functional $\funct[\cdot;\mu,\blambda,\speed]$ has a local minimum at $\ubU$ and its translates $\ubU(\cdot+s)$ on  
$${\man}:=\{\bU\in 
L^2(\R/\Xi\Z)
\,;\;  \textstyle \int_{0}^{\Xi}\Impulse(\bU)\,\dif x=\int_{0}^{\Xi}\Impulse(\ubU)\,\dif x\,,
\int_{0}^{\Xi} \bU\,\dif x=\int_{0}^{\Xi} \ubU\, \dif x\}\,.$$
Orbital stability can then be achieved by a contradiction argument as in \cite{BonaSouganidisStrauss,GrillakisShatahStrauss}, see \cite{BNR-GDR-AEDP}, or in a direct way as in \cite{Johnson,GallayHaragus} (see also \cite[\S 4.2 \& \S 7.3]{DebievreGenoudRotaNodari} for an interesting discussion of the pros and cons of these arguments). We choose the direct way for the proof of Theorem \ref{thm:orb} in Section \ref{s:orb}. 

In practice, we need to evaluate the \emph{orbital stability index} $\negsign(\Linvar)\,-\,\negsign(\bC)$. For the `concrete' systems we consider in Section \ref{s:ex}, we can infer  from a {Sturm}--{Liouville} argument that $\negsign(\Linvar) \,\in\,\{1,2\}$. In particular, extending results by Johnson \cite{Johnson} for (gKV), we show that $\Action_{\mu\mu}>0$ implies $\negsign(\Linvar) \,=\,1$. Furthermore, we see that when $N=1$, $\Action_{\mu\mu}\,\det(\bC)\,=\,\det(\Hess\Action)$. Hence a more explicit --- but partial --- version of the sufficient stability condition: 
$$\Action_{\mu\mu}>0\,,\;\det(\Hess\Action)<0\,,$$ which ensures indeed that 
$\negsign(\Linvar) =\negsign(\bC)$, and is of course consistent with Johnson's findings in the semilinear case. In fact, we recover in Section \ref{ss:qKdV} the more general sufficient condition for $\negsign(\Linvar)\,=\,\negsign(\bC)$ in (gKdV) that was later derived by Bronski, Johnson and Kapitula \cite{BronskiJohnsonKapitula} --- using in particular that $\Action_{\mu\mu}<0$ implies $\negsign(\Linvar) \,=\,2$ ---, namely
$$\Action_{\mu\mu}\neq 0\,,\; \left|\begin{array}{cc}
\Action_{\lambda\lambda} & \Action_{\mu\lambda} \\
\Action_{\lambda\mu}& \Action_{\mu\mu}\end{array}\right| \neq 0\,,\;\det(\Hess\Action)\neq0\,,\quad \negsign(\Hess\Action)=1\,.$$
More generally, for our concrete systems, we prove by related arguments --- Sturm-Liouville theory (see Lemma \ref{lem:Sturm}) and simple algebraic relations (see Proposition \ref{prop:ActionC}) --- the remarkable identity for the orbital stability index
$$\negsign(\Linvar)-\negsign(\bC)=\negsign(\Hess\Action)-N\,.$$ 
Therefore, our nonlinear stability result applies as soon as $\Action_{\mu\mu}\neq0$, $\det(\Hess\Action)\neq0$ and
$$
\negsign(\Hess\Action)\ =\ N\,.
$$

Our approach is applied to (qKdV) in Section \ref{ss:qKdV}, Theorem \ref{thm:qKdVorb}.

Details regarding the systems which have motivated this work, 
(EKL) and {\rm (EKE)}, are given in Section \ref{ss:EK}. A most important fact is that these systems share the very same abbreviated action, $\Action(\mu,\lambda,j,\sigma)$ defined in \eqref{eq:singleaction}, where $\sigma$ is the speed of EKE waves, $j$ is the `speed'\footnote{We use some quotes here because this is not a speed from the physical point of view, it actually corresponds to a mass transfer flux across the corresponding EKE waves.} of EKL waves, and
the roles of the parameters $\mu$ and $\lambda$ are exchanged when we go from {\rm (EKL)} to {\rm (EKE)} and vice versa:
$\mu$ is an \emph{energy level} for EKE waves and a \emph{Lagrange multiplier} for EKL waves, $\lambda$ is a Lagrange multiplier for EKE waves and an energy level for EKL waves\footnote{To avoid the introduction of too many notations, we have chosen to use the greek letters $\mu$ and $\lambda$ with a meaning in the  `concrete' examples {\rm (EKL)} and {\rm (EKE)} that is slightly different from their meaning in the abstract framework, see Table \ref{tb:notations}.}. As a consequence, the stability criteria which are expressed only in terms of $\det(\Hess\Action)$ and $\negsign(\Hess\Action)$ coincide for corresponding EKE waves and EKL waves. By contrast, the individual negative signatures $\negsign(\Linvar)$ and $\negsign(\bC)$ are in general not preserved by going from one formulation to the other. This explains why some simplified, partial criteria --- analogous to those in \cite{Johnson} for (gKdV) for instance --- are actually formulation-dependent.

The fact that waves should be simultaneously stable in both formulations seems very natural from a physical point of view. However, this is not that obvious to prove mathematically, because the mass Lagrangian coordinates are obtained from the Eulerian coordinates through a nonlinear and nonlocal change of variables that depends on the solution itself.

As far as spectral stability is concerned, and not only co-periodic stability actually, we prove in Theorem \ref{thm:conj} (also see Remark \ref{thm:conj}) that the corresponding linearized operators are indeed \emph{isospectral}. The proof is quite simple once we reformulate the nonlinear problems in a suitable way, but it reveals that the kind of conjugacy between those operators is not trivial. We are not aware of any earlier result of this type.

As regards the co-periodic orbital stability, its simultaneous occurrence in both formulations {\rm (EKL)} and {\rm (EKE)} is supported by the idea that corresponding EKE waves and EKL waves share the same constrained functional  $\funct[\cdot;\mu,\lambda,j,\sigma]$, and that the constraints are preserved by passing from Eulerian coordinates to mass Lagrangian coordinates\footnote{In fact, this is true provided that we also consider the period as a constraint, and thus prescribe the $(N+2)$ constraints $\textstyle\int_{0}^\Xi \dif x = \uuXi\,,\;\int_{0}^\Xi \bU(x) \dif x = \int_{0}^{\uuXi} \ubU(x) \dif x\,,\;
\int_{0}^\Xi \Impulse(\bU(x)) \dif x = \int_{0}^{\uuXi} \Impulse(\ubU(x) \dif x$. }. If the vanishing of our orbital stability index were exactly equivalent to the fact that the functional  $\funct[\cdot;\mu,\lambda,j,\sigma]$ has a local minimum on the constrained manifold ${\man}$ at the wave profile and its translates, this would show the equivalence of co-periodic orbital stability for {\rm (EKL)} and {\rm (EKE)}. We find out that the issue is a little more subtle by looking at our abstract result, Theorem \ref{thm:orb}. 
Recalling that the roles of the `concrete' parameters $\mu$ and $\lambda$ are exchanged when we go from {\rm (EKE)} to (EKL), we see that the main assumptions for applying Theorem \ref{thm:orb} to {\rm (EKE)} and {\rm (EKL)} are, 
$$\Action_{\mu\mu} \,\neq\,0\,,\quad \det(\Hess\Action)\,\neq\,0\,,\quad \negsign(\Hess\Action)\,=\,2\,,$$
for the former (see Theorem \ref{thm:EKEorb}), and 
$$ \Action_{\lambda\lambda} \,\neq\,0\,,\quad \det(\Hess\Action)\,\neq\,0\,,\quad \negsign(\Hess\Action)\,=\,2$$
for the latter (see Theorem \ref{thm:EKLorb}).
The slight discrepancy between these two sets of conditions obviously comes from the derivatives $\Action_{\mu\mu}$ and $\Action_{\lambda\lambda}$, which correspond respectively to the derivative with respect to $\mu$ of the wave period in Eulerian coordinates --- $\mu$ being the energy level in these coordinates --- and the derivative with respect to $\lambda$  of the wave period in mass Lagrangian coordinates --- $\lambda$ being the energy level in these coordinates. They are not a priori related to each other. However, as long as we regard the vanishing of either $\Action_{\mu\mu}$ or $\Action_{\lambda\lambda}$ as anomalous transitions, we can indeed think of the periodic waves as being simultaneously orbitally stable with respect to co-periodic perturbations in Eulerian coordinates and mass Lagrangian coordinates. Again, this is not that an obvious result, because the meaning of co-periodic perturbations is different from one formulation to the other\footnote{More precisely, the prescription of the period on one side corresponds to a `zero mass' perturbation on the other side, see Remark \ref{rem:stabind} for more details.}. 

Finally,  it turns out from a simple algebraic computation that
for {\rm (EKL)} the negative signature of the Hessian of the constrained Hamiltonian in mass Lagrangian coordinates coincides with the negative signature of the qKdV operator, $$\linvar:=\Hess(\en+\speed \imp)[\uvol]\,,$$  
where $\imp:=\frac12 \vol^2$ is the qKdV impulse, and $\speed=-j^2$ is prescribed by the speed of the EKL wave. Two ingredients then lead to a set of sufficient stability conditions for (EKL). The first one is that, as mentioned above,
$\negsign(\linvar)$ is known in terms of the sign of $\action_{\mu\mu}$, where $\action$ is an alternative notation for the abbreviated action associated with qKdV waves --- to avoid confusion with the one associated with EK waves, still denoted by $\Action$. The second ingredient follows from the observation that $\Action$ is explicitly related to $\action$, so that by the chain rule $\Hess\Action$ can be expressed in terms of $\Hess\action$ and $\nabla\action$.

\section{Co-periodic, spectral instability}\label{s:spectral}

\subsection{General setting}\label{ss:setting}

As in the introduction, we consider an abstract Hamiltonian system as in \eqref{eq:absHam}, with
$$\bU=\left(\begin{array}{c} \bv \\ \bu\end{array}\right)\,,\;\sJ=\bB\partial_x\,,\,\bB^{-1}=\,\left(\begin{array}{c|c}a& b \\ \hline b &0\end{array}\right)\,,\;b\neq 0\,\,,$$
$$\Ham=\Ham[\bU]=\Ham(\bU,\bU_x)=\Ec(\bv,\bu)+\En(\bv,\bv_x)\,,$$
and denote by $\Impulse$ the impulse --- or momentum --- defined by
$$\Impulse(\bU)=\frac12 \bU\cdot \bB^{-1} \bU\,.$$

Furthermore, let us assume that $\Ec$ is strongly convex with respect to $\bu$, and that $\En$ is strongly convex with respect to $\bv_x$. Both {\rm (EKE)} and {\rm (EKL)} fit this abstract framework, with
$$\bU=\left(\begin{array}{c} \rho \\ \vits \end{array}\right)\,,\;\Ec(\rho,\vits)=\frac12 \rho \vits^2\,,\;\En(\rho,\rho_x)=\frac12 \Cap(\vol)\rho_x^2+F(\rho)$$ 
for the former, and
$$\bU=\left(\begin{array}{c} \vol \\ \vitsL \end{array}\right)\,,\;\Ec(\vol,\vitsL)=\frac12  \vitsL^2\,,\;\En(\vol,\vol_y)=\frac12 \cap(\vol)\vol_y^2+f(\vol)$$
for the latter, as long as $\cap$, $\Cap$, and $\rho$ take positive values.

Recall that profiles $\ubU$ of periodic wave solutions to \eqref{eq:absHam} are characterized by the algebro-differential system made of \eqref{eq:EL}, \eqref{eq:ELham}, which depends on the parameters $(\mu,\blambda,\speed)\in \R^4$. Equivalently, by using \eqref{eq:translx},  we can view the profile $\ubU$ of waves of speed $\speed$ as a spatially periodic, and steady solution to the system \eqref{eq:absHam} rewritten in a mobile frame, which reads
\begin{equation}\label{eq:absHamc}\partial_t\bU = \bB\,\partial_x (\Euler (\Ham+\speed \Impulse)[\bU])\,.\end{equation}
For later use, let us note that \eqref{eq:absHamc} admits the conservation law
\begin{equation}
\label{eq:impulseclc}
\partial_t\Impulse(\bU) = \partial_x( \bU \cdot \Euler (\Ham+\speed \Impulse)[\bU]\,+\,\Legendre (\Ham+\speed \Impulse)[\bU] )\,,
\end{equation}
which is just  \eqref{eq:impulsecl} written in the mobile frame. In a similar way,  let us note that as soon as Eq.~\eqref{eq:EL} holds true, Eq.~\eqref{eq:ELham} equivalently reads 
\begin{equation}
\label{eq:ELhamc}
\ubU \cdot \Euler (\Ham+\speed \Impulse)[\ubU]\,+\,\Legendre (\Ham+\speed \Impulse)[\ubU] \,=\,\mu\,,
\end{equation}
We now fix such a periodic profile $\ubU$, say of period $\Xi$, and assume without loss of generality that
$\uvol_x$ vanishes at $x=0$, which will simplify a little bit our computations.
Linearizing \eqref{eq:absHamc} about $\ubU$, we receive the following system, in which the same notation $\bU$ now stands for the variation of the original $\bU$ around $\ubU$,
\begin{equation}\label{eq:absHamlin}\partial_t\bU = \bB\,\partial_x (\Linvar \bU)\,,\end{equation}
where 
$$\Linvar\ :=\ \Hess(\Ham+c\Impulse)[\ubU]\,.$$
In general, $\Hess \Ham[\ubU]$ is the differential operator defined by $$(\Hess \Ham[\ubU] \bU)_\alpha=\, \frac{\partial^2 \Ham[\ubU]}{\partial U_\alpha \partial U_\beta} U_\beta \,+\,
\frac{\partial^2 \Ham[\ubU]}{\partial U_\alpha \partial U_{\beta,x}} U_{\beta,x}
\,-\,\Dif_x\left(\frac{\partial^2 \Ham[\ubU]}{\partial U_{\alpha,x}\partial U_\beta} U_\beta\,+\,\frac{\partial^2 \Ham[\ubU]}{\partial U_{\alpha,x}\partial U_{\beta,x}} U_{\beta,x}\right)\,,$$
and similarly for $\Impulse$. However, $\Hess\Impulse[\ubU]$ happens to merely coincide here with the matrix $\bB^{-1}$, and 
$\Linvar$ is  like $\Hess \Ham[\ubU]$ a self-adjoint differential operator, of second order in $\bv$, with periodic coefficients of period $\Xi$. Our main purpose here is to derive a criterion ensuring that the composite differential operator
$$\Lin:= \bB\,\partial_x \Linvar$$
does not have any spectrum in the complex, right-half plane, when the eigenfunctions are sought $\Xi$-periodic.

Let us recall the classical observations that the profile equation in \eqref{eq:EL} implies, by differentiation in $x$, that
$\Linvar \ubU_x=0$, and by differentiation with respect to parameters $\mu$, $\blambda$, $\speed$,
we see that\footnote{Throughout the paper, subscripts $\mu$, $\lambda_1$, $\lambda_2$ --- or simply $\lambda$ when $N=1$ ---, and $\speed$ stand for partial derivatives with respect to the parameters $\mu$, $\blambda$, $\speed$.} 
$$\Lin \ubU_\mu=0\,,\;\Lin \ubU_{\lambda_{1}}=0\,,\;\Lin \ubU_{\lambda_{2}}=0\,,\;\Lin \ubU_\speed= - \ubU_x\,.$$

Another useful relation, which follows from \eqref{eq:absHamlin} but is more easily derived by linearizing \eqref{eq:impulseclc} about $\ubU$, is the following
$$\partial_t(\nabla\Impulse(\ubU) \cdot \bU) = \begin{array}[t]{l}\partial_x( \bU \cdot \Euler (\Ham+\speed \Impulse)[\ubU]\,+\,\ubU\cdot \Linvar \bU-\bU\cdot \nabla_{\bU}(\Ham+\speed \Impulse)[\ubU])\\ [3pt]
+\,\partial_x\left(\partial_{\bv_x\bv }\En[\ubv]\, \ubv_x \,\bv  
+\partial^2_{\bv_x}\En[\ubv]\,\ubv_x \,\bv_x \right)\,,\end{array}
$$
which can be simplified into
\begin{equation}
\label{eq:impulseclclin}
\partial_t(\nabla\Impulse(\ubU) \cdot \bU) = 
\partial_x(\ubU\cdot \Linvar \bU\,- \,\ucap \,\ubv_{xx} \bv \,+\,\ucap\,\ubv_x\,\bv_x)
\end{equation}
where $\ucap:=\partial^2_{\bv_x} \En[\uvol] >0$ by assumption.

For all $\ev\in \C$, let us consider the spectral problem associated with \eqref{eq:absHamlin}, 
\begin{equation}\label{Evans_eq}
\ev\,\bU\ =\ \Lin\bU
\end{equation}
which amounts to looking for solutions of \eqref{eq:absHamlin} of the form
$\ee^{\ev t} \bU(x)$. For such a solution, \eqref{eq:impulseclclin} readily implies by integration the following relation
$$\ev \int_{0}^{\Xi} \nabla\Impulse(\ubU(x)) \cdot \bU(x)\,\dif x = 
\ubU(0)\cdot [\Linvar \bU]\,-\,\ucap(0)\, \ubv_{xx}(0)\, [\bv]\,,$$
where we have used the shortcut $[f]$ for expressions of the form 
$f(\Xi)-f(0)$, and the fact that $\ubU$ is $\Xi$-periodic and chosen such that $\ubv_x(0)=0$. Furthermore, by also integrating \eqref{Evans_eq} over a period, we obtain
\begin{equation}
\label{eq:direct}\ev\,\int_{0}^{\Xi} \bU(x)\,\dif x\,=\,\bB \,[\Linvar \bU]\,.
\end{equation}
Therefore, we have
\begin{equation}
\label{eq:crucial}
\ev \int_{0}^{\Xi} \nabla\Impulse(\ubU(x)) \cdot \bU(x)\,\dif x = 
\ev\, \bB^{-1}\ubU(0)\cdot \int_{0}^{\Xi} \bU(x)\,\dif x\,-\,\ucap(0)\, \ubv_{xx}(0)\, [\bv]\,.
\end{equation}
This relation will be used in a crucial way below, given that $\ucap(0)>0$ and $\ubv_{xx}(0)\neq 0$.

\subsection{Evans function}\label{ss:Evans}

The eigenvalue equations in \eqref{Evans_eq} consist of a system of ODEs, which is of first order in $\bu$ and third order in $\bv$. Rewriting this system as a first-order system of four ODEs --- linear ODEs with $\Xi$-periodic coefficients,  and denoting by $\Monod(\cdot;\ev)$ its fundamental solution, we see that the existence of a $\Xi$-periodic, nontrivial solution to \eqref{Evans_eq} is equivalent to $\Evans(\ev)=0$, where
$$
\Evans(\ev)\ :=\ \det (\Monod(\Xi;\ev)-\Monod(0;\ev)).
$$
The function $\Evans:\C\to\C$ is called an Evans function\footnote{Here with Floquet exponent equal to zero, since we search for co-periodic eigenfunctions only; a more general Evans function would be $\Evans(\ev,\alpha)\ :=\ \det (\Monod(\Xi;\ev)-\ee^{i\alpha}\Monod(0;\ev))$, defined for all Floquet exponents $\alpha \in \R$.}.

\begin{theorem}\label{thm:EvansEK}
In the framework of Sections \ref{s:not} and \ref{ss:setting}, we make the `generic' assumption
\begin{itemize}
\item[{\bf (H0)}] There exists an open set $\Omega$ of $\R^{N+2}$ and a family of periodic traveling profiles $\ubU$ smoothly parametrized by   $(\mu,\blambda,\speed)\in \Omega$ such that 
\eqref{eq:EL}-\eqref{eq:ELham} hold true, that is
$$\Euler (\Ham+\speed\Impulse)[\ubU] \,=\,\blambda\,,\qquad \ubU\cdot \Euler\Ham[\ubU] + \Legendre\Ham[\ubU]+ \speed \Impulse(\ubU)\,=\,\mu\,.$$
\end{itemize} 
If $N=2$ then the Evans function $\Evans$ defined above has the following asymptotic behaviors
\begin{equation}
D(r)\stackrel{r\to0}{=} r^{4}\ (-\det(\bB^{-1}))\ \det  (\Hess\Action)\,+\,o(r^{4})\,,\qquad D(r)>0 \mbox{ for } r\gg 1\,.
\end{equation}
If  $\det  (\Hess\Action)<0$, then the corresponding wave is spectrally unstable.
\end{theorem}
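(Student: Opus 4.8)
The plan is to reduce the instability statement to an intermediate value argument on the positive real axis, fed by the two asymptotic descriptions of $D$. Since the spatial ODE has real coefficients when $\ev=r$ is real, $D$ is real-valued on $\R$, and a zero $r_*>0$ is exactly a $\Xi$-periodic solution of $\Lin\bU=r_*\bU$, i.e.\ a real positive eigenvalue of $\Lin$, hence spectral instability. Granting the two asymptotics, if $\det(\Hess\Action)<0$ then, because $-\det(\bB^{-1})=b^2>0$, the small-$r$ expansion gives $D(r)<0$ for $0<r\ll1$ while $D(r)>0$ for $r\gg1$, so continuity forces a zero $r_*>0$. Thus the whole content lies in the two asymptotics, and the small-$r$ one is the heart.

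For the small-$r$ expansion I would first put the eigenvalue system $\Lin\bU=\ev\bU$ into first-order form in the variables $(\bv,\bv_x,W)$, $W:=\Linvar\bU\in\R^2$, the point being that $\partial_x W=\ev\,\bB^{-1}\bU$, so $W$ is constant at $\ev=0$ and drifts only at order $\ev$; this is exactly the content of \eqref{eq:direct}, $[W]=\ev\,\bB^{-1}\int_0^\Xi\bU\,\dif x$. I would then construct a basis of solutions $Y_1,\dots,Y_4$ of $\Lin Y=\ev Y$, analytic in $\ev$, reducing at $\ev=0$ to the four independent solutions $\ubU_x,\ubU_\mu,\ubU_{\lambda_1},\ubU_{\lambda_2}$ of $\Lin\bU=0$ (their independence being the nondegeneracy implicit in {\bf (H0)}), and continue the translation branch as $Y_1=\ubU_x-\ev\,\ubU_\speed+O(\ev^2)$ using the generalized eigenvector relation $\Lin\ubU_\speed=-\ubU_x$. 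Writing the periodicity condition in this basis, $D(\ev)$ coincides, up to a nonzero analytic factor equal to the determinant of the basis $(Y_1,\dots,Y_4)$ at $x=0$, with the determinant of the $4\times4$ matrix whose columns are the jumps $[Y_j]:=Y_j(\Xi;\ev)-Y_j(0;\ev)$.

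The jumps are then computed from three inputs: the period-dependence identity $[\ubU_p]=-\Xi_p\,\ubU_x(0)=-\Action_{\mu p}\,\ubU_x(0)$ for $p\in\{\mu,\lambda_1,\lambda_2,\speed\}$ (obtained by differentiating the $\Xi$-periodicity of $\ubU$ and using $\Action_\mu=\Xi$ from \eqref{eq:deraction}); relation \eqref{eq:direct} for the two $W$-components; and \eqref{eq:crucial} for the $\bv$-component. With the normalization $\ubv_x(0)=0$, at $\ev=0$ every column is a multiple of the single vector $(0,\ubv_{xx}(0),0,0)$, so the jump matrix has rank one and $D(0)=0$; inspecting the $\ev$-orders shows $[\bv]=O(\ev)$ and $[W]=O(\ev)$ for the columns $Y_2,Y_3,Y_4$ but $[\bv]=O(\ev^2)$ and $[W]=O(\ev^2)$ for the translation column $Y_1$ (because $\int_0^\Xi\ubU_x\,\dif x=0$ and $\int_0^\Xi\bB^{-1}\ubU\cdot\ubU_x\,\dif x=0$ by periodicity), so that the leading nonvanishing minor appears precisely at order $\ev^4$. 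The decisive step is to rewrite every integral produced this way --- $\int_0^\Xi\ubU_p\,\dif x$, $\int_0^\Xi\bB^{-1}\ubU\cdot\ubU_p\,\dif x$, and $\int_0^\Xi\ubU_\speed\,\dif x$ --- as a second partial derivative of $\Action$ by differentiating the identities in \eqref{eq:deraction}, carefully accounting for the moving period; a cofactor expansion then collapses the order-$\ev^4$ coefficient into $\det(\Hess\Action)$, the factors $\ubv_{xx}(0)$ cancelling against the normalizing determinant at $\ev=0$ while the two $W$-rows supply the factor $\det(\bB^{-1})$, which yields $D(r)=r^4(-\det(\bB^{-1}))\det(\Hess\Action)+o(r^4)$.

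The large-$r$ positivity I would obtain from a high-frequency analysis of the spatial ODE: after rescaling, the principal part of $\Lin$ is $\bB\,\partial_x(-\ucap\,\partial_x^2)$ on the $\bv$-component, so the spatial exponents behave for large $r$ like $r^{1/3}$ times fixed constants and split into groups with strictly positive and strictly negative real parts; the monodromy therefore has no unit eigenvalue for $r$ large, $D(r)$ keeps a constant sign, and a constant-coefficient comparison (or a homotopy at large $r$) fixes that sign to be positive. I expect the genuine obstacle to be the order-$\ev^4$ identification of the previous paragraph: one must differentiate the action identities through the solution-dependent period and track the generalized-eigenvector contribution $\int_0^\Xi\ubU_\speed\,\dif x$ without sign or bookkeeping errors, whereas the large-$r$ estimate, though technical, is a standard WKB computation.
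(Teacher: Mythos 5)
Your overall architecture --- intermediate value argument on $(0,+\infty)$, a basis of solutions reducing at $\ev=0$ to $\ubU_x,\ubU_\mu,\ubU_{\lambda_1},\ubU_{\lambda_2}$, continuation of the translation branch via $\Lin\ubU_\speed=-\ubU_x$, row/column reductions through \eqref{eq:direct} and \eqref{eq:crucial}, and identification of the order-$\ev^4$ coefficient with $\det(\Hess\Action)$ --- is essentially the paper's own proof; your coordinates $(\bv,\bv_x,W)$ with $W=\Linvar\bU$ are a clean repackaging of the paper's row operations (the change of variables is $\Xi$-periodic, so the Evans function is unchanged), and you correctly flag the one real bookkeeping subtlety, namely that the identities \eqref{eq:deraction} must be differentiated through the moving period; the resulting boundary terms are all proportional to the $(\Xi_\speed,\Xi_\mu,\Xi_{\lambda_1},\Xi_{\lambda_2})$ row and therefore drop out of the determinant. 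Up to this point the proposal is sound.

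The genuine gap is in the high-frequency step, and it is twofold. First, your principal-symbol claim is wrong for $N=2$: the eigenvalue system is third order in $\bv$ but first order in $\bu$, and eliminating $\bu$ (using $\partial_x(\ualpha\,\bu+\cdots)\sim \ev b\,\bv$) closes it into a \emph{fourth}-order equation, $\partial_x^4\bv\sim -\ev^2 b^2\,\bv/(\ucap\,\ualpha)$, whose spatial exponents scale like $\sqrt{r}$ --- the paper's model computation gives exactly $\pm(1\pm i)\sqrt{r|b|/2}$. The $r^{1/3}$ scaling you invoke is the structure of the scalar case $N=1$ treated in Theorem~\ref{thm:EvansKdV}, so your exponent analysis as written is based on the wrong dispersion relation (the qualitative splitting off the imaginary axis survives, but by accident, not by your computation). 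Second, and more seriously, knowing that the frozen- or constant-coefficient exponents have nonzero real parts does not by itself yield $\Evans(r)\neq 0$ for the actual variable-coefficient, quasilinear operator at large $r$; this is precisely why the paper runs a homotopy $\Lin_\theta$ from $\Lin$ to the constant-coefficient model $\tLin$ and proves, via integration-by-parts estimates \emph{uniform in $\theta$}, that no real eigenvalue $r\geq R$ exists along the entire path --- only then does the explicit positivity of $\tEvans(r)$ transfer to $\Evans(r)$. Dismissing this as ``a standard WKB computation'' skips the actual content of the step: as the paper stresses in Remark~\ref{r:Mat}, the high-frequency resolvent bound is exactly where quasilinearity forces one to depart from semilinear arguments. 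To complete your proof you must either supply those uniform a priori estimates for the homotopy, or give a rigorous large-parameter ODE asymptotic with error control built on the correct quartic dispersion relation.
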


Notice that $\det(\bB^{-1})=-b^2$ so that $(-\det(\bB^{-1}))=|\det(\bB^{-1})|$.

\begin{proof}
We begin by observing that $\ubU_x,\ubU_\mu,\ubU_{\lambda_1},\ubU_{\lambda_2}$ are solutions to the ODEs in \eqref{Evans_eq} with $z=0$, as a consequence --- by differentiation in $x$ --- of the relations
\begin{equation}\label{eq:Linvarderprof}
\Linvar \ubU_x=0\,,\;\Linvar \ubU_\mu=0\,,\;\Linvar\ubU_{\lambda_1}=\left(\begin{array}{c}1\\0\end{array}\right)\,,\;
\Linvar\ubU_{\lambda_2}=\left(\begin{array}{c}0\\1\end{array}\right)\,,
\end{equation}
which themselves come from the differentiation of \eqref{eq:EL}.
Furthermore, $(\ubU_x,\ubU_\mu,\ubU_{\lambda_1},\ubU_{\lambda_2})$ is an independent family. Indeed, would a linear combination $\beta_0\ubU_x+\beta_3\ubU_\mu+\beta_1\ubU_{\lambda_1}+\beta_2\ubU_{\lambda_2}$ be zero, the last two equations in \eqref{eq:Linvarderprof} above would imply $\beta_1=\beta_2=0$, so that $\beta_0\ubU_x+\beta_3\ubU_\mu=0$, which turns out to be impossible unless $\beta_0=\beta_3=0$. Indeed, by differentiation of \eqref{eq:ELham} we see that
$$- \,\ucap \,\ubv_{xx} \ubv_\mu \,+\,\ucap\,\ubv_x\,\ubv_{\mu,x}\,=\,1\,,\quad - \,\ucap \,\ubv_{xx} \ubv_x \,+\,\ucap\,\ubv_x\,\ubv_{xx}\,=\,0\,,$$
so that $\ubv_\mu$ and $\ubv_x$ cannot be colinear.
(In the computation here above, we actually have differentiated \eqref{eq:ELham} under the form \eqref{eq:ELhamc}, and used the same simplification as in the derivation of \eqref{eq:impulseclclin}, as well as
the first two 
equations 
in \eqref{eq:Linvarderprof} to cancel out the terms 
$\ubU\cdot \Linvar \ubU_\mu$, and $\ubU\cdot \Linvar \ubU_x$, 
even though these 
simplifications are 
not necessary to show that $\ubU_x$ and $\ubU_\mu$ cannot be colinear. If the second equation obtained in this way is trivial, this is not the case for the first one.)  Since we have enforced $\ubv_x(0)=0$, the above relations imply in particular
\begin{equation}\label{e:keyrel}
\ucap(0) \,\ubv_{xx}(0) \ubv_\mu(0)\ =\ -1\,.
\end{equation}

This preliminary observation that $(\ubU_x,\ubU_\mu,\ubU_{\lambda_1},\ubU_{\lambda_2})$ is an independent family allows us to consider the family of independent solutions $\bU^j(\cdot;z)$, $j=1,2,3,4$, to  \eqref{Evans_eq} defined by the initial conditions
$$
(\bv^j(0;z),\bv^j_x(0;z),\bv^j_{xx}(0;z),\bu^j(0;z))^{\sf T}
\ =\ \left\{\begin{array}{rcl} 
(\ubv_x(0),\ubv_{xx}(0),\ubv_{xxx}(0),\ubu_x(0))^{\sf T}&\textrm{ if }&j=1,\\
(\ubv_\mu(0),\ubv_{\mu,x}(0),\ubv_{\mu,xx}(0),\ubu_\mu(0))^{\sf T}&\textrm{ if }&j=2,\\
(\ubv_{\lambda_1}(0),\ubv_{\lambda_1,x}(0),\ubv_{\lambda_1,xx}(0),\ubu_{\lambda_1}(0))^{\sf T}&\textrm{ if }&j=3,\\
(\ubv_{\lambda_2}(0),\ubv_{\lambda_2,x}(0),\ubv_{\lambda_2,xx}(0),\ubu_{\lambda_2}(0))^{\sf T}&\textrm{ if }&j=4.\\
\end{array}\right.
$$

We see that the Evans function equivalently reads $\Evans(\ev)=E(\ev)/\Delta$, where
$$
E(z)\ :=\ \left|\begin{array}{rrlcrl}
&[\!\!\!\!&\bv^1]& \cdots &[\!\!\!\!&\bv^{4}]\\ 
&[\!\!\!\!&\bv^1_x]& \cdots &[\!\!\!\!&\bv^{4}_x]\\ 
&[\!\!\!\!&\bv^1_{xx}]& \cdots &[\!\!\!\!&\bv^{4}_{xx}]\\ 
&[\!\!\!\!&\bu^1]&\cdots &[\!\!\!\!&\bu^{4}]\\ 
\end{array}\right|\,,\;
\Delta:= \left|\begin{array}{rcl}
\bv^1(0)& \cdots &\bv^{4}(0)\\ 
\bv^1_x(0)& \cdots &\bv^{4}_x(0)\\ 
\bv^1_{xx}(0)& \cdots &\bv^{4}_{xx}(0)\\ 
\bu^1(0)&\cdots &\bu^{4}(0)\\ 
\end{array}\right|
\ .
$$
Here above and in the remaining part of this proof, the notation $[\cdot]$ is reserved for differences of values between $x=\Xi$ and $x=0$, and has nothing to do with the evaluation of functionals anymore.
 
\paragraph{Low frequency expansion.} This is basically a variation on the computation made in \cite[Section B.2]{BNR-JNLS14}, with a few more details for the reader's convenience. We begin by observing that  
$$[\Linvar \bU]\,=\,\left(\begin{array}{c} - \ucap(0)\,[\bv_{xx}] \\ \ualpha(0)\,[\bu] \end{array}\right)\,+\,\left(\begin{array}{ccc}
* & * & * \\
* & 0 & 0 \end{array}\right) \,\left(\begin{array}{c} \,[\bv] \\ \,[\bv_x] \\ \,[\bu] \end{array}\right)$$
where $\ualpha:=\partial^2_{\bu} \Ec(\ubv,\ubu) >0$ by assumption, and $*$ stand for immaterial real numbers coming from the evaluation of $\ubU$ and its derivatives at $0$. Therefore, we can make some row combinations in the determinant defining $E(\ev)$ by using \eqref{eq:direct}, and obtain
$$
E(\ev)\ =\ -\ev^2\det(\bJ^{-1})
\,\ucap(0)^{-1}\,\ualpha(0)^{-1}\,
\left|\begin{array}{rrlcrl}
&[\!\!\!\!&\bv^1]& \cdots &[\!\!\!\!&\bv^4]\\ 
&[\!\!\!\!&\bv^1_x]& \cdots &[\!\!\!\!&\bv^4_x]\\ 
&\!\!\!\!&\int_0^{\uXi} \bU^1 &\cdots&\!\!\!\!&\int_0^{\uXi} \bU^4
\end{array}\right|.
$$
We can proceed in a similar way by using \eqref{eq:crucial}, which yields
$$
\begin{array}{rcl}
E(\ev)&=&\ev^3\det(\bJ^{-1})
\,\ucap(0)^{-2}\,\ualpha(0)^{-1}\,
\ubv_{xx}(0)^{-1}\\[1em]
&&\times\left|\begin{array}{rrlcrl}
&\!\!\!\!&\int_0^{\uXi}
\nabla\Impulse(\ubU)\cdot \bU^1
& \cdots &\!\!\!\!&\int_0^{\uXi}
\nabla\Impulse(\ubU)\cdot \bU^4\\[1em]
&[\!\!\!\!&\bv^1_x]& \cdots &[\!\!\!\!&\bv^4_x]\\[1em]
&\!\!\!\!&\int_0^{\uXi} \bU^1 &\cdots&\!\!\!\!&\int_0^{\uXi} \bU^4
\end{array}\right|.
\end{array}
$$

Now, we observe that $(\bU^1)_{|\ev=0}=\ubU_x$, and by differentiating $\ev \bU^1=\Lin\bU^1$ we see that $\ubU_x= \Lin (\partial_\ev\bU^1)_{|\ev=0}$. Therefore, $\bU^1+\ubU_c$ is in the kernel of $\Lin$, which is spanned by $(\ubU_x,\ubU_\mu,\ubU_{\lambda_1},\ubU_{\lambda_2})$ since this is an independent family of solutions to $\Lin \bU=0$, which is equivalent to a first-order system of four ODEs. We also have that $(\bU^2)_{|\ev=0}=\ubU_\mu$, $(\bU^1)_{|\ev=0}=\ubU_{\lambda_1}$, $(\bU^1)_{|\ev=0}=\ubU_{\lambda_2}$, by the choice of initial conditions.
We thus find that
$$
\begin{array}{l}
E(\ev)=-\ev^4\det(\bJ^{-1})
\,\ucap(0)^{-2}\,\ualpha(0)^{-1}\,
\ubv_{xx}(0)^{-1}\\[1em]
\;\times\left|\begin{array}{cccc}
\int_0^{\uXi}
\nabla\Impulse(\ubU)\cdot \ubU_c& 
\int_0^{\uXi}
\nabla\Impulse(\ubU)\cdot \ubU_\mu&
\int_0^{\uXi}
\nabla\Impulse(\ubU)\cdot \ubU_{\lambda_1}&
\int_0^{\uXi}
\nabla\Impulse(\ubU)\cdot \ubU_{\lambda_2}\\[1em]
[\ubv_{c,x}]&[\ubv_{\mu,x}]&[\ubv_{\lambda_1,x}]&[\ubv_{\lambda_2,x}]\\[1em]
\int_0^{\uXi} \ubU_c&\int_0^{\uXi} \ubU_\mu&\int_0^{\uXi} \ubU_{\lambda_1}&\int_0^{\uXi} \ubU_{\lambda_2}
\end{array}\right|+\cO(\ev^5)\,,\\[4em]
\end{array}$$
which equivalently reads, according to \eqref{eq:deraction},
$$
\begin{array}{l}
E(\ev)=-\ev^4\det(\bJ^{-1})
\,\ucap(0)^{-2}\,\ualpha(0)^{-1}\,
\ubv_{xx}(0)^{-1}
\;\times\left|\begin{array}{cccc}
\Action_{\speed \speed}& 
\Action_{\speed \mu}&
\Action_{\speed \lambda_1}&
\Action_{\speed \lambda_2}\\[1em]
[\ubv_{c,x}]&[\ubv_{\mu,x}]&[\ubv_{\lambda_1,x}]&[\ubv_{\lambda_2,x}]\\[1em]
\Action_{\lambda_1 \speed}&
\Action_{\lambda_1 \mu}&\Action_{\lambda_1 \lambda_1}&\Action_{\lambda_1 \lambda_2}
\\[1em]
\Action_{\lambda_2 \speed}&
\Action_{\lambda_2 \mu}&\Action_{\lambda_2 \lambda_1}&\Action_{\lambda_2 \lambda_2}
\end{array}\right|+\cO(\ev^5)\,.\\[4em]
\end{array}$$
Finally, using that $[\ubv_{a,x}]=- \Xi_a \ubv_{xx}(0)$, for $a=\speed,\mu,\lambda_1,\lambda_2$ --- which merely comes from the differentiation of the relation $\ubv_x(\Xi)=\ubv_x(0)$ with respect to those parameters --- we obtain
$$
\begin{array}{l}
E(\ev)=\ev^4\det(\bJ^{-1})
\,\ucap(0)^{-2}\,\ualpha(0)^{-1}
\;\times\left|\begin{array}{cccc}
\Action_{\speed \speed}& 
\Action_{\speed \mu}&
\Action_{\speed \lambda_1}&
\Action_{\speed \lambda_2}\\[1em]
\uXi_c&\uXi_\mu&\uXi_{\lambda_1}&\uXi_{\lambda_2}\\[1em]
\Action_{\lambda_1 \speed}&
\Action_{\lambda_1 \mu}&\Action_{\lambda_1 \lambda_1}&\Action_{\lambda_1 \lambda_2}
\\[1em]
\Action_{\lambda_2 \speed}&
\Action_{\lambda_2 \mu}&\Action_{\lambda_2 \lambda_1}&\Action_{\lambda_2 \lambda_2}
\end{array}\right|+\cO(\ev^5),
\end{array}
$$
that is, recalling also from \eqref{eq:deraction} that $\Xi=\Action_\mu$, 
$$E(\ev)=\ev^4\det(\bJ^{-1})
\,\ucap(0)^{-2}\,\ualpha(0)^{-1}\,\det \Hess\Action+\cO(\ev^5)\,.$$
On the other hand, using that 
$$\Linvar \bU(0)\,=\,\left(\begin{array}{c} - \ucap(0)\,\bv_{xx}(0) \\ \ualpha(0)\,\bu(0) \end{array}\right)\,+\,\left(\begin{array}{ccc}
* & * & * \\
* & 0 & 0 \end{array}\right) \,\left(\begin{array}{c} \,\bv(0) \\ \,\bv_x(0) \\ \,\bu(0)\end{array}\right)$$
in general, and Eqs in \eqref{eq:Linvarderprof}, 
we can compute explicitly $\Delta$ by making row combinations again. By using \eqref{e:keyrel} in the last step, we thus find that
$$
\begin{array}{rcl}
\Delta
&=&
\left|\begin{array}{cccc}
\ubv_x(0)&\ubv_\mu(0)&\ubv_{\lambda_1}(0)&\ubv_{\lambda_2}(0)\\ 
\ubv_{xx}(0)&\ubv_{\mu,x}(0)&\ubv_{\lambda_1,x}(0)&\ubv_{\lambda_2,x}(0)\\ 
\ubv_{xxx}(0)&\ubv_{\mu,xx}(0)&\ubv_{\lambda_1,xx}(0)&\ubv_{\lambda_2,xx}(0)\\ 
\ubu_x(0)&\ubu_\mu(0)&\ubu_{\lambda_1}(0)&\ubu_{\lambda_2}(0)\\ 
\end{array}\right|\\
&=&-
\,\ucap(0)^{-1}\,\ualpha(0)^{-1}\,
\left|\begin{array}{cccc}
0&\ubv_\mu(0)&\ubv_{\lambda_1}(0)&\ubv_{\lambda_2}(0)\\ 
\ubv_{xx}(0)&\ubv_{\mu,x}(0)&\ubv_{\lambda_1,x}(0)&\ubv_{\lambda_2,x}(0)\\ 
0&0&1&0\\ 
0&0&0&1\\ 
\end{array}\right|\\[1em]
&=&
\,\ucap(0)^{-1}\,\ualpha(0)^{-1}\,
\,\ubv_{xx}(0)\,\ubv_\mu(0)\\[1em]
&=&\,-\,\ucap(0)^{-2}\,\ualpha(0)^{-1}.
\end{array}
$$
Altogether with the expansion of $E(\ev)$, this gives
$$
\begin{array}{l}
\Evans(\ev)\stackrel{\ev\to0}{=}-\ev^4\det(\bJ^{-1})
\,\det \Hess\Action\,+\,\cO(\ev^5)\,.
\end{array}
$$

\paragraph{High frequency expansion.}

In order to find the sign of $\Evans(\ev)$ for $\ev\in\R_+$ large enough,
we can invoke a \emph{homotopy argument}\footnote{Usual in many related computations, but apparently used for the first time in a periodic context ; see Remark~\ref{r:Mat}.}. For this argument to work out, we must check  
that there exists $R>0$ so that $\Evans(r)\neq 0$ for $r\geq R$, and moreover that this $R$ can be found to be uniform along the family of Evans functions associated with a continuous path going from the operator $\Lin$ to a simpler operator, say $\tLin$, for which we can compute the Evans function $\tEvans$ explicitly. We choose
$$\tLin\,:=\,\tbJ\, \partial_x \,\Hess \tHam[\ubU]\,,\;\tbJ^{-1}:=\left(\begin{array}{c|c}0&b\\ \hline b&0\end{array}\right)\,,\;\tHam[\bU]:= \frac12 \bu^2+\frac12 \bv_x^2\,,$$
and postpone the search for $R$ to the next paragraph. 

Once we have this $R$, we know that
$\Evans(r)$ and $\tEvans(r)$ have the same sign for $r\geq R$. Let us compute $\tEvans(r)$.
The eigenvalue equations $\tLin \bU=\ev \bU$ equivalently read
$$\left\{\begin{array}{rcl}
-\partial_x^3\bv &=& \ev b\,\bu\,,\\
\partial_x \bu &=& \ev b\,\bv\,,
\end{array}\right.$$
or
$$
\partial_x\left(\begin{array}{c} \bv \\ \bv_x \\ \bv_{xx} \\ \bu\end{array}\right)=
{\mathbb A}(\ev) \left(\begin{array}{c} \bv \\ \bv_x \\ \bv_{xx} \\ \bu\end{array}\right)\,,\;
{\mathbb A}(\ev) := \left(\begin{array}{cccc}0&1&0&0\\0&0&1&0\\0&0&0&-zb\\zb&0&0&0\end{array}\right)\,.
$$
It is then a simple exercise to compute $
\tEvans(z)\ = \ \det(\ee^{\Xi {\mathbb A}(\ev)}-1)$. In particular, for $\ev=r\in (0,+\infty)$, ${\mathbb A}(r)$ has four distinct eigenvalues, $\pm (1\pm i) \sqrt{r |b|/2}$. Therefore, it is diagonalizable and
$$\det(\ee^{\Xi {\mathbb A}(r)}-1)= (e^{\Xi\,(1+i)\sqrt{ r |b|/2}}-1) \,(e^{\Xi\,(1-i)\sqrt{ r |b|/2}}-1)\, (e^{\Xi\,(-1+i)\sqrt{ r |b|/2}}-1)\, (e^{\Xi\,(-1-i)\sqrt{ r |b|/2}}-1)$$
$$\,= \ \left|e^{\Xi\,(1+i)\sqrt{ r |b|/2}}-1\right|^2\ \left|e^{
\Xi\,
(-1+i)\sqrt{r |b|/2}}-1\right|^2> 0.
$$

\paragraph{High enough frequencies are not eigenvalues.} For any $\theta\in [0,1]$, we set 
$$\Lin_\theta:=\tbJ_\theta\, \partial_x \,\Hess \tHam_\theta[\ubU]\quad\textrm{where}\ \tHam_\theta=\theta\Ham+(1-\theta)\tHam
\quad\textrm{and}\quad
\tbJ_\theta^{-1}=\theta\,\bJ^{-1}+(1-\theta)\tbJ^{-1}.$$
Notice that this does define a $\tbJ_\theta$ since the formula for $\tbJ_\theta^{-1}$ defines a matrix with determinant $\det(\bJ^{-1})$. The aim is to find $R\in (0,+\infty)$ such that, for all $\theta\in [0,1]$, the operator 
$\Lin_\theta$ does not have any $\Xi$-periodic eigenfunction associated with a real eigenvalue $r\geq R$. Observing that $\Lin_\theta$ is --- on purpose --- exactly of the same form as $\Lin$, by just replacing $\bJ^{-1}$, $\Ec$, $\En$ by
$$\tbJ^{-1}_\theta:=\left(\begin{array}{c|c}\theta a &b\\ \hline b&0\end{array}\right)\,,\; \Ec_\theta=\theta \Ec+ (1-\theta) (\tfrac12 \bu^2)\,,\;\En_\theta=\theta \En+ (1-\theta) (\tfrac12 \bv_x^2)\,,$$
we can drop $\theta$ and seek $R$ for $\Lin$ under the only assumptions that $b$ is fixed and positive, $a$ may vary while staying bounded, $\ucap=\partial^2_{\bv_x} \En[\ubv]$ and $\ualpha=\partial^2_{\bu} \Ec[\ubv,\ubu]$ are bounded, positive and bounded away from zero. Such an $R$ can be derived from some rough \emph{a priori} estimates. A similar computation was made in \cite[Section B.1]{BNR-JNLS14}, with a slight mistake which can be fixed by modifying the high order estimate accordingly with what follows.

\noindent
Let us write the eigenvalue equations \eqref{Evans_eq} in a more explicit way,
$$\left\{\begin{array}{l}
\partial_x(\uM \bv + \ubeta \bv +\ugamma \bu)\ +\ \speed\ \partial_x(a \bv + b \bu)\ =\ r\ ( a \bv + b \bu)\,,\\
\partial_x(\ugamma \bv + \ualpha \bu)\ +\ \speed\ \partial_x(b \bv)\ =\ r \ b  \bv\,,
\end{array}\right.$$
where $\uM:=\Hess \En[\ubv]=- \partial_x \ucap \partial_x + \uq$ with $\uq$ bounded, and 
$\ubeta:=\partial^2_{\bv} \Ec[\ubv,\ubu]$, $\ugamma:=\partial_{\bu\bv} \Ec[\ubv,\ubu]$ bounded too.
On the one hand, taking the inner product of the system above with 
$$\bJ \left(\begin{array}{c}\bv\\ \bu\end{array}\right)= \left(\begin{array}{c}\bu/b\\ \bv/b-a\bu/b\end{array}\right)$$ in $L^2(\R/\Xi\Z)$ and integrating by parts, we find that
$$r (\|\bv\|_{L^2}^2+\|\bu\|_{L^2}^2)\begin{array}[t]{l}= \int_{0}^{\Xi} 
\big(\  \bu_x\, \partial_x(\ucap\, \bv_x) \, + \, \bu \, \partial_x ( (\uq+\ubeta) \bv + \ugamma  \bu )
 + (\bv-a\bu)\, \partial_x(\ugamma \bv +\ualpha \bu) 
 \big)/b \ \dif x
 \\[5pt]
  \leq C\, (\|\bv\|_{H^2}^2+\|\bu\|_{H^1}^2)
   \end{array}
$$
for some constant $C$ depending only the bounds on $a$, $\ucap$, $\uq$, $\ualpha$, $\ubeta$, $\ugamma$.
On the other hand, taking the inner product of the system above with $(\bv_x,\bu_x)^{{\sf T}}$,
and integrating by parts again, we obtain
$$0\begin{array}[t]{l}=  \int_{0}^{\Xi} \begin{array}[t]{r}
\!\!\big(\  \bv_{xx}\, \partial_x(\ucap\, \bv_x) \, + \, \bv_x \, \partial_x ( (\uq+\ubeta) \bv + \ugamma  \bu )
 + \bu_x\, \partial_x(\ugamma \bv +\ualpha \bu) \\
 \quad + \speed\, \bv_x\,\partial_x(a\bv+b\bu)\,+\, \speed \,\bu_x\,\partial_x(b\bv)
 \big) \ \dif x
 \end{array}
 \\[5pt]
 =  \int_{0}^{\Xi}
\big( \ucap\,\bv_{xx}^2\,+\,\ualpha\,\bu_x^2\big)\,\dif x\,+\,
\begin{array}[t]{r}  \int_{0}^{\Xi}
\big(\ \ucap_{x} \bv_x \bv_{xx} - \ugamma\, \bv_{xx}\,\bu\,+ \bv_x\,  \partial_x((\uq+\ubeta+a\speed)\bv) \\
+\, \bu_x\, \partial_x((\ugamma\,+\,2b\speed)\, \bv
 \big) \ \dif x
 \end{array}
 \\[5pt]
  \geq \frac12\,\int_{0}^{\Xi}
\big( \ucap\,\bv_{xx}^2\,+\,\ualpha\,\bu_x^2\big)\,\dif x -\,C'\, (\|\bv\|_{H^1}^2+\|\bu\|_{L^2}^2)
   \end{array}
$$
for some other constant $C'$ depending only the bounds on $a$, $\ucap$, $\ucap_x$, $\uq$, $\ualpha$, $\ubeta$, $\ugamma$, $\ugammax$. Using once more that $\ucap$ and $\ualpha$ are positive and bounded away from zero,
we thus find a constant $C''$ such that
$$\|\bv\|_{H^2}^2+\|\bu\|_{H^1}^2\,\leq\,C'' (\|\bv\|_{L^2}^2+\|\bu\|_{L^2}^2)\,.$$
(Note that $\|\bv_x\|_{L^2}^2$ has been absorbed in the left-hand side.) Therefore, we have 
$$r (\|\bv\|_{L^2}^2+\|\bu\|_{L^2}^2)\leq CC''(\|\bv\|_{L^2}^2+\|\bu\|_{L^2}^2)\,,$$
which implies that $\bv$ and $\bu$ must be zero if $r> CC''$.

\paragraph{Conclusion.}
By the mean value theorem, a necessary condition for stability of the wave is that $D$ does not change sign on $(0,+\infty)$. 
Combining the low frequency expansion with the fact that $\Evans(r)$ is positive for large $r$, we obtain the necessary condition for co-periodic stability
$$
-\ \det (\bJ^{-1})\ \det \Hess \Action
\ \geq\ 0.
$$
which requires that $\det \Hess \Action$ be nonnegative. 
\end{proof}

We can show a similar result in the case $N=1$, which corresponds to (qKdV), or equivalently
$\bU=\bv, \;\Ham[\bU]=\en(\bv,\bv_x)\,,\;\bJ=1\,,\;\Impulse=\imp=\tfrac12 \bv^2$
in the abstract form \eqref{eq:absHamb}. In this case, the profile equation \eqref{eq:EL} reduces to the scalar Euler--Lagrange equation 
$$\Euler (\en
+\speed
\imp)[\ubv] \,=\,\lambda\,,$$
and the abbreviated action is defined as in \eqref{eq:defTheta} by just dropping the $\bu$-component and $\Ec$:
$$\action(\mu,\lambda,\speed):= 
\int_{0}^{\Upsilon} (\en(\ubv,\ubv_x)+\speed \imp(\ubv) - \lambda  \ubv +\mu)\,\dif x\,.$$
For a reason that will be clarified in Section \ref{ss:EK}, we have substituted $\Upsilon$ for $\Xi$ as the period of the wave.
The Evans function is also defined as before by
$$
\evans(\ev)\ :=\ \det (\monod(\Upsilon;\ev)-\monod(0;\ev)),
$$
where $\monod(\cdot;\ev)$ is now the fundamental solution of the third order ODE 
\begin{equation}
\label{Evans_eq-scalar}
\partial_x (\Hess(\en 
+\speed 
\imp)[\ubv] \bv) =\ev \ \bv
\end{equation}
viewed as a first-order system.

\begin{theorem}\label{thm:EvansKdV} Under assumption {\bf (H0)}, in the case $N=1$, the Evans function defined above has the following asymptotic behaviors
\begin{equation}
\evans(r)\stackrel{r\to0}{=} r^{3} \det  (\Hess\action)\,+\,o(r^{3})\,,\quad\evans(r)<0 \mbox{ for } r\gg 1\,.
\end{equation}
If  $\det  (\Hess\action)>0$, then the corresponding wave is spectrally unstable.
\end{theorem}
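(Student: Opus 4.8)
The plan is to mirror, in the simpler scalar setting, the proof of Theorem~\ref{thm:EvansEK}, exploiting the fact that the eigenvalue problem \eqref{Evans_eq-scalar} is now a single third-order ODE, hence equivalent to a first-order system of three equations governed by a $3\times3$ Evans determinant. First I would record the structure of the kernel of $\Lin=\partial_x\Linvar$ at $\ev=0$. Differentiating the scalar Euler--Lagrange equation $\Euler(\en+\speed\imp)[\uvol]=\lambda$ in $x$ and in the parameters gives $\Linvar\uvol_x=0$, $\Linvar\uvol_\mu=0$, $\Linvar\uvol_\lambda=1$ and $\Lin\uvol_\speed=-\uvol_x$, so that $(\uvol_x,\uvol_\mu,\uvol_\lambda)$ spans the kernel of $\Lin$ while $\uvol_\speed$ plays the role of a generalized eigenvector. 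Their independence follows as in the $N=2$ case: applying $\Linvar$ to a vanishing linear combination kills the contributions of $\uvol_x$ and $\uvol_\mu$ and forces the $\uvol_\lambda$-coefficient to vanish, and differentiating the energy constraint \eqref{eq:ELham} in the form \eqref{eq:ELhamc} yields the scalar key relation $\ucap(0)\,\uvol_{xx}(0)\,\uvol_\mu(0)=-1$, which rules out colinearity of $\uvol_x$ and $\uvol_\mu$ and normalizes the later computation of $\Delta$.

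For the \emph{low-frequency expansion} I would introduce three solutions $\bv^j(\cdot;\ev)$, $j=1,2,3$, of \eqref{Evans_eq-scalar} with initial data $(\uvol_x,\uvol_{xx},\uvol_{xxx})$, $(\uvol_\mu,\uvol_{\mu,x},\uvol_{\mu,xx})$ and $(\uvol_\lambda,\uvol_{\lambda,x},\uvol_{\lambda,xx})$ at $x=0$, write $\evans(\ev)=E(\ev)/\Delta$ with $E,\Delta$ the associated $3\times3$ determinants, and then perform three successive row reductions. The scalar analogue of \eqref{eq:direct}, namely $\ev\int_0^\Upsilon\bv\,\dif x=[\Linvar\bv]$ (here $\bB=1$), lets me trade the $[\bv_{xx}]$-row for an $\int_0^\Upsilon\bv^j$-row at the cost of a single factor $\ev$ --- note that, unlike the $N=2$ case where $\bB$ is $2\times2$ and two rows are replaced at once, here only one power of $\ev$ is gained at this step. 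The scalar version of \eqref{eq:crucial}, with $\nabla\imp(\uvol)=\uvol$, then replaces the $[\bv]$-row by an $\int_0^\Upsilon\uvol\,\bv^j$-row, gaining a second factor $\ev$; finally the Jordan relation $\Lin\uvol_\speed=-\uvol_x$, together with $(\bv^1)_{|\ev=0}=\uvol_x$, brings in the speed derivatives and a third factor $\ev$. Using $\action_\mu=\Upsilon$, the period identities $[\uvol_{a,x}]=-\Upsilon_a\,\uvol_{xx}(0)$ and the scalar analogues of the derivative formulas \eqref{eq:deraction}, the remaining $3\times3$ determinant collapses (up to a row permutation and the constant computed in $\Delta$) to $\det\Hess\action$ in the variables $(\speed,\mu,\lambda)$, yielding $\evans(\ev)\stackrel{\ev\to0}{=}\ev^{3}\det\Hess\action+\cO(\ev^{4})$.

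For the \emph{high-frequency} behaviour I would run the same homotopy argument with the simplified operator $\tLin:=\partial_x\Hess\tHam[\uvol]$, $\tHam[\bU]:=\tfrac12\bv_x^2$, for which \eqref{Evans_eq-scalar} becomes $-\bv_{xxx}=\ev\,\bv$, a constant-coefficient system whose matrix ${\mathbb A}(\ev)$ has eigenvalues equal to the cube roots of $-\ev$. For $\ev=r>0$ these are one negative real root $-r^{1/3}$ and a complex-conjugate pair with positive real part, so that
$$\tEvans(r)=\det(\ee^{\Upsilon{\mathbb A}(r)}-1)=\left(\ee^{-\Upsilon r^{1/3}}-1\right)\left|\ee^{\Upsilon r^{1/3}\ee^{i\pi/3}}-1\right|^2<0,$$
the first factor being negative and the modulus-squared positive; this is the odd-order counterpart of the positivity found for $N=2$. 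Deforming $\Lin$ to $\tLin$ through a family $\Lin_\theta$ and establishing, by a priori $H^1$/boundary estimates of exactly the same type as (but simpler than) those carried out for $N=2$, that there is a uniform $R>0$ beyond which no real $r$ is an eigenvalue of any $\Lin_\theta$, I conclude that $\evans(r)<0$ for $r\gg1$.

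It then only remains to combine the two expansions: if $\det\Hess\action>0$, then $\evans(r)>0$ for small $r>0$ while $\evans(r)<0$ for large $r$, so by the intermediate value theorem $\evans$ has a zero in $(0,+\infty)$, i.e. $\Lin$ has a positive real eigenvalue and the wave is spectrally unstable; equivalently, co-periodic spectral stability forces $\det\Hess\action\le0$. I expect the main obstacle to be essentially bookkeeping: correctly tracking the signs and the scalar normalizing constants $\ucap(0)$ and $\uvol_{xx}(0)$ through the three determinant reductions, and in particular checking that the odd order $3$ flips the relevant sign so that the instability criterion reads $\det\Hess\action>0$ rather than $\det\Hess\action<0$ as in the even case. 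The only genuinely analytic point, the uniform-in-$\theta$ high-frequency estimate, is a direct one-component specialization of the estimate already established in the proof of Theorem~\ref{thm:EvansEK}.
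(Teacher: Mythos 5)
Your proposal is correct and follows essentially the same route as the paper: the paper's own proof of Theorem~\ref{thm:EvansKdV} simply transposes the proof of Theorem~\ref{thm:EvansEK} to the scalar case ($\bJ=1$, no $\bu$-component, no $\lambda_2$-terms), tracking exactly the adaptations you spell out --- the kernel/Jordan structure and the three determinant reductions yielding the order $\ev^3$, the sign bookkeeping (positive $\det\bJ$ and a single surviving minus sign from $\int_0^\Upsilon \ubv\,\dif x=-\action_\lambda$), and the odd-order high-frequency computation $\tevans(r)=(\ee^{k_0}-1)\,|\ee^{k_1}-1|^2<0$ feeding the homotopy and intermediate value arguments. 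Your unfolding of the low-frequency row reductions and the one-component a priori estimate is precisely what the paper's ``copy-paste with attention to sign changes'' abbreviates.
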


For consistency, notice that here $\det(\bJ^{-1})=1$.

\begin{proof}
We can basically copy-paste computations from the proof of Theorem \ref{thm:EvansEK}, by taking $\bJ=1$, and dropping
the $\bu$-components, $\Ec$, and all terms in $\lambda_2$. We just have to pay attention to where the signs change.
A first, obvious one is that $\det \bJ$ is now positive. There is another change of sign in the computation of $E(\ev)$ near zero, because there is now only one row where we find a minus sign by writing $\int_{0}^{\Xi} \ubv\,\dif x=-\action_\lambda$. So these two changes of sign give the claimed asymptotic expansion at zero.
There is a third, and last change of sign in the computation of $\evans(r)$ for large $r$. Indeed, the ODE to solve is now
$$-\partial_x^3\bv = r \,\bv\,,
$$
which has the three wavenumbers $k_0=-\sqrt[3]{r}$, $k_1=\ee^{i\pi/3} \sqrt[3]{r}$, and $\overline{k_1}$ for $r>0$. Therefore,
$\tevans(r)=(\ee^{k_0}-1)\ |\ee^{k_1}-1|^2<0$.
\end{proof} 

\begin{remark}\label{r:Frederic}
\textup{
Our main implicit restriction --- even at the abstract level --- is that we only consider systems for which, by a suitable number of integrations, the original traveling-wave profile system may be converted in a planar Hamiltonian, reduced, profile equation. Otherwise one would expect, as in the well-studied --- and algebraically much simpler --- case of solitary waves, some orientation index to enter in formulas for stability indices. For solitary waves, the computation of the necessary orientation index from geometric invariants is still the object of intense research ; see for instance \cite{Chardard-Bridges} and references therein.}
\end{remark}

\begin{remark}\label{r:Mat}
\textup{
As said in the introduction, this result confirms earlier findings by Johnson \cite{Johnson} in the special case when $\cap$ is constant. Though our presentation for (qKdV) does not follow the one by Johnson for (gKdV), some of our steps do not differ significantly. By contrast, some others are fundamentally different, as required by the quasilinear nature of our problem. For instance, in the foregoing proof, the purpose of the homotopy argument and the auxiliary resolvent estimates is precisely to reduce computations to a semilinear case. When equations are already in semilinear form, those techniques are not needed, and a readily regular limit $\lambda\to\infty$ leads to a constant-coefficient problem. Likewise, the local well-posedness results invoked in Section \ref{s:ex}  for applying our abstract orbital stability to actual PDEs is dramatically improved for semilinear versions of those equations. These observations are instances of the usual rule of thumb that departures of quasilinear strategies from semilinear ones are only required when some high-frequency control is needed.}
\end{remark}

\begin{remark}\label{r:Krein}
\textup{
It is instructive to seek parallels of our results in the classical stability theory for steady states of finite-dimensional Hamiltonian systems of ordinary differential equations. Indeed, up to replacing Evans' functions with characteristic polynomials, the foregoing proofs echo the classical proof that steady states at which the Hessian of the Hamiltonian is nonsingular and has an odd number of negative directions are spectrally instable. We claim that the analogy goes further. On the one hand, it follows from our proof that the sign of $\det  (\Hess\Action)$ provides us with the parity of the number of eigenvalues of $\Lin$ on $(0,+\infty)$. On the other hand, as we shall see in the next section, the relevant Hessian there is the constrained Hessian $\Linvar_{|T_{\ubU}{\man}}$. Even though we do not endeavor to prove it in the present paper, we do expect that the parity of the negative signature of $\Linvar_{|T_{\ubU}{\man}}$ and of the number of eigenvalues of $\Lin$ on $(0,+\infty)$ coincide, so that the results of the current section could be thought of as a direct analogue of the finite-dimensional case, with  $\Linvar_{|T_{\ubU}{\man}}$ in place of the classical Hessian of the Hamiltonian. The deepest way to prove our claim regarding the agreement of those parities consists in examining the \emph{Krein signature} of eigenvalues. Indeed, building on the fact that eigenvectors of $\Lin$ are orthogonal for the quadratic form associated with $\Linvar$, one may expect to prove that the negative signature of $\Linvar_{|T_{\ubU}{\man}}$ is the number of eigenvalues $\ev$, with $\Real(\ev)\geq0$ and negative Krein signature, and our claim on parity would then follow from the fact that eigenvalues with $\Real(\ev)\geq0$ but $\Imag(\ev)\neq0$ come in pairs. See detailed discussions, precise statements and proofs of similar results in \cite{KapitulaKevrekidisSandstede,BronskiJohnsonKapitula,BronskiJohnsonKapitulaII} and \cite[Chapter~7]{KapitulaPromislow}.}
\end{remark}

\section{Co-periodic, orbital stability}\label{s:orb}

\subsection{Abstract setting}\label{ss:abs}

We still consider a Hamiltonian system of the form \eqref{eq:absHamb}, which we relabel here for the reader's convenience:
\begin{equation}\label{eq:absHamB}
\partial_t\bU =   \partial_x(\bJ \Euler \Ham[\bU])\,,
\end{equation}
with $\bU$ taking values in $\R^N$, $\bJ$ a nonsingular, symmetric $N\times N$ matrix, $\Ham=\Ham(\bU,\bU_x)$, 
and denote by $\Impulse$ the impulse --- or momentum --- defined by
$$\Impulse(\bU)=\frac12 \bU\cdot \bB^{-1} \bU\,.$$

Eq.~\eqref{eq:absHamB} is obviously a system of $N$ (local) conservation laws of order at most three in the spatial variable $x$.  
Of course we have in mind the more specific forms of $\bJ$ and $\Ham$ that are described in Section \ref{ss:setting}, and correspond to either (qKdV) in the case $N=1$, or to (EK) in the case $N=2$. In the latter case, the first conservation law is of order one, and the second one is of order three as regards the first dependent variable, and of order one for the second dependent variable. Section \ref{s:ex} is devoted to a detailed investigation of those `examples'. Here, we refrain from restricting to any specific form of $\bJ$ and $\Ham$, in order to emphasize the crucial ingredients in the proof of co-periodic, orbital stability. The reader is referred to Sections \ref{ss:qKdV} and \ref{ss:EK} for an application of our abstract result (Theorem \ref{thm:orb} below) to  respectively (qKdV) and (EK).

As far as smooth solutions of \eqref{eq:absHamB} are concerned, they satisfy at least two additional, local conservation laws. One is the conservation of the impulse $\Impulse$, Eq.~\eqref{eq:impulsecl}, and the other one is the conservation of the Hamiltonian $\Ham$, which explicitly reads
$$\partial_t\Ham[\bU] = \partial_x(\tfrac12 \Euler\Ham[\bU]\cdot \bB\, \Euler\Ham[\bU]  \, + \,\nabla_{\bU_x}\Ham[\bU] \cdot \bB\, \partial_x \Euler\Ham[\bU])\,.$$
All these local conservation laws have the most important consequence that, along smooth periodic solutions to \eqref{eq:absHamB}, we have
\begin{equation}
\label{eq:globalCL}
\frac{\dif }{\dif t} \int_{0}^\Xi \bU \dif x = 0\,,\quad \frac{\dif }{\dif t} \int_{0}^\Xi \Impulse(\bU) \dif x = 0\,,\quad \frac{\dif }{\dif t} \int_{0}^\Xi \Ham[\bU] \dif x = 0\,,
\end{equation}
if $\Xi$ denotes the period of those solutions. For a given $\Xi$, we call \emph{energy space}, and denote by $\HH_\Xi$ a dense subspace of $(L^2(\R/\Xi\Z))^N$ on which the functional
$$\bU\mapsto \int_{0}^\Xi \Ham[\bU] \dif x$$
is (at least) ${\class}^2$. Behind this loose definition, we merely have in mind $\HH_\Xi=H^1(\R/\Xi\Z)$ for (qKdV), and 
$\HH_\Xi=H^1(\R/\Xi\Z)\times L^2(\R/\Xi\Z)$ for (EK). Note that the linear functional
$$\bU\mapsto \int_{0}^\Xi \bU \dif x$$
is automatically ${\class}^\infty$ on $(L^2(\R/\Xi\Z))^N$, by the embedding $L^1(\R/\Xi\Z)\hookrightarrow L^2(\R/\Xi\Z)$,
and that the quadratic functional
$$\bU\mapsto \int_{0}^\Xi \Impulse(\bU) \dif x$$
is also ${\class}^\infty$ on $(L^2(\R/\Xi\Z))^N$ by the Cauchy--Schwarz inequality. Therefore, whatever the constants 
$(\mu,\blambda,\speed)\in \R^{N+2}$, the functional
$$\bU\mapsto\funct[\bU;\mu,\blambda,\speed]\,:=\, \int_{0}^{\Xi} (\Ham(\bU,\bU_x)+\speed\Impulse(\bU)-\blambda\cdot \bU+\mu)\,\dif x$$
is ${\class}^2$ on the {energy space} $\HH_\Xi$. Let us point out that these functionals --- in particular $\funct[\bU;\mu,\blambda,\speed]$ --- are invariant under the action of spatial translations $\bU\mapsto \bU(\cdot+s)$ on $\Xi$-periodic functions $\bU$, so that they are indeed well defined for $\bU$ viewed as a function on  the circle $\R/\Xi\Z$. Furthermore, 
\eqref{eq:globalCL} implies that $\funct[\cdot;\mu,\blambda,\speed]$ is preserved along smooth, $\Xi$-periodic solutions of 
\eqref{eq:absHamB}.

Our main assumptions are the following.

\begin{itemize}
\item[{\bf (H0)}] There exists an open set $\Omega$ of $\R^{N+2}$ and a family of periodic traveling profiles $\ubU$ parametrized by   $(\mu,\blambda,\speed)\in \Omega$ such that 
the profile equations in \eqref{eq:EL}-\eqref{eq:ELham}, 
$$\Euler (\Ham+\speed\Impulse)[\ubU] \,=\,\blambda\,,\; \Legendre\Ham[\ubU]- \speed \Impulse(\ubU)+\blambda\cdot\ubU \,=\,\mu\,,$$
hold true, and the mapping $(\mu,\blambda,\speed)\in \Omega \mapsto (\ubU,\Xi)\in {\class}^2_b(\R)\times \R$ is continuously differentiable, where $\Xi$ denotes the period of the profile $\ubU$.
\item[{\bf (H1)}] The derivative of the period $\Xi$ with respect to the energy level $\mu$, denoted by $\Xi_\mu$, does not vanish on $\Omega$, and the abbreviated action integral
$$\Action(\mu,\blambda,\speed)
=\,\int_{0}^{\Xi} (\Ham(\ubU,\ubU_x)+\speed \Impulse(\ubU) - \blambda\cdot \ubU +\mu)\,\dif x$$
is such that the matrix 
$$\bC:=
\frac{\check\nabla \Action_{\mu}\otimes \check\nabla \Action_{\mu}}{\Action_{\mu\mu}}\,-\,
\check\nabla^2 \Action\,,$$
is nonsingular for $(\mu,\blambda,\speed)\in \Omega$, with $\check\nabla=\Big(\begin{array}{c}\nabla_{\blambda} \\ \partial_\speed\end{array}\Big)$.
\item[{\bf (H2)}] For all $\Xi$ in the set of periods achieved on $\Omega$, there exists a dense subspace $\HH_\Xi$ of $(L^2(\R/\Xi\Z))^N$, and an open subset of $\HH_\Xi$ containing all the profiles $\ubU$ on which the functional
$$\bU\mapsto \int_{0}^\Xi \Ham[\bU] \dif x$$
is ${\class}^2$, and if we denote by $\langle \cdot,\cdot\rangle$ the dual product between $\HH_\Xi'$ and $\HH_\Xi$, there exists a positive number $\alpha$ such that
$$\|\bU\|_{\HH_\Xi}^2=\langle \Hess\Ham[\ubU] \bU, \bU\rangle\,+\,\alpha\,\|\bU\|^2_{L^2}$$ defines an equivalent norm on $\HH_\Xi$, uniformly in the parameters defining the  $\Xi$-periodic profile $\ubU$.
\item[{\bf (H3)}] For all $\Xi$ in the set of periods achieved on $\Omega$, there exists a dense subspace $\W_\Xi$ of the energy space $\HH_\Xi$ on which the Cauchy problem for \eqref{eq:absHamb} is locally well-posed.
\end{itemize}

These assumptions are discussed for (qKdV) and (EK) in Section \ref{s:ex}.

\subsection{An index for co-periodic, orbital stability}\label{ss:indexorb}

\begin{theorem}\label{thm:orb}
Under the assumptions {\bf (H0)}-{\bf (H1)}-{\bf (H2)}-{\bf (H3)}, for all $(\mu,\blambda,\speed)\in \Omega$ such that
\begin{itemize}
\item the negative signature of $\bC$ equals the one of the operator $\Linvar=\Hess(\Ham+\speed \Impulse)[\ubU]$,
\item the kernel of $\Linvar$ is spanned by $\ubU_x$,
\end{itemize}
the periodic wave of profile $\ubU$ is conditionally, orbitally stable in the following sense.

\noindent
For all $\varepsilon>0$, there exists $\eta>0$ so that, for all 
$\bU_0\in \W_\Xi$ such that $\|\bU_0-\ubU\|_{\HH_\Xi}\leq \eta$,
if $T$ is the maximal time of existence of the solution $\bU:t\mapsto \bU(\cdot,t)\in \W_\Xi$ to \eqref{eq:absHamB} such that $\bU(0)=\bU_0$, then
$$\inf_{s\in \R}\|\bU(\cdot,t)-\ubU(\cdot+s)\|_{\HH_\Xi}\leq \varepsilon\,,\quad \forall t\in [0,T)\,.$$
\end{theorem}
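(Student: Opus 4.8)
The plan is to run the Grillakis--Shatah--Strauss programme in its \emph{direct} form, following \cite{Johnson,GallayHaragus}, with $\funct[\cdot;\mu,\blambda,\speed]$ serving as a Lyapunov functional. The decisive preliminary point is that, by the global conservation laws \eqref{eq:globalCL}, the three integrals $\int_0^\Xi\bU\,\dif x$, $\int_0^\Xi\Impulse(\bU)\,\dif x$ and $\int_0^\Xi\Ham[\bU]\,\dif x$ are constant along any solution in $\W_\Xi$, so that $t\mapsto\funct[\bU(\cdot,t);\mu,\blambda,\speed]$ is constant. Moreover {\bf (H0)} says that $\ubU$ solves the Euler--Lagrange equation \eqref{eq:EL}, which is exactly the vanishing of the first variation of $\funct$ at $\ubU$; hence the second-order expansion in $\HH_\Xi$ reads
$$\funct[\ubU+\bw;\mu,\blambda,\speed]-\funct[\ubU;\mu,\blambda,\speed]\,=\,\tfrac12\langle\Linvar\bw,\bw\rangle\,+\,o(\|\bw\|_{\HH_\Xi}^2)\,,$$
its quadratic part being the form attached to the Hessian operator $\Linvar$. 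Everything then comes down to bounding this form from below along the constrained flow.

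First I would turn the signature data into coercivity. Inserting the standing hypothesis $\negsign(\Linvar)=\negsign(\bC)$ into the algebraic relation $\negsign(\Linvar)=\negsign(\Linvar_{|T_{\ubU}\man})+\negsign(\bC)$ from \cite{BNR-GDR-AEDP} gives $\negsign(\Linvar_{|T_{\ubU}\man})=0$, so $\Linvar$ is nonnegative on the tangent space $T_{\ubU}\man$ to the constraint manifold. The translation mode $\ubU_x$ lies in $T_{\ubU}\man$, since the constraints defining $\man$ are translation invariant, and it spans $\ker\Linvar$ by assumption. The G\aa rding-type inequality $\langle\Linvar\bU,\bU\rangle\geq c_1\|\bU\|_{\HH_\Xi}^2-C\|\bU\|_{L^2}^2$ encoded in {\bf (H2)}, together with the fact that the essential spectrum of $\Linvar$ sits in the open positive reals, makes $0$ an isolated eigenvalue of finite multiplicity; a standard positivity (coercivity) lemma in the spirit of \cite{GrillakisShatahStrauss} then upgrades the nonnegativity on $T_{\ubU}\man$ into the uniform bound
$$\langle\Linvar\bw,\bw\rangle\,\geq\,c\,\|\bw\|_{\HH_\Xi}^2\,,\qquad \bw\in T_{\ubU}\man\,,\quad\langle\bw,\ubU_x\rangle=0\,,$$
with $c>0$ independent of the parameters over $\Omega$.

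Next I would neutralise the two obstructions preventing a direct application of this bound to $\bw=\bU(\cdot,t)-\ubU$: the translation direction and the constraint directions. Translation is removed by the standard modulation: for $\bU$ in a tube around the orbit $\{\ubU(\cdot+s)\}_{s\in\R}$, the implicit function theorem selects a phase $s=s(\bU)$ making the remainder $\bw:=\bU-\ubU(\cdot+s)$ orthogonal to $\ubU_x(\cdot+s)$. For the constraints, observe that the conserved integrals $\int_0^\Xi\bU_0\,\dif x$ and $\int_0^\Xi\Impulse(\bU_0)\,\dif x$ of the datum differ from those of $\ubU$ by $O(\eta)$; working at the fixed period $\Xi$, the hypothesis $\Action_{\mu\mu}\neq0$ lets me solve $\Xi=\Action_\mu$ for $\mu$ as a function of $(\blambda,\speed)$, and \eqref{eq:deraction} then shows that the differential of the reduced map $(\blambda,\speed)\mapsto(\int_0^\Xi\ubU\,\dif x,\int_0^\Xi\Impulse(\ubU)\,\dif x)$ is precisely $-\bC$. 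Hence {\bf (H1)} makes this map a local diffeomorphism, and I may choose adjusted parameters $(\mu^\star,\blambda^\star,\speed^\star)\in\Omega$, of the same period $\Xi$, whose profile $\ubU^\star$ is $O(\eta)$-close to $\ubU$ and carries the \emph{same} conserved integrals as $\bU_0$. Replacing $\ubU$ by $\ubU^\star$ places the modulated remainder in $T_{\ubU^\star}\man$, where the coercive bound of the previous step still applies, its hypotheses being open conditions that persist at $\ubU^\star$.

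Finally the conclusion follows by a soft continuation argument. Since $\funct[\cdot;\mu^\star,\blambda^\star,\speed^\star]$ is conserved and its expansion at $\ubU^\star$ has no linear term, as long as $\bw(t)$ remains small enough for the modulation and the expansion to be valid one obtains
$$\tfrac{c}{2}\,\|\bw(t)\|_{\HH_\Xi}^2\,\leq\,\funct[\bU_0;\mu^\star,\blambda^\star,\speed^\star]-\funct[\ubU^\star;\mu^\star,\blambda^\star,\speed^\star]\,=\,O(\eta^2)\,,$$
so that $\|\bw(t)\|_{\HH_\Xi}=O(\eta)$; a standard bootstrap shows this smallness is maintained on all of $[0,T)$, and the triangle inequality together with $\|\ubU^\star-\ubU\|_{\HH_\Xi}=O(\eta)$ then yields $\inf_{s\in\R}\|\bU(\cdot,t)-\ubU(\cdot+s)\|_{\HH_\Xi}\leq\varepsilon$ once $\eta$ is small. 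I expect the main obstacle to be the coercivity upgrade of the second step: passing from the bare nonnegativity of $\Linvar$ on the codimension-$(N+1)$ space $T_{\ubU}\man$, modulo its one-dimensional kernel, to a genuine spectral gap measured in the energy norm, and doing so \emph{uniformly} in the wave parameters. This is precisely where {\bf (H2)} is indispensable, and where the interplay among the $N+1$ constraint directions, the translation mode, and the modulation phase must be organised with care.
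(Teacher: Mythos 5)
Your proposal is correct and follows essentially the same route as the paper's own proof: the same signature identity $\negsign(\Linvar)=\negsign(\Linvar_{|T_{\ubU}{\man}})+\negsign(\bC)$ from \cite{BNR-GDR-AEDP}, the same coercivity upgrade from $L^2$ to $\HH_\Xi$ via {\bf (H2)} (uniform in the parameters), the same translation-modulation lemma, the same use of the inverse function theorem for $(\blambda,\speed)\mapsto(\int_0^\Xi\ubU\,\dif x,\int_0^\Xi\Impulse(\ubU)\,\dif x)$ with Jacobian $-\bC$ to match the conserved integrals of the datum, and the same Lyapunov continuation argument. The only differences are cosmetic --- you adjust the wave parameters before running the Lyapunov argument, whereas the paper first proves stability within the constraint manifold and then exits it, and your appeal to the ``essential spectrum'' is vacuous since $\Linvar$ has discrete spectrum on $\R/\Xi\Z$ --- neither of which affects correctness.
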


In practice, for the cases discussed in Section \ref{s:ex}, the fact that the kernel of $\Linvar$ is spanned by $\ubU_x$ is a consequence of the assumption in {\bf (H1)} that $\Xi_\mu$ is nonzero. This is nevertheless a crucial point in the proof of Theorem \ref{thm:orb}, that is why we state it explicitly. Otherwise, the most important and nontrivial assumption is 
$$\negsign(\Linvar)-\negsign(\bC)=0\,.$$
In this respect, the integer $\negsign(\Linvar)-\negsign(\bC)$ may be called an \emph{orbital stability index}. It is investigated in more details in Section \ref{s:ex}, where we show in particular its connection with the negative signature of $\Hess\Action$ itself, through the remarkable formula
\begin{equation}\label{eq:indicevssignAction}
\negsign(\Linvar)-\negsign(\bC)=\negsign(\Hess\Action)-N\,.
\end{equation}

\begin{remark}
\textup{
At the abstract level of Theorem \ref{thm:orb}, it is not obvious that the stability criterion $\negsign(\Linvar)=\negsign(\bC)$ contains the necessary condition $(-1)^{N}\det\Hess\Action\geq 0$ derived in Section \ref{s:spectral} (Theorems \ref{thm:EvansEK} and \ref{thm:EvansKdV}) for spectral stability. However, if we admit \eqref{eq:indicevssignAction} for a while, we readily see that a null orbital stability index means that $\negsign(\Hess\Action)=N$, which implies that
$(-1)^N \,\det \Hess\Action\geq 0$. For an alternative connection, see Remarks~\ref{r:Mat} \& \ref{r:sw}.}
\end{remark}

\begin{remark}
\textup{
In special cases, genuine orbital stability can be inferred from conditional orbital stability. This was done\footnote{In the sense that it is proved there that solutions starting sufficiently close to the background wave are global in time. However the result is still conditional in the sense that $\bU_0$ is required to have higher regularity --- $\bU_0\in \W_\Xi$ ---  than afforded by the energy norm $\|\,\cdot\,\|_{\HH_\Xi}$.} for example by Bona and Sachs \cite[Theorem 4]{BonaSachs} regarding the stability of solitary waves in {\rm (EKL)} with a constant $\cap$. Indeed, in this case {\rm (EKL)} is a \emph{semilinear} system of PDEs, and a bound on the low order derivatives of the energy space yields a bound on higher order derivatives, by differentiation of the PDEs and by commutator estimates for the lower order terms.}
\end{remark}

\begin{remark}\label{r:sw}
\textup{
Going on with our analogy with the finite-dimensional, ODE case initiated in Remark~\ref{r:Mat}, we observe that the above theorem essentially shows that, if the negative signature of $\Linvar_{|T_{\ubU}{\man}}$ is \emph{zero} (see Eq.~\eqref{eq:negsign} below) then the wave is nonlinearly stable, while the results of the previous section show that, if that negative signature is \emph{odd} then the wave is spectrally unstable. Even in the finite-dimensional case, this offers a genuine dichotomy\footnote{Up to considering also the opposite of the `natural' Hamiltonian, if needed.} only for \emph{planar} ODEs. For periodic waves, by contrast with what happens for the effectively lower-dimensional solitary waves or kinks, it turns out that we \emph{never} have a genuine dichotomy. Nevertheless, by transferring \emph{infinite-dimensional} conditions on $\Linvar_{|T_{\ubU}{\man}}$ to
\emph{finite-dimensional} ones on $\Hess\Action$, we have come up with the neat following criteria: 
\begin{itemize}
\item if $\negsign(\Hess\Action)-N$ is zero then the wave is nonlinearly stable (by Theorem \ref{thm:orb} plus \eqref{eq:indicevssignAction});
\item if $\negsign(\Hess\Action)-N$ is odd then the wave is spectrally unstable (by Theorems \ref{thm:EvansEK} \& \ref{thm:EvansKdV}).
\end{itemize}
}
\end{remark}

\begin{proof}[Proof of Theorem \ref{thm:orb}]
It heavily relies on \cite[Theorem 3]{BNR-GDR-AEDP}, in which we proved that
\begin{equation}\label{eq:negsign}
\negsign(\Linvar) = \negsign(\Linvar_{|T_{\ubU}{\man}}) \,+\,\negsign(\bC)\,,
\end{equation}
with
$$T_{\ubU}{\man}:= \{\bU\in (L^2(\R/\Xi\Z))^N\,;\;\textstyle\int_{0}^{\Xi} \bU\,\dif x=0\,,\;\int_{0}^{\Xi} \bU\cdot \nabla\Impulse(\ubU)\,\dif x =\,0\}\,,$$
and is a revisited --- expanded and more accurate --- version of the proof of Corollary 2 in \cite{BNR-GDR-AEDP}.
We proceed in the same spirit as in \cite[Theorem 7.3]{DebievreGenoudRotaNodari}.

\paragraph{Step 1.} Orbital stability within the constraint manifold
$${\man}\,=\,\{\bU\in \HH_\Xi\,;\;  \textstyle 
\int_{0}^{\Xi} \bU\,\dif x=\int_{0}^{\Xi} \ubU\, \dif x\,,\;\int_{0}^{\Xi}\Impulse(\bU)\,\dif x=\int_{0}^{\Xi}\Impulse(\ubU)\,\dif x \,\}\,.$$
The assumption $\negsign(\Linvar) = \negsign(\bC)$ implies $\negsign(\Linvar_{|T_{\ubU}{\man}})=0$ by the formula in \eqref{eq:negsign} recalled above.
Since the kernel of $\Linvar$ is spanned by $\ubU_x$, this altogether implies the existence of $C>0$ so that 
$$\langle\Linvar \bV,\bV\rangle\,\geq \,C\,
\|\bV\|^2_{L^2}$$
for all $\bV\in \HH_\Xi$ such that
\begin{equation}\label{eq:allconstraints}
\textstyle\int_{0}^{\Xi} \bV\,\dif x=0\,,\quad\int_{0}^{\Xi} \bV\cdot \nabla\Impulse(\ubU)\,\dif x =\,0\,,\quad\int_{0}^{\Xi} \bV\cdot \ubU_x\,\dif x=0\,.
\end{equation}
In addition, $C$ is bounded by below by a uniform positive constant when the parameters $(\blambda,\speed)$ vary in a (small) compact subset of $\Lambda$, the projection of $\Omega$ onto $\R^{N+1}$, and $\mu$ is implicitly defined as a function of $(\blambda,\speed)$ by the fixed period $\Xi$ (which is made possible by the assumption $\Xi_\mu\neq 0$ in {\bf (H1)}).
Therefore, by {\bf (H2)}, there exists another positive constant $\widetilde{C}$ such that
$$\langle\Linvar \bV,\bV\rangle\,\geq \,\widetilde{C}\,\|\bV\|_{\HH_\Xi}^2\,,$$
for all $\bV\in \HH_\Xi$ satisfying \eqref{eq:allconstraints}. Indeed, using the equivalent norm on $\HH_\Xi$ given in {\bf (H2)},
up to augmenting $\alpha$ in such a way that
$$- \speed \,\bV\cdot \bB^{-1} \bV \leq {\alpha} \,\bV\cdot\bV\,,\;\forall \bV\in\R^N\,,$$
(and $\speed$ possibly varying in a compact set of possible wave velocities), and recalling that
$\Linvar= \Hess \Ham[\ubU] +\speed \bB^{-1}$, 
we see that 
$$\|\bV\|_{\HH_\Xi}^2 =\langle\Linvar \bV,\bV\rangle\,-\,\speed\, \int_{0}^\Xi \bV\cdot \bB^{-1} \bV\,\dif x\,+\,
\alpha \int_{0}^{\Xi} \bV\cdot \bV\, \dif x\,\leq \,\left(1+\frac{2\alpha}{C}\right)\, \langle\Linvar \bV,\bV\rangle$$
for $\bV\in \HH_\Xi$ satisfying \eqref{eq:allconstraints}.

Now, by Taylor expansion we have 
$$\funct[\bU;\mu,\blambda,\speed]-\funct[\ubU;\mu,\blambda,\speed]\,=\,
\tfrac12\,\langle \Linvar (\bU-\ubU) , \bU-\ubU\rangle\,+\,
o(\|\bU-\ubU\|_{\HH_\Xi}^2)
$$
for $\bU\in \HH_\Xi$ close to $\ubU$.
In the expansion here above, the first order term has vanished because of the profile equation $\Euler (\Ham+\speed\Impulse)[\ubU] \,=\,\blambda$, which means that $\ubU$ is a critical point of the functional 
$\funct[\cdot;\mu,\blambda,\speed]$, and the second order term comes from the fact that the operator 
$\Linvar=\Hess(\Ham+\speed\Impulse)$ is precisely the second variational derivative of the functional 
$\funct[\cdot;\mu,\blambda,\speed]$. For those $\bU\in \HH_\Xi$ close to $\ubU$ that in addition belong to 
${\man}$, we have
$$\bU-\ubU\,=\,\bV+o(\|\bU-\ubU\|_{\HH_\Xi})\,,\; \bV\in T_{\ubU}{\man}\,,$$
which means that $\bV$ satisfies the constraints in \eqref{eq:allconstraints} except for the last one. A nowadays well-known trick to enforce this constraint is to use translation invariance and the implicit function theorem to prove the following.
\begin{lemma}\label{lem:transl}
 For all $\epsilon>0$ we define
$$\UU_\epsilon\,=\,\{\bU\in \HH_\Xi\,;\;\inf_{s\in \R} \| \bU-\ubU(\cdot+s)\|_{\HH_\Xi}\,\leq\,\epsilon\}\,.$$
There exists $\epsilon_0>0$ and a ${\class}^1$ function $\tau: \UU_{\epsilon_0}\to \R$ such that
for all $\bU\in \UU_{\epsilon_0}$, 
$$\int_{0}^{\Xi} (\bU(x+ \tau(\bU))-\ubU(x))\cdot \ubU_x(x)\,\dif x=0\,,\quad \|\bU(\cdot+ \tau(\bU))-\ubU\|_{\HH_\Xi}\leq\epsilon_0\,,$$
and
$$ \|\bU(\cdot+ \tau(\bU))-\ubU\|_{\HH_\Xi} \rightarrow 0\quad\mbox{when}\quad 
\inf_{s\in \R} \| \bU-\ubU(\cdot+s)\|_{\HH_\Xi} \rightarrow 0\,.$$
\end{lemma}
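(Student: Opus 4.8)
The plan is to produce $\tau$ by applying the implicit function theorem to a single scalar equation, after first rewriting the orthogonality relation so that all the $\tau$-dependence is carried by the smooth profile $\ubU$ rather than by the rough argument $\bU$. Concretely, I would set
$$G(\bU,\tau) := \int_0^\Xi (\bU(x+\tau)-\ubU(x))\cdot\ubU_x(x)\,\dif x,$$
and observe that the substitution $y=x+\tau$, the $\Xi$-periodicity of both $\bU$ and $\ubU_x$, and the identity $\int_0^\Xi \ubU\cdot\ubU_x\,\dif x=\tfrac12\int_0^\Xi \partial_x(\ubU\cdot\ubU)\,\dif x=0$ together recast it as $G(\bU,\tau)=\int_0^\Xi \bU(y)\cdot\ubU_x(y-\tau)\,\dif y$. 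In this form $G$ is affine (hence smooth) in $\bU$ as a bounded functional on $L^2(\R/\Xi\Z)$, and $\class^1$ in $\tau$ because $\ubU\in\class^2$; through the continuous embedding $\HH_\Xi\hookrightarrow L^2(\R/\Xi\Z)$ it is therefore $\class^1$ on $\HH_\Xi\times\R$.

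Next I would check the nondegeneracy. We have $G(\ubU,0)=0$ and, differentiating the rewritten form,
$$\partial_\tau G(\ubU,0)=-\int_0^\Xi \ubU(y)\cdot\ubU_{xx}(y)\,\dif y=\int_0^\Xi \ubU_x\cdot\ubU_x\,\dif y=\|\ubU_x\|_{L^2}^2>0,$$
the strict positivity holding precisely because $\ubU$ is a genuine, nonconstant periodic wave, so that $\ubU_x$, which spans $\ker\Linvar$ under the hypotheses of Theorem~\ref{thm:orb}, does not vanish identically. The implicit function theorem then supplies $\delta>0$ and a $\class^1$ map $\bU\mapsto\tau(\bU)$ on $\{\|\bU-\ubU\|_{\HH_\Xi}<\delta\}$ with $\tau(\ubU)=0$ and $G(\bU,\tau(\bU))=0$; after shrinking $\delta$ one also gets $\|\bU(\cdot+\tau(\bU))-\ubU\|_{\HH_\Xi}\le\epsilon_0$.

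The third step is to propagate this purely local construction around the whole orbit by translation equivariance. For $\bU\in\UU_{\epsilon_0}$ with $\epsilon_0$ small I would pick $s$ with $\|\bU(\cdot-s)-\ubU\|_{\HH_\Xi}<\delta$ (possible since $\|\cdot\|_{\HH_\Xi}$ is translation invariant, whence $\|\bU(\cdot-s)-\ubU\|_{\HH_\Xi}=\|\bU-\ubU(\cdot+s)\|_{\HH_\Xi}$), and define $\tau(\bU):=\tau(\bU(\cdot-s))-s$. Local uniqueness in the implicit function theorem, together with the fact that for $\epsilon_0$ small the curve $s\mapsto\ubU(\cdot+s)$ is embedded in $\HH_\Xi$ (so $s$ is pinned down modulo $\Xi$), makes this independent, modulo $\Xi$, of the admissible choice of $s$. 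Since only the translate $\bU(\cdot+\tau(\bU))$ ever enters the statement, $\tau$ may be read as $\R/\Xi\Z$-valued, which is harmless; the one genuinely global feature is the monodromy $\tau\mapsto\tau-\Xi$ incurred on going once around the orbit. Both the orthogonality relation and the bound $\|\bU(\cdot+\tau(\bU))-\ubU\|_{\HH_\Xi}\le\epsilon_0$ are inherited from the local statement.

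Finally, for the limiting property: if $\inf_{s}\|\bU-\ubU(\cdot+s)\|_{\HH_\Xi}\to0$ then the selected $W:=\bU(\cdot-s)\to\ubU$ in $\HH_\Xi$, whence $\tau(W)\to\tau(\ubU)=0$ by continuity, and $\bU(\cdot+\tau(\bU))=W(\cdot+\tau(W))\to\ubU$ by joint continuity of $(W,\sigma)\mapsto W(\cdot+\sigma)$, itself a consequence of the translation group being isometric and strongly continuous on $\HH_\Xi$. I expect the main obstacle to lie not in the implicit function theorem, which is routine once the change of variables has isolated the smooth $\tau$-dependence, but in the careful globalization of the third step: one must choose $\epsilon_0$ small enough that the tubular neighborhood is embedded, and keep track throughout of the fact that $\tau$ is only well defined up to an integer multiple of the period.
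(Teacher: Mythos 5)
Your proof is correct and follows exactly the route the paper intends for this lemma: the paper gives no detailed argument, saying only that the result follows from translation invariance and the implicit function theorem, which is precisely what you carry out, including the key device of shifting the translation parameter onto the smooth profile $\ubU$ (so that $G(\bU,\tau)=\int_0^\Xi \bU(y)\cdot\ubU_x(y-\tau)\,\dif y$ is genuinely $\class^1$ despite $\bU$ being merely of class $\HH_\Xi$) and the nondegeneracy $\partial_\tau G(\ubU,0)=\|\ubU_x\|_{L^2}^2>0$. Your explicit treatment of the monodromy --- that the globalized $\tau$ is only well defined modulo $\Xi$, so the advertised global $\class^1$ map into $\R$ should really be read as $\R/\Xi\Z$-valued or as a local branch --- is a careful point that the paper's statement glosses over, and it is harmless for the stability argument since only the translate $\bU(\cdot+\tau(\bU))$ is ever used.
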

As a consequence, for $\bU\in \UU_{\epsilon_0}\bigcap {\man}$, we have by the invariance of $\funct[\cdot;\mu,\blambda,\speed]$ under spatial translations,
$$\funct[\bU;\mu,\blambda,\speed]-\funct[\ubU;\mu,\blambda,\speed]\,\begin{array}[t]{l}=\,\funct[\widetilde{\bU};\mu,\blambda,\speed]-\funct[{\ubU};\mu,\blambda,\speed]\\ [5pt]
=\,\tfrac12\,\langle \Linvar \widetilde{\bV},\widetilde{\bV}\rangle\,\dif x \,+\,
o(\|\widetilde{\bU}-{\ubU}\|_{\HH_\Xi}^2)\,,\end{array}
$$
where we have denoted by $\widetilde{\bU}$ the translate $\bU(\cdot+\tau(\bU))$, so that
$$\widetilde{\bU}-{\ubU}\,=\,\widetilde{\bV}+o(\|\widetilde{\bU}-{\ubU}\|_{\HH_\Xi})\,,\; \widetilde{\bV}\in T_{\ubU}{\man}\,,\;\textstyle
\int_{0}^{\Xi} \widetilde{\bV}\cdot \ubU_x\,\dif x=0\,.$$
Therefore, we have the lower bound
$$\funct[\bU;\mu,\blambda,\speed]-\funct[\ubU;\mu,\blambda,\speed]\,\begin{array}[t]{l}\geq\,\frac{\widetilde{C}}{2}\,\|\widetilde{\bV}\|^2_{\HH_\Xi}\,+\,o(\|\widetilde{\bU}-\ubU\|_{\HH_\Xi}^2)\\ [5pt]
\geq \frac{\widetilde{C}}{4}\,\|\widetilde{\bU}-\ubU\|^2_{\HH_\Xi}\,,\end{array}$$
for $\bU\in \UU_{\epsilon_1}\bigcap {\man}$ and some $\epsilon_1\in (0,\epsilon_0]$.

This is all what we need to use $\funct[\cdot;\mu,\blambda,\speed]$ as a Lyapunov function to show orbital stability within 
${\man}$. Even though this is a classical reasoning, we give it for completeness.
For all $\varepsilon\in (0,\epsilon_1]$, for all $\bU\in \UU_{\epsilon_1}\bigcap {\man}$ such that
$\|\widetilde{\bU}-\ubU\|_{\HH_\Xi}=\varepsilon$, we have
$$\funct[\bU;\mu,\blambda,\speed]\geq \funct[\ubU;\mu,\blambda,\speed]+ \frac{\widetilde{C}}{4}\,\varepsilon^2=:m(\varepsilon)\,>\,\funct[\ubU;\mu,\blambda,\speed]\,.$$
By continuity of $\funct[\cdot;\mu,\blambda,\speed]$ at $\ubU$ and its invariance under spatial translations, there exists $\eta\in (0,\varepsilon]$ such that 
$$\funct[\bU;\mu,\blambda,\speed]<m(\varepsilon)$$
for all $\bU\in \UU_{\eta}$.
Therefore, if we take $\bU_0\in \W_\Xi \bigcap\UU_{\eta}\bigcap {\man}$, and denote by $\bU(t)$ the solution at time $t\in [0,T)$ of \eqref{eq:absHamB} such that $\bU(0)=\bU_0$,
we have 
$$\funct[\bU(t);\mu,\blambda,\speed]=\funct[\bU_0;\mu,\blambda,\speed]<m(\varepsilon)$$
for all $t\in [0,T)$. Since $\bU(t)$ belongs to ${\man}$ by the conservation of $\int_{0}^\Xi \bU\dif x$ and 
$\int_{0}^\Xi \Impulse(\bU)\dif x$ in \eqref{eq:globalCL}, the definition of $m(\varepsilon)$ and the mean value theorem 
prevent $\|\widetilde{\bU}(t)-\ubU\|_{\HH_\Xi}$ from growing larger than $\varepsilon$. Indeed, if this happened, 
there should exist a time $t$ such that $\|\widetilde{\bU}(t)-\ubU\|_{\HH_\Xi}=\varepsilon$, hence in particular 
$\bU(t)\in \UU_{\epsilon_1}\bigcap {\man}$, and therefore
$$\funct[\bU(t);\mu,\blambda,\speed]\geq m(\varepsilon)$$
whereas we know that
$$\funct[\bU(t);\mu,\blambda,\speed]<m(\varepsilon)\,.$$

This proves that $\bU(t)$ belongs to $\UU_\varepsilon$ whenever $\bU(0)\in \UU_{\eta}\bigcap {\man}$.
In addition, this $\eta$ --- as well as the $\epsilon_0$, $\epsilon_1$ invoked in the derivation of $\eta$ --- can be chosen to be the same for parameters $(\mu,\blambda,\speed)$ varying in a compact subset of $\Omega$ on which the period $\Xi$ remains constant, because $\funct$ is uniformly continuous on compact sets of $\HH_\Xi \times \Omega$, the profile $\ubU$ depends continuously on the parameters $(\mu,\blambda,\speed)$, as well as the lower bound $\widetilde{C}$, as already mentioned. This uniformity will be used in a crucial way in the final argument.

\paragraph{Step 2.} Find a way out of the constraint manifold.

For fixed $(\umu,\ublambda,\uspeed)\in \Omega$, let us denote by $\uubU$ the associated profile, of period $\Xi$,
$$\uUU_\epsilon\,=\,\{\bU\in \HH_\Xi\,;\;\inf_{s\in \R} \| \bU-\uubU(\cdot+s)\|_{\HH_\Xi}\,\leq\,\epsilon\}\,$$
for all $\epsilon>0$,
We still denote  by $\ubU$ profiles associated with `generic' parameters $(\mu,\blambda,\speed)\in\Omega$. There exists a neighbordhood ${\mathbb B}_\epsilon$ of $(\ublambda,\uspeed)$ in $\Lambda$ such that
for all $(\blambda,\speed)\in {\mathbb B}_\epsilon$ and $\mu=\mu(\blambda,\speed)$  --- prescribed  by the fixed period $\Xi$ ---, the corresponding profile $\ubU$ belongs to $\uUU_\epsilon$.

Next, we claim that for $(\bM,P)$ close enough to $(\int_{0}^\Xi \uubU\,\dif x,\int_{0}^\Xi \Impulse(\uubU)\,\dif x)$,
say
$$\textstyle\left\|\bM \,-\,\int_{0}^\Xi \uubU\,\dif x \right\|_{\R^N}\leq \delta(\epsilon)\,,\qquad\left|P \,-\,\int_{0}^\Xi \Impulse(\uubU)\,\dif x \right|\leq \delta(\epsilon)$$
for some positive $\delta(\epsilon)$, there exists $(\blambda,\speed)\in {\mathbb B}_\epsilon$
such that the traveling profile $\ubU$ associated with $(\mu(\blambda,\speed),\blambda,\speed)$ satisfies
$$\textstyle\int_{0}^\Xi \ubU\,\dif x\,=\,\bM\,,\qquad\int_{0}^\Xi \Impulse(\ubU)\,\dif x\,=\,P\,.$$
This follows from the inverse mapping theorem. As a matter of fact, the Jacobian matrix of the mapping
$$\textstyle(\blambda,\speed)\mapsto (\int_{0}^\Xi \ubU\,\dif x,\int_{0}^\Xi \Impulse(\ubU)\,\dif x)$$
turns out to be $-\bC$. This is precisely the reason why we have called $\bC$ the \emph{constraint} matrix. For more details, see \cite[Theorem~3]{BNR-GDR-AEDP} where we use in a crucial way relations in \eqref{eq:deraction},
$$\textstyle\Action_\mu=\Xi\,,\;\nabla_{\blambda} \Action = -\int_{0}^{\Xi} \ubU\,\dif x\,,\;\Action_\speed=\int_{0}^{\Xi} \Impulse(\ubU)\,\dif x\,.$$
Since by {\bf (H1)} $\bC$ is assumed to be nonsingular, the inverse mapping theorem does apply and this proves our claim.

\paragraph{Conclusion.} We now have all the ingredients to complete the proof of Theorem \ref{thm:orb}. Let us take $\varepsilon \in (0,\epsilon_1]$, where $\epsilon_1$ is introduced as in Step 1 for $(\umu,\ublambda,\uspeed)$ \emph{and its neighbors}. This is where we use uniformity: by Step 1, there exists $\eta\in (0,\varepsilon]$ so that, for all $(\blambda,\speed)\in {\mathbb B}_{\epsilon_1}$, if $\bU_0\in \W_\Xi$ is such that
\begin{equation}\label{eq:small+constraints}
\inf_{s\in \R} \|\bU_0-\ubU(\cdot+s)\|_{\HH_\Xi}\leq \eta\,,\;\textstyle\int_{0}^\Xi \bU_0\,\dif x\,=\,\int_{0}^\Xi \ubU\,\dif x\,,\,\int_{0}^\Xi \Impulse(\bU_0)\,\dif x\,=\,\int_{0}^\Xi \Impulse(\ubU)\,\dif x\,,
\end{equation}
with $\ubU$ being the traveling profile associated with $(\mu(\blambda,\speed),\blambda,\speed)$, then
$\bU(t)\in \UU_{\varepsilon/2}$ for all $t\in [0,T)$, where $T$ is the maximal time of existence of the solution $\bU(t)$ of \eqref{eq:absHamB}
such that $\bU(0)=\bU_0$.

By continuity of the mapping
$$\textstyle\bU \in \HH_\Xi\mapsto (\int_{0}^\Xi \bU\,\dif x,\int_{0}^\Xi \Impulse(\bU)\,\dif x)$$ 
there exists $\zeta \in (0,\eta/2]$ such that $\|\bU_0-\uubU\|_{\HH_\Xi}\leq \zeta$ implies
$$\textstyle\left\|\int_{0}^\Xi \bU_0\,\dif x \,-\,\int_{0}^\Xi \uubU\,\dif x \right\|_{\R^N}\leq \delta(\eta/2)\,,\;\left|\int_{0}^\Xi \Impulse(\bU_0)\,\dif x \,-\,\int_{0}^\Xi \Impulse(\uubU)\,\dif x \right|\leq \delta(\eta/2)\,,$$
where $\delta$ is the function involved in Step 2.
This implies by Step 2 the existence of $(\blambda,\speed)\in {\mathbb B}_{\eta/2}$, its associated profile being $\ubU\in \uUU_{\eta/2}$ such that
$$\textstyle\int_{0}^\Xi \ubU\,\dif x\,=\,\int_{0}^\Xi \bU_0\,\dif x\,,\qquad\int_{0}^\Xi \Impulse(\ubU)\,\dif x\,=\,\int_{0}^\Xi \Impulse(\bU_0)\,\dif x\,.$$

Therefore, if $\bU_0\in \W_\Xi$ is such that $\|\bU_0-\uubU\|_{\HH_\Xi}\leq \zeta$, there exists a profile $\ubU$ such that we have \eqref{eq:small+constraints} --- by using the triangle inequality to achieve the first condition. By Step 1 this implies that $\bU(t)\in \UU_{\varepsilon/2}$ for all $t\in [0,T)$, and thus $\bU(t)\in \uUU_{\varepsilon}$ by the triangle inequality again.
This proves the orbital stability of the traveling wave $(x,t)\mapsto \uubU(x-\speed t)$.
\end{proof}

Before concentrating separately on the cases $N=1$ and $N=2$, let us point out a general identity on the constraint matrix and the Hessian of the abbreviated action integral.

\begin{proposition}\label{prop:ActionC}
Let $\Action:\Omega\to \R$ be a ${\mathcal C}^2$ function of $(\mu,\blambda,\speed)\in \Omega$, an open subset of $\R\times\R^N\times\R$, such that 
the second order derivative $\Action_{\mu\mu}$ does not vanish, and define the continuous function $\bC:\Omega\to \R^{(N+1)\times(N+1)}$ by
$$\bC= \frac{\check\nabla \Action_{\mu}\otimes \check\nabla \Action_{\mu}}{\Action_{\mu\mu}}\,-\,
\check\nabla^2 \Action\,,$$
where $\check\nabla$ stands for the partial gradient with respect to all but the first independent variable $\mu$.
Then there exists a continuous mapping $\bP: \Omega\to {\sf SL}_{N+2}(\R)$ such that
$$ {\bP} \,(\Hess\Action)\, \transp{\bP}= \left( \begin{array}{c|c} \Action_{\mu\mu} & 0 \\ \hline
0 & -\bC\end{array}\right) \,.$$
In particular, we have
\begin{equation}
\label{eq:detActionC}
\det(\Hess\Action)\,=\,(-1)^{N+1}\,\Action_{\mu\mu}\,\det\bC\,.
\end{equation}
\end{proposition}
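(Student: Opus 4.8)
The plan is to recognize $\bC$ as minus the Schur complement of the pivot $\Action_{\mu\mu}$ in $\Hess\Action$, and to realize the asserted block-diagonalization by the elementary congruence that performs the corresponding block Gaussian elimination. Ordering the independent variables as $(\mu,\blambda,\speed)$ with $\mu$ first, I would write the symmetric Hessian in the $2\times2$ block form
$$\Hess\Action \,=\, \begin{pmatrix} \Action_{\mu\mu} & \transp{(\check\nabla\Action_\mu)} \\ \check\nabla\Action_\mu & \check\nabla^2\Action \end{pmatrix},$$
where the top-left $1\times1$ block is the scalar $\Action_{\mu\mu}$, the off-diagonal blocks are given by the mixed derivatives $\check\nabla\Action_\mu\in\R^{N+1}$ (a column on the lower left, its transpose on the upper right), and the bottom-right $(N+1)\times(N+1)$ block is $\check\nabla^2\Action$.

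Next I would introduce the block lower-triangular matrix
$$\bP \,:=\, \begin{pmatrix} 1 & 0 \\ -\dfrac{1}{\Action_{\mu\mu}}\,\check\nabla\Action_\mu & I_{N+1} \end{pmatrix},$$
which is well-defined precisely because $\Action_{\mu\mu}\neq0$ on $\Omega$, which depends continuously on $(\mu,\blambda,\speed)$ since $\Action\in\class^2$ and the pivot is continuous and nonvanishing, and which satisfies $\det\bP=1$, so that $\bP\in{\sf SL}_{N+2}(\R)$. A direct block multiplication then gives
$$\bP\,(\Hess\Action)\,\transp{\bP} \,=\, \begin{pmatrix} \Action_{\mu\mu} & 0 \\ 0 & \check\nabla^2\Action \,-\, \dfrac{1}{\Action_{\mu\mu}}\,\check\nabla\Action_\mu\otimes\check\nabla\Action_\mu \end{pmatrix}.$$
Indeed, left multiplication by $\bP$ clears the lower off-diagonal block and replaces the bottom-right block by the Schur complement $\check\nabla^2\Action-\Action_{\mu\mu}^{-1}\,\check\nabla\Action_\mu\otimes\check\nabla\Action_\mu$, while the symmetric right multiplication by $\transp{\bP}$ clears the upper off-diagonal block without altering either the pivot or the Schur complement. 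By the very definition of $\bC$, this Schur complement equals $-\bC$, which is exactly the claimed congruence identity.

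Finally, for the determinant relation I would simply take determinants in the congruence identity: since $\det\bP=\det\transp{\bP}=1$, one gets $\det(\Hess\Action)=\Action_{\mu\mu}\,\det(-\bC)=(-1)^{N+1}\,\Action_{\mu\mu}\,\det\bC$, the sign coming from the size $(N+1)\times(N+1)$ of the block $-\bC$. There is no genuine obstacle in this argument; the only points requiring care are bookkeeping ones, namely fixing the ordering of variables so that the $1\times1$ pivot block $\Action_{\mu\mu}$ occupies the top-left corner, and tracking the factor $(-1)^{N+1}$. The hypothesis $\Action_{\mu\mu}\neq0$ is used in exactly one place, to make the pivot inverse — and hence $\bP$ and the Schur complement — well-defined and continuous on $\Omega$.
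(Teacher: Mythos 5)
Your proof is correct and is essentially the paper's own argument: the paper likewise exhibits an explicit block lower-triangular $\bP$ with unit determinant (block Gaussian elimination on the pivot $\Action_{\mu\mu}$, i.e.\ the Schur complement congruence) and then takes determinants. The only discrepancy is the sign of the lower-left block: the paper prints $\bP$ with $+\tfrac{1}{\Action_{\mu\mu}}\check\nabla\Action_\mu$, which as written does not clear the off-diagonal blocks (it is the inverse of the correct elimination matrix), whereas your choice $-\tfrac{1}{\Action_{\mu\mu}}\check\nabla\Action_\mu$ is the one that actually yields the stated block-diagonal form.
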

\begin{proof}
The matrix relation is a matter of elementary operations on rows and columns, which give the result with 
 $$\arraycolsep=1.5pt\def\arraystretch{1.5}
\bP\,=\,\left( \begin{array}{c|c}1 & 0\\  \hline
\frac{1}{\Action_{\mu\mu}}\check\nabla\Action_\mu & I_{N+1}\end{array}\right)\,.$$
Eq.~\eqref{eq:detActionC} is a straightforward consequence of that relation.
\end{proof}

As a consequence, we see that $\negsign(\Hess\Action)=\negsign(-\bC)$ if $\Action_{\mu\mu}>0$ and 
$\negsign(\Hess\Action)=\negsign(-\bC)+1$ if $\Action_{\mu\mu}<0$. This is the key to the proof of the identity in \eqref{eq:indicevssignAction}, together with a Sturm--Liouville argument as we explain in the next section.

\section{Examples}\label{s:ex}

\subsection{Quasilinear KdV}\label{ss:qKdV}

We use here the notational convention introduced before Theorem \ref{thm:EvansKdV}, namely, for a given periodic wave solution to (qKdV) of spatial period $\Upsilon$ and profile $\ubv$,
$$\action(\mu,\lambda,\speed)=\int_{0}^{\Upsilon} (\en(\ubv,\ubv_x)+\speed \imp(\ubv) - \lambda  \ubv +\mu)\,\dif x\,,$$
$$\linvar=\Hess(\en+\speed\imp)[\ubv]\,,\qquad\qquad \imp(\bv)=\tfrac12   \bv^2\,.$$
Accordingly, we denote by $\lag$ the Lagrangian such that $$\action(\mu,\lambda,\speed)
=\int_{0}^{\Upsilon} (\lag[\ubv;\lambda,\speed]+\mu)\,\dif x\,,$$
and whose second variational derivative is $\linvar$, that is
$$\lag[\bv;\lambda,\speed]:= \en(\bv,\bv_x)+ \tfrac12   \speed \bv^2 - \lambda\,\bv\,.$$

The assumption in {\bf (H0)} regarding the existence and parametrization of periodic wave profiles is easily met when the energy is of the same form as in \eqref{eq:en},
$$\en(\bv,\bv_x)=f(\bv)+\tfrac12 \cap(\bv) \bv_x^2\,,$$
with smooth functions $f$ and $\cap$ such that $\cap(\bv)>0$. Indeed, the profile equation $\Legendre \lag[\ubv;\lambda,\speed]=\mu$ then reads
$$\tfrac12 \cap(\ubv) \ubv_x^2-f(\ubv)-\tfrac12   \speed \ubv^2+ \lambda\,\ubv=\mu\,.$$
Remarkably enough, the associated phase portrait in the plane $\{(\bv,\dot\bv)\,;\,\dot\bv:=\bv_x\sqrt{\cap(\bv)}\}$ does not depend on $\cap$, and consists of the level sets 
$$\{(\ubv,\dot\ubv)\,;\tfrac12 \dot\ubv^2-f(\ubv)-\tfrac12   \speed \ubv^2+ \lambda\,\ubv=\mu\,\}\,.$$
When $\mu$ is varied, these level sets
exhibit saddle points $(\bv,0)$ where the potential $$\Potential(\bv;\lambda,\speed):=-f(\bv)-\tfrac12   \speed \bv^2+ \lambda\,\bv$$
achieves a local maximum, and center points $(\bv,0)$ where $\Potential(\cdot;\lambda,\speed))$ has a local minimum.
In other words,  since $f'=-\press$, $(\bv,0)$ is a saddle point if $\press(\bv)-\speed\bv +\lambda=0$, $\press'(\bv)<\speed$,
and a center point if $\press(\bv)-\speed\bv +\lambda=0$, $\press'(\bv)>\speed$. In particular, if $\press$ is convex, 
a family of periodic wave profiles is found as soon as we have a pair made of a saddle point $(\bv_s,0)$ and a center point $(\bv_0,0)$ such that there exists $\bv^s$ with $\bv_s<\bv_0<\bv^s$ and 
$$\Potential(\bv_s;\lambda,\speed)=\Potential(\bv^s;\lambda,\speed)\,,$$
which amounts to an equal area rule on the graph of $p$
(that is, the areas in between the graph of $p$ and that of the affine function $\bv\mapsto \speed \bv -\lambda$, 
for $\bv\in [\bv_s,\bv_0]$ and for $\bv\in [\bv_0,\bv^s]$ are equal; this can be observed on
Figures \ref{fig:portrait-KdV} and \ref{fig:portrait-SV} hereafter).

By the implicit function theorem, the roots $\bv$ of $\press(\bv)-\speed\bv +\lambda$ are smoothly parametrized by 
$(\lambda,\speed)$ as long as $\press'-\speed$ does not vanish, which means that saddle points and center points are smoothly parametrized by $(\lambda,\speed)$, and the zeroes of $\Potential(\cdot;\lambda,\speed)-\mu$ are smoothly parametrized by $(\mu,\lambda,\speed)$ away from critical points $\bv_s(\lambda,\speed)$ and $\bv_0(\lambda,\speed)$. By smoothness of the flow of ODEs, this shows that periodic wave orbits found inside homoclinic loops are smoothly parametrized by $(\mu,\lambda,\speed)$. 

Let us give a more precise situation in which {\bf (H0)} is satisfied. It is chosen in order to include many of the examples we have in mind, and in particular power laws $\press(\bv)=\bv^\gamma$, $\gamma>1$, and $\press(\bv)=\bv^{-\gamma}$, $\gamma>0$. (The more complicated van der Waals law has been investigated in earlier work, see \cite{BNR-JNLS14}.) 
Let us point out in passing that, as far as profile equations are concerned, the special case $\press(\bv)=\bv^2$, which corresponds to the `standard' KdV equation, and
$\press(\bv)=\bv^{-2}$, which corresponds to a shallow-water type of pressure law\footnote{See  \S\ref{ss:orbEK} for an explanation.}, are closely related. Indeed, the profile equation for $\press(\bv)=\bv^2$ readily amounts to a cubic potential $\Potential$,
while the profile equation for $\press(\bv)=\bv^{-2}$ also amounts to a cubic potential after multiplying it by $\ubv$ --- and modifying $\cap$ accordingly (one should however pay attention to the fact that this operation alters the status of the parameters $(\mu,\lambda,\speed)$, and in particular that of $\mu$, which becomes like a Lagrange multiplier instead of being an energy level). Even without this trick, the phase portraits look similar, see Figures \ref{fig:portrait-KdV} and \ref{fig:portrait-SV} to compare the two situations.

\newpage
\begin{figure}[H]
\begin{center}
\includegraphics[width=85mm]{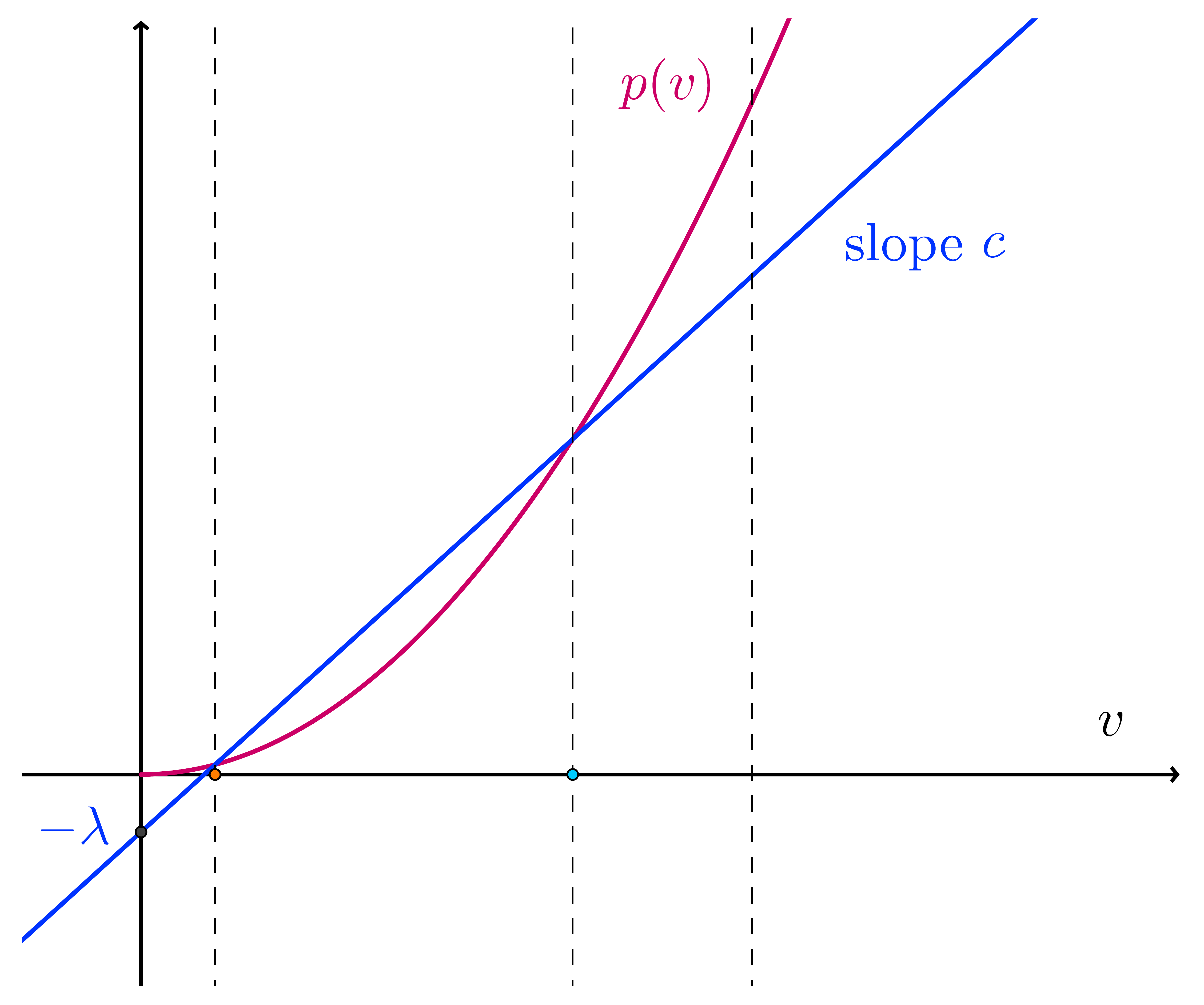} 

\vspace{2mm}
\includegraphics[width=85mm]{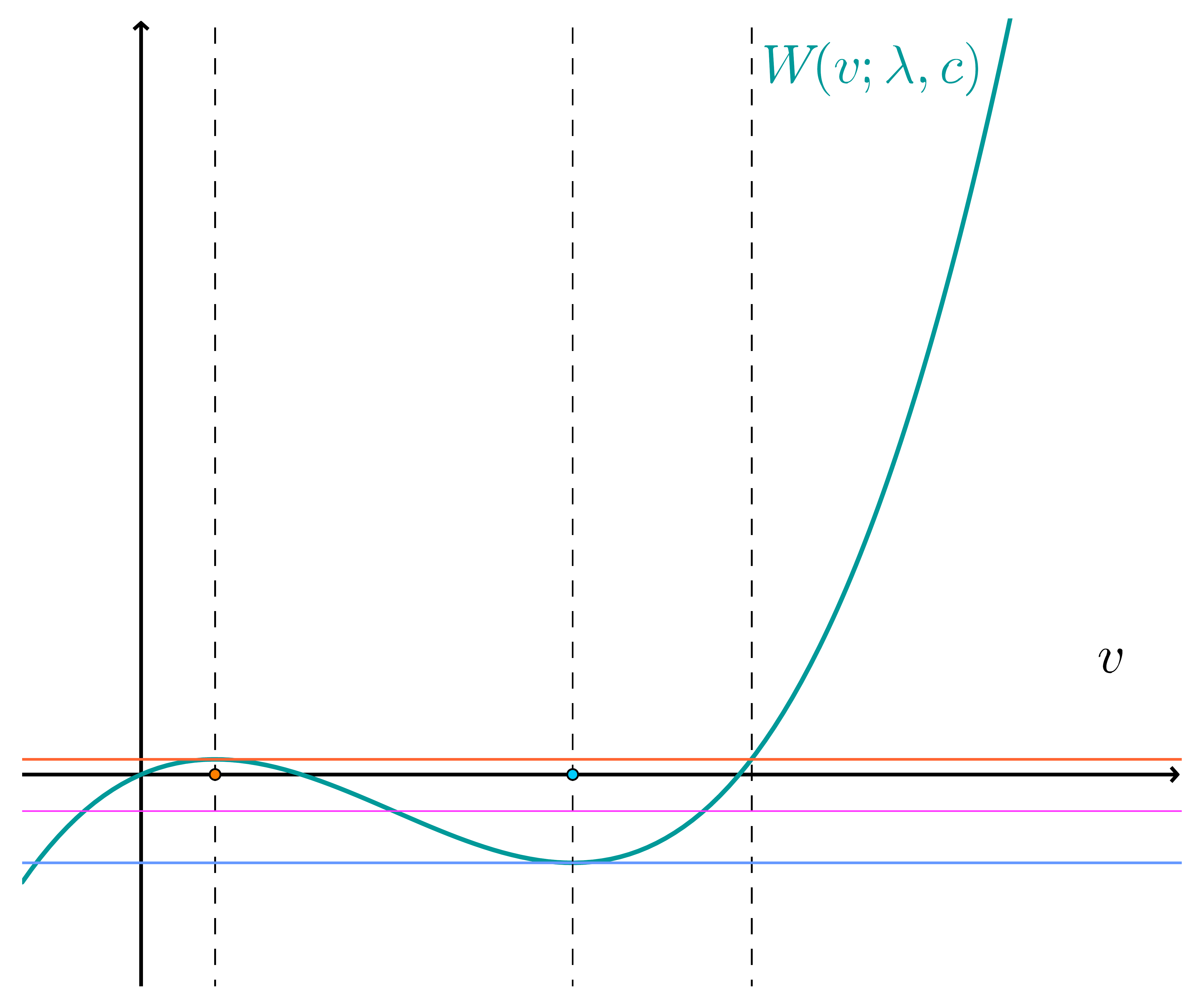}

\vspace{2mm}
\includegraphics[width=85mm]{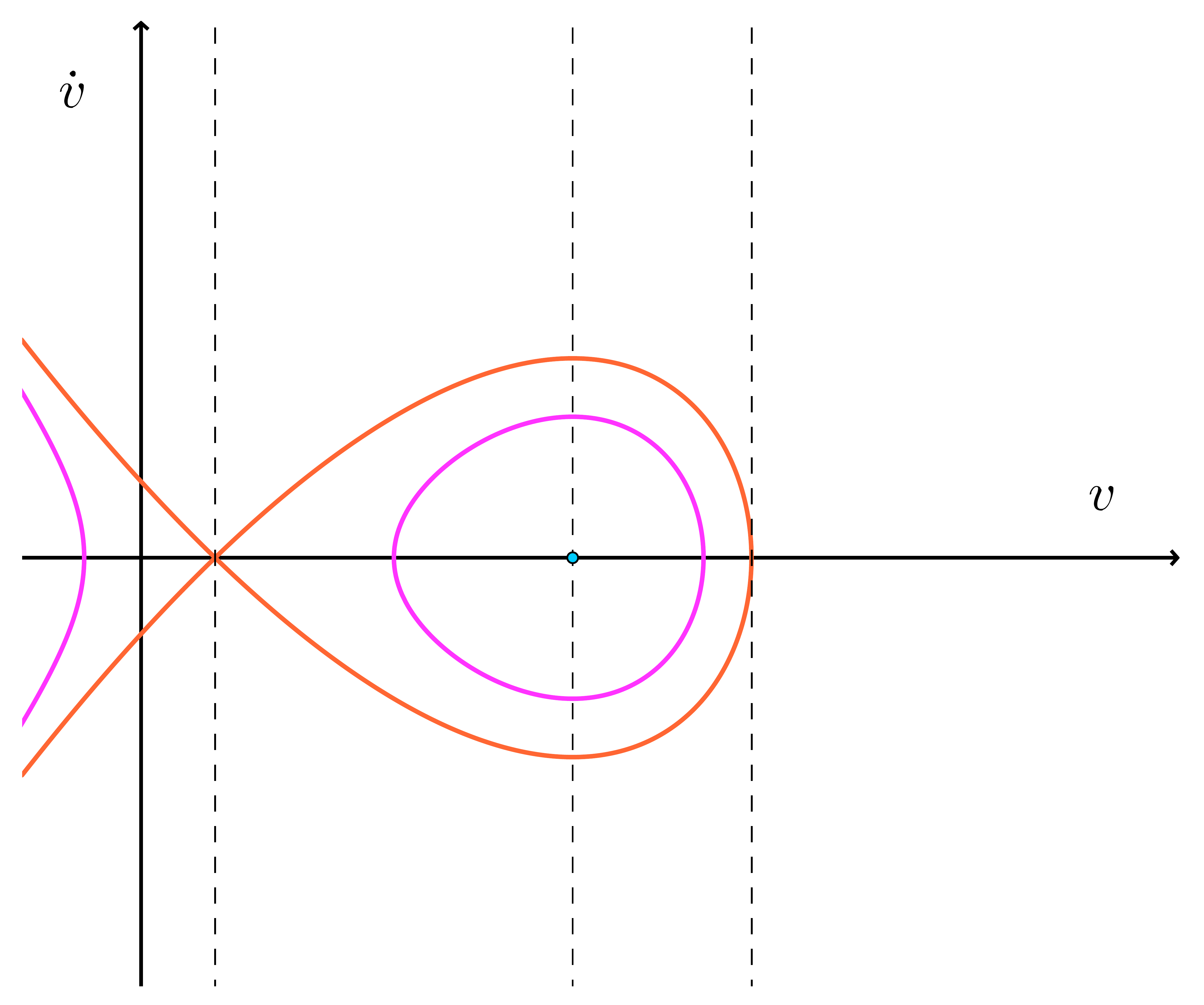}

\caption{Quadratic nonlinearity,  
associated potential and phase portrait.}\label{fig:portrait-KdV}
\end{center}
\end{figure}

\newpage
\begin{figure}[H]
\begin{center}
\includegraphics[width=85mm]{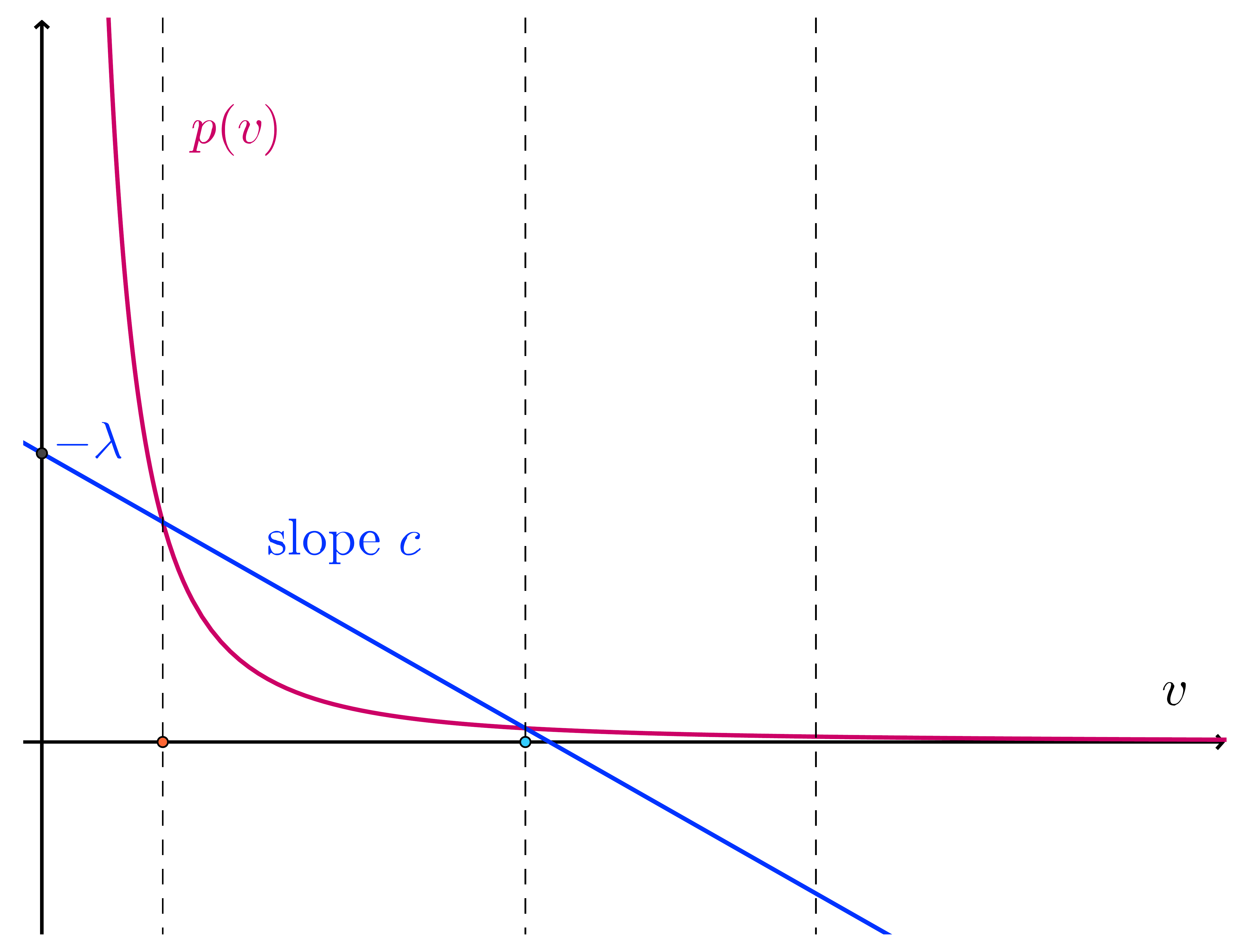}

\vspace{2mm}
\includegraphics[width=85mm]{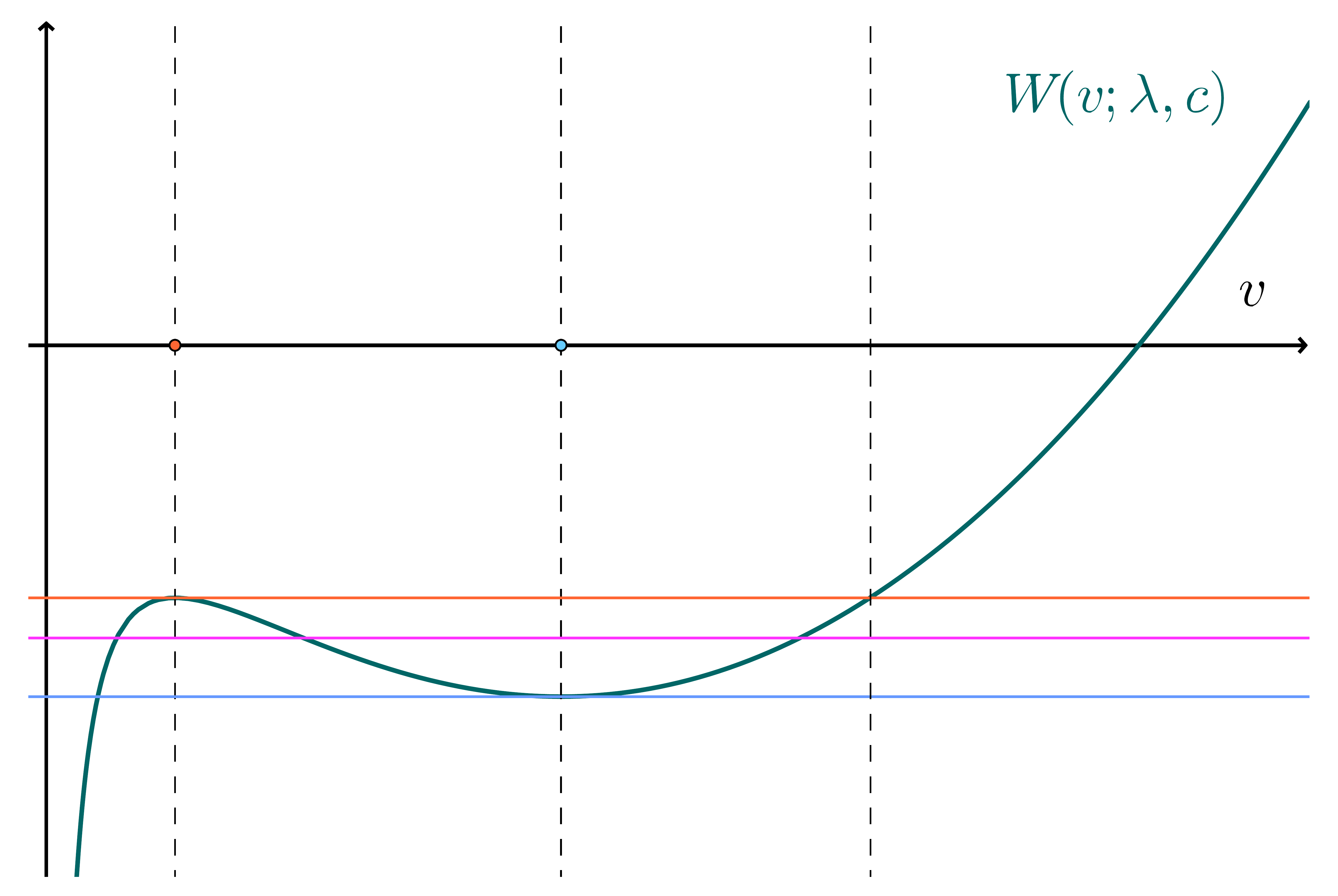}

\vspace{2mm}
\includegraphics[width=85mm]{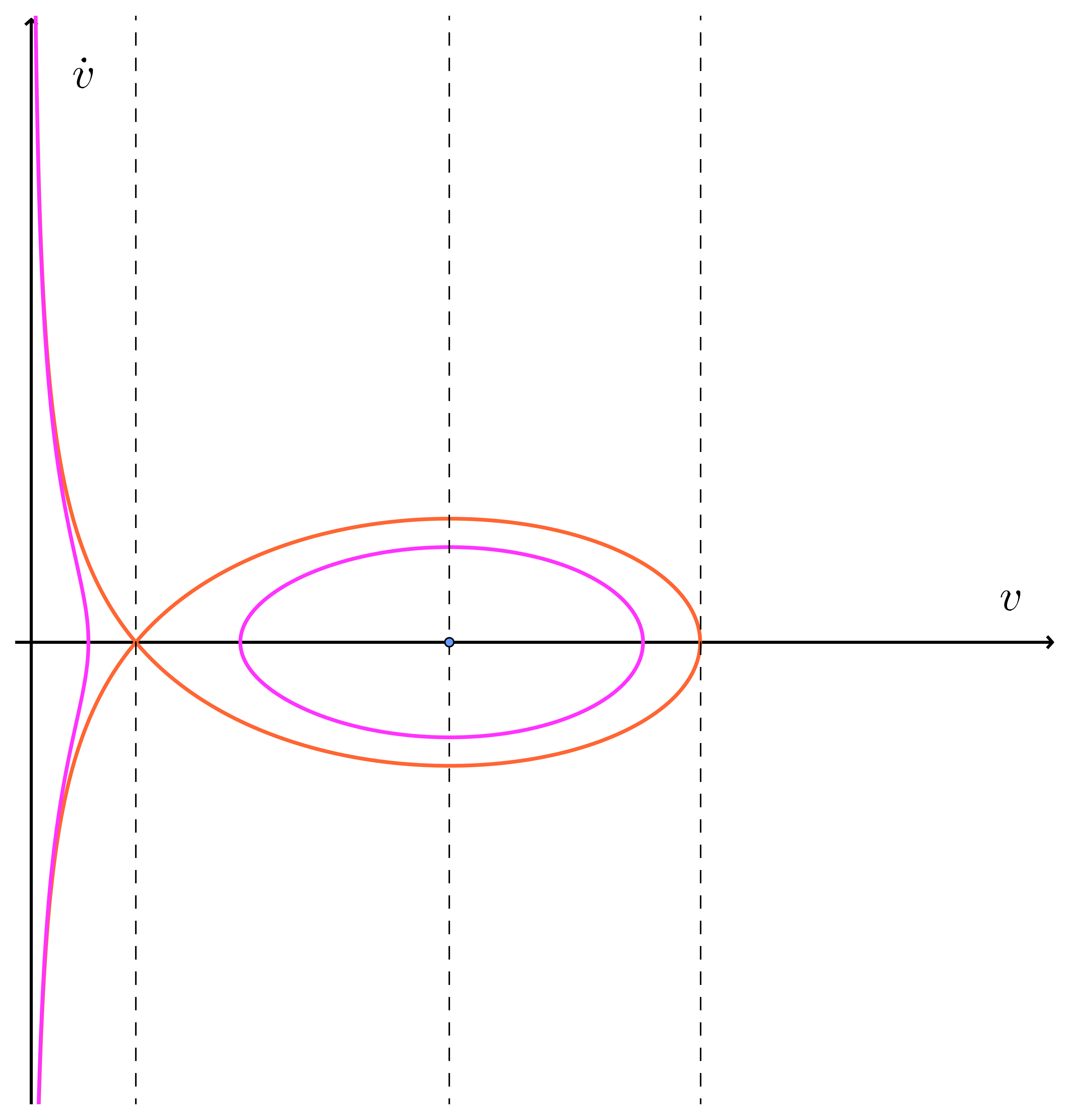}

\caption{Shallow-water type nonlinearity,  
associated potential and phase portrait.}\label{fig:portrait-SV}
\end{center}
\end{figure}

\begin{proposition}\label{prop:H0}
Assume that $\cap:I\to (0,+\infty)$ is ${\class}^2$, and that $\press=-f': I \to \R$ is ${\class}^2$ with $p''>0$ on the open interval $I\subset\R$.
We denote by $V:J:=\press'(I)\to I$ the inverse mapping of $\press'$, 
and 
$$l: \speed\in J\mapsto \speed V(\speed)-p(V(\speed))\,,$$
$$l_-: \speed\in J\mapsto \lim_{\bv \searrow \inf I} (\speed \bv-p(\bv))\,,$$
$$l_+: \speed\in J\mapsto \lim_{\bv \nearrow \sup I} (\speed \bv-p(\bv))\,,$$
and consider the open sets
$$\Gamma:=\{\speed\in J\,;\; 
l_-(\speed) <\speed\,,\;l_+(\speed) <l(\speed)\}\,,$$
$$\Lambda:=\{(\lambda,\speed)\in \R\times \Gamma\,;\; \max(l_-(\speed),l_+(\speed))<\lambda<l(\speed)\}\,.$$
Then there exist ${\class}^2$ mappings
$$\bv_s:\Lambda\to \{\bv\in I\,;\;\bv<V(\speed)\}\quad \mbox{and}\quad \bv_0:\Lambda\to \{\bv\in I\,;\;\bv>V(\speed)\}$$ such that for all $(\mu,\lambda,\speed)\in \Omega$,
$$\Omega:=\{(\mu,\lambda,\speed)\in \R\times \R\times \Gamma\,;\;
(\lambda,\speed)\in \tilde{\Lambda}\,,\;\mu \in (\Potential(\bv_0(\lambda,\speed);\lambda,\speed),\Potential(\bv_s(\lambda,\speed);\lambda,\speed))) \}\,,$$
$$\tilde{\Lambda}:=\{(\lambda,\speed)\in \Lambda\,;\;\lim_{\bv \nearrow \sup I} \Potential(\bv;\lambda,\speed) > \Potential(\bv_s(\lambda,\speed);\lambda,\speed) \}\,,$$
there is a unique solution $\ubv$ to
$$\tfrac12 \cap(\ubv) \ubv_x^2-f(\ubv)-\tfrac12   \speed \ubv^2+ \lambda\,\ubv=\mu\,,\; \ubv_x(0)=0\,,$$
that is periodic, and it is ${\class}^2$ in parameters $(\mu,\lambda,\speed)$.
\end{proposition}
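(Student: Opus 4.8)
The plan is to recognize the profile relation as the energy integral of a planar Hamiltonian system and to read off existence, uniqueness and smoothness of the periodic orbits from the phase portrait of a potential well. First I would perform the change of dependent variable $\dot{\ubv} := \sqrt{\cap(\ubv)}\,\ubv_x$, under which the profile relation $\tfrac12 \cap(\ubv)\ubv_x^2 + \Potential(\ubv;\lambda,\speed) = \mu$ becomes $\tfrac12 \dot{\ubv}^2 + \Potential(\ubv;\lambda,\speed) = \mu$, the standard energy integral of a one-degree-of-freedom Hamiltonian with potential $\Potential(\cdot;\lambda,\speed)$. Since $\cap>0$, the map $(\bv,\bv_x)\mapsto(\bv,\dot\bv)$ is a ${\class}^1$ diffeomorphism, so the orbits of the profile ODE are in one-to-one, phase-portrait-preserving correspondence with the level sets $\{\tfrac12 \dot\bv^2 + \Potential = \mu\}$; in particular their closedness, their turning points and their smooth dependence on parameters are unaffected by $\cap$, only the parametrization by $x$ is. This is what reduces the analysis to the $\cap$-independent potential $\Potential$.

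Next I would locate the critical points of $\Potential(\cdot;\lambda,\speed)$. Since $\Potential' = \press(\bv)-\speed\bv+\lambda = \lambda - (\speed\bv - \press(\bv))$ and $\press$ is strictly convex, the map $\bv\mapsto \speed\bv-\press(\bv)$ is strictly concave, with a unique maximum $l(\speed)$ at $\bv = V(\speed)$ and limits $l_-(\speed),l_+(\speed)$ at the endpoints of $I$. Hence $\Potential'=0$ amounts to $\speed\bv-\press(\bv)=\lambda$, and for $(\lambda,\speed)\in\Lambda$, i.e. $\max(l_-,l_+)<\lambda<l(\speed)$, this has exactly two roots, one on each side of $V(\speed)$: the saddle $\bv_s<V(\speed)$ with $\Potential''(\bv_s)=\press'(\bv_s)-\speed<0$ (a local maximum of $\Potential$), and the center $\bv_0>V(\speed)$ with $\Potential''(\bv_0)>0$ (a local minimum). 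Because $\Potential''\neq0$ at each of these, the implicit function theorem applied to $\Potential'(\bv;\lambda,\speed)=0$ yields the ${\class}^2$ maps $\bv_s,\bv_0:\Lambda\to I$.

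Then I would build the periodic orbit. For $\mu\in(\Potential(\bv_0),\Potential(\bv_s))$ the level $\{\Potential=\mu\}$ meets the potential well exactly twice: $\Potential$ decreases from $\Potential(\bv_s)>\mu$ to $\Potential(\bv_0)<\mu$ on $(\bv_s,\bv_0)$, giving a left turning point $\bv_-(\mu)\in(\bv_s,\bv_0)$, and increases to the right of $\bv_0$; the constraint defining $\tilde\Lambda$, namely $\lim_{\bv\nearrow\sup I}\Potential>\Potential(\bv_s)>\mu$, guarantees that $\Potential$ climbs back above level $\mu$ strictly inside $I$, producing a right turning point $\bv_+(\mu)\in(\bv_0,\sup I)$. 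Both are simple zeros of $\Potential-\mu$ (there $\Potential'\neq0$), hence ${\class}^2$ functions of $(\mu,\lambda,\speed)$ by the implicit function theorem, and the arc of the level set between them is a smooth closed curve encircling $\bv_0$. To obtain the profile and settle uniqueness and smoothness cleanly, avoiding the non-Lipschitz square-root singularity of $\ubv_x=\pm\sqrt{2(\mu-\Potential)/\cap}$ at the turning points, I would differentiate the energy relation once to get the genuinely second-order, smooth-coefficient ODE $\cap(\ubv)\ubv_{xx} + \tfrac12\cap'(\ubv)\ubv_x^2 + \Potential'(\ubv)=0$, and solve it with the turning-point datum $(\ubv(0),\ubv_x(0))=(\bv_+(\mu,\lambda,\speed),0)$. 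Cauchy--Lipschitz gives a unique solution, conservation of $\tfrac12\cap\,\ubv_x^2+\Potential$ keeps it on the closed orbit so it is periodic, and smooth dependence of the flow on the datum and on the (${\class}^2$) coefficients, together with the ${\class}^2$ dependence of $\bv_+$, yields the ${\class}^2$ dependence of $\ubv$ on $(\mu,\lambda,\speed)$.

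I expect the main obstacle to be the \emph{global}, rather than local, control of the orbit: showing that for the prescribed range of $\mu$ the trajectory is genuinely trapped, in that both turning points exist, lie in the interior of $I$ on opposite sides of the center, and the arc joining them never leaves $I$. This is precisely the content encoded in the definitions of $\Gamma$, $\Lambda$, $\tilde\Lambda$ and of the $\mu$-interval, and verifying that these inequalities do force the desired well geometry, namely monotonicity of $\Potential$ on each side of $\bv_0$ and the right-hand barrier supplied by $\tilde\Lambda$, is the crux. The turning-point non-Lipschitz issue is a secondary technical point handled by passing to the second-order formulation, and the regularity bookkeeping, tracking how the ${\class}^2$ hypotheses on $\press$ and $\cap$ propagate to $\ubv$, is routine.
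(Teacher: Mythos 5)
You take essentially the same route as the paper's own (sketched) proof: the substitution $\dot\bv:=\sqrt{\cap(\bv)}\,\bv_x$ that makes the phase portrait $\cap$-independent, the saddle/center classification of the critical points of $\Potential(\cdot;\lambda,\speed)$ via strict concavity of $\bv\mapsto \speed\bv-\press(\bv)$, the implicit function theorem for $\bv_s$, $\bv_0$ and the turning points, and ODE arguments for the orbit itself. All of the geometric and implicit-function-theorem material is correct. The step that fails as written is precisely the one you dismiss as routine bookkeeping, namely the final ${\class}^2$-dependence claim. When your second-order equation
$$\cap(\ubv)\,\ubv_{xx}+\tfrac12\,\cap'(\ubv)\,\ubv_x^2+\Potential'(\ubv;\lambda,\speed)\,=\,0$$
is written as a first-order system in the unknowns $(\ubv,\ubv_x)$, its right-hand side involves $\cap'$, which is only ${\class}^1$ under the hypothesis $\cap\in{\class}^2$; your parenthetical ``$({\class}^2)$ coefficients'' is therefore incorrect for this formulation. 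The standard differentiability theorem for flows (a ${\class}^k$ vector field has a ${\class}^k$ flow, and one cannot do better by iterating the variational equation, since that would require $\cap'''$) then delivers only ${\class}^1$ dependence of $\ubv$ on $(\mu,\lambda,\speed)$ --- one derivative short of the assertion.

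The repair uses the change of variables you introduced at the outset and then set aside. In the unknowns $(\ubv,\dot\ubv)$ with $\dot\ubv:=\sqrt{\cap(\ubv)}\,\ubv_x$, the $\cap'$ terms cancel: a direct computation from the equation above gives
$$\frac{\dif}{\dif x}\,\ubv\,=\,\frac{\dot\ubv}{\sqrt{\cap(\ubv)}}\,,\qquad
\frac{\dif}{\dif x}\,\dot\ubv\,=\,-\,\frac{\Potential'(\ubv;\lambda,\speed)}{\sqrt{\cap(\ubv)}}\,,$$
whose right-hand side is ${\class}^2$ in $(\ubv,\dot\ubv)$ and affine in $(\lambda,\speed)$, because $\cap^{-1/2}$ and $\press$ are ${\class}^2$. (Equivalently, reparametrize $\dif\tau=\dif x/\sqrt{\cap(\ubv)}$ to obtain the Hamiltonian system $\ubv_\tau=\dot\ubv$, $\dot\ubv_\tau=-\Potential'(\ubv)$ and then invert the strictly increasing map $\tau\mapsto x$.) The flow of this system is ${\class}^2$ jointly in time, data and parameters; composing with the datum map $(\mu,\lambda,\speed)\mapsto(\bv_+(\mu,\lambda,\speed),0)$ --- which is in fact ${\class}^3$, since $\Potential$ is ${\class}^3$ --- and noting that $\ubv$ is just the first component, yields the claimed ${\class}^2$ dependence. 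The rest of your argument goes through unchanged in these coordinates: Cauchy--Lipschitz uniqueness needs only ${\class}^1$, and periodicity follows from conservation of $\tfrac12\dot\ubv^2+\Potential(\ubv)$ together with compactness of the level curve and the absence of rest points on it. (One further point, which the paper also glosses over: uniqueness must be understood among non-constant solutions and up to the choice of crest versus trough at $x=0$, since the constant functions $\ubv\equiv\bv_\pm$ solve the first-order relation with $\ubv_x(0)=0$ but not your second-order equation.)
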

The proof is based on the remarks made above and some elementary analysis of the variations of $\Potential(\cdot;\lambda,\speed)$ for $(\lambda,\speed)\in \tilde{\Lambda}$, see Table \ref{tb:var} below. Details are left to the reader.

\begin{table}[hb]
$$\begin{array}{c|cccccccccccccc}
\bv &  & & v_1 & & v_s & & v_2  &  & v_0 & & v_3 & & v^s & \\ \hline 
\Potential' & & & + & & 0 & & - & & 0 & & + & &   \\  \hline
& & &  & & \mu_s & & & & & &  & &   \mu_s & \\
 & &  & & \nearrow &  & \searrow  & & & & & & \nearrow &   \\ [-5pt]
\Potential & & & \mu & & & & \mu & & & & \mu & &   \\
 & &  \nearrow &  &  &  &  &  & \searrow  & &  \nearrow & &  &   \\
  & &   &  &  &  &  &   &  & \mu_0 & & &  &   \\
\end{array}$$
\caption{Variations of potential}\label{tb:var}
\end{table}

Proposition \ref{prop:H0} applies in particular to
\begin{itemize}
\item $\press(\bv)=\bv^\gamma$, $\gamma>1$, $I=(0,+\infty)$, $J=(-\infty,0)$, $V(\speed)=(\speed/\gamma)^{1/(\gamma-1)}$, $$l_-(\speed)=0\,,\;l_+(\speed)=-\infty\,,\;l(\speed)=(1/\gamma^{1/(\gamma-1)}-1/\gamma^{\gamma/(\gamma-1)})\speed^{\gamma/(\gamma-1)}\,,$$
\item $\press(\bv)=\bv^{-\gamma}$, $\gamma>0$, $I=(0,+\infty)$, $J=(0,+\infty)$, $V(\speed)=(-\gamma/\speed)^{1/(\gamma+1)}$, $$l_-(\speed)=-\infty\,,\;l_+(\speed)=-\infty\,,\;l(\speed)=(\gamma^{1/(\gamma+1)}-1/\gamma^{\gamma/(\gamma+1)})(-\speed)^{\gamma/(\gamma+1)}\,.$$
\end{itemize}

This is what we may say on {\bf (H0)}. Regarding {\bf (H1)}, it turns out to be equivalent to
$$\action_{\mu\mu}\neq 0\,,\qquad\det(\Hess\action)\,\neq\,0\,.$$
Indeed, $\action_{\mu\mu}\neq 0$ is exactly the first condition in {\bf (H1)}, and as soon as it is satisfied, the equivalence between $\det \bc\neq 0$ and $\det(\Hess\action)\neq 0$ readily follows from Proposition \ref{prop:ActionC}, in which Eq.~\eqref{eq:detActionC} reads, with our present notation (and $N=1$),
$$ \det (\Hess\action) = \action_{\mu\mu} \det \bc\,.$$
We may also derive this relation by writing explicitly 
$$\bc= -\frac{1}{\action_{\mu\mu}}\;\left(\begin{array}{cc} 
\left|\begin{array}{cc}
\action_{\lambda\lambda} & \action_{\mu\lambda} \\
\action_{\lambda\mu} & \action_{\mu\mu}
\end{array}\right| & 
\left|\begin{array}{cc}
\action_{\lambda \speed} & \action_{\mu \speed} \\
\action_{\lambda\mu} & \action_{\mu\mu}
\end{array}\right| 
\\ [15pt]
\left|\begin{array}{cc}
\action_{\speed \lambda} & \action_{\mu\lambda} \\
\action_{\speed \mu} & \action_{\mu\mu}
\end{array}\right| & 
\left|\begin{array}{cc}
\action_{\speed  \speed} & \action_{\mu \speed} \\
\action_{\speed \mu} & \action_{\mu\mu}
\end{array}\right|
\end{array}\right)\,,
$$
and by computing
$$\left|\begin{array}{cc} 
\left|\begin{array}{cc}
\action_{\lambda\lambda} & \action_{\mu\lambda} \\
\action_{\lambda\mu} & \action_{\mu\mu}
\end{array}\right| & 
\left|\begin{array}{cc}
\action_{\lambda \speed} & \action_{\mu \speed} \\
\action_{\lambda\mu} & \action_{\mu\mu}
\end{array}\right| 
\\ [15pt]
\left|\begin{array}{cc}
\action_{\speed \lambda} & \action_{\mu\lambda} \\
\action_{\speed \mu} & \action_{\mu\mu}
\end{array}\right| & 
\left|\begin{array}{cc}
\action_{\speed  \speed} & \action_{\mu \speed} \\
\action_{\speed \mu} & \action_{\mu\mu}
\end{array}\right|
\end{array}\right| \,=\,\action_{\mu\mu}\,\det(\Hess\action)\,.$$

The investigation of whether $\action_{\mu\mu}$, the derivative of the period with respect to the value of the ODE Hamiltonian vanishes is a classical topic in Hamiltonian dynamics, see for instance the recent paper \cite{GarijoVilladelprat} and references therein.
A criterion ensuring that this derivative is positive was given in particular by Chicone \cite{Chicone}. It merely reads  $(\Potential/(\Potential')^2)''>0$,
for a Hamiltonian of the form $\tfrac12  
\bv_x^2+\Potential(\bv)$. Despite its simple form, it is not easy to check analytically. We have chosen to rely on numerical experiments in Section \ref{s:num} to rule out the critical cases in which $\action_{\mu\mu}$ would be zero.
Regarding the zeroes of the determinant of $\Hess\action$, far from distinguished limits --- see \cite{BMR2} for a discussion of those ---, we are not aware of any general analytical result. This is also investigated numerically in Section \ref{s:num}.

Concerning assumptions in {\bf (H2)}, we claim --- recall that the period of waves is denoted by $\Upsilon$ instead of $\Xi$ here --- that the space $\HH_\Upsilon=H^1(\R/\Upsilon\Z)$ is a convenient choice as soon as 
$$\en(\bv,\bv_x)=f(\bv)+\tfrac12 \cap(\bv) \bv_x^2$$
with $f:I\to \R$ and $\cap:I\to (0,+\infty)$ of class ${\mathcal C}^2$ on an open interval $I$. Indeed, in this case,
the mapping 
$$E:\bv \mapsto \int_{0}^{\Upsilon} \en(\bv,\bv_x) \,\dif x$$
is  well-defined on the open subset of $H^1(\R/\Upsilon\Z)$ made of $\bv$ with values in $I$ --- thanks to the embedding $H^1(\R/\Upsilon\Z)\hookrightarrow {\mathcal C}^0_b$ --- and twice differentiable with
$$\dif E (\bv) \cdot h= \int_{0}^{\Upsilon} (f'(\bv) h +\tfrac12 \cap'(\bv) h \bv_x^2 + \cap(\bv)  \bv_x h_x)\dif x\,, \; \forall h\in H^1(\R/\Upsilon\Z)\,,$$
$$\dif^2 E (\bv) \cdot (h,k)= \int_{0}^{\Upsilon} (f''(\bv) h k +\tfrac12 \cap''(\bv) h k \bv_x^2 +  \cap'(\bv) (h k_x+k h_x) + \cap(\bv)  h_x k_x)\dif x\,, \; \forall h,k\in H^1(\R/\Upsilon\Z)\,,$$
for all $\bv\in H^1(\R/\Upsilon\Z)$ with image in the domain of definition of $f$ and $\cap$. If in addition $\bv\in {\mathcal C}^2_b$, as is the case for traveling profiles, we may integrate by parts in the formulas above, and recognize the variational derivatives of $\en$. As a matter of fact, if $\bv\in {\mathcal C}^2_b$ we find that
$$\dif E (\bv) \cdot h= \int_{0}^{\Upsilon} h\,\Euler\en[\bv] \dif x\,, \; \forall h\in H^1(\R/\Upsilon\Z)\,,$$
$$\dif^2 E (\bv) \cdot (h,k)= \langle \Hess\en[\bv] h, k\rangle\,, \; \forall h,k\in H^1(\R/\Upsilon\Z)\,,$$
where 
$$\Euler \en[\bv]:= \partial_\bv \en - \partial_x(\partial_{\bv_x}\en)= f'(\bv) +\tfrac12 \cap'(\bv) \bv_x^2 - \partial_x(\cap(\bv)  \bv_x)\in {\mathcal C}^0_b\,,$$
and
$$\Hess\en[\bv] h:= (\partial_{\bv}^2\en - \partial_x(\partial_{\bv\bv_x}\en)) \,h - \partial_x(h\,\partial_{\bv_x}^2\en)=
 (f''(\bv)   + \tfrac{1}{2} \cap''(\bv) \bv_x^2   - \partial_x(\cap'(\bv) \bv_{x}) )\,h-\partial_x (\cap(\bv) h_x) $$
belongs to $H^{-1}(\R/\Upsilon\Z) = (H^1(\R/\Upsilon\Z))'$ for all $h\in H^1(\R/\Upsilon\Z)$, and 
$\langle \cdot ,\cdot \rangle$ denotes the dual product. In particular, $\Hess\en[\bv]$ is a \emph{Sturm-Liouville} operator, of the form $-\partial_x K \partial_x + q$, with  $\Upsilon$-periodic coefficients,
$$K= \cap(\bv) \in {\mathcal C}^2_b\,,\qquad0<K_0\leq K\leq K_1\,,$$
$$q=f''(\bv)   + \tfrac{1}{2} \cap''(\bv) \bv_x^2   - \partial_x(\cap'(\bv) \bv_{x}) \in {\mathcal C}^0_b\,, \quad\|q\|_{L^\infty}\leq \alpha_0\,.$$
Therefore, for all $h\in H^1(\R/\Upsilon\Z)$, we have
$$\langle \Hess\en[\bv] h, h\rangle + \alpha \|h\|_{L^2}^2= \int_{0}^{\Upsilon} (K h_x^2 + (q+\alpha) h^2) \dif x\,,$$
and if we choose for instance $\alpha= \alpha_0+1$, we have
$$\min(K_0,1)  \int_{0}^{\Upsilon} (h_x^2 + h^2) \dif x \leq \langle \Hess\en[\bv] h, h\rangle + \alpha \|h\|_{L^2}^2 \leq 
\max(K_1,2\alpha)  \int_{0}^{\Upsilon} (h_x^2 + h^2) \dif x\,.$$
This shows the equivalence of norms requested in {\bf (H2)}.

Finally, the main assumption in {\bf (H3)} is satisfied at least in $H^s(\R/\Upsilon\Z)$ for $s>7/2$. Indeed,  
the following theorem is proved in a forthcoming paper \cite{Mietka}.
\begin{theorem}\label{thm:qKdV}
Let $s$ be an integer such that $s>7/2$. If $p=-f': I \to \R$  is ${\class}^{s+1}$ and $\cap:I \to (0,+\infty)$ is ${\class}^{s+2}$, then for all $\Upsilon>0$, 
$\bv_0\in H^s(\R/\Upsilon\Z)$, the image of $\bv_0$ being in $I$, there exists $T>0$ and a unique $\bv\in {\class}([0,T);H^s(\R/\Upsilon\Z))$ solution to {\rm (qKdV)} with $\en=f(v)+\frac12 \cap(v)v_x^2$, and $\bv_0\mapsto \bv$ is continuous.
\end{theorem}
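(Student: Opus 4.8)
The plan is to first put (qKdV) in non-conservative form. Using the expression of the variational derivative recalled in the excerpt, $\Euler\en[v]=f'(v)-\tfrac12\cap'(v)v_x^2-\cap(v)v_{xx}$, the equation $\partial_t v=\partial_x(\Euler\en[v])$ reads as the quasilinear, dispersive equation
$$\partial_t v=-\cap(v)\,\partial_x^3 v-2\cap'(v)\,v_x v_{xx}-\tfrac12\cap''(v)\,v_x^3+f''(v)\,v_x\,.$$
The point is that the leading coefficient $\cap(v)>0$ depends on the unknown, so the equation is \emph{genuinely} quasilinear: Duhamel/contraction arguments are unavailable, and a naive $H^s$ energy estimate loses one derivative through the top-order term. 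The whole proof is organized around taming this loss, then feeding a clean a priori estimate into a regularization--compactness scheme and a Bona--Smith continuity argument.

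The heart of the matter is an a priori estimate bounding $\|v(t)\|_{H^s}$ on a time interval depending only on $\|v_0\|_{H^s}$, for $s>7/2$. I would use a \emph{weighted} energy adapted to the principal operator. The operator $-\cap(v)\partial_x^3$ is skew-adjoint in $L^2(\R/\Upsilon\Z;\cap(v)^{-1}\dif x)$ (since $\partial_x^3$ is skew-adjoint on the torus), so the weight $\cap(v)^{-1}$ annihilates its contribution at the $L^2$ level. For higher $s$ this is insufficient: the commutator of $\partial_x^s$ with $\cap(v)$ and the subprincipal term $-2\cap'(v)v_xv_{xx}$ each produce terms of the form $\int(\cdot)\,(\partial_x^{s+1}v)^2\,\dif x$, one order too high to absorb. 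I would therefore apply $\partial_x^s$, multiply by $w(v)\,\partial_x^s v$ with $w(v)=\cap(v)^{\beta}$ for a suitable power $\beta=\beta(s)$ growing linearly in $s$ (reducing to $\cap(v)^{-1}$ at the principal level), integrate over $\R/\Upsilon\Z$, and tune $\beta$ so that, after integration by parts, all the $\partial_x^{s+1}$-level contributions cancel identically. What then remains are terms of the form $\int(\cdot)(\partial_x^s v)^2\,\dif x$ and one term $\int(\cdot)(\partial_x^s v)^2\,v_{xxx}\,\dif x$, the latter arising from $\partial_t w(v)=\beta\cap(v)^{\beta-1}\cap'(v)\,\partial_t v$ together with $\partial_t v\sim-\cap(v)\,v_{xxx}$. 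This term is bounded by $\|v_{xxx}\|_{L^\infty}\|v\|_{H^s}^2$, and $H^{s-3}\hookrightarrow L^\infty$ precisely when $s>7/2$, which is the origin of the threshold. The remaining products are controlled by Moser/Gagliardo--Nirenberg and Kato--Ponce commutator estimates, which require $\cap\in{\class}^{s+2}$ and $p\in{\class}^{s+1}$ to handle the nonlinear substitutions up to order $s$. This yields $\tfrac{\dif}{\dif t}\|v\|_{H^s}^2\le C(\|v\|_{H^s})\|v\|_{H^s}^2$, hence a uniform bound on some $T=T(\|v_0\|_{H^s})>0$.

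To construct solutions I would regularize, replacing the equation by the semilinear parabolic equation $\partial_t v=-\varepsilon\,\partial_x^4 v+\partial_x(\Euler\en[v])$, which is locally well-posed in $H^s(\R/\Upsilon\Z)$ by analytic-semigroup and fixed-point arguments. Its solutions $v^\varepsilon$ satisfy the same weighted-energy identity up to the extra term $-\varepsilon\int w(v^\varepsilon)(\partial_x^{s+2}v^\varepsilon)^2\,\dif x$, which has favorable sign modulo lower-order pieces absorbed by Young's inequality, so the a priori bound is \emph{uniform} in $\varepsilon\in(0,1]$. Bounding $\partial_t v^\varepsilon$ in a weaker norm via the equation, Aubin--Lions compactness lets me pass to the limit $\varepsilon\to0$ and obtain $v\in L^\infty(0,T;H^s)$ solving (qKdV). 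Uniqueness I would prove at low regularity: the difference $w=v_1-v_2$ of two $H^s$ solutions solves a linear equation whose source contains $(\cap(v_1)-\cap(v_2))\,\partial_x^3 v_2$, which is controlled because $v_2$ is smooth enough; a weighted $L^2$ estimate and Gr\"onwall then force $w\equiv0$.

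Finally, strong continuity in time, $v\in{\class}([0,T);H^s)$, and the continuous dependence $v_0\mapsto v$ I would obtain by a Bona--Smith argument: mollify the data as $v_0^\delta=P_\delta v_0$, compare the corresponding solutions simultaneously in the high norm $H^s$ and in a low norm, and use that the $H^s$ weighted energy is \emph{almost} conserved (its time derivative is controlled by the a priori estimate) to upgrade weak continuity of $t\mapsto\|v(t)\|_{H^s}$ to strong continuity and to transfer convergence of the data into convergence of the solutions in ${\class}([0,T);H^s)$. The main obstacle throughout is the a priori estimate of the second step: everything hinges on neutralizing the derivative loss created by the variable leading coefficient $\cap(v)$, and it is exactly the weighted-energy cancellation with the $s$-dependent power of $\cap(v)$ that achieves this, the $v_{xxx}$ generated by the time derivative of the weight being what pins the threshold at $s>7/2$. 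The regularization and Bona--Smith steps are technically heavy but routine once the uniform estimate is secured.
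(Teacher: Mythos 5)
You should know at the outset that the paper does not actually prove this statement: Theorem \ref{thm:qKdV} is quoted as a result of the forthcoming companion paper \cite{Mietka}, and the present article only records it as the verification of hypothesis {\bf (H3)} for {\rm (qKdV)}. There is therefore no in-paper argument to compare yours against; what can be assessed is whether your blind proposal is a viable proof of the quoted statement, and in outline it is: it is exactly the standard quasilinear scheme (gauged energy estimates, parabolic regularization, Bona--Smith) that a reference of this type would be expected to follow.

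Your central computation is correct and is indeed the crux. Writing the equation as $\partial_t v = -\cap(v)v_{xxx}-2\cap'(v)v_xv_{xx}-\tfrac12\cap''(v)v_x^3+f''(v)v_x$, applying $\partial_x^s$ and testing against $\cap(v)^\beta\partial_x^s v$, the only surviving coefficient of $\int \cap(v)^\beta\cap'(v)\,v_x\,(\partial_x^{s+1}v)^2\,\dif x$ after integration by parts is $2(s+2)-3(\beta+1)$, so the choice $\beta=(2s+1)/3$ --- linear in $s$, as you assert --- annihilates every term of order $s+1$. (Your parenthetical that the weight reduces to $\cap(v)^{-1}$ at the principal level is accurate only for the principal operator $-\cap(v)\partial_x^3$ alone, for which the requirement is $\beta=2s/3-1$; with the subprincipal term included the $s=0$ weight is $\cap(v)^{1/3}$. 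This is a harmless heuristic slip.) The gauge works, structurally, because the subprincipal coefficient $2\cap'(v)v_x$ and its commutator analogue $s\,\cap'(v)v_x$ are exact $x$-derivatives of functions of $v$, so a \emph{periodic} gauge exists; on the torus this is precisely what rules out the Mizohata-type obstruction that can defeat pure energy methods for variable-coefficient third-order operators. Two refinements to your bookkeeping: besides the term generated by $\partial_t\bigl(\cap(v)^\beta\bigr)$, the Leibniz remainders carrying $\partial_x^{s+1}v$ linearly (with coefficients like $\partial_x^2(\cap(v))$) also produce, after one more integration by parts, coefficients involving $v_{xxx}$ --- so there are several sources of the threshold, all resolved by $H^{s-3}\hookrightarrow L^\infty(\R/\Upsilon\Z)$, i.e.\ $s>7/2$; and the hypotheses $p\in{\class}^{s+1}$, $\cap\in{\class}^{s+2}$ are exactly what the Moser-type composition estimates consume, consistently with your remark. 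The back end (local solvability of the $-\varepsilon\partial_x^4$ regularization, $\varepsilon$-uniform weighted estimates with the cross terms absorbed by interpolation, Aubin--Lions, a weighted $L^2$ Gr\"onwall for uniqueness using $\|\partial_x^3v_2\|_{L^\infty}$, and Bona--Smith for strong continuity and continuous dependence) is routine once the uniform estimate is secured. What you have is a correct strategy and a correct identification of every point where the hypotheses enter, though as written it remains a sketch whose heavy steps are asserted rather than carried out.
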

In the special case when $\cap$ is constant, well-posedness is known to hold true with much lower regularity, for example $s > -1/2$ is sufficient for the classical KdV \cite{KPV96}.
However low-regularity results rely on dipsersive effects to control nonlinear terms and are thus strongly model-dependent.
This is notoriously still a field of intense research\footnote{Incidentally we point to the attention of the reader the attempt of keeping track of latest known results --- including local-well posedness proofs --- for various dispersive equations on {\tt http://wiki.math.toronto.edu/DispersiveWiki/}}.

\begin{theorem}\label{thm:qKdVorb}
Under the assumptions of Theorem \ref{thm:qKdV}, 
if ${\bf (H0)}$ is satisfied (in particular if the assumptions of --- and thus --- Proposition \ref{prop:H0} hold true), and if, for some $(\mu,\lambda,\speed)\in \Omega$, 
$${\bf (s)}\quad \action_{\mu\mu}\neq 0\,,\quad \det(\Hess\action)\neq 0\,,\quad \negsign(\Hess\action)\,=\,1\,,$$
is satisfied, then the periodic wave associated with $(\mu,\lambda,\speed)$ is conditionally, orbitally stable in $H^1(\R/\Upsilon\Z)$.
\end{theorem}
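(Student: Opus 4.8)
The plan is to deduce the statement from the abstract orbital-stability Theorem~\ref{thm:orb}, applied with $N=1$, $\HH_\Upsilon=H^1(\R/\Upsilon\Z)$ and $\W_\Upsilon=H^s(\R/\Upsilon\Z)$ for an integer $s>7/2$. First I would dispatch the four structural hypotheses of that theorem. Three of them are essentially in hand: {\bf (H0)} is assumed; {\bf (H2)} was already verified above, through the explicit Sturm--Liouville form of $\linvar=\Hess(\en+\speed\imp)[\ubv]$ and the resulting equivalence of norms on $H^1(\R/\Upsilon\Z)$; and {\bf (H3)} holds with $\W_\Upsilon=H^s(\R/\Upsilon\Z)$ by the local well-posedness result of Theorem~\ref{thm:qKdV}. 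For {\bf (H1)}, the requirement $\Upsilon_\mu=\action_{\mu\mu}\neq0$ is the first clause of {\bf (s)}, and the nonsingularity of $\bc$ follows from the identity $\det(\Hess\action)=\action_{\mu\mu}\det\bc$ --- the $N=1$ instance of~\eqref{eq:detActionC} in Proposition~\ref{prop:ActionC} --- combined with $\det(\Hess\action)\neq0$. So all of {\bf (H0)}--{\bf (H3)} hold.

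It then remains to check the two bullet conditions of Theorem~\ref{thm:orb}. For the kernel of $\linvar$, I would observe that differentiating the profile equation $\Euler(\en+\speed\imp)[\ubv]=\lambda$ in $x$ and in $\mu$ gives $\linvar\ubv_x=0$ and $\linvar\ubv_\mu=0$; but since $\Upsilon_\mu=\action_{\mu\mu}\neq0$, the function $\ubv_\mu$ fails to be $\Upsilon$-periodic and hence does not belong to $L^2(\R/\Upsilon\Z)$. A Sturm--Liouville / oscillation argument (Lemma~\ref{lem:Sturm}) then shows that $0$ is a \emph{simple} periodic eigenvalue of $\linvar$ whose eigenspace is spanned by $\ubv_x$ (which has exactly two zeros per period), and at the same time pins down the negative signature of $\linvar$ from the position of $0$ in the periodic spectrum: $\negsign(\linvar)=1$ when $\action_{\mu\mu}>0$, and $\negsign(\linvar)=2$ when $\action_{\mu\mu}<0$.

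To see that the orbital stability index then vanishes, I would combine this count with Proposition~\ref{prop:ActionC}, which yields $\negsign(\Hess\action)=\negsign(\action_{\mu\mu})+\negsign(-\bc)$; since $\bc$ is a nonsingular $2\times2$ symmetric matrix one has $\negsign(\bc)+\negsign(-\bc)=2$. If $\action_{\mu\mu}>0$, then $\negsign(\Hess\action)=1$ forces $\negsign(-\bc)=1$, so $\negsign(\bc)=1=\negsign(\linvar)$; if $\action_{\mu\mu}<0$, then $\negsign(-\bc)=0$, so $\negsign(\bc)=2=\negsign(\linvar)$. In either case $\negsign(\linvar)=\negsign(\bc)$ --- this is precisely the $N=1$ form of~\eqref{eq:indicevssignAction}, whose right-hand side $\negsign(\Hess\action)-1$ vanishes under {\bf (s)}. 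With both bullet conditions of Theorem~\ref{thm:orb} satisfied, that theorem delivers conditional orbital stability in $H^1(\R/\Upsilon\Z)$, which is the claim.

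The hard part is the Sturm--Liouville step, Lemma~\ref{lem:Sturm}: everything hinges on translating the mere \emph{sign} of the scalar $\action_{\mu\mu}$ into the exact number of negative periodic eigenvalues of the differential operator $\linvar$, and on establishing simplicity of its zero eigenvalue. The delicate point is that $\ubv_\mu$ solves $\linvar\ubv_\mu=0$ yet is periodic only in the degenerate case $\action_{\mu\mu}=0$; the sign of $\action_{\mu\mu}$ controls the secular, non-periodic behavior of this generalized kernel element, and that is exactly what decides whether $0$ sits at the first or the second nonzero eigenvalue in the oscillation ordering. Once this Sturm--Liouville input is secured, the remaining verification is routine bookkeeping of signatures.
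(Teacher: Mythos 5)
Your proposal is correct and follows essentially the same route as the paper's own proof: verify \textbf{(H0)}--\textbf{(H3)} (with \textbf{(H1)} reduced to $\action_{\mu\mu}\det(\Hess\action)\neq 0$ via Proposition \ref{prop:ActionC}), invoke Lemma \ref{lem:Sturm} both for the simplicity of the kernel of $\linvar$ and for the rule $\negsign(\linvar)=1$ or $2$ according to the sign of $\action_{\mu\mu}=\Upsilon_\mu$, and then combine Proposition \ref{prop:ActionC} with $\negsign(\bc)+\negsign(-\bc)=2$ to obtain $\negsign(\Hess\action)=1+\negsign(\linvar)-\negsign(\bc)$, so that \textbf{(s)} makes the orbital stability index vanish and Theorem \ref{thm:orb} applies. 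Your case analysis on the sign of $\action_{\mu\mu}$ is exactly the paper's bookkeeping, merely phrased casewise rather than as a single identity.
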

\begin{proof}
The assumptions {\bf (H2)}-{\bf (H3)} are implied by those of Theorem \ref{thm:qKdV}, 
while, as explained before,  
{\bf (H1)} is equivalent to
$$\action_{\mu\mu} \,\det(\Hess\action)\,\neq \,0\,.$$
So our main assumptions
{\bf (H0)}-{\bf (H1)}-{\bf (H2)}-{\bf (H3)} are met here.

Furthermore, as announced before, the assumption that the kernel of $\linvar$ is spanned by $\ubv_x$ is automatically satisfied. In fact, Lemma \ref{lem:Sturm} stated below applies to the present operator $\linvar$, by using the potential $\Potential=\Potential(\bv;\lambda,\speed)$ introduced at the beginning of this section. We thus infer at the same time that 
the kernel of $\linvar$ in $L^2(\R/\Upsilon\Z)$ is spanned by $\ubv_x$, and how to compute the signature of $\linvar$.
Therefore, to apply Theorem \ref{thm:orb} and thus complete the proof of Theorem \ref{thm:qKdVorb} it suffices to check 
that our assumptions imply $\negsign(\linvar)=\negsign(\bc)$.

To do so, we can actually bypass the computation of $\negsign(\bc)$ by using Proposition \ref{prop:ActionC}. With our current notations, this algebraic proposition shows indeed that 
$$\negsign(\Hess\action)=\negsign(-\bc)\;\mbox{ if }\,\action_{\mu\mu}>0\,,\quad
\negsign(\Hess\action)=\negsign(-\bc)\,+\,1\;\mbox{ if }\,\action_{\mu\mu}<0\,,$$
while we know from Lemma \ref{lem:Sturm} that
$$\negsign(\linvar)= 1\;\mbox{ if }\action_{\mu\mu}=\Upsilon_\mu>0\,,\quad 
\negsign(\linvar)= 2\;\mbox{ if }\action_{\mu\mu}=\Upsilon_\mu<0\,.$$
Therefore, regardless of the sign of $\action_{\mu\mu}$ --- as long as this number is nonzero ---, we have
$$\negsign(\Hess\action)=\negsign(-\bc)\,+\,\negsign(\linvar)\,-\,1\,.$$
Furthermore, since the $2\times 2$ matrix $\bc$ is nonsingular --- as already justified ---, 
$$\negsign(-\bc)=2-\negsign(\bc)\,,$$
so that the previous formula equivalently reads
$$\negsign(\Hess\action)\,=\,1\,+\,\negsign(\linvar)\,-\,\negsign(\bc)\,.$$
This is the announced identity in \eqref{eq:indicevssignAction} in the special case $N=1$.

As a consequence, the stability index $\negsign(\linvar)\,-\,\negsign(\bc)$ vanishes if and only if 
$$\negsign(\Hess\action)\,=\,1\,.$$
In this respect, the set of conditions in ${\bf (s)}$ are the optimal ones enabling us to apply Theorem \ref{thm:orb} to {\rm (qKdV)}.
\end{proof}

\begin{lemma}
\label{lem:Sturm} Assume that $\cap:I\to (0,+\infty)$ and
$\Potential:I\to \R$ are ${\class}^2$ on some open interval $I$ and such that the \emph{Euler--Lagrange} equation
$\Euler \enred  [\bv] =0$ associated with the energy
$$\enred: (\bv,\bv_x)\mapsto \tfrac{1}{2} \cap(\bv) \,\bv_x^2\,+\,\Potential (\bv)$$
admits a family of periodic solutions $\ubv$ taking values in $I$, parametrized by the energy level
$\mu \,=\,\Legendre \enred[\ubv]$, for $\mu\in J$, another open interval. If we denote by $\Upsilon$ the period of  $\ubv$, and assume that $\Upsilon_\mu$, its derivative with respect to $\mu$, does not vanish, then the self-adjoint differential operator $\linvar\,:=\,\Hess \enred[\ubv]$ has the following properties:
\begin{itemize}
\item the kernel of $\linvar$ on $L^2(\R/\Upsilon\Z)$ is the line spanned by $\ubv_x$;
\item the negative  signature $\negsign(\linvar)$ of $\linvar$ is given by the following rule:
\begin{itemize}
\item[$*$] if $\Upsilon_\mu>0$ then $\negsign(\linvar)=1$,
\item[$*$] if $\Upsilon_\mu<0$ then $\negsign(\linvar)=2$.
\end{itemize}
\end{itemize}
\end{lemma}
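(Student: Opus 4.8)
The plan is to treat $\linvar=\Hess\enred[\ubv]$ as a periodic Sturm--Liouville operator and combine Floquet/oscillation theory with the monodromy of the second variational solution. As computed earlier in this section in the verification of {\bf (H2)}, for an energy of the form $\enred=\tfrac12\cap(\ubv)\ubv_x^2+\Potential(\ubv)$ the Hessian reads $\linvar=-\partial_x\,\cap(\ubv)\,\partial_x+\mathcal{Q}$ with $\cap(\ubv)>0$ bounded away from $0$ and a continuous $\Upsilon$-periodic potential $\mathcal{Q}$. Hence its $L^2(\R/\Upsilon\Z)$-spectrum is a real, discrete sequence $\nu_0<\nu_1\le\nu_2<\nu_3\le\cdots\to+\infty$ to which Haupt's oscillation theorem applies: the ground state $\nu_0$ is simple with a nodeless eigenfunction, while $\nu_1,\nu_2$ are the periodic eigenvalues whose eigenfunctions have exactly two zeros per period.

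First I would exhibit the two solutions of the variational ODE $\linvar h=0$. Differentiating the profile equation $\Euler\enred[\ubv]=0$ in $x$ gives $\linvar\ubv_x=0$, and since the orbit of $\ubv$ in the phase plane $\{(\ubv,\dot{\ubv})\}$, with $\dot{\ubv}:=\ubv_x\sqrt{\cap(\ubv)}$, encircles a single center, $\ubv_x$ vanishes exactly twice per period (at the two turning points). Differentiating instead the family $\ubv(\cdot\,;\mu)$ with respect to the energy level $\mu$ produces a second solution $\ubv_\mu$ of $\linvar h=0$. The crucial point is that $\ubv_\mu$ is \emph{not} periodic: differentiating the identity $\ubv(x+\Upsilon;\mu)=\ubv(x;\mu)$ in $\mu$ yields the secular relation $\ubv_\mu(x+\Upsilon)-\ubv_\mu(x)=-\Upsilon_\mu\,\ubv_x(x)$. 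Since $\Upsilon_\mu\neq0$ by hypothesis, no nontrivial combination $a\ubv_x+b\ubv_\mu$ with $b\neq0$ is $\Upsilon$-periodic, so the space of periodic solutions of $\linvar h=0$ — that is, $\ker\linvar$ on $L^2(\R/\Upsilon\Z)$, by elliptic regularity — is exactly the line $\R\ubv_x$. This settles the first bullet.

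For the signature, the two-zero eigenfunction $\ubv_x$ together with the one-dimensionality of the kernel forces $0\in\{\nu_1,\nu_2\}$ with $\nu_1<\nu_2$ (the coexistence $\nu_1=\nu_2=0$ being excluded). As $\nu_0<0$ always, this already yields $\negsign(\linvar)\in\{1,2\}$, the value being $1$ if $0=\nu_1$ and $2$ if $0=\nu_2$. To decide which, I would pass to the monodromy matrix $M(\nu)$ of $\linvar h=\nu h$ (written as a first-order system in $(h,\cap(\ubv)h_x)$) and its discriminant $\Delta(\nu):=\mathrm{tr}\,M(\nu)$; periodic eigenvalues are the solutions of $\Delta=2$, and at a band edge the expansion $\rho_\pm(\nu)\approx1\pm\sqrt{\Delta'(0)\,\nu}$ of the Floquet multipliers shows that $0=\nu_1$ (lower edge, gap on the right) when $\Delta'(0)>0$, and $0=\nu_2$ (upper edge) when $\Delta'(0)<0$. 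In the basis $(\ubv_x,\ubv_\mu)$ the secular relation says $M(0)$ is conjugate to a single unit Jordan block with off-diagonal entry $-\Upsilon_\mu$; quantitatively, the variation-of-parameters formula $\Delta'(0)=\mathrm{tr}\big(M(0)\int_0^\Upsilon\Phi^{-1}\,\partial_\nu A\,\Phi\,\dif x\big)$ evaluates, after inserting $\ubv_x$ and $\ubv_\mu$ and using the normalization $\ubv_x(0)=0$, to a positive multiple of $\Upsilon_\mu$. Hence $\mathrm{sign}\,\Delta'(0)=\mathrm{sign}\,\Upsilon_\mu$, giving $\negsign(\linvar)=1$ when $\Upsilon_\mu>0$ and $\negsign(\linvar)=2$ when $\Upsilon_\mu<0$.

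I expect the main obstacle to be precisely this last sign bookkeeping: translating ``$\ubv_\mu$ has secular coefficient $-\Upsilon_\mu$'' into the correct sign of $\Delta'(0)$, and thereby into the correct position of $0$ among $\{\nu_1,\nu_2\}$. The oscillation count and the kernel computation are robust, but the orientation of the band edge relative to $\mathrm{sign}\,\Upsilon_\mu$ must be tracked carefully through the monodromy computation, including the effect of the weight $\cap(\ubv)$ — which is most transparently handled in the $\cap$-independent phase-plane variable $\dot{\ubv}$ introduced earlier in this section.
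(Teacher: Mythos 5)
Your proposal is correct and follows essentially the same route as the paper's proof: the kernel is identified through the pair $\ubv_x$, $\ubv_\mu$ and the secular relation $\ubv_\mu(\cdot+\Upsilon)-\ubv_\mu=-\Upsilon_\mu\,\ubv_x$; the placement of $0$ among the periodic eigenvalues follows from oscillation theory; and the decisive sign of the discriminant derivative $\disc'(0)$ is obtained by the same variation-of-parameters (Duhamel) computation applied to the two explicit solutions, giving ${\rm sign}\,\disc'(0)={\rm sign}\,\Upsilon_\mu$. The ``sign bookkeeping'' you flag as the main obstacle is precisely what the paper's appendix carries out in detail, by normalizing the Wronskian to $1$ via the $\mu$-differentiated energy relation and evaluating, at a turning point where $\ubv_x(0)=0$, the quantity $\cap(\ubv(0))^2\,\ubv_{xx}(0)^2\,\Upsilon_\mu$, which is indeed a positive multiple of $\Upsilon_\mu$ as you asserted.
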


Since $$\linvar=- \partial_x \cap(\ubv) \partial_x + \Potential''(\ubv)   + \tfrac{1}{2} \cap''(\ubv) \ubv_x^2   - \partial_x(\cap'(\ubv) \ubv_{x})\,,$$
is a \emph{Sturm--Liouville} operator with periodic coefficients, the main argument in the proof of Lemma \ref{lem:Sturm} relies on Sturm's oscillation theorem, as for instance in Lemma 1 in \cite{BronskiJohnsonKapitula}, which concerns the case of a constant $\cap$. The detailed proof is postponed to Appendix \ref{app:Sturm}.

Note that since $\Hess\action$ is a $3\times 3$ matrix, its determinant cannot be positive if $\negsign(\Hess\action)=1$. So we could equivalently replace the second condition in {\bf (s)} by
$\det(\Hess\action)<0$.

If the first two conditions in {\bf (s)} are readily amenable to computations, the verification of the third one, $\negsign(\Hess\action)=1$ demands some slightly more sophisticated algebraic work. It can be interesting to have more explicit conditions, in particular to compare with earlier results. If we assume moreover that $2\times 2$ determinant $\action_{\lambda\lambda} \action_{\mu\mu}-\action_{\lambda\mu}^2$ does not vanish, we can see by an elementary count of sign changes in the principal minors of $\Hess \action$  that {\bf (s)} stems from having either one of the following sets of conditions
\begin{itemize}
\item[{\bf (s1)}]
$\action_{\mu\mu}>0\,,\quad \left|\begin{array}{cc}
\action_{\lambda\lambda} & \action_{\mu\lambda} \\
\action_{\lambda\mu} & \action_{\mu\mu} \end{array}\right|\neq 0\,,\quad \det(\Hess\action)<0\,,$
\item[{\bf (s2)}]
$\action_{\mu\mu}<0\,,\quad \left|\begin{array}{cc}
\action_{\lambda\lambda} & \action_{\mu\lambda} \\
\action_{\lambda\mu} & \action_{\mu\mu}
\end{array}\right| <0\,,\quad \det(\Hess\action)<0\,.$
\end{itemize}
For convenience, the reader may refer to  Table \ref{tb:signs} in Appendix \ref{app:signs}, in which 
{\bf (s1)} corresponds to the second and third rows, and {\bf (s2)} to the $7$th.
From the same table we see that, in terms of the constraint matrix $\bc$,
{\bf (s1)} corresponds to $\negsign(\bc)=1$, and {\bf (s2)} to $\negsign(\bc)=2$. 
Note however that our conditions in {\bf (s)} are slightly more general than the prescription of ({\bf (s1)} or {\bf (s2)}), in that they
do not require $\action_{\lambda\lambda} \action_{\mu\mu}-\action_{\lambda\mu}^2\neq 0$.

In the special case when $\cap$ is constant, Theorem \ref{thm:qKdVorb} was essentially already known in the case $\action_{\lambda\lambda} \action_{\mu\mu}-\action_{\lambda\mu}^2\neq 0$, and proved in a slightly different manner in \cite{BronskiJohnsonKapitula}. Indeed, orbital stability with respect to co-periodic perturbations is essentially a consequence of \cite[Theorem 1]{BronskiJohnsonKapitula}, under the assumption
$$\action_{\mu\mu}\neq 0\,,\; \left|\begin{array}{cc}
\action_{\lambda\lambda} & \action_{\mu\lambda} \\
\action_{\lambda\mu}& \action_{\mu\mu}\end{array}\right| \neq 0\,,
\quad \det(\Hess\action)\neq 0\,,
\quad \negsign(\Hess\action)=1\,,$$
which is  equivalent to ({\bf (s1)} or {\bf (s2)}).
An earlier, similar result was shown by Johnson \cite{Johnson}, under the more restrictive assumption
\begin{equation}
\label{eq:Johnson}
\action_{\mu\mu}>0\,,\quad\left|\begin{array}{cc}
\action_{\lambda\lambda} & \action_{\mu\lambda} \\
\action_{\lambda\mu} & \action_{\mu\mu}
\end{array}\right| <0\,,\quad \det(\Hess\action)<0\,.
\end{equation}
We should mention that these results by Bronski \emph{et al} dealing with (gKdV) --- and not its quasilinear version (qKdV) --- yield genuine orbital stability, and not only conditional orbital stability. This is because the Cauchy problem for (gKdV) is much better understood that for (qKdV) with a nonconstant $\cap$.

Remarkably enough, Table \ref{tb:signs} in Appendix \ref{app:signs} shows that the matrix $\Hess\action$ cannot be nonnegative. Indeed,
by our identity on negative signatures (see Eq.~\eqref{eq:negsign}), we must have $\negsign(\linvar)\geq \negsign(\bc)$, so that the --- purely algebraic --- situation in the first row of Table \ref{tb:signs} cannot occur.

\subsection{Euler--Korteweg}\label{ss:EK}
\subsubsection{Eulerian coordinates \emph{vs} mass Lagrangian coordinates}\label{ss:EKLEKE}
Before investigating how to apply Theorem \ref{thm:orb} to the Euler--Korteweg system,
let us come back to the equivalence between its formulation in Eulerian coordinates {\rm (EKE)}, and its formulation in mass Lagrangian coordinates (EKL). This equivalence works as long as we deal with states away from vacuum, and more precisely, with densities $\rho$ that are bounded and bounded by below by some positive constant. It is based on the fact that the continuity equation 
\begin{equation}
\label{eq:cont}
\partial_t\rho+\partial_x(\rho\vits)=0
\end{equation}
is equivalent --- for $x\in \R$, and $t$ in some interval too ---  to the existence of a function $Y=Y(x,t)$ such that $\partial_xY=\rho$ and $\partial_tY=-\rho\vits$. Denoting by $(y=Y(x,t),s=t)$ the new coordinates obtained this way,
and introducing 
$$\vol(y,s)=1/\rho(x,t)\,, \; \vitsL(y,s)=\vits(x,t)\,,$$
we see that {\rm (EKE)} and {\rm (EKL)} are equivalent provided that 
$\En(\rho,\rho_x)=\rho\en(\vol,\vol_y)$. (Here we focus on smooth solutions, but this equivalence is also known to hold true for weak solutions when $\En$ depends only on $\rho$, that is, for the usual Euler equations.)
The change of coordinates $(x,t,\rho,\vits)\mapsto (y,s,\vol,\vitsL)$ is clearly nonlinear, and also nonlocal since the new `independent variable' $y$ actually depends on the integration of the dependent variables $\rho$ and $\rho\vits$. Nevertheless, there is also an equivalence between travelling wave solutions of {\rm (EKE)} and (EKL).
As shown in \cite{SBG-DIE}, $(\rho,\vits)=(\urho,\uvits)(x-\sigma t)$ is a travelling wave solution to {\rm (EKE)} if and only if $(\vol,\vitsL)=(\uvol,\uvitsL)(y+j t)$ is a travelling wave solution to {\rm (EKE)}, along with
$$\urho(\uvits-\sigma)\equiv j\,,\quad \urho(x)=1/\uvol(\uY(x)))\,,\quad\uvits(x)=\uvitsL(\uY(x))\,,\quad\uY'(x)=\urho(x)\,,$$
or, equivalently,
$$\uvitsL-j \uvol\equiv \sigma\,,\quad \uvol(y)=1/\urho(\uX(y)))\,,\quad\uvitsL(y)=\uvits(\uX(y))\,,\quad\uX'(y)=\uvol(y)\,.$$
In particular, if $(\urho,\uvits)$ is $\Xi$-periodic then $(\uvol,\uvitsL)$ is $\Upsilon$-periodic with
$$\Upsilon=\int_{0}^{\Xi}\urho(x) \dif x\,,\qquad \Xi=\int_{0}^{\Upsilon}\uvol(y) \dif y\,.$$
Furthermore, we can see that the abbreviated action integral does not depend on the chosen formulation. Before checking this, let us perform preliminary work on profile equations.
The system {\rm (EKE)} fits our abstract framework with $$\bU=\left(\begin{array}{c} \rho \\ \vits \end{array}\right)\,,\;\Ham[\bU]= \En(\rho,{\rho}_x) + \tfrac{1}{2} \rho\vits^2\,,\;\Impulse(\bU)=-\rho\vits\,,$$
so that the profile equations in \eqref{eq:EL}-\eqref{eq:ELham} can be written as
\begin{equation}\label{eq:profEKE}
\left\{\begin{array}{l}(\partial_\rho\En)(\urho,\urhox) - ((\partial_{\rho_x}\En)(\urho,\urhox))_x + \tfrac{1}{2} \uvits^2 -\sigma\uvits \,=\,-\lambda\,,\\
\urho\uvits-\sigma \urho\,=\,j\,,\\
\urhox\,(\partial_{\rho_x}\En)(\urho,\urhox)\,-\,\En(\urho,\urhox)\,-\,\tfrac{1}{2} \urho\uvits^2\,+\,\sigma\,\urho\,\uvits\,-\,\lambda\,\urho\,+\,j\,\uvits\,=\,\mu\,.
\end{array}\right.
\end{equation}
The notational choice for the Lagrange multiplier in the right-hand side of the second equation here above is dictated by the relation we already have in mind, and we denote by $-\lambda$ the first Lagrange multiplier for convenience, because it is going to be identified with the energy level in the EKL profile equations.  As a matter of fact,
it results from Theorem 1 in \cite{SBG-DIE} that the profile equations in Eulerian coordinates, as written above, are equivalent to
\begin{equation}\label{eq:profEKL}
\left\{\begin{array}{l}(\partial_\vol\en)(\uvol,\uvoly) - ((\partial_{\vol_x}\en)(\uvol,\uvoly))_y- j \uvitsL \,=\,-\mu\,,\\
\uvitsL- j  \uvol\,=\,\sigma\,,\\
\uvoly\,(\partial_{\vol_y}\en)(\uvol,\uvoly)\,-\,\en(\uvol,\uvoly)\,-\,\tfrac{1}{2} \uvitsL^2\,+\,j\,\uvol\,\uvitsL\,-\,\mu\,\uvol\,+\,\sigma\,\uvitsL\,=\,\lambda\,.
\end{array}\right.
\end{equation}
We recognize here the profile equations for (EKL), which fits our abstract framework with $$\bU=\left(\begin{array}{c} \vol \\ \vitsL \end{array}\right)\,,\qquad\Ham[\bU]= \en(\vol,\vol_y) + \tfrac{1}{2} \vitsL^2\,,\qquad\Impulse(\bU)=\vol\vitsL\,.$$
So there is an almost perfect symmetry in these profile equations, where we can see that 
$j$ and $\sigma$ exchange their roles, as well as $\lambda$ and $\mu$, when we go from {\rm (EKE)} to {\rm (EKL)} or \emph{vice versa}. This is why, even though this might seem confusing at first glance, we avoid introducing any additional piece of notation.
The fact that we have a single abbreviated action integral for both {\rm (EKE)} and {\rm (EKL)} is now clear because 
the change of variables is such that $\dif y=\urho \dif x$, or equivalently $\dif x=\uvol \dif y$, so that
\begin{equation}\label{eq:singleaction}
\Action(\mu,\lambda,j,\sigma)\begin{array}[t]{l}\,:=\,\displaystyle\int_{0}^{\Xi} 
(\En(\urho,\urhox)+\tfrac{1}{2} \urho\uvits^2 - \sigma \urho\uvits + \lambda \urho - j \uvits + \mu)\,\dif x\\ [8pt]
\,=\,\displaystyle\int_{0}^{\Upsilon} 
(\en(\uvol,\uvoly)+\tfrac{1}{2} \uvitsL^2 - j \uvol\uvitsL + \mu \uvol - \sigma \uvitsL + \lambda)\,\dif y\,.\end{array}
\end{equation}
The third of profile equations in \eqref{eq:profEKE} and \eqref{eq:profEKL} show that $\Action$ is indeed the abbreviated action for both of them.
In addition, we have the following correspondence between the `abstract parameters' $(\mu,\blambda,\speed)$, as introduced in profile equations \eqref{eq:EL}-\eqref{eq:ELham} and used in the abbreviated action \eqref{eq:Theta}, and the `practical parameters' $(\mu,\lambda,j,\sigma)$ for {\rm (EKE)} and (EKL).

\begin{table}[H]
\begin{center}\begin{tabular}{|c|c|c|c|c|} \hline
abstract & $\mu$ & $\lambda_1$ & $\lambda_2$ & $\speed$ \\ \hline
EKE & $\mu$ & $-\lambda$ & $j$ & $\sigma$ \\ \hline
EKL & $\lambda$ & $-\mu$ & $\sigma$  & $- j$ \\ \hline
\end{tabular}
\end{center}
\caption{Notation for wave parameters}
\label{tb:notations}
\end{table}

\begin{remark}
\label{rem:parameters}
\textup{
On the one hand, the occurrence of minus signs in Table \ref{tb:notations}
is not a real issue for applying our theory to {\rm (EKE)} and (EKL),  because both the quadratic form associated with $\Hess\Action$ and the one associated with the constraint matrix $\bC$ are invariant under the symmetry $\nu\mapsto -\nu$ in any dependent variable $\nu$ of $\Action$, and these quadratic forms are, together with the derivatives $
\Xi_\mu$ and $
\Upsilon_\lambda$, the only objects which govern our stability criteria.
On the other hand, the fact that the roles of parameters change when we go from {\rm (EKL)} to {\rm (EKE)} will have to be addressed carefully.}
\end{remark}

The invariance of spectral stability properties when going from Eulerian coordinates to mass Lagrangian coordinates, even though it seems very natural, is not obvious either as the original conjugacy occurs through the change of coordinates $(x,t,\rho,\vits)\mapsto (y,s,\vol,\vitsL)$ that is \emph{nonlinear} and \emph{nonlocal}.
Nonetheless, there is a kind of `conjugacy' between systems that are obtained by linearizing {\rm (EKE)} and (EKL),  in moving frames, about $(\urho,\uvits)$ and $(\uvol,\uvitsL)$ respectively. This in turn enables us to prove that the existence of an unstable mode in either one of these systems implies so for the other. In order to point out this conjugacy, we first reformulate {\rm (EKE)} and {\rm (EKL)} in moving frames associated with the waves. Let $\sigma$ and $j$ be fixed, and consider the new dependent variables
$$q:=\rho(\vits-\sigma)-j\,,\quad z:=\vitsL-j\vol-\sigma\,,$$
and the new independent variables
$$\xi:=x-\sigma t\,,\quad\zeta:=y+js\,.$$
Then the system {\rm (EKE)} is equivalent to
\begin{equation*}
\mbox{(EKE$j$)}\quad \left\{\begin{array}{l}\partial_t\rho +\partial_x q\,=\,0\,,\\ [5pt]
\partial_t q +\partial_x \Phi \,=\,0\,,\quad \Phi=\Phi(\rho,\rho_x,\rho_{xx},q;j):=(q+j)^2/\rho+\rho \Euler\En[\rho]+\Legendre\En[\rho]\,,
\end{array}\right.
\end{equation*}
where $\partial_t$ now denotes the partial derivative at constant $\xi$, and for convenience we have substituted again $x$ for $\xi$.
The second equation in {\rm (EKE$j$)} here above comes from the conservation law for the impulse, as in \eqref{eq:impulsecl}, which reduces here to
$$\partial_t(\rho\vits)+\partial_x(\rho \vits^2+ \rho \,\Euler\En[\rho]+  \Legendre \En[\rho])\,=\,0\,,$$
from which we have subtracted $\sigma$ times the continuity equation \eqref{eq:cont}.
Similarly, the system {\rm (EKL)} is found to be equivalent to
\begin{equation*}
\mbox{(EKL$j$)}\quad \left\{\begin{array}{l}\partial_s{\vol} \,=\,\partial_y z\,,\\ [5pt]
\partial_s z \,=\, \partial_y\Psi\,,\quad \Psi=\Psi(\vol,\vol_y,\vol_{yy},z;j):= \Euler \en [\vol] - 2 j z - j^2 \vol\,.
\end{array}\right.
\end{equation*}
Here above, $\partial_s$ stands for the partial derivative at constant $\zeta$, and for convenience we have substituted again $y$ for $\zeta$. Of course, as is the case for {\rm (EKE)} and (EKL), systems {\rm (EKE$j$)} and {\rm (EKL$j$)} are equivalent, as long as smooth solutions with positive and bounded densities $\rho$ and volumes $\vol$ are concerned and provided that 
$\En(\rho,\rho_x)=\rho\en(\vol,\vol_y)$. More precisely, the change of coordinates $(x,t,\rho,q)\mapsto (y,s,\vol,\vitsL)$ is given by
$$\dif y = \rho \dif x - q \dif t\,,\quad s=t\,,\quad\vol(y,s)=1/\rho(x,t)\,, \quad z(y,s)=q(x,t)/\rho(x,t)\,,$$
or equivalently by
$$\dif x = \vol \dif y + z \dif s\,,\quad t=s\,,\quad\rho(x,t)=1/\vol(y,s)\,, \quad q(x,t)=z(y,s)/\vol(y,s)\,.$$
By construction of {\rm (EKE$j$)} and {\rm (EKL$j$)}, the travelling wave solutions to {\rm (EKE)} and {\rm (EKL)} considered above become \emph{stationary} solutions  to {\rm (EKE$j$)} and {\rm (EKL$j$)}, which read $(\urho,0)$ and $(\uvol,0)$ respectively.
We can now show the following.

\begin{theorem}\label{thm:conj}
Assume that $\urho$ and $\uvol$ are smooth functions on $\R$, bounded and bounded by below by positive constants, related by
$$\urho(x)=1/\uvol(\uY(x)))=\uY'(x)\,,$$
and such that $(\urho,0)$ and  $(\uvol,0)$ are stationary solutions to {\rm (EKE$j$)} and {\rm (EKL$j$)} respectively, for some real number $j$. Then by linearizing {\rm (EKE$j$)} and {\rm (EKL$j$)} about $(\urho,0)$ and  $(\uvol,0)$ respectively, we receive systems whose spectra are identical.
\end{theorem}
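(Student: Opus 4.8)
The plan is to linearize the nonlinear, nonlocal change of variables $(x,t,\rho,q)\mapsto(y,s,\vol,z)$ recalled just before the statement, producing an \emph{explicit linear isomorphism} $T$ between the two spaces of perturbations, and then to exploit the principle that the linearization of a conjugacy of flows is a conjugacy of the linearized generators. Since the change of variables leaves time untouched ($s=t$), a temporal mode $\ee^{\varpi t}$ is carried to a mode $\ee^{\varpi s}$ with the \emph{same} growth rate $\varpi$, so equality of the spectra of the linearized generators $L_E$ (for (EKE$j$)) and $L_L$ (for (EKL$j$)) follows as soon as $T$ is invertible and intertwines them.

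First I would linearize the coordinate map. Writing the perturbed Lagrangian coordinate at first order as $y=\uY(x)+\phi(x,t)$ and inserting this into $\dif y=\rho\,\dif x-q\,\dif t$ with $(\rho,q)=(\urho+\dot\rho,\dot q)$ gives $\phi_x=\dot\rho$ and $\phi_t=-\dot q$; these are compatible precisely because of the linearized continuity equation $\partial_t\dot\rho+\partial_x\dot q=0$, so $\phi$ is a well-defined potential ($\phi=-\dot q/\varpi$ on a mode with $\varpi\neq0$, equivalently an $x$-antiderivative of $\dot\rho$). Transporting the dependent variables, I set $V(x,t):=\vol(y(x,t),t)=1/\rho$ and $Z(x,t):=z(y(x,t),t)=q/\rho$, so that $\dot V=-\dot\rho/\urho^2$ and $\dot Z=\dot q/\urho$; comparing these with the expansions of $V=\vol(\uY+\phi,t)$ and $Z=z(\uY+\phi,t)$, and using $\uvol'(\uY)=-\urho'/\urho^3$ together with $z\equiv0$ at the base, yields the transformation $T$,
\begin{equation*}
\dot z(\uY(x),t)=\frac{\dot q(x,t)}{\urho(x)}\,,\qquad
\dot\vol(\uY(x),t)=-\frac{\dot\rho(x,t)}{\urho(x)^2}+\frac{\urho'(x)}{\urho(x)^3}\,\phi(x,t)\,.
\end{equation*}
The appearance of $\phi$ — the nonlocal antiderivative of $\dot\rho$ — is the whole subtlety: it encodes the motion of the Lagrangian grid induced by the perturbation, and is exactly the \emph{nontrivial} part of the conjugacy.

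Next I would establish the intertwining. The conceptually clean route is to recall that the nonlinear map sends solutions of (EKE$j$) to solutions of (EKL$j$) and is a bijection away from vacuum; differentiating this statement along a one-parameter family of (EKE$j$)-solutions through $(\urho,0)$ whose derivative at the base solves the linearized equation produces, by the chain rule and the formulas above, a solution of linearized (EKL$j$), i.e.\ $T L_E=L_L T$. For a self-contained check I would instead substitute the formulas for $\dot\vol,\dot z$ into the linearization of (EKL$j$) and reduce it to that of (EKE$j$) using the profile relations \eqref{eq:profEKE}--\eqref{eq:profEKL}, the identities $\urho=\uY'=1/\uvol(\uY)$, and $\partial_y=\urho^{-1}\partial_x$ for the change of spatial variable; the $\phi$-terms are precisely what cancels the contribution of $\partial_y$ acting on the moved grid. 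To conclude, the pointwise linear map relating $(\dot\rho,\dot q)$ to $(\dot\vol,\dot z)$ is invertible (its determinant involves only $\urho>0$) and $y=\uY(x)$ descends to a diffeomorphism $\R/\Xi\Z\to\R/\Upsilon\Z$ with $\uY'=\urho$ bounded and bounded below, so $T$ is an isomorphism mapping $\Xi$-periodic Eulerian data to $\Upsilon$-periodic Lagrangian data and respecting the Floquet structure; since $s=t$ it sets up a bijection between eigenpairs of $L_E$ and $L_L$ at each fixed $\varpi$, giving the asserted equality of spectra. The zero mode $\varpi=0$ (where $\phi$ is fixed only up to an additive constant) is handled separately from the explicit profile derivatives, consistently with $\Xi_\mu,\Upsilon_\lambda\neq0$.

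The hard part will be the bookkeeping in this linearization — getting the $\phi$-contribution and the substitution $\partial_y=\urho^{-1}\partial_x$ exactly right so that the intertwining closes — together with verifying invertibility of $T$ and the matching of periods and Floquet data. Everything downstream (preservation of $\varpi$ from $s=t$, and the passage from intertwining to equal spectra) is essentially formal.
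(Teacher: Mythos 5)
Your proposal is correct and follows essentially the same route as the paper's proof: you linearize the nonlinear, nonlocal coordinate change by differentiating a one-parameter family of (EKE$j$)-solutions through the profile, identify the same key nonlocal ingredient $\phi=\dot Y=-\dot q/\nu$ (made explicit mode-by-mode via the linearized continuity equation), and arrive at exactly the correspondence formulas \eqref{eq:ELdiffr}--\eqref{eq:ELdiffi} of the paper, together with the same checks of invertibility, Floquet-exponent preservation under $\uX(y+\Upsilon)-\uX(y)=\Xi$, and the separate treatment of the zero mode by translation invariance.
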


\begin{proof}
Let us simply call {\rm ($\ell$E$j$)} and {\rm ($\ell$L$j$)} the linearized systems of {\rm (EKE$j$)} and {\rm (EKL$j$)} about $(\urho,0)$ and  $(\uvol,0)$ respectively, and denote them in abstract form as
$$\mbox{\rm ($\ell$E$j$)} \quad \partial_t \left(\begin{array}{c}\dot\rho \\ \dot{q} \end{array}\right)= \Linb_E \left(\begin{array}{c}\dot\rho \\ \dot{q} \end{array}\right)\,,$$
$$\mbox{\rm ($\ell$L$j$)} \quad \partial_s \left(\begin{array}{c}\dot\vol \\ \dot{z} \end{array}\right)= \Linb_L \left(\begin{array}{c}\dot\vol \\ \dot{z} \end{array}\right)\,.$$
Our aim is to show that 
 the differential operators $\Linb_E$ and $\Linb_L$ are  isospectral.
By translation invariance of  {\rm ($\ell$E$j$)} and {\rm ($\ell$L$j$)}, we already know that $\nu=0$ is an eigenvalue of both $\Linb_E$ and $\Linb_L$, associated eigenvectors being
$\transp{(\urhox,0)}$ and $\transp{(\uvoly,0)}$ respectively. 
From now on, we take a nonzero complex number $\nu$, and aim at showing that it belongs to the spectrum of  
$\Linb_E$ if and only if it belongs to the spectrum of $\Linb_L$. By spectrum of the operator $\Linb_E$, which is a differential operator in $x$ with $\Xi-$periodic coefficients, we mean the whole spectrum in the space of square integrable functions, which is known to be the collection of complex numbers $\nu$ such that there is a nontrivial $(\dot\rho,\dot{q})$ satisfying
\begin{equation}\label{eq:spe}
\Linb_E \left(\begin{array}{c}\dot\rho \\ \dot{q} \end{array}\right) = \nu \left(\begin{array}{c}\dot\rho \\ \dot{q} \end{array}\right)\,,\quad\exists \alpha\in \R\,,\;(\dot\rho,\dot{q})(x+\Xi) = \ee^{i \alpha} (\dot\rho,\dot{q})(x)\,,\;\forall x\in \R\,.
\end{equation}
Here above, $\alpha$ is called a Floquet exponent. Note that the spectrum of $\Linb_E$ in the space of square integrable $\Xi$-periodic functions corresponds to those $\nu$ for which $\alpha=0$. This is the case for $\nu=0$, since $(\urhox,0)$ is $\Xi$-periodic. Of course, the spectrum of $\Linb_L$ enjoys a similar characterization, which is the existence of a nontrivial $(\dot\vol,\dot{z})$ such that
\begin{equation}\label{eq:spl}
\Linb_L \left(\begin{array}{c}\dot\vol \\ \dot{z} \end{array}\right) = \nu \left(\begin{array}{c}\dot\vol\\ \dot{z} \end{array}\right)\,,\quad\exists \alpha\in \R\,,\;(\dot\vol,\dot{z})(y+\Upsilon) = \ee^{i \alpha} (\dot\vol,\dot{z})(y)\,,\;\forall y\in \R\,.
\end{equation}
The idea is to show a one-to-one correspondence between nontrivial $(\dot\rho,\dot{q})$ satisfying \eqref{eq:spe}
and nontrivial $(\dot{\vol},\dot{z})$ satisfying \eqref{eq:spl}, for the very same values of $(\nu,\alpha)$ with $\nu\neq 0$. This can be done by first returning to nonlinear systems. 

Solving  the original, nonlinear system {\rm (EKE$j$)} for some perturbation of $\urho$ that is parametrized by say $\epsilon$ as initial data, we receive a family of solutions $(\rho,q)=(\rho,q)(x,t;\epsilon)$ of {\rm (EKE$j$)} parametrized by $\epsilon$ such that
$$(\rho,q)(x,t;0)= (\urho(x),0)\,,$$
and that $(\dot\rho,\dot{q}):(x,t)\mapsto (\rho_\epsilon,q_\epsilon)(x,t;0)$ solves {\rm ($\ell$E$j$)}.
Furthermore, introducing $Y=Y(x,t;\epsilon)$ such that $Y(x,t;0)=\uY(x)$ and 
$\partial_xY=\rho$, $\partial_tY=-q$, we have 
\begin{equation}\label{eq:ELcoord} 
\rho(x,t;\epsilon)=1/\vol(Y(x,t;\epsilon),t;\epsilon)\,, \; q(x,t)=z(Y(x,t;\epsilon),t;\epsilon)/\vol(Y(x,t;\epsilon),t;\epsilon)
\end{equation}
where $(\vol,z)=(\vol,z)(y,s;\omega)$ is a family of solutions to  {\rm (EKL$j$)} parametrized by $\omega$. (We use here different notations for parameters $\epsilon$ and $\omega$ for the same reason as for times $t$ and $s$, that is, in order to avoid confusion about partial derivatives.) This implies that 
$$(\vol,z)(y,s;0)= (\uvol(y),0)\,,$$
and that $(\dot{\vol},\dot{q}):(y,s)\mapsto (\vol_\omega,q_\omega)(y,s;0)$ solves {\rm ($\ell$L$j$)}. 
Assume moreover that the family $(\rho,q)(\cdot,0;\epsilon)$ is chosen such that 
$(\dot{\rho}_0,\dot{q}_0)={(\rho_\epsilon,q_\epsilon)}(\cdot,0;0)$  
satisfies \eqref{eq:spe}.
Then
$$(\dot\rho,\dot{q})=\ee^{\nu t} (\dot{\rho}_0,\dot{q}_0)\,,$$
and we claim that, similarly,
$$(\dot\vol,\dot{z})=\ee^{\nu s} (\dot{\vol}_0,\dot{z}_0)\,,$$
with $(\dot{\vol}_0,\dot{z}_0)={(\vol_\omega,z_\omega)}(\cdot,0;0)$ satisfying \eqref{eq:spl}.

In order to prove that claim, let us first note that, by the chain rule applied to \eqref{eq:ELcoord},
\begin{equation}\label{eq:ELdiff}\dot\rho(x,t)=-(\dot\vol(\uY(x),t)+\dot{Y}(x,t)\, \uvoly(\uY(x)))/\uvol(\uY(x))^2\,,\dot{q}(x,t)=\dot{z}(\uY(x),t)/\uvol(\uY(x))\,,\end{equation}
where $\dot{Y}(x,t):=Y_\epsilon(x,t;0)$. Now, by differentiating $\partial_xY=\rho$, $\partial_tY=-q$, we get that
$\dot{Y}_x=\dot{\rho}$, $\dot{Y}_t=-\dot{q}$. By the first row in \mbox{\rm ($\ell$E$j$)} and the fact that $(\dot\rho,\dot{q})$ depends on $t$ as a linear function of $\ee^{\nu t}$, we have $\nu \dot{\rho} = - \dot{q}_x$, and therefore, we find that $\dot{Y}=-\dot{q}/\nu$. This relation is the key to the claimed conjugacy, because it enables us to rewrite \eqref{eq:ELdiff} as
\begin{equation}\label{eq:ELdiffr}\dot{\vol}(y,s)=-\uvol(y)^2 \dot{\rho}(\uX(y),s) + \dot{q}(\uX(y),s)\, \uvoly(y)/\nu\,,\;
\dot{z}(y,s)= \uvol(y) \dot{q}(\uX(y),s)\,,\end{equation}
with $\uX=\uY^{-1}$.
Equation \eqref{eq:ELdiffr} obviously implies that
$$\dot{\vol}(y,s)=\ee^{\nu s}\,(-\uvol(y) \dot{\rho}_0(\uX(y)) + \dot{q}_0(\uX(y))\, \uvoly(y)/\nu) = \ee^{\nu s} \dot{\vol}_0(y)\,,$$
$$\dot{z}(y,s)= \ee^{\nu s}( \uvol(y) \dot{q}_0(\uX(y)) = \ee^{\nu s} \dot{z}_0(y)\,.$$
In addition, we observe that
$\dot{\vol}$, $\dot{z}$ given by \eqref{eq:ELdiffr} in terms of $\uvol$, $\uX$, $\dot{\rho}$, $\dot{q}$, and $\nu$,
are bounded functions of $y$ if $\dot{\rho}$, $\dot{q}$ are bounded functions of $x$ --- because $\uvol$ and its derivatives are bounded. Furthermore,  $\dot{\vol}$, $\dot{z}$ are $\Upsilon$-periodic in $y$ if $\dot{\rho}$, $\dot{q}$ are $\Xi$-periodic in $x$ --- because $\uX(y+\Upsilon)-\uX(y)=\Xi$, and more generally, 
if $(\dot\rho,\dot{q})(x+\Xi,t) = \ee^{i \alpha} (\dot\rho,\dot{q})(x,t)$ for all $x,t$,
then 
$(\dot\vol,\dot{z})(y+\Upsilon,s) = \ee^{i \alpha} (\dot\vol,\dot{z})(y,s)$ for all $y,s$.

Similar computations can be performed in the other way round. Indeed, we can find $(\dot\vol,\dot{z})$ solving {\rm (EKL$j$)} from $(\dot\rho,\dot{q})$ solving {\rm (EKE$j$)} through the following formula, analogous to \eqref{eq:ELdiff},
\begin{equation}\label{eq:ELdiffi}\dot\vol(y,s)=-(\dot\rho(\uX(y),s)+\dot{X}(y,s)\, \urhox(\uX(y)))/\urho(\uX(y))^2\,,\dot{z}(y,s)=\dot{q}(\uX(y),s)/\urho(\uX(y))\,,\end{equation}
and the key relation is $\dot{X}=\dot{z}/\nu$. Using in addition that $\urho=1/\uvol$, $\urhox/\urho^2=-\uvoly/\uvol$, we thus see that, as expected, \eqref{eq:ELdiffi} is equivalent to \eqref{eq:ELdiffr}. 
\end{proof}

\begin{remark}\label{rem:conj}
\textup{
Theorem \ref{thm:conj} shows in particular that the operators just obtained by linearizing {\rm (EKE)} and {\rm (EKL)} in moving frames, but in the `original' dependent variables $(\rho,\vits)$ and $(\vol,\vitsL)$,
$$\Lin_E=-\Dif_x\left(\begin{array}{cc} j/\urho & \urho \\
\Hess \En[\urho] & j / \urho\end{array}\right)\quad \mbox{and}\quad \Lin_L=\Dif_y\left(\begin{array}{cc} -j & 1 \\
\Hess \en[\uvol] & -j\end{array}\right)\,,$$
are \emph{isospectral}.
Furthermore, we can infer from its proof the relationship between the eigenfunctions of $\Lin_E$ and $\Lin_L$ associated with nonzero eigenvalues.
Indeed, recalling that 
$$q:=\rho(\vits-\sigma)-j\,,\;z:=\vitsL-j\vol-\sigma\,,$$
we have 
$$\dot{q}= (j  / \urho)\dot{\rho} + \urho \dot{\vits}\,,\;\dot{z}=\dot{\vitsL}- j\dot{\vol}\,,$$
which yields, by substitution in \eqref{eq:ELdiffr},
$$\dot{\vol}=\uvol^2 (-1 + j \uL) \dot{\rho} +  \uL \dot{\vits}\,,\qquad
\dot{\vitsL}= j^2 \uvol^2\uL \dot{\rho} + (1+j\uL) \dot{\vits}\,,\qquad\uL:= \frac{\uvoly}{\nu \uvol}$$
(where we have omitted to write the independent variables for simplicity).
The practical `conjugacy' between $\Lin_E$ and $\Lin_L$ is thus far from being trivial.}
\end{remark}

Another natural question is the relationship between the Hessians of the constrained energies
$$\Linvar_E=\left(\begin{array}{cc} 
\Hess \En[\urho] & j / \urho\\
j/\urho & \urho \end{array}\right)\quad \mbox{and}\quad 
\Linvar_L=\left(\begin{array}{cc}
\Hess \en[\uvol] & -j \\
-j & 1 \end{array}\right)\,.$$
Interestingly, both of these matrix-valued operators are linked in a rather simple manner to \emph{scalar} operators.
Indeed, using notation as above,
$$\dot{q}= (j  / \urho)\dot{\rho} + \urho \dot{\vits}\,,\quad\dot{z}=\dot{\vitsL}- j\dot{\vol}\,,$$
we see that
$$\left(\begin{array}{c} 
\dot\rho \\
\dot\vits\end{array}\right)\cdot \Linvar_E \left(\begin{array}{c} 
\dot\rho \\
\dot\vits\end{array}\right)\,=\,
\dot\rho\,\linvar_E\dot\rho\,+\,\dot{q}^2/\urho\,,\quad\linvar_E:= \Hess \En[\urho] \,-\,j^2/\urho^3\,,$$
$$\left(\begin{array}{c} 
\dot\vol \\
\dot\vitsL\end{array}\right)\cdot \Linvar_L \left(\begin{array}{c} 
\dot\vol \\
\dot\vitsL\end{array}\right)\,=\,
\dot\vol\,\linvar_L\dot\vol\,+\,\dot{z}^2\,,\qquad\linvar_L:= \Hess \en[\uvol] \,-\,j^2\,.$$
These expressions have the striking consequence in terms of negative signatures that
\begin{equation}
\label{eq:signatures}
\negsign(\Linvar_E)=\negsign(\linvar_E)\,,\qquad\negsign(\Linvar_L)=\negsign(\linvar_L)\,.
\end{equation}
Both $\linvar_E$ and $\linvar_L$ are \emph{scalar} Sturm--Liouville operators with periodic coefficients. We thus have a simple criterion to compute their negative signature (see Lemma \ref{lem:Sturm} stated in Section \ref{ss:qKdV}).

Regarding the relationship between the scalar operators $\linvar_E$ and $\linvar_L$, one may check that 
$$\urho\,\linvar_E\dot{\rho} \,+\,\uCap\,(\urhox\,\dot\rho_x\,-\,\urhoxx \,\dot\rho)\,=\,-\,\linvar_L \dot\vol\,,\qquad  \uCap:=\partial^2_{\rho_x} \En[\urho]\,,$$
or equivalently
$$\uvol\,\linvar_L\dot{\vol} \,+\,\ucap\,(\uvoly\,\dot\vol_y\,-\,\uvolyy \,\dot\vol)\,=\,-\,\linvar_E \dot\rho\,,\qquad  \ucap:=\partial^2_{\vol_x} \en[\uvol]\,,$$
for
$$\dot{\vol}(\uY(x))=-\dot{\rho}(x)/\urho(x)^2\,,$$
which amounts to substituting $0$ 
for $\dot{X}$ in \eqref{eq:ELdiffi}. 
These relations are --- fortunately --- consistent with the fact that 
$\linvar_E\urhox$ and $\linvar_L\uvoly$ vanish simultaneously, 
 but they do not show a relationship between the signatures of $\linvar_E$ and $\linvar_L$.
 This is actually no surprise because we expect that it is the stability indices 
 $$\ind_E:=\negsign(\Linvar_E)-\negsign(\bC_E)=\negsign(\linvar_E)-\negsign(\bC_E)\quad \mbox{and}\quad \ind_L:=\negsign(\Linvar_L)-\negsign(\bC_L)=\negsign(\linvar_L)-\negsign(\bC_L)$$ which vanish simultaneously (see Remark \ref{rem:stabind} here after for a variational point of view on this question), and the negative signatures of the constraint matrices $\bC_E$ and $\bC_L$ have no a priori reason to coincide. Indeed, recalling from \eqref{eq:singleaction} the definition of $\Action(\mu,\lambda,j,\sigma)$,
and interpreting the abstract definition of the constraint matrix $\bC$ in {\bf (H1)} with the present notation (see Remark \ref{rem:parameters}), we find that
\begin{equation}\label{eq:CE}
\bC_E=-\frac{1}{\Action_{\mu\mu}}  
\left(\begin{array}{ccc} 
\left|\begin{array}{cc}
\Action_{\lambda\lambda} & \Action_{\mu\lambda} \\
\Action_{\lambda\mu} & \Action_{\mu\mu}
\end{array}\right| & 
\left|\begin{array}{cc}
\Action_{\lambda j} & \Action_{\mu j} \\
\Action_{\lambda\mu} & \Action_{\mu\mu}
\end{array}\right|
& \left|\begin{array}{cc}
\Action_{\lambda \sigma} & \Action_{\mu \sigma} \\
\Action_{\lambda\mu} & \Action_{\mu\mu}
\end{array}\right| \\ [15pt]
\left|\begin{array}{cc}
\Action_{j \lambda} & \Action_{\mu\lambda} \\
\Action_{j \mu} & \Action_{\mu\mu}
\end{array}\right| & 
\left|\begin{array}{cc}
\Action_{j  j} & \Action_{\mu j} \\
\Action_{j \mu} & \Action_{\mu\mu}
\end{array}\right|
& \left|\begin{array}{cc}
\Action_{j  \sigma} & \Action_{\mu \sigma} \\
\Action_{j \mu} & \Action_{\mu\mu}
\end{array}\right| \\ [15pt]
\left|\begin{array}{cc}
\Action_{\sigma \lambda} & \Action_{\mu\lambda} \\
\Action_{\sigma \mu} & \Action_{\mu\mu}
\end{array}\right| & 
\left|\begin{array}{cc}
\Action_{\sigma  j} & \Action_{\mu j} \\
\Action_{\sigma \mu} & \Action_{\mu\mu}
\end{array}\right|
& \left|\begin{array}{cc}
\Action_{\sigma  \sigma} & \Action_{\mu \sigma} \\
\Action_{\sigma \mu} & \Action_{\mu\mu}
\end{array}\right|
\end{array}\right)
\,,\end{equation}
\begin{equation}\label{eq:CL}\bC_L=- \frac{1}{\Action_{\lambda\lambda}}
\left(\begin{array}{ccc} 
\left|\begin{array}{cc}
\Action_{\mu\mu} & \Action_{\lambda\mu} \\
\Action_{\mu\lambda} & \Action_{\lambda\lambda}
\end{array}\right| & 
\left|\begin{array}{cc}
\Action_{\mu\sigma} & \Action_{\lambda\sigma} \\
\Action_{\mu\lambda} & \Action_{\lambda\lambda}
\end{array}\right|
& \left|\begin{array}{cc}
\Action_{\mu j} & \Action_{\lambda j} \\
\Action_{\mu\lambda} & \Action_{\lambda\lambda}
\end{array}\right| \\ [15pt]
\left|\begin{array}{cc}
\Action_{\sigma\mu} & \Action_{\lambda\mu} \\
\Action_{\sigma\lambda} & \Action_{\lambda\lambda}
\end{array}\right| & 
\left|\begin{array}{cc}
\Action_{\sigma\sigma} & \Action_{\lambda\sigma} \\
\Action_{\sigma\lambda} & \Action_{\lambda\lambda}
\end{array}\right|
& \left|\begin{array}{cc}
\Action_{\sigma  j} & \Action_{\lambda j} \\
\Action_{\sigma\lambda} & \Action_{\lambda\lambda}
\end{array}\right| \\ [15pt]
\left|\begin{array}{cc}
\Action_{j \mu} & \Action_{\lambda\mu} \\
\Action_{j \lambda} & \Action_{\lambda\lambda}
\end{array}\right| & 
\left|\begin{array}{cc}
\Action_{j \sigma} & \Action_{\lambda\sigma} \\
\Action_{j \lambda} & \Action_{\lambda\lambda}
\end{array}\right|
& \left|\begin{array}{cc}
\Action_{j j} & \Action_{\lambda j} \\
\Action_{j \lambda} & \Action_{\lambda\lambda}
\end{array}\right|
\end{array}\right)\,.
\end{equation}
Obviously, the matrices $\bC_E$ and $\bC_L$ do not involve  exactly the same minors of $\Hess\Action$, and may therefore have different negative signatures --- according to our algebraic computations in Appendix \ref{app:signs}, it might happen that $\negsign(\bC_L)=2$ and $\negsign(\bC_E)=0$.

\begin{remark}\label{rem:stabind}
\textup{
From \eqref{eq:singleaction} we see that 
$$\Action(\mu,\lambda,j,\sigma)\,=\, \funct_E[\urho,\uvits;\mu,\lambda,j,\sigma]\,=\,\funct_L[\uvol,\uvitsL;\mu,\lambda,j,\sigma]$$ with
$$\funct_E[\rho,\vits;\mu,\lambda,j,\sigma]\,:=\, \int_{0}^{\Xi} (\En(\rho,\rho_x)+\tfrac{1}{2} \rho\vits^2 - \sigma \rho\vits + \lambda \rho - j \vits + \mu)\,\dif x\,,$$
$$\funct_L[\vol,\vitsL;\mu,\lambda,j,\sigma]\,=\,\int_{0}^{\Upsilon} 
(\en(\vol,\vol_y)+\tfrac{1}{2} \vitsL^2 - j \vol\vitsL + \mu \vol - \sigma \vitsL + \lambda)\,\dif y\,.$$
The nullity of the stability index $\ind_E$ is a necessary condition for $\funct_E[\cdot;\mu,\lambda,j,\sigma]$ to have a local minimum at $(\urho,\uvits)$ under the constraints 
\begin{equation}\label{eq:Econstraints}
\begin{array}{l}
\textstyle\int_{0}^\Xi \dif x = \Xi\,,\;\int_{0}^\Xi \rho(x) \dif x = \int_{0}^\Xi \urho(x) \dif x\,,\\ [5pt]
\int_{0}^\Xi \vits(x) \dif x = \int_{0}^\Xi \uvits(x) \dif x\,,\;\int_{0}^\Xi (\rho\vits)(x) \dif x = \int_{0}^\Xi (\urho\uvits)(x) \dif x\,.
\end{array}
\end{equation}
Similarly, the nullity of the stability index $\ind_L$ is a necessary condition for $\funct_L[\cdot;\mu,\lambda,j,\sigma]$ to have a local minimum at $(\uvol,\uvitsL)$ under the constraints 
\begin{equation}\label{eq:Lconstraints}
\begin{array}{l}\textstyle\int_{0}^\Upsilon \dif y = \Upsilon\,,\;\int_{0}^\Upsilon \vol(y) \dif y = \int_{0}^\Upsilon \uvol(y) \dif y\,,\\ [5pt]
\int_{0}^\Upsilon \vitsL(y) \dif y = \int_{0}^\Upsilon \uvitsL(y) \dif y\,,\;\int_{0}^\Upsilon (\vol\vitsL)(y) \dif y = \int_{0}^\Upsilon (\uvol\,\uvitsL)(y) \dif y\,.\end{array}
\end{equation}
We have inserted the apparently trivial, first condition in \eqref{eq:Econstraints} and \eqref{eq:Lconstraints} in order to point out that these sets of constraints are actually equivalent under the change of variables
$$\rho(x)=1/\vol(Y(x)))\,,\;\vits(x)=\vitsL(Y(x))\,,\;Y'(x)=\rho(x)\,,$$
which happens to also ensure that 
$$\funct_L[\vol,\vitsL;\mu,\lambda,j,\sigma]\,=\,\funct_E[\rho,\vits;\mu,\lambda,j,\sigma]\,.$$
This is why we may expect $\ind_E$ and $\ind_L$  to vanish simultaneously.  
}
\end{remark}

What we can actually prove is the equality of the indices $\ind_E$ and $\ind_L$ under some `generic assumptions'.

\begin{theorem}\label{thm:indices}
Assume that $(\urho,\uvits)$ and $(\uvol,\uvitsL)$ are periodic solutions to, respectively, \eqref{eq:profEKE} and \eqref{eq:profEKL}, depending smoothly on the parameters $(\mu,\lambda,j,\sigma)$, as well as their periods $\Xi$ and $\Upsilon$. With notation introduced earlier, if $\Action_{\mu\mu}\neq0$, $\Action_{\lambda\lambda}\neq 0$, and the matrices $\bC_E$ and $\bC_L$ are nonsingular then 
we have the identity
$$\negsign(\Hess\Action)-2\,=\,\negsign(\Linvar_E)-\negsign(\bC_E)\,=\,\negsign(\Linvar_L)-\negsign(\bC_L)\,.$$ 
\end{theorem}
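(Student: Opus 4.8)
The plan is to obtain the common value $\negsign(\Hess\Action)-2$ for all three quantities by two symmetric applications of Proposition \ref{prop:ActionC}, combined with the Sturm--Liouville count of Lemma \ref{lem:Sturm} and the signature reduction \eqref{eq:signatures}. Since we are in the case $N=2$, the matrices $\bC_E$ and $\bC_L$ are nonsingular $3\times 3$ matrices, so that $\negsign(-\bC_E)=3-\negsign(\bC_E)$ and $\negsign(-\bC_L)=3-\negsign(\bC_L)$.

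First I would note that the explicit matrices in \eqref{eq:CE} and \eqref{eq:CL} are exactly the constraint matrices delivered by Proposition \ref{prop:ActionC} when one singles out, respectively, the coordinate $\mu$ (legitimate since $\Action_{\mu\mu}\neq0$) and the coordinate $\lambda$ (legitimate since $\Action_{\lambda\lambda}\neq0$): in each case the entries are, up to the prefactor, all the $2\times2$ minors of $\Hess\Action$ containing the pivot $\Action_{\mu\mu}$, respectively $\Action_{\lambda\lambda}$. Applying that proposition twice therefore gives
\begin{equation*}
\negsign(\Hess\Action)=\negsign(-\bC_E)+\delta(\Action_{\mu\mu})\,,\qquad
\negsign(\Hess\Action)=\negsign(-\bC_L)+\delta(\Action_{\lambda\lambda})\,,
\end{equation*}
where $\delta(t)=0$ if $t>0$ and $\delta(t)=1$ if $t<0$.

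Next I would use \eqref{eq:signatures} to replace $\negsign(\Linvar_E)$ by $\negsign(\linvar_E)$ and $\negsign(\Linvar_L)$ by $\negsign(\linvar_L)$, then evaluate the latter two via Lemma \ref{lem:Sturm}. For this I must identify the energy level and the period entering the Sturm--Liouville criterion: eliminating the velocity through $\uvits=\sigma+j/\urho$ in \eqref{eq:profEKE} reduces the \textrm{(EKE)} profile $\urho$ to a planar Hamiltonian ODE with energy level $\mu$ and period $\Xi$, whence $\Xi_\mu=\Action_{\mu\mu}$ by \eqref{eq:deraction} and Table \ref{tb:notations}; symmetrically, eliminating $\uvitsL=\sigma+j\uvol$ in \eqref{eq:profEKL} reduces the \textrm{(EKL)} profile $\uvol$ to one with energy level $\lambda$ and period $\Upsilon$, whence $\Upsilon_\lambda=\Action_{\lambda\lambda}$. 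Lemma \ref{lem:Sturm} then yields $\negsign(\linvar_E)=1+\delta(\Action_{\mu\mu})$ and $\negsign(\linvar_L)=1+\delta(\Action_{\lambda\lambda})$. Subtracting the displayed identities and using $\negsign(-\bC_\bullet)=3-\negsign(\bC_\bullet)$ gives, regardless of the sign of the pivot, $\negsign(\Linvar_E)-\negsign(\bC_E)=\negsign(\Hess\Action)-2$ and likewise for the $L$ indices, which is the claim.

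The step I expect to be the main obstacle is the structural reduction underlying the identification of $\linvar_E=\Hess\En[\urho]-j^2/\urho^3$ and $\linvar_L=\Hess\en[\uvol]-j^2$ as Hessians of genuine reduced \emph{scalar} energies to which Lemma \ref{lem:Sturm} applies, together with the verification that the corresponding energy levels and periods are precisely $(\mu,\Xi)$ and $(\lambda,\Upsilon)$. Everything else is the algebraic bookkeeping above, which is insensitive to the precise form of the nonlinearities.
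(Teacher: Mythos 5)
Your proposal is correct and follows essentially the same route as the paper's proof: reduction to the scalar operators $\linvar_E$, $\linvar_L$ via \eqref{eq:signatures}, two applications of Proposition \ref{prop:ActionC} with pivots $\Action_{\mu\mu}$ and $\Action_{\lambda\lambda}$, the Sturm--Liouville count of Lemma \ref{lem:Sturm} applied to the reduced profile equations obtained by eliminating the velocity, and the final bookkeeping with $\negsign(-\bC_\bullet)=3-\negsign(\bC_\bullet)$. The only imprecision is that the reduced energy levels are actually $\mu-j\sigma$ and $\lambda-\tfrac12\sigma^2$ rather than $\mu$ and $\lambda$, but since these differ by constants at fixed $(j,\sigma)$ the period derivatives $\Xi_\mu=\Action_{\mu\mu}$ and $\Upsilon_\lambda=\Action_{\lambda\lambda}$ are unaffected, so your argument goes through exactly as in the paper.
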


\begin{proof} As already noticed, we have 
$$\ind_E:=\negsign(\Linvar_E)-\negsign(\bC_E)=\negsign(\linvar_E)-\negsign(\bC_E)\quad \mbox{and}\quad \ind_L:=\negsign(\Linvar_L)-\negsign(\bC_L)=\negsign(\linvar_L)-\negsign(\bC_L)\,.$$
Furthermore, by Proposition \ref{prop:ActionC}, we have
$$\negsign(\Hess\Action)=\negsign(-\bC_E)\;\mbox{if }\,\Action_{\mu\mu}>0\,,\quad
\negsign(\Hess\Action)=\negsign(-\bC_E)\,+\,1\;\mbox{if }\,\Action_{\mu\mu}<0\,,$$
$$\negsign(\Hess\Action)=\negsign(-\bC_L)\;\mbox{if }\,\Action_{\lambda\lambda}>0\,,\quad
\negsign(\Hess\Action)=\negsign(-\bC_L)\,+\,1\;\mbox{if }\,\Action_{\lambda\lambda}<0\,.$$
We claim that, by Lemma \ref{lem:Sturm}, these relations imply
$$\negsign(\Hess\Action)=\negsign(-\bC_E)\,+\,\negsign(\linvar_E)\,-\,1\,=\,\negsign(-\bC_L)\,+\,\negsign(\linvar_L)\,-\,1\,, $$
from which we arrive at our final formula --- similarly as in the proof of Theorem \ref{thm:qKdVorb} --- by observing that for the 
$3\times 3$, noninsingular matrices $\bC_E$ and $\bC_L$ we have
$$\negsign(-\bC_E)=3-\negsign(\bC_E)\,,\quad \negsign(-\bC_L)=3-\negsign(\bC_L)\,.$$

It just remains to check that Lemma \ref{lem:Sturm} does imply that
\begin{equation}
\label{eq:signsL}
\negsign(\linvar_L)\,=1\;\mbox{if } \Action_{\lambda\lambda}>0\,,\quad\negsign(\linvar_L)\,=2\;\mbox{if } \Action_{\lambda\lambda}<0\,,
\end{equation}
\begin{equation}
\label{eq:signsE}
\negsign(\linvar_E)\,=1\;\mbox{if } \Action_{\mu\mu}>0\,,\quad\negsign(\linvar_E)\,=2\;\mbox{if } \Action_{\mu\mu}<0\,.
\end{equation}
This a matter of adapting notation, and checking the relationship between the energy levels for the vector-valued profile equations and for the reduced, scalar profile equations.

Eliminating $\uvitsL$ from the first equation in \eqref{eq:profEKL}, 
we can view it as an Euler--Lagrange equation for the energy
$$\enred(\vol,\vol_y):= \en(\vol,\vol_y) \,-\, \tfrac12 j^2\vol^2\,-\,(-\mu+j \sigma)\,\vol\,,$$
and eliminating $\uvitsL$ from the third equation in \eqref{eq:profEKL}, we find that the corresponding energy level is
$$\Legendre\enred[\uvol]=\lambda-\tfrac12 \sigma^2\,.$$
Therefore $\Action_{\lambda\lambda}=\Upsilon_\lambda$ is indeed the partial derivative of the period of $\uvol$ with respect to energy level, and since $\linvar_L=\Hess \enred[\uvol]=\Hess\en[\uvol]-j^2$,
Lemma  \ref{lem:Sturm} applies and shows \eqref{eq:signsL}. 

We proceed similarly to prove \eqref{eq:signsE}. 
By eliminating $\uvits$ from the first equation in \eqref{eq:profEKE} we find
$$\Euler \En[\urho] + \frac{j^2}{2{\urho}^2} \,=\,-\lambda+\tfrac12 \sigma^2\,,$$
which is the Euler--Lagrange equation associated with the energy
$$\enred(\rho,\rho_x):= \En(\rho,\rho_x) \,-\, j^2/(2\rho)\,-\,(-\lambda+\tfrac12 \sigma^2)\,\rho\,,$$
and by eliminating $\uvits$ from the third equation in \eqref{eq:profEKE}, we see that the corresponding energy level is
$$\Legendre\enred[\urho]=\mu-j\sigma\,.$$
Therefore $\Action_{\mu\mu}=\Xi_\mu$ is the partial derivative of the period of $\urho$ with respect to energy level, and since $\linvar_E\,=\,\Hess\En[\urho]-j^2/\urho^3$, Lemma  \ref{lem:Sturm} applies and shows \eqref{eq:signsE}. 

\end{proof}

\subsubsection{Connexion with quasilinear KdV equations}\label{ss:conn-qKdV}
As already mentioned in the proof of Theorem \ref{thm:indices},
the profile equations 
in \eqref{eq:profEKL} for traveling wave solutions $(\vol,\vitsL)=(\uvol,\uvitsL)(y+ j t)$ to {\rm (EKL)} reduce, after elimination of $\uvitsL$ to
$$\Euler \en [\uvol]\,-\,j^2\,\uvol\,+\,\mu\,-\,j\,\sigma\,=\,0\,,$$
which is nothing but the governing ODE for the profile of  traveling wave solutions $\vol=\uvol(x+ j^2 t)$ to (qKdV), and the 
Euler--Lagrange equation associated with
$$\lag[\vol;j\sigma-\mu,-j^2]= \en(\vol,\vol_y)- \tfrac12   j^2\vol^2 + (\mu - j\sigma)\,\vol\,,$$
the Lagrangian defined in Section \ref{ss:qKdV}.
Similarly, the velocity $\uvitsL$ can be eliminated from the abbreviated action integral
$$\Action(\mu,\lambda,j,\sigma)\,=\,\displaystyle\int_{0}^{\Upsilon} 
(\en(\uvol,\uvoly)+\tfrac{1}{2} \uvitsL^2 - j \uvol\,\uvitsL + \mu \uvol - \sigma \uvitsL + \lambda)\,\dif y$$
for (EKL). This yields
\begin{equation}\label{eq:actionsEKLqKdV}
\Action(\mu,\lambda,j,\sigma)\,=\,\displaystyle\int_{0}^{\Upsilon} 
(\en(\uvol,\uvoly)- \tfrac12   j^2\uvol^2 + (\mu - j\sigma)\,\uvol + \lambda-\tfrac12 \sigma^2)\,\dif y =\action(\lambda-\tfrac12 \sigma^2,j\sigma-\mu,-j^2)\,,
\end{equation}
where $\action$ is the abbreviated action integral associated with the $\Upsilon$-periodic traveling wave solutions $\vol=\uvol(x+ j^2 t)$ to (qKdV), as in Section \ref{ss:qKdV}. Therefore, by the chain rule, $\Hess\Action(\mu,\lambda,j,\sigma)$ can be expressed in terms of 
$\nabla \action$ and $\Hess\action$ evaluated at $(\lambda-\tfrac12 \sigma^2,j\sigma-\mu,-j^2)$. 
This will be used in \S \ref{ss:orbEK} below to compare the stability criteria for {\rm (EKL)} and (qKdV).

\subsubsection{Orbital stability in the Euler--Korteweg system}\label{ss:orbEK}
Let us now examine in which situation we may apply Theorem \ref{thm:orb} to the Euler--Korteweg system.
We consider energies as in \eqref{eq:en}, that is
$$\En = F(\rho)+\frac12 \Cap(\rho) \rho_x^2\,,$$
or equivalently,
$$\en = f(\vol)+\frac12 \cap(\vol) \vol_y^2\,,$$
with 
$$F(\rho)=\rho f(1/\rho), \qquad \Cap(\rho)= \rho^{-5} \cap(1/\rho)\,,$$
with $f:(0,+\infty)\to \R$ and $\cap:(0,+\infty)\to (0,+\infty)$ of class ${\mathcal C}^2$, hence also
$F:(0,+\infty)\to \R$ and $\Cap:(0,+\infty)\to (0,+\infty)$ are of class ${\mathcal C}^2$.
Even though many others are possible, we give now the most classical kind of nonlinearities that we may think of :
\begin{itemize}
\item \emph{Shallow-water} pressure law: $F(\rho)=\frac12 \rho^2$, or equivalently $f(\vol)=1/(2\vol)$, which gives $p(\vol)=1/(2\vol^2)$; then the  EKE system  is a dispersive modification of the Saint-Venant equations for shallow water flows, in which the `pressure' term is indeed known to be of the form $p(\rho)=\frac12 \rho^2$;
\item NLS \emph{capillarity}: $\Cap(\rho)=1/(4\rho)$;  then the  EKE system is the fluid formulation\footnote{Via the \emph{Madelung} transform: $\psi=\sqrt{\rho}\,\ee^{i\varphi}$, $\varphi_x=\vits$.} of the Non Linear Schr\"odinger equation 
$$\mbox{(NLS)} \quad i  \partial_t \psi + \tfrac{1}{2} \partial_x^2\psi= \psi \chem(|\psi|^2)\,,$$
with $g(\rho)=F'(\rho)$. In the shallow-water case, $g(\rho)=\rho$, which corresponds to the \emph{cubic} NLS.
\item \emph{Semilinear} EKE: $\Cap$ constant.
\item \emph{Semilinear} EKL: $\cap$ constant.
\end{itemize}

We warn here again the reader that, as explained in  \S\ref{ss:EKLEKE}, our notation from the abstract setting \ref{ss:abs} have to be adapted to parameters $(\mu,\lambda,j,\sigma)$ used to define  the action in \eqref{eq:singleaction} for both  {\rm (EKL)} and {\rm (EKE)} (see in particular Table \ref{tb:notations}).

By the connexion made in \S\ref{ss:conn-qKdV} between periodic waves in {\rm (EKL)} and (qKdV), and the one-to-one correspondence between periodic waves in {\rm (EKL)} and {\rm (EKE)} recalled in  \S\ref{ss:EKLEKE}, we have a good knowledge of situations in which {\bf (H0)} is satisfied for both {\rm (EKL)} and {\rm (EKE)}. Proposition \ref{prop:H0} exemplifies one such situation. More precisely, under the assumptions of that proposition, {\bf (H0)} holds true for both {\rm (EKL)} and {\rm (EKE)} 
for $(\mu,\lambda,j,\sigma)$ in the preimage by 
$$(\mu,\lambda,j,\sigma)\mapsto (\lambda-\tfrac12 \sigma^2,j\sigma-\mu,-j^2)$$ 
of the open set $\Omega$ defined in Proposition \ref{prop:H0}.
Other possibilities, with various families of periodic waves, are considered in \cite{BNR-JNLS14}. 

Taking {\bf (H0)} for granted, let us look at our other main assumptions {\bf (H1)}-{\bf (H2)}-{\bf (H3)}.

The abstract, nondegeneracy assumptions in {\bf (H1)} become, for {\rm (EKE)}
$$\Action_{\mu\mu}\neq 0\,,\quad \det \bC_E\neq 0\,,$$
and for 
{\rm (EKL)}
$$\Action_{\lambda\lambda}\neq 0\,,\quad \det \bC_L\neq 0\,.$$
Both of them can be reformulated in more convenient way by using Proposition \ref{prop:ActionC}, in which Eq.~\eqref{eq:detActionC} applied to {\rm (EKE)} and {\rm (EKL)} gives
\begin{equation}\label{eq:constEKE-EKL}
\Action_{\mu\mu}\, \det \bC_E\,=\,\Action_{\lambda\lambda}\,\det \bC_L\,=\,\,-\,\det (\Hess\Action)\,.
\end{equation}
(This identity may also be checked from the expressions of $\bC_E$ and $\bC_L$ given in \eqref{eq:CE} and \eqref{eq:CL} respectively.)
In particular, Proposition \ref{prop:ActionC} implies that, if $\Action_{\mu\mu}\,\Action_{\lambda\lambda}\,=\,\Xi_\mu\,\Upsilon_\lambda$ is nonzero, the matrices $\bC_L$ and $\bC_E$ are simultaneously nonsingular, and this happens when $\Hess\Action$ itself is nonsingular. In other words, {\bf (H1)} amounts to
$$\Action_{\mu\mu}\neq 0\,,\quad \det(\Hess\Action)\neq 0$$
for {\rm (EKE)}, and
$$\Action_{\lambda\lambda}\neq 0\,,\quad \det(\Hess\Action)\neq 0$$
for {\rm (EKL)}.

Regarding {\bf (H2)}, we claim that $\HH_\Upsilon:=H^1(\R/\Upsilon\Z)\times L^2(\R/\Upsilon\Z)$ and 
$\HH_\Xi:=H^1(\R/\Xi\Z)\times L^2(\R/\Xi\Z)$ do the job, respectively for {\rm (EKL)} and {\rm (EKE)}. For the former, this comes from a straightforward addition to what is done in  \S\ref{ss:qKdV}, since the Hamiltonian functional is just
$$(\vol,\vitsL)\mapsto \int_{0}^\Upsilon \en(\vol,\vol_y)\,\dif y \,+\,\tfrac{1}{2}\,\int_{0}^\Upsilon \vitsL^2\,\dif y\,=\,
E[\vol]\,+\,\tfrac12 \,\|\vitsL\|_{L^2}^2\,.$$
For the latter, the Hamiltonian functional is
$$(\rho,\vits)\mapsto \int_{0}^\Xi \Ham(\rho,\vits,\rho_x)\,\dif x \,=\,\int_{0}^\Xi \En(\rho,\rho_x)\,\dif x \,+\,\tfrac{1}{2}\,\int_{0}^\Xi\,\rho \vits^2\,\dif x\,.$$
The first part of this functional can be handled exactly as before, 
since $\en$ and $\En$ have the same abstract form. By this, we mean that the mapping
$\rho\mapsto \int_{0}^\Xi \En(\rho,\rho_x)\,\dif x$ is ${\class}^2$ on $H^1(\R/\Xi\Z)$, 
and for $\Xi$-periodic $\urho \in {\class}^2_b$ with values in $(0,+\infty)$,
$$\rho\mapsto (\langle \Hess\En[\urho] \rho, \rho\rangle\,+\,\alpha\,\|\rho\|^2_{L^2})^{1/2}$$ defines an equivalent norm on $H^1(\R/\Xi\Z)$, provided that $\alpha$ is large enough.
Furthermore,
$$\left\langle \Hess\Ham[\urho,\uvits] \left(\begin{array}{c}\rho\\ \vits\end{array}\right), \left(\begin{array}{c}\rho\\ \vits\end{array}\right)\right\rangle\,=\langle \Hess\En[\urho] \rho, \rho\rangle\,+\,\int_{0}^\Xi\,(\urho\, \vits^2+2\,\uvits\, \rho\,\vits)\,\dif x\,,$$
and 
$$\int_{0}^\Xi\,(\urho\, \vits^2+2\,\uvits\, \rho\,\vits)\,\dif x\,+\,\alpha\,\|\rho\|^2_{L^2}\,=\,
\int_{0}^\Xi \left(\begin{array}{c}\rho\\ \vits\end{array}\right)\cdot {\underline A} \left(\begin{array}{c}\rho\\ \vits\end{array}\right)\,\dif x\,,
$$
where
$${\underline A}:= \left(\begin{array}{cc}\alpha & \ubu \\ \ubu & \urho \vits\end{array}\right)$$
is bounded, and positive for $\alpha$ large enough, its upper and lower bound being uniform when the image of $(\urho,\ubu)$ is in a compact set of $(0,+\infty)\times\R$. This proves that {\bf (H2)} holds true.

Finally, in view of earlier work \cite{BDD1d,BDDmultiD} on the Cauchy problem for {\rm (EKL)} and {\rm (EKE)} in one space dimension, we  
claim that {\bf (H3)} is satisfied in $H^{s+1}\times H^s$ for $s>3/2$. 

\begin{theorem}\label{thm:EKL}
Let $s>3/2$. If $f: (0,+\infty) \to \R$ and $\cap:(0,+\infty) \to (0,+\infty)$ are ${\class}^{s+2}$, then, for all $\Upsilon>0$, 
$(\vol_0,\vitsL_0)\in H^{s+1}(\R/\Upsilon\Z)\times H^{s}(\R/\Upsilon\Z)$, $\vol_0>0$, there exists $T>0$ and a unique $(\vol,\vitsL)\in {\class}([0,T);H^{s+1}(\R/\Upsilon\Z)\times H^{s}(\R/\Upsilon\Z))$ solution to {\rm (EKL)} with $\en=f(\vol)+\frac12 \cap(\vol)\vol_y^2$, and 
$(\vol_0,\vitsL_0)\mapsto (\vol,\vitsL)$ is continuous.
\end{theorem}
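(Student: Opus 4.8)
The plan is to read the system {\rm (EKL)} as a quasilinear dispersive system whose third-order part carries the definite structure inherited from the Korteweg Hamiltonian, and then to run the classical program: uniform a priori estimates, construction by regularization and compactness, uniqueness, and Bona--Smith continuity of the flow. Since the statement is advertised as an adaptation of the whole-line theory \cite{BDD1d,BDDmultiD} to the torus, I would keep the architecture of those works and merely replace $\R$ by $\R/\Upsilon\Z$; this substitution in fact \emph{simplifies} several steps, as there is no decay to track at infinity, the Rellich embedding $H^{s+1}(\R/\Upsilon\Z)\hookrightarrow H^s(\R/\Upsilon\Z)$ is compact, and one has a clean periodic Fourier and integration-by-parts calculus.

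First I would write the system out. Writing $t$ for the time variable (denoted $s$ in {\rm (EKL)}) so as not to clash with the Sobolev index $s$, and expanding the Euler operator as $\Euler \en[\vol]=f'(\vol)-\tfrac12\cap'(\vol)\vol_y^2-\cap(\vol)\vol_{yy}$, the system reads
$$\partial_t\vol=\partial_y\vitsL\,,\qquad \partial_t\vitsL=\partial_y\big(f'(\vol)-\tfrac12\cap'(\vol)\vol_y^2-\cap(\vol)\vol_{yy}\big)\,,$$
whose leading behaviour is the third-order term $-\cap(\vol)\vol_{yyy}$ coupled to $\partial_t\vol=\partial_y\vitsL$. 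The structural observation that makes the estimates close --- exactly as in \cite{BDD1d} --- is that, after introducing a complex ``Madelung-type'' unknown built from $\vitsL$ together with a primitive of $\sqrt{\cap(\vol)}\,\vol_y$, the system recasts as a quasilinear Schr\"odinger equation whose principal operator is $i\,\cap(\vol)\partial_y^2$. The symbol of this operator is skew-adjoint, so that in the energy method the genuinely dangerous, order-$(s+2)$ contribution cancels identically, leaving only commutators of order $s+1$.

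Next I would establish the a priori estimate in $X:=H^{s+1}(\R/\Upsilon\Z)\times H^s(\R/\Upsilon\Z)$. Applying $\Lambda^s:=(1-\partial_y^2)^{s/2}$ to the Schr\"odinger-form equation and pairing with $\Lambda^s$ of the unknown, the worst term $[\Lambda^s,\cap(\vol)\partial_y^2]$ is a priori of order $s+1$; the skew-adjointness of the leading operator, together with a Kato--Ponce/Moser commutator estimate, reduces it to a quantity bounded by $C(\|\vol\|_{\class^1})\,\|(\vol,\vitsL)\|_X^2$. The embedding $H^s(\R/\Upsilon\Z)\hookrightarrow \class^1$ --- valid precisely because $s>3/2$ --- and the Moser composition estimates for $f,\cap\in\class^{s+2}$ then close a Gr\"onwall inequality, yielding a lifespan $T>0$ bounded below in terms of $\|(\vol_0,\vitsL_0)\|_X$ and of $\inf_y\vol_0>0$. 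Throughout I would monitor $\inf_y\vol$, which stays positive on a (possibly shorter) interval by continuity, keeping $\cap(\vol)>0$ so that the dispersive structure never degenerates.

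To construct a solution I would regularize --- for instance by adding a parabolic term $\varepsilon\partial_y^2\vitsL$ to the second equation, or by mollifying the coefficient $\cap(\vol)$ --- solve each regularized problem by a contraction on a short time interval, then derive bounds \emph{uniform} in $\varepsilon$ from the estimate above and pass to the limit: on the compact torus the $X$-bound and the equation yield equicontinuity in $t$, so Aubin--Lions furnishes strong convergence in a lower norm, enough to identify all nonlinear terms. Uniqueness follows from an $L^2$-type estimate on the difference of two solutions sharing the same high-norm bound, the derivative loss being again absorbed by the skew-symmetric leading part. Continuous dependence of $(\vol_0,\vitsL_0)\mapsto(\vol,\vitsL)$, which for a quasilinear problem cannot be read off from a contraction, I would obtain by the Bona--Smith technique: mollify the data, exploit the uniform bounds and the low-regularity convergence, and upgrade to convergence in $X$ using continuity in $t$ of the $X$-norm read from the energy identity. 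The main obstacle is precisely the top-order a priori estimate: because the coefficient $\cap(\vol)$ of the third-order term depends on the unknown, one must exhibit the exact cancellation --- equivalently, the skew-adjointness of $i\,\cap(\vol)\partial_y^2$ in the $\cap$-weighted inner product --- that prevents a loss of one derivative at each differentiation; once that estimate is secured, existence, uniqueness and Bona--Smith continuity are routine, and the transfer from $\R$ to $\R/\Upsilon\Z$ introduces no new difficulty.
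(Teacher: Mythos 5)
The paper itself contains no proof of Theorem \ref{thm:EKL}: it is stated as following from ``a basic adaptation to the 1D torus'' of the whole-line theory of \cite{BDD1d,BDDmultiD}. Your outline --- Madelung-type complex unknown, quasilinear Schr\"odinger reformulation, energy estimates, regularization plus compactness, Bona--Smith continuity --- is precisely the architecture of those references, so you are reconstructing the intended proof rather than proposing a different route, and your observation that the periodic setting only simplifies matters is correct.

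The gap is in your treatment of the one estimate on which everything turns. A small slip first: with $z=\vitsL+i\sqrt{\cap(\vol)}\,\vol_y$ the principal part is $i\,a(\vol)\,\partial_y^2$ with $a=\sqrt{\cap(\vol)}$, not $i\,\cap(\vol)\,\partial_y^2$; this is harmless. What is not harmless is the assertion that skew-adjointness of the principal part together with a Kato--Ponce/Moser estimate controls the order-$(s+1)$ commutator. It does not. Setting $w_s:=\partial_y^s z$, the commuted equation reads $\partial_t w_s = i\,a\,\partial_y^2 w_s + i\,s\,a_y\,\partial_y w_s + \mathrm{l.o.t.}$, and the real part of the plain $L^2$ pairing with $w_s$ leaves, after integration by parts,
\begin{equation*}
(1-s)\int a_y\,\Imag\!\left(\partial_y w_s\,\overline{w_s}\right)\dif y\,,
\end{equation*}
a quantity of total order $2s+1$: it is bounded by $\|a_y\|_{L^\infty}\|z\|_{H^{s+1}}\|z\|_{H^s}$ and by nothing involving only $\|z\|_{H^s}$, so it loses a full derivative and Gr\"onwall cannot close. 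Because its coefficient sits on the imaginary (dispersive) side of the equation, it does not integrate by parts down to lower order the way the analogous term in a symmetric-hyperbolic system would, and no commutator estimate repairs this; it is exactly the notorious first-order obstruction of quasilinear Schr\"odinger equations. Removing it is the actual content of \cite{BDD1d}: one estimates the gauged quantity $\varphi\,w_s$ with the $s$-dependent weight $\varphi=a^{s-1}$, determined by solving $\partial_y(\varphi a)=s\,\varphi\,a_y$, under which the order-$(2s+1)$ terms cancel identically (the extra terms produced by $\partial_t\varphi$ and $\partial_y\varphi$ are lower order, since $\varphi$ is a function of $\vol$). Your closing remark about ``skew-adjointness in the $\cap$-weighted inner product'' points in the right direction but with the wrong weight: any fixed, $s$-independent weight kills the obstruction for a single value of $s$ only --- the unweighted pairing above works only for $s=1$, and the divergence-form variant only for $s=0$, the latter being the correct gauge for your $L^2$ uniqueness estimate. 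With the correct, $s$-dependent gauge inserted, the rest of your plan (parabolic regularization, uniform bounds, Aubin--Lions on the torus, Bona--Smith) goes through as in \cite{BDD1d}, and the passage from $\R$ to $\R/\Upsilon\Z$ is indeed innocuous.
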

\begin{theorem}\label{thm:EKE}
Let $s>3/2$. If $F: (0,+\infty) \to \R$ and $\Cap:(0,+\infty) \to (0,+\infty)$ are ${\class}^{s+2}$, then, for all $\Xi>0$,
$(\rho_0,\vits_0)\in H^{s+1}(\R/\Xi\Z)\times H^{s}(\R/\Xi\Z)$, $\rho_0>0$, there exists $T>0$ and a unique $(\rho,\vits)\in {\class}([0,T);H^{s+1}(\R/\Xi\Z)\times H^{s}(\R/\Xi\Z))$ solution to {\rm (EKE)} with $\En=F(\rho)+\frac12 \Cap(\rho)\rho_x^2$, and 
$(\rho_0,\vits_0)\mapsto (\rho,\vits)$ is continuous.
\end{theorem}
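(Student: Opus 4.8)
The plan is to regard Theorem~\ref{thm:EKE} as the periodic counterpart of the Cauchy theory developed on the whole line in \cite{BDD1d,BDDmultiD}, and to verify that the only genuinely delicate step --- the control of the top-order dispersive terms --- survives on the torus. First I would record the elementary but essential fact that, since $s+1>1/2$, the embedding $H^{s+1}(\R/\Xi\Z)\hookrightarrow {\class}^0$ holds, so that $\rho_0$ being positive and continuous on the compact circle is automatically bounded above and below by positive constants; hence $F$ and $\Cap$ may be taken ${\class}^{s+2}$ on a fixed compact neighbourhood of the range of $\rho_0$, and this two-sided positivity is propagated on a short time interval by continuity. The system (EKE) is quasilinear and third-order dispersive, the density-dependent capillarity producing the term $\partial_x(\Cap(\rho)\rho_{xx})$ inside $\partial_x(\Euler\En[\rho])$; a naive $H^{s+1}\times H^s$ energy estimate therefore loses one derivative.

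The remedy, following \cite{BDD1d}, is to expose a hidden skew-adjoint structure through a gauge change of unknowns. Introducing $\varphi$ with $\varphi'(\rho)=\sqrt{\Cap(\rho)/\rho}$ and the extended velocity $W:=\vits+i\,\partial_x\varphi(\rho)$, the two real equations of (EKE) combine into a single complex, quasilinear Schr\"odinger-type equation of the schematic form $\partial_t W + a(\rho)\,\partial_x W + i\,b(\rho)\,\partial_x^2 W = (\text{lower order})$, whose principal part is skew-adjoint modulo bounded multiplication operators. On this formulation, differentiating $s$ times in $x$ and taking the real part of the $L^2$ pairing with $\partial_x^s W$, the would-be derivative-losing contribution is converted into a commutator controlled by a Kato--Ponce inequality, while the variable-coefficient dispersion enters only through its imaginary, self-adjoint leading symbol, which drops out of the real part of the energy identity. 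All the functional-analytic ingredients --- product, commutator, and composition (Moser) estimates in Sobolev spaces --- hold verbatim in the periodic spaces $H^\sigma(\R/\Xi\Z)$ via the periodic Littlewood--Paley and paradifferential calculus, so the whole-line a priori bounds transfer with the same constants.

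With the uniform a priori estimate in hand, I would construct solutions by a standard approximation scheme: a Friedrichs regularisation (or iteration on the linearised system), obtaining approximants bounded uniformly in ${\class}([0,T);H^{s+1}\times H^s)$ on a time $T$ depending only on $\|(\rho_0,\vits_0)\|_{H^{s+1}\times H^s}$ and on the lower bound of $\rho_0$. Compactness together with the equation yields a limit solution, and uniqueness follows by running the same skew-adjoint energy argument on the difference of two solutions in a norm one order lower, say $H^s\times H^{s-1}$, where no derivative loss occurs. Continuous dependence $(\rho_0,\vits_0)\mapsto(\rho,\vits)$, the last and usually most delicate assertion for quasilinear problems, is then obtained by a Bona--Smith argument: regularise the data, and use the uniform high-norm bounds together with the low-norm Lipschitz estimate and interpolation to upgrade weak convergence to convergence in the energy norm.

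The main obstacle is precisely the top-order energy estimate for the variable-coefficient dispersion: without the gauge reformulation the term $\partial_x(\Cap(\rho)\rho_{xx})$ cannot be closed by integration by parts, and it is the content of \cite{BDD1d} that passing to $W$ symmetrises the system so that the dangerous term becomes a lower-order commutator; the periodic adaptation adds essentially nothing beyond checking that these commutator and composition estimates remain valid on $\R/\Xi\Z$, which they do. I should also note an alternative route through the results already established here: by \S\ref{ss:EKLEKE} the systems (EKE) and (EKL) are equivalent, for smooth solutions with positive bounded density, under the mass Lagrangian change of variables $\partial_xY=\rho$, $\partial_tY=-\rho\vits$; transporting the data $(\rho_0,\vits_0)$ to $(\vol_0,\vitsL_0)=(1/\rho_0,\vits_0)\circ Y_0^{-1}\in H^{s+1}\times H^s(\R/\Upsilon\Z)$, invoking Theorem~\ref{thm:EKL}, and mapping the solution back would also yield Theorem~\ref{thm:EKE}. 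The obstacle on that path is the regularity bookkeeping through a \emph{time-dependent}, nonlinear and nonlocal change of coordinates, which is why I would favour the direct adaptation of \cite{BDD1d,BDDmultiD}.
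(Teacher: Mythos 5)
Your proposal is correct and takes essentially the same route as the paper: the paper gives no self-contained proof of Theorem~\ref{thm:EKE}, asserting instead that {\bf (H3)} holds by ``a basic adaptation to the 1D torus'' of the whole-line Cauchy theory of \cite{BDD1d,BDDmultiD}, which is precisely the gauge-transformation/energy-estimate strategy you outline. Your sketch correctly identifies the crucial ingredient of those works (the extended complex unknown $W=\vits+i\,\partial_x\varphi(\rho)$ exposing the skew-adjoint dispersive structure) and the fact that the commutator, composition, and Bona--Smith arguments transfer to the periodic spaces $H^\sigma(\R/\Xi\Z)$ without change.
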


We thus have all the ingredients to apply Theorem \ref{thm:orb} to {\rm (EKL)} and {\rm (EKE)}.

\begin{theorem}\label{thm:EKLorb}
Under the assumptions of Theorem \ref{thm:EKL}, 
if ${\bf (H0)}$ is satisfied, then for a wave profile $(\uvol,\uvitsL)$ at which we have
$${\bf (S_L)}\quad \Action_{\lambda\lambda}\neq 0\,,\quad \det(\Hess\Action)\neq 0\,,\quad \negsign(\Hess\Action)\,=\,2\,,$$
the associated  periodic wave solution to {\rm (EKL)} is conditionally, orbitally stable in $H^1(\R/\Upsilon\Z)\times L^2(\R/\Upsilon\Z)$.
\end{theorem}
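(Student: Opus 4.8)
The plan is to check that every hypothesis of the abstract orbital stability result, Theorem \ref{thm:orb}, is met for {\rm (EKL)} at the wave profile $(\uvol,\uvitsL)$, keeping in mind the parameter correspondence of Table \ref{tb:notations}, where for {\rm (EKL)} the \emph{energy level} is $\lambda$ and $\Upsilon$ plays the role of $\Xi$. First I would dispose of the structural assumptions. Assumption {\bf (H0)} is granted; {\bf (H2)} holds with $\HH_\Upsilon=H^1(\R/\Upsilon\Z)\times L^2(\R/\Upsilon\Z)$ by the computation already carried out in \S\ref{ss:orbEK}; and {\bf (H3)} follows from the local well-posedness of Theorem \ref{thm:EKL} in $H^{s+1}\times H^s$, $s>3/2$. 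For {\bf (H1)} in its {\rm (EKL)} form, the nonvanishing of the period derivative reads $\Action_{\lambda\lambda}=\Upsilon_\lambda\neq 0$, which is the first condition in ${\bf (S_L)}$, while the nonsingularity of the constraint matrix $\bC_L$ is obtained from the identity \eqref{eq:constEKE-EKL}, namely $\Action_{\lambda\lambda}\,\det\bC_L=-\det(\Hess\Action)$, together with $\det(\Hess\Action)\neq0$.

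It then remains to verify the two genuinely spectral conditions of Theorem \ref{thm:orb}: that the kernel of $\Linvar_L$ is spanned by the translation mode, and that $\negsign(\Linvar_L)=\negsign(\bC_L)$. For the kernel, I would exploit the block structure
$$\Linvar_L=\left(\begin{array}{cc}\Hess\en[\uvol]&-j\\-j&1\end{array}\right)\,,$$
so that $\Linvar_L(\dot\vol,\dot\vitsL)^{\sf T}=0$ forces $\dot\vitsL=j\,\dot\vol$ and $(\Hess\en[\uvol]-j^2)\dot\vol=\linvar_L\dot\vol=0$. Since $\Action_{\lambda\lambda}=\Upsilon_\lambda\neq0$, Lemma \ref{lem:Sturm}, applied to the reduced energy $\enred$ identified in the proof of Theorem \ref{thm:indices}, shows that $\ker\linvar_L$ is the line spanned by $\uvoly$; as $\uvitsL_y=j\,\uvoly$ by differentiation in $y$ of the second profile equation in \eqref{eq:profEKL}, the kernel of $\Linvar_L$ is exactly the line spanned by $\ubU_y=(\uvoly,\uvitsL_y)^{\sf T}$, i.e. the translation mode, as required.

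For the signature equality I would simply invoke the index identity established (for the {\rm (EKL)} half) in the proof of Theorem \ref{thm:indices},
$$\negsign(\Linvar_L)-\negsign(\bC_L)=\negsign(\Hess\Action)-2\,,$$
which rests only on Proposition \ref{prop:ActionC} (applied with $\lambda$ as the distinguished variable), on $\negsign(\Linvar_L)=\negsign(\linvar_L)$ from \eqref{eq:signatures}, and on Lemma \ref{lem:Sturm}; all of these are available here since $\Action_{\lambda\lambda}\neq0$ and $\bC_L$ is nonsingular. The third hypothesis $\negsign(\Hess\Action)=2$ in ${\bf (S_L)}$ then forces this orbital stability index to vanish, that is $\negsign(\Linvar_L)=\negsign(\bC_L)$. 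With both conditions of Theorem \ref{thm:orb} verified, that theorem yields the announced conditional, orbital stability in $H^1(\R/\Upsilon\Z)\times L^2(\R/\Upsilon\Z)$.

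The proof is thus essentially an assembly of previously established facts, so I do not anticipate a serious obstacle; the only point demanding genuine care is the bookkeeping of the parameter correspondence of Table \ref{tb:notations} --- in particular that the distinguished energy-level variable for {\rm (EKL)} is $\lambda$, not $\mu$ --- which is exactly what dictates the appearance of $\Action_{\lambda\lambda}$ and $\bC_L$ in place of $\Action_{\mu\mu}$ and $\bC$, and the reason why the sufficient condition reads $\negsign(\Hess\Action)=2$ with no constraint on $\Action_{\mu\mu}$.
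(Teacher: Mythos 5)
Your proposal is correct and follows essentially the same route as the paper's proof: verify \textbf{(H0)}--\textbf{(H3)} (with \textbf{(H1)} reduced to ${\bf (S_L)}$ via Proposition \ref{prop:ActionC}/Eq.~\eqref{eq:constEKE-EKL}), identify the kernel of $\Linvar_L$ with the translation mode through the reduction to the scalar Sturm--Liouville operator $\linvar_L$ and Lemma \ref{lem:Sturm}, invoke the index identity $\negsign(\Linvar_L)-\negsign(\bC_L)=\negsign(\Hess\Action)-2$ from Theorem \ref{thm:indices}, and conclude by Theorem \ref{thm:orb}. The only difference is cosmetic: you spell out the block-structure kernel argument and the parameter bookkeeping that the paper compresses into references to earlier sections.
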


\begin{proof} Our assumptions include {\bf (H0)}, imply {\bf (H3)} by Theorem \ref{thm:EKL}, and are 
designed to also meet {\bf (H1)} and {\bf (H2)}  in the way explained earlier. 
Furthermore, the kernel of $\Linvar_L$ is spanned by $\transp{(\uvoly,\uvitsL_y)}$ because the kernel of $\linvar_L$ is spanned by $\uvoly$, which 
follows from the first statement in Lemma \ref{lem:Sturm}, applied as in the proof of Theorem \ref{thm:indices}.
Moreover, 
by Theorem \ref{thm:indices} we have
$$\negsign(\Linvar_L)-\negsign(\bC_L) \,=\,\negsign(\Hess\Action)-2\,,$$ 
which equals zero by assumption.
This means that Theorem \ref{thm:orb} does apply here.
\end{proof}

\begin{theorem}\label{thm:EKEorb}
Under the assumptions of Theorem \ref{thm:EKE}, 
if ${\bf (H0)}$ is satisfied, then for a wave profile $(\urho,\uvits)$ at which we have
$${\bf (S_E)}\quad \Action_{\mu\mu}\neq 0\,,\quad \det(\Hess\Action)\neq 0\,,\quad \negsign(\Hess\Action)\,=\,2\,,$$
the associated  periodic wave solution to {\rm (EKE)} is conditionally, orbitally stable in $H^1(\R/\Upsilon\Z)\times L^2(\R/\Upsilon\Z)$.
\end{theorem}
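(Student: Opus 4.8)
The plan is to mirror the proof of Theorem~\ref{thm:EKLorb}, since (EKE) plays exactly the role symmetric to (EKL) under the exchange of the parameters $\mu$ and $\lambda$ recorded in Table~\ref{tb:notations}. Concretely, I would verify the four standing hypotheses of the abstract orbital stability result, Theorem~\ref{thm:orb}, then its two additional pointwise requirements, and conclude by invoking that theorem directly.

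First I would dispatch {\bf (H0)}--{\bf (H3)}. Assumption {\bf (H0)} is granted by hypothesis. Assumption {\bf (H3)}, local well-posedness, is supplied by Theorem~\ref{thm:EKE} on $\W_\Xi=H^{s+1}(\R/\Xi\Z)\times H^s(\R/\Xi\Z)$ for $s>3/2$. Assumption {\bf (H2)} has already been checked in \S\ref{ss:orbEK} for the energy space $\HH_\Xi=H^1(\R/\Xi\Z)\times L^2(\R/\Xi\Z)$, using the splitting of the Hamiltonian functional into the scalar part $\int_0^\Xi \En(\rho,\rho_x)\,\dif x$ and the quadratic velocity term, together with the positivity of the matrix $\underline{A}$. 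For {\bf (H1)}, I would translate the abstract nondegeneracy of $\bC_E$ via Proposition~\ref{prop:ActionC} and the identity $\Action_{\mu\mu}\det\bC_E=-\det(\Hess\Action)$ from \eqref{eq:constEKE-EKL}: this shows that $\bC_E$ is nonsingular precisely when $\Action_{\mu\mu}\neq0$ and $\det(\Hess\Action)\neq0$, which are the first two conditions in ${\bf (S_E)}$.

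Next I would verify the two pointwise conditions in Theorem~\ref{thm:orb}. For the kernel, I would use the scalar reduction $\negsign(\Linvar_E)=\negsign(\linvar_E)$ from \eqref{eq:signatures} together with the first statement of Lemma~\ref{lem:Sturm} applied to the reduced Sturm--Liouville operator $\linvar_E=\Hess\En[\urho]-j^2/\urho^3$, exactly as in the proof of Theorem~\ref{thm:indices}; this gives that the kernel of $\linvar_E$ is the line spanned by $\urhox$, hence that the kernel of $\Linvar_E$ is spanned by $\transp{(\urhox,\uvits_x)}$. For the signature equality, I would invoke Theorem~\ref{thm:indices}, which yields $\negsign(\Linvar_E)-\negsign(\bC_E)=\negsign(\Hess\Action)-2$; the third condition $\negsign(\Hess\Action)=2$ in ${\bf (S_E)}$ forces this orbital stability index to vanish, so $\negsign(\Linvar_E)=\negsign(\bC_E)$. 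With all hypotheses in place, Theorem~\ref{thm:orb} delivers the conditional orbital stability.

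The point that deserves attention is not the logical skeleton --- which is short once Theorem~\ref{thm:indices} is available --- but the correct bookkeeping inside the appeal to Lemma~\ref{lem:Sturm}: one must identify $\Action_{\mu\mu}=\Xi_\mu$ as the derivative of the period of $\urho$ with respect to the \emph{energy level} $\mu-j\sigma$ of the reduced scalar profile equation obtained by eliminating $\uvits$, as carried out in the proof of Theorem~\ref{thm:indices}. So the genuinely nontrivial analytic input is already packaged into Theorem~\ref{thm:indices} (itself resting on Proposition~\ref{prop:ActionC} and the scalar reductions), and the present statement reduces to that verification. I would also stress that, although the proof is structurally identical to that of Theorem~\ref{thm:EKLorb}, the statement is genuinely distinct, because the notion of co-periodic perturbation differs between the Eulerian and the mass Lagrangian formulations.
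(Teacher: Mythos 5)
Your proposal is correct and follows essentially the same route as the paper's own proof: verify {\bf (H0)}--{\bf (H3)} (with {\bf (H1)} reduced to the first two conditions in ${\bf (S_E)}$ via Proposition~\ref{prop:ActionC} and \eqref{eq:constEKE-EKL}), obtain the kernel of $\Linvar_E$ from Lemma~\ref{lem:Sturm} applied to the reduced operator $\linvar_E$ as in the proof of Theorem~\ref{thm:indices}, deduce $\negsign(\Linvar_E)=\negsign(\bC_E)$ from the index identity $\negsign(\Linvar_E)-\negsign(\bC_E)=\negsign(\Hess\Action)-2$ of Theorem~\ref{thm:indices}, and conclude by Theorem~\ref{thm:orb}. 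Your remark on the bookkeeping --- identifying $\Action_{\mu\mu}=\Xi_\mu$ as the derivative of the period with respect to the reduced energy level $\mu-j\sigma$ --- is precisely the point the paper delegates to the proof of Theorem~\ref{thm:indices}.
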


\begin{proof}
It is of course very similar to that of Theorem \ref{thm:EKLorb}. Our assumptions 
ensure that {\bf (H0)}-{\bf (H1)}-{\bf (H2)}-{\bf (H3)} are all met.
The kernel of $\Linvar_E$ is spanned by $\transp{(\urhox,\uvits_x)}$ because the one of $\linvar_E$ is spanned by $\urhox$, which follows from the first statement in Lemma \ref{lem:Sturm}, applied as in the proof of Theorem \ref{thm:indices}.
The latter also shows that
$$\negsign(\Linvar_L)-\negsign(\bC_L) \,=\,\negsign(\Hess\Action)-2\,=\,0$$
by assumption. So we can apply Theorem \ref{thm:orb} here too.
\end{proof}

We may now take advantage of the connexion between {\rm (EKL)} and {\rm (qKdV)} pointed out in \S \ref{ss:conn-qKdV} to compare the stability criteria in Theorems \ref{thm:qKdV}, \ref{thm:EKL}, \ref{thm:EKE}.

As shown in Appendix \ref{app:signs}, we have 
\begin{equation}\label{eq:detActionaction}
\det(\Hess\Action)=
(2\action_\speed\action_\mu -\action_\lambda^2)\,(\action_{\mu\mu}\action_{\lambda\lambda}-\action_{\lambda\mu}^2) - 4 j^2\,\action_\mu\,\det(\Hess \action)\,,
\end{equation}
\begin{equation}\label{eq:negsignActionaction}
\negsign(\Hess\Action) = 1 + \negsign (\Hess\action-(2\action_\speed\action_\mu- \action_\lambda^2)/(4j^2\action_\mu) J)\,,\;J:= \left(\begin{array}{ccc} 0 & 0 & 0\\
0 & 0 & 0 \\
0 & 0 & 1\end{array}\right)\,,
\end{equation}
where the derivatives of $\action$ are all evaluated at $(\lambda-\tfrac12 \sigma^2,j\sigma-\mu,-j^2)$.
The fact that $\uvol$ is a nonconstant periodic function of period $\action_\mu=\Upsilon$ implies at the same time 
that $\action_\mu>0$ and 
$$2\action_\speed\action_\mu -\action_\lambda^2=\Upsilon\,\int_{0}^{\Upsilon} \uvol^2 \dif y\,-\,\big(\int_{0}^{\Upsilon} \uvol\dif y\big)^2>0$$
by the Cauchy--Schwarz inequality. Therefore, the (qKdV) stability conditions in {(\bf s)} automatically imply, by
\eqref{eq:detActionaction}, that
$\det(\Hess\Action)> 0$
if $j\neq 0$ and $(\action_{\mu\mu}\action_{\lambda\lambda}-\action_{\lambda\mu}^2)\leq 0$. In this situation, we also have 
$$\negsign (\Hess\action-(2\action_\speed\action_\mu- \action_\lambda^2)/(4j^2\action_\mu) J)= \negsign (\Hess\action)\,,$$
since the involved $3\times 3$ matrices have the same first two principal minors, and their determinants have the same sign. So the third condition in {(\bf s)} implies $\negsign(\Hess\Action)=2$ by \eqref{eq:negsignActionaction}.
In fact, as soon as $j\neq 0$ and
$$\det (\Hess\action)\,\det(\Hess\action-(2\action_\speed\action_\mu- \action_\lambda^2)/(4j^2\action_\mu) J)>0,$$
or equivalently --- by Eq.~\eqref{eq:detActionaction} ---
$$\det (\Hess\action)\,\det(\Hess\Action)<0\,,$$
Eq.~ \eqref{eq:negsignActionaction} implies that
the stability conditions in ${\bf (S_L)}$ are equivalent to those in {(\bf s)}, evaluated at $(\lambda-\tfrac12 \sigma^2,j\sigma-\mu,-j^2)$. We may summarize this in the following statements.

\begin{theorem}\label{thm:qKdVEKL}
Under the assumptions of Theorems \ref{thm:qKdVorb} and \ref{thm:EKLorb}, if 
$$j\neq 0\,,\quad \Action_{\mu\mu}\neq 0\,,\quad \det (\Hess\action)\,\det(\Hess\Action)<0\,,$$
then
$$\negsign (\Hess\action) =1 \quad \Leftrightarrow\quad \negsign (\Hess\Action) =2\,.$$
If this is the case then the periodic traveling wave $(\vol,\vitsL)=(\uvol,\uvitsL)(y+jt)$ is an orbitally stable solution to {\rm (EKL)} and $\vol=\uvol(y+j^2t)$ is an orbitally stable solution to {\rm (qKdV)}.
\end{theorem}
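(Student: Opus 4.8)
The plan is to extract everything from the two displayed identities \eqref{eq:detActionaction} and \eqref{eq:negsignActionaction}, which already reduce the comparison of $\Hess\Action$ with $\Hess\action$ to the comparison of the single symmetric $3\times 3$ matrix $M:=\Hess\action$, evaluated at $(\lambda-\tfrac12\sigma^2,j\sigma-\mu,-j^2)$, with its perturbation $M-\kappa J$, where $J$ is the rank-one matrix appearing in \eqref{eq:negsignActionaction} and $\kappa:=(2\action_\speed\action_\mu-\action_\lambda^2)/(4j^2\action_\mu)$. The first thing I would record are the two positivity facts that make the argument run: since $\uvol$ is a genuinely periodic, nonconstant profile, the Cauchy--Schwarz computation recalled just before the statement gives $\action_\mu=\Upsilon>0$ and $2\action_\speed\action_\mu-\action_\lambda^2>0$; together with $j\neq0$ this yields $4j^2\action_\mu>0$ and $\kappa>0$, so that $\kappa J\succeq 0$ is a rank-one positive semidefinite matrix and $M-\kappa J\preceq M$.

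Next I would compute the determinant of the perturbed matrix. Expanding along the distinguished ($\speed$) coordinate, the cofactor of the modified diagonal entry is the principal minor $\action_{\mu\mu}\action_{\lambda\lambda}-\action_{\lambda\mu}^2$, so $\det(M-\kappa J)=\det M-\kappa(\action_{\mu\mu}\action_{\lambda\lambda}-\action_{\lambda\mu}^2)$. Substituting the value of $\kappa$ and using \eqref{eq:detActionaction} collapses this to the clean relation
\[
\det(M-\kappa J)\,=\,-\,\frac{\det(\Hess\Action)}{4j^2\action_\mu}\,.
\]
As $4j^2\action_\mu>0$, the sign of $\det(M-\kappa J)$ is opposite to that of $\det(\Hess\Action)$. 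Combined with the hypothesis $\det(\Hess\action)\,\det(\Hess\Action)<0$, which says that $\det M=\det(\Hess\action)$ and $\det(\Hess\Action)$ have opposite signs, this shows that $\det M$ and $\det(M-\kappa J)$ are both nonzero and of the \emph{same} sign.

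The heart of the argument is then a short linear-algebra step. Because $M$ and $M-\kappa J$ differ by a rank-one positive semidefinite matrix, Weyl's monotonicity of eigenvalues gives $\negsign(M)\leq\negsign(M-\kappa J)\leq\negsign(M)+1$. On the other hand, two nonsingular symmetric $3\times3$ matrices whose determinants share the same sign have negative signatures of the same parity. These two facts together force $\negsign(M)=\negsign(M-\kappa J)$, and feeding this into \eqref{eq:negsignActionaction} produces the exact identity $\negsign(\Hess\Action)=1+\negsign(\Hess\action)$. In particular $\negsign(\Hess\action)=1\Leftrightarrow\negsign(\Hess\Action)=2$, which is the asserted equivalence.

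Finally, assuming the common condition holds, I would conclude orbital stability by invoking the two earlier theorems. For (EKL), Theorem \ref{thm:EKLorb} applies: the conditions $\det(\Hess\Action)\neq0$ and $\negsign(\Hess\Action)=2$ are in hand, and the period nondegeneracy $\Action_{\lambda\lambda}\neq0$ is the relevant instance of {\bf (H1)}, namely that the common period $\Upsilon$ of the shared profile varies with its energy level. For (qKdV), Theorem \ref{thm:qKdVorb} applies with $\det(\Hess\action)\neq0$ and $\negsign(\Hess\action)=1$, its remaining nondegeneracy $\action_{\mu\mu}\neq0$ being \emph{identical} to $\Action_{\lambda\lambda}\neq0$: differentiating the composition \eqref{eq:actionsEKLqKdV} by the chain rule gives $\Action_{\lambda\lambda}=\action_{\mu\mu}$ at the transformed parameters. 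I expect the genuine difficulty to lie not in the signature comparison, which is the clean core above, but in this cross-formulation bookkeeping: matching the period derivatives and nondegeneracy conditions of the three reductions (EKE, EKL, qKdV) so that $\action_{\mu\mu}=\Action_{\lambda\lambda}=\Upsilon_\lambda$ is correctly identified and seen to be nonzero, and in keeping track of the Cauchy--Schwarz positivity that guarantees $\kappa>0$ — without which $\kappa J$ would fail to be sign-definite and the parity argument would break down.
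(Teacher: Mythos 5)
Your proposal is correct, and its skeleton is the same as the paper's: both rest on the identities \eqref{eq:detActionaction}--\eqref{eq:negsignActionaction} from Appendix \ref{app:signs}, on the positivity facts $\action_\mu=\Upsilon>0$ and $2\action_\speed\action_\mu-\action_\lambda^2>0$ (Cauchy--Schwarz), and on reducing the whole statement to the single equality $\negsign(\Hess\action-\kappa J)=\negsign(\Hess\action)$, with $\kappa=(2\action_\speed\action_\mu-\action_\lambda^2)/(4j^2\action_\mu)$; your computation $\det(\Hess\action-\kappa J)=-\det(\Hess\Action)/(4j^2\action_\mu)$ is exactly the intermediate relation the paper derives in the appendix. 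Where you genuinely depart from the paper is in how that signature equality is justified. The paper argues that $\Hess\action$ and $\Hess\action-\kappa J$ share their first two leading principal minors and, under $\det(\Hess\action)\,\det(\Hess\Action)<0$, have determinants of the same sign, and then counts sign changes of principal minors (Sylvester's rule); this counting is delicate when intermediate minors vanish, and the nondegeneracy hypothesis $\Action_{\mu\mu}\neq0$ (which by the chain rule equals $\action_{\lambda\lambda}\neq0$) is what makes such a count exploitable. You instead use that $\kappa J$ is a rank-one positive semidefinite perturbation, so eigenvalue interlacing gives $\negsign(\Hess\action)\leq\negsign(\Hess\action-\kappa J)\leq\negsign(\Hess\action)+1$, and then the parity argument (same-sign determinants of nonsingular symmetric matrices force negative signatures of equal parity) pins the value. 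This buys robustness: your route needs no nondegeneracy of any principal minor, so the equivalence $\negsign(\Hess\action)=1\Leftrightarrow\negsign(\Hess\Action)=2$ comes out without using $\Action_{\mu\mu}\neq0$ at all. One bookkeeping point you handle, and should keep explicit: to invoke Theorems \ref{thm:qKdVorb} and \ref{thm:EKLorb} one must check the first conditions of ${\bf (s)}$ and ${\bf (S_L)}$, which coincide with the single requirement $\action_{\mu\mu}=\Action_{\lambda\lambda}\neq0$ (by the chain rule applied to \eqref{eq:actionsEKLqKdV}); this is \emph{not} the literal hypothesis $\Action_{\mu\mu}\neq0$ of the statement, so, as you do, it has to be read as part of the standing assumptions of those two theorems --- the paper's own wording is equally loose on this, since its proof paragraph asserts the full equivalence of ${\bf (s)}$ and ${\bf (S_L)}$, whose common first condition is precisely this quantity.
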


\begin{corollary}\label{cor:qKdVEKL}
Under the assumptions of Theorems \ref{thm:qKdVorb} \& \ref{thm:EKLorb}, if $j\neq 0$ and 
\begin{itemize}
\item[{\bf (s2)}]
$\action_{\mu\mu}<0\,,\quad \left|\begin{array}{cc}
\action_{\lambda\lambda} & \action_{\mu\lambda} \\
\action_{\lambda\mu} & \action_{\mu\mu}
\end{array}\right| <0\,,\quad \det(\Hess\action)<0\,,$
\end{itemize}
holds true at $(\lambda-\tfrac12 \sigma^2,j\sigma-\mu,-j^2)$,
 then the periodic traveling wave $(\vol,\vitsL)=(\uvol,\uvitsL)(y+jt)$ is an orbitally stable solution to {\rm (EKL)} and $\vol=\uvol(y+j^2t)$ is an orbitally stable solution to {\rm (qKdV)}.
\end{corollary}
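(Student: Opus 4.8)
The plan is to obtain both conclusions at once from Theorem~\ref{thm:qKdVEKL}, by checking that the single hypothesis ${\bf (s2)}$ (read at the point $(\lambda-\tfrac12\sigma^2,\,j\sigma-\mu,\,-j^2)$) together with $j\neq0$ supplies everything that theorem requires.

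First I would record that ${\bf (s2)}$ implies ${\bf (s)}$. Ordering the variables as $(\mu,\lambda,\speed)$, the leading principal minors of $\Hess\action$ are $\action_{\mu\mu}$, the $2\times2$ minor $\action_{\mu\mu}\action_{\lambda\lambda}-\action_{\mu\lambda}^2$, and $\det(\Hess\action)$; under ${\bf (s2)}$ all three are negative, so the sign sequence $(+,-,-,-)$ exhibits a single change and the count of sign changes in the leading principal minors gives $\negsign(\Hess\action)=1$, while $\action_{\mu\mu}\neq0$ and $\det(\Hess\action)\neq0$ (indeed $\det(\Hess\action)<0$) are immediate. This is exactly ${\bf (s)}$, so the hypotheses of Theorem~\ref{thm:qKdVorb} are in force and the (qKdV) part will follow.

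Next I would pin down the sign of $\det(\Hess\Action)$ through \eqref{eq:detActionaction}. Since $\uvol$ is a nonconstant periodic profile, $\action_\mu=\Upsilon>0$ and, by the Cauchy--Schwarz inequality, $2\action_\speed\action_\mu-\action_\lambda^2>0$. In \eqref{eq:detActionaction} the first summand $(2\action_\speed\action_\mu-\action_\lambda^2)(\action_{\mu\mu}\action_{\lambda\lambda}-\action_{\mu\lambda}^2)$ and the second summand $-4j^2\action_\mu\det(\Hess\action)$ are then governed by the signs coming from ${\bf (s2)}$ and $j\neq0$; as developed in the discussion preceding Theorem~\ref{thm:qKdVEKL}, this yields $\det(\Hess\Action)>0$. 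In particular $\det(\Hess\Action)\neq0$, whence $\Action_{\mu\mu}\neq0$ by the identity $\Action_{\mu\mu}\det\bC_E=-\det(\Hess\Action)$ in \eqref{eq:constEKE-EKL}, and $\det(\Hess\action)\,\det(\Hess\Action)<0$ because $\det(\Hess\action)<0$. These are precisely the remaining hypotheses of Theorem~\ref{thm:qKdVEKL}.

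With $j\neq0$, $\Action_{\mu\mu}\neq0$ and $\det(\Hess\action)\,\det(\Hess\Action)<0$ verified, Theorem~\ref{thm:qKdVEKL} applies, and its equivalence $\negsign(\Hess\action)=1\Leftrightarrow\negsign(\Hess\Action)=2$ combined with $\negsign(\Hess\action)=1$ from the first step gives the orbital stability of both the (EKL) wave $(\uvol,\uvitsL)(y+jt)$ and the (qKdV) wave $\uvol(y+j^2t)$. The step carrying the genuine content is the determination of the sign of $\det(\Hess\Action)$ via \eqref{eq:detActionaction}: the two summands have opposite signs, so it is exactly the quantitative balance between them that forces $\det(\Hess\Action)>0$, and hence $\negsign(\Hess\Action)=2$ rather than $3$; I expect this to be the main obstacle, everything else being bookkeeping. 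As an alternative route for the (EKL) half one could invoke Theorem~\ref{thm:EKLorb} directly, verifying ${\bf (S_L)}$: there $\Action_{\lambda\lambda}=\action_{\mu\mu}\neq0$ follows at once by the chain rule from \eqref{eq:actionsEKLqKdV}, while $\negsign(\Hess\Action)=2$ comes from \eqref{eq:negsignActionaction} as soon as the same sign of $\det(\Hess\Action)$ is known.
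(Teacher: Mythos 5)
Your first step is sound: under {\bf (s2)} the leading principal minors of $\Hess\action$ in the ordering $(\mu,\lambda,\speed)$ are all negative, so the sign sequence $(1,-,-,-)$ has exactly one change, $\negsign(\Hess\action)=1$, hence {\bf (s)} holds and the (qKdV) half follows from Theorem \ref{thm:qKdVorb}. The gap sits exactly at the step you yourself flag as carrying ``the genuine content'': the assertion that {\bf (s2)} and $j\neq 0$ force $\det(\Hess\Action)>0$ through \eqref{eq:detActionaction}. Under {\bf (s2)} the first summand $(2\action_\speed\action_\mu-\action_\lambda^2)(\action_{\mu\mu}\action_{\lambda\lambda}-\action_{\lambda\mu}^2)$ is strictly \emph{negative} (the Cauchy--Schwarz factor is positive, the minor is negative), while the second summand $-4j^2\action_\mu\det(\Hess\action)$ is strictly \emph{positive}; a sum of two terms of fixed but opposite signs has no determined sign, and {\bf (s2)} carries no quantitative comparison of $|\det(\Hess\action)|$ with $\eta\,|\action_{\mu\mu}\action_{\lambda\lambda}-\action_{\lambda\mu}^2|$, where $\eta=(2\action_\speed\action_\mu-\action_\lambda^2)/(4j^2\action_\mu)$, which is what deciding the balance requires. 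Equivalently, by \eqref{eq:negsignActionaction}, subtracting $\eta J$ from $\Hess\action$ leaves the first two principal minors untouched but may flip the sign of the determinant, so $\negsign(\Hess\Action)$ could a priori be $2$ (orbital stability) or $3$ (which would even give \emph{spectral instability} of the (EKL) wave by Theorem \ref{thm:EvansEK}). Announcing that ``the quantitative balance forces $\det(\Hess\Action)>0$'' is not a proof of it.

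Deferring to the discussion preceding Theorem \ref{thm:qKdVEKL} does not close this gap: that sign argument is conclusive only when the minor $\action_{\mu\mu}\action_{\lambda\lambda}-\action_{\lambda\mu}^2$ is \emph{nonnegative}, for then both summands of \eqref{eq:detActionaction} are $\geq 0$ with one positive --- the ``$\leq 0$'' written in that sentence of the paper is inconsistent with \eqref{eq:detActionaction} itself. Indeed the paper's own Table \ref{tb:signs}, in the row $(-,-,-)$ which is precisely {\bf (s2)}, records the sign of $\det(\Hess\Action)$ as undetermined (``?'') and only $\negsign(\bC_L)\geq 2$; so the paper's stated justification of this corollary suffers from the same defect, and your proposal faithfully reproduces it rather than repairs it. Your alternative route through ${\bf (S_L)}$ changes nothing, since $\negsign(\Hess\Action)=2$ versus $3$ is decided by the very same undetermined sign. (A smaller point: you cannot infer $\Action_{\mu\mu}\neq 0$ from $\Action_{\mu\mu}\det\bC_E=-\det(\Hess\Action)$, because $\bC_E$ is only defined when $\Action_{\mu\mu}\neq 0$; that inference is circular.) To reach the conclusion one must either assume $\det(\Hess\action)\,\det(\Hess\Action)<0$ outright, as Theorem \ref{thm:qKdVEKL} does, or supply a genuine estimate showing $-\det(\Hess\action)>\eta\,|\action_{\mu\mu}\action_{\lambda\lambda}-\action_{\lambda\mu}^2|$ under {\bf (s2)}; neither appears in your argument.
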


\section{Numerical investigation of specific examples}\label{s:num}

In this section, we focus on the quasilinear Korteweg-de Vries equation (qKdV) and on both versions of
the Euler-Korteweg system, {\rm (EKL)} or {\rm (EKE)}, with an energy as in \eqref{eq:en}.
In all cases, the traveling profiles $\ubv$ are governed by an equation of the form
\begin{equation}\label{eq:profil_EKL}
\frac12 \cap(\ubv) \dot\ubv^2 + \Potential( \ubv ; \lambda,\speed) = \mu \, ,
\end{equation}
where $\speed$ is linked to the speed of the wave, and $\lambda$, $\mu$ are basically constants of integration.
For (qKdV), notation in \eqref{eq:profil_EKL} are perfectly consistent with those used in  \S\ref{ss:qKdV}. Indeed, $\speed$ is exactly the speed of the wave, $\lambda$ is the Lagrange multiplier in the profile equation, $\mu$ is the energy level resulting from the integrated version of this profile equation, and the potential is
$$\Potential( \bv ; \lambda,\speed)\,=\,-\,f(\bv)\,-\,\tfrac12 \speed \bv^2\,+\,\lambda\,\bv\,.$$
We warn the reader once more, however, that the abstract notation for the parameters  in \eqref{eq:profil_EKL} has to be interpreted differently for the EK system. Indeed, as should be clear from the discussion in  \S\ref{ss:conn-qKdV}, the profile equations for {\rm (EKL)} are not readily of the form \eqref{eq:profil_EKL}, which is a \emph{reduced profile equation}, obtained only after eliminating the velocity profile $\uvitsL$. 
This yields the following table of correspondence (Table \ref{tb:morenotations}) between the `practical' parameters $(\mu,\lambda,j,\sigma)$ of the EKL wave profile $(\uvol,\uvitsL)$ introduced in  \S\ref{ss:EKLEKE}, and the abstract parameters $(\mu,\lambda,\speed)$ in \eqref{eq:profil_EKL} --- which can also be checked from the relation between the EK action and the qKdV action in \eqref{eq:actionsEKLqKdV}.

\begin{table}[H]
\begin{center}\begin{tabular}{|c|c|c|c|} \hline
abstract & $\mu$ & $\lambda$ & $\speed$ \\ \hline
qKdV & $\mu$ & $\lambda$ & $\speed$ \\ \hline
EKL & $\lambda-\tfrac12 \sigma^2$ & $j\sigma-\mu$ & $-j^2$ \\ \hline
\end{tabular}
\end{center}
\caption{Parameters in the reduced profile equation}
\label{tb:morenotations}
\end{table}

In fact, the computations performed in the proof of Theorem \ref{thm:indices} also show that the reduced profile equation for {\rm (EKE)} has an abstract form as in \eqref{eq:profil_EKL}, up to adapting notation. The correspondence is summarized in the next table (Table \ref{tb:morenotationsagain}).

\begin{table}[H]
\begin{center}\begin{tabular}{|c|c|c|c|} \hline
equation & energy level & potential $\Potential$ \\ [5pt] \hline
qKdV & $\mu$ & $\Potential(\bv)=-\,f(\bv)\,-\,\tfrac12 \speed \bv^2\,+\,\lambda\,\bv$ \\ [5pt] \hline
EKL & $\lambda-\tfrac12 \sigma^2$ &  $\Potential(\bv)=-\,f(\bv)\,+\,\tfrac12 j^2 \bv^2\,+\,(j\sigma-\mu)\,\bv$ \\ [5pt] \hline
EKE & $\mu-j\sigma$ &  $\Potential(\rho)=-\,F(\rho)\,+\,\tfrac12 j^2 /\rho\,+\, (\tfrac12 \sigma^2-\lambda)\,\rho$ \\ [5pt] \hline
\end{tabular}
\end{center}
\caption{Energy level and potential in the reduced profile equation}
\label{tb:morenotationsagain}
\end{table}

We find it convenient to keep the `abstract' notation in \eqref{eq:profil_EKL} for the discussion that follows.

\subsection{Methodology}

We wish to investigate numerically the conditions for orbital, coperiodic stability that are derived in Section \ref{s:orb}, and applied to (qKdV) in \S\ref{ss:qKdV}, and to {\rm (EKL)}, {\rm (EKE)} in \S\ref{ss:EKLEKE}-\ref{ss:conn-qKdV}.
Equation \eqref{eq:profil_EKL} is obviously solvable by separation of variables, which readily gives the action
\begin{equation}
\Action\,=\,\oint \,\cap(\bv) \,\dot\bv \;\dif \bv = \oint \sqrt{2 \cap(\bv) (\mu - \Potential(\bv ; \lambda,\speed))}\; \dif \bv\,,
\end{equation}
and the period
\begin{equation}
\Upsilon\,=\,\oint \,\frac{\dif \bv}{\dot\bv}  = \oint \frac{\dif \bv}{\sqrt{2(\mu - \Potential(\bv ; \lambda,\speed))/\cap(\bv)}} \,.
\end{equation}

To actually compute these integrals, we first need the points $\bv_2$, $\bv_3$, as denoted in 
Table~\ref{tb:var},
at which $\Potential$ achieves the energy level $\mu$ and in between which $\Potential$ is less than $\mu$, the point $\bv_2$ being the trough of the wave, and $\bv_3$, the crest. These points we can compute numerically by means of, \emph{e.g.} \emph{Newton}'s method. This is what we have done, with a \emph{relative tolerance} $\varepsilon=10^{-10}$.
Then we have computed numerically
$$\Action\,=\,2\int_{\bv_2}^{\bv_3}  \sqrt{2 \cap(\bv) (\mu - \Potential(\bv ; \lambda,\speed))}\; \dif \bv\,,
$$
and also
$$
\Upsilon\,=\,2\int_{\bv_2}^{\bv_3} \frac{\dif \bv}{\sqrt{2(\mu - \Potential(\bv ; \lambda,\speed))/\cap(\bv)}} \,,
$$
by standard numerical integration techniques\footnote{Here, the trapezoidal rule. We have also tried 
\emph{Simpson}'s rule without any significant benefit.}. Some care is needed to handle properly the square root singularity at endpoints. This has been done by first making the desingularizing change of variables $$\bv=\frac{\bv_3+\bv_2}{2}+\frac{\bv_3-\bv_2}{2}\sin \omega$$ 
(already used for instance in \cite{BronskiJohnson,BNR-JNLS14}).
Then every Newton--Cotes formulae that we tried worked well, and gave very close results, with a step size $\Delta \omega=10^{-4}$.
We just had to preferably use an open Newton--Cotes formula
to avoid too much sensitivity on the numerical error made on the preliminary computations of $\bv_2$ and $\bv_3$. 

Finally, we have computed approximations of the second order derivatives of the action by means of finite differences.
To avoid any confusion, the qKdV action is denoted by $\action$ in what follows --- as in \S\ref{ss:qKdV}, and the notation $\Action$ is reserved for the EK action. The second order derivatives of $\action$ and $\Action$ are computed in the natural, `abstract' variables, respectively $(\mu,\lambda,\speed)$ and $(\mu,\lambda_1,\lambda_2, \speed)$, up to referring to Table~\ref{tb:notations} for computing derivatives of the EK action with respect to the `concrete' parameters $(\mu,\lambda,j,\sigma)$. In both cases, we have used a  9-point stencil for the computation of those second order discrete derivatives, with a step size $\Delta \nu$ which turned out to be rather severely limited by the earlier steps --- especially the numerical integration step. 
An additional difficulty is that the \emph{condition number} of the Hessian of the action is most often very large.

The tolerance $\varepsilon=10^{-10}$ and the integration step size $\Delta \omega=10^{-4}$ are basically kept constant in the results presented below, whereas the finite difference step $\Delta \nu$ varies from place to place. 

One important observation to validate our numerical approach is that, for some specific nonlinearities which correspond to completely integrable PDEs, we have access to an explicit formula for some characteristics of the Hessian of the action. To be more precise, we know from \cite[Proposition 2]{BNR-GDR-AEDP} that the \emph{characteristic velocities} of the \emph{modulated} equations are those of a matrix known in terms of the Hessian of the action, the velocity and the period of periodic waves. For (qKdV), this matrix reads
$${\bf d} := \speed \left(\begin{array}{ccc}
1 & 0 & 0 \\
0 & 1 & 0 \\
0 & 0 & 1
\end{array}\right) \,+\,(\Hess \action)^{-1}\,\left(\begin{array}{ccc}
0 & 0 & -1 \\
0 & 1 & 0 \\
-1 & 0 & 0
\end{array}\right)$$
and for (EKL), it is
$${\bf D} = - j \left(\begin{array}{cccc}
1 & 0 & 0 & 0 \\
0 & 1 & 0 & 0 \\
0 & 0 & 0 & 1
\end{array}\right) \,+\,(\Hess \Action)^{-1}\,\left(\begin{array}{cccc}
0 & 0 & 0 &  -1 \\
0 & 0 &  1 &  0 \\
0 & 1 & 0 & 0 \\
-1 & 0 & 0 & 0
\end{array}\right)\,.$$
So, in cases when these characteristic velocities are known through `explicit' formulas --- which involve elliptic functions ---, we can compare them with the eigenvalues of ${\bf d}$ and ${\bf D}$
that we obtain through our numerical approach. This is what we have done for the `standard' Korteweg-de Vries (KdV) equation ($p(\bv)=\frac12 \bv^2$, $\cap\equiv 1$) and for the Euler--Korteweg version of the cubic NLS equation ($p(\vol)=1/(2\vol^2)$, $\Cap(\rho)=1/(4\rho)$, that is, $\cap(\vol)=1/(4\vol^4)$), 
by using the expressions of characteristic velocities collected in the book by Kamchatnov \cite{Kamchatnov} (chapter 3.5, p. 184 and chapter 5.1, p. 238) and in \cite[\S~2.2]{Jenkins}.

\begin{remark}
\textup{
This testing against explicit formulas incidentally enabled us to find out that the numerical computations displayed in \cite{BNR-JNLS14} were, to some extent, corrupted, because of inconsistent choices for the finite difference step $\Delta \nu$ and for the integration step size $\Delta \omega$ --- in fact, $\Delta \nu$ was taken too small.}
\end{remark}

\subsection{A few numerical results}

\subsubsection{Benchmarks}
Let us start with the KdV case (with actually $p(\bv)=3 \bv^2$, for convenience). On Figure \ref{fig:kdv_cond} hereafter, we have plotted the condition number of $\Hess\action$ for a reasonable range of periods $\Upsilon$, almost going down to the harmonic period --- that is, the small amplitude limit --- as well as the relative error of its determinant when compared to what is expected from the explicit formulas mentioned above.
As we can see,
it is for periods close to the harmonic period that the condition number of $\Hess\action$ is the highest, and the error on the determinant reaches the highest values, around $0.12 \%$. Were the step size $\Delta \nu$ diminished, this instability phenomenon would worsen.

\begin{figure}[H]
\begin{center}
\includegraphics[width=7.5cm]{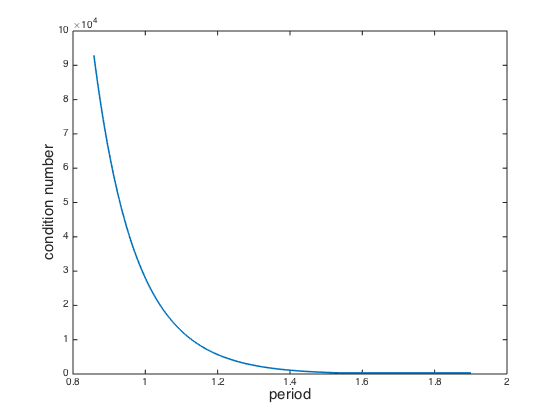}\hfill \includegraphics[width=7.5cm]{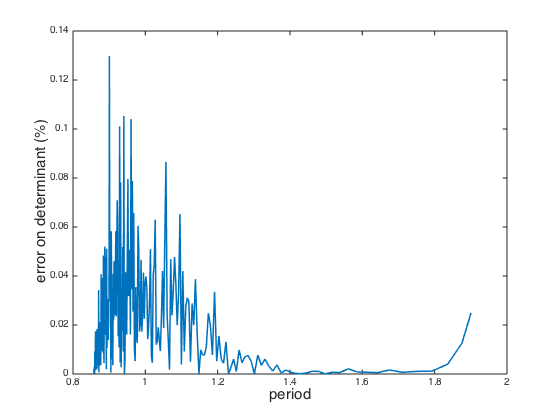}
\caption{Left: condition number of $\Hess \action$ as a function of the period $\Upsilon$ for KdV, with $\speed=60$, 
$\lambda=-60$ kept fixed. 
Right: relative error on $\det\Hess \action$. Finite difference step size: $\Delta \nu=0.005$.
}
\label{fig:kdv_cond}
\end{center}
\end{figure}

Figure \ref{fig:conditions_kdv} hereafter displays, with the same parameter values as in Figure \ref{fig:kdv_cond},
 the minors that encode the orbital stability condition in Theorem \ref{thm:qKdVorb}, namely
 $\mineur_1 = \action_{\mu \mu}$, $\mineur_2 = \action_{\mu \mu} \action_{\lambda \lambda} - \action_{\lambda \mu}^2$ and $\mineur_3 = \det\Hess\action$. 
\begin{figure}[H]
\begin{center}
 \includegraphics[width=16cm]{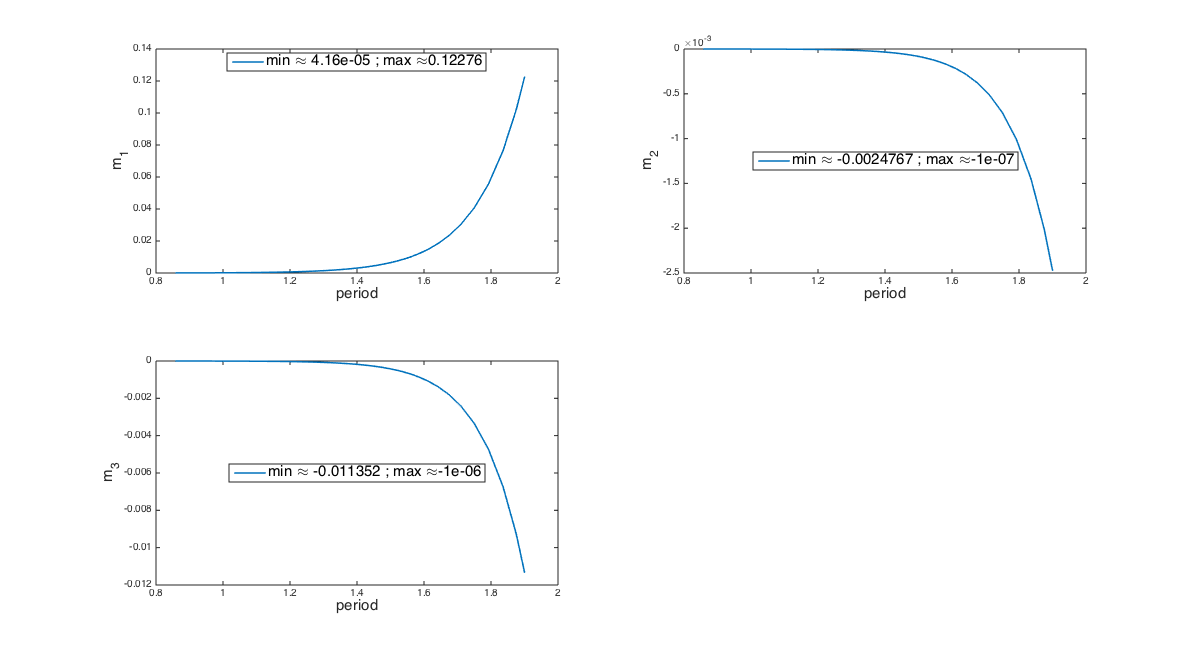}
\caption{Upper left: $\mineur_1 = \action_{\mu \mu}$; upper right: $\mineur_2 = \action_{\mu \mu} \action_{\lambda \lambda} - \action_{\lambda \mu}^2$; and lower left: $\mineur_3 = \det\Hess\action$, all of them as functions of the period $\Upsilon$ for KdV with $\speed=60$, 
$\lambda=-60$ kept fixed. Finite difference step size: $\Delta \nu=0.005$. }
\label{fig:conditions_kdv}
\end{center}
\end{figure}
By looking at extreme values of those minors given in boxes, or by zooming on and eye-checking the parts of their curves that look closest to zero, we recover the signs that confirm orbital stability for KdV. More precisely, we find that the signs of minors of $\Hess\action$ are as in the third row on Table \ref{tb:signs}, 
$$\action_{\mu\mu}>0\,,\quad\left|\begin{array}{cc}
\action_{\lambda\lambda} & \action_{\mu\lambda} \\
\action_{\lambda\mu} & \action_{\mu\mu}
\end{array}\right| <0\,,\quad \det(\Hess\action)<0\,,$$
(which is \eqref{eq:Johnson}, corresponding to the set of conditions found by Johnson in \cite{Johnson}).

Let us now consider the \emph{cubic} NLS nonlinearity, whose numerical behavior differs. Again, on Figure \ref{fig:nls_cond} below, we have plotted the condition number of $\Hess\Action$ and the relative error of its determinant.

\begin{figure}[H]
\begin{center}
\includegraphics[width=7.5cm]{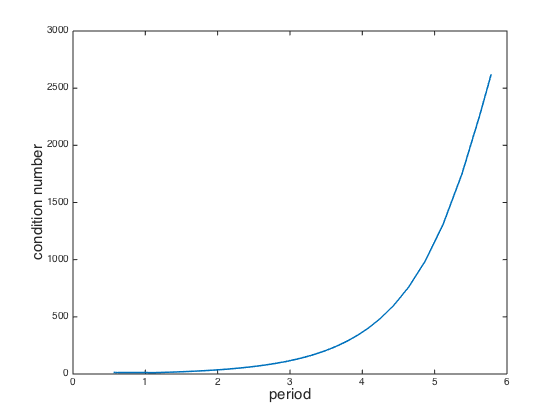} \hfill \includegraphics[width=7.5cm]{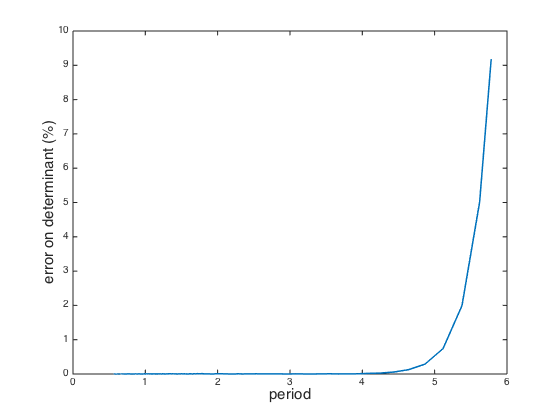}
\caption{Left: condition number of $\Hess \Action$ as a function of the period for NLS, with
$j=1$, $\sigma=0$, $\mu=-2.5$ kept fixed.
Right: relative error on $\det\Hess \Action$. Finite difference step size: $\Delta \nu=10^{-5}$.}
\label{fig:nls_cond}
\end{center}
\end{figure}

Here the most important numerical problems come from large periods, that is, when we approach the soliton limit. The condition number gets quite large, and the error on the determinant obviously blows up in this limit. Nevertheless, we can plot the minors encoding orbital stability as long as the period is not too large.
We first plot, in Figure \ref{fig:det_nls}, the four minors of $\Hess\Action$ encoding the stability conditions in ${\bf (S_L)}$ (in Theorem \ref{thm:EKLorb}), with in particular $\Mineur_1 = \Action_{\lambda\lambda}$ and $\Mineur_4 = \det(\Hess\Action)$, and then also look at the missing one for checking the stability conditions in ${\bf (S_E)}$ (in Theorem \ref{thm:EKEorb}), namely $\Action_{\mu\mu}$. 

\begin{figure}[H]
\begin{center}
\includegraphics[width=16cm]{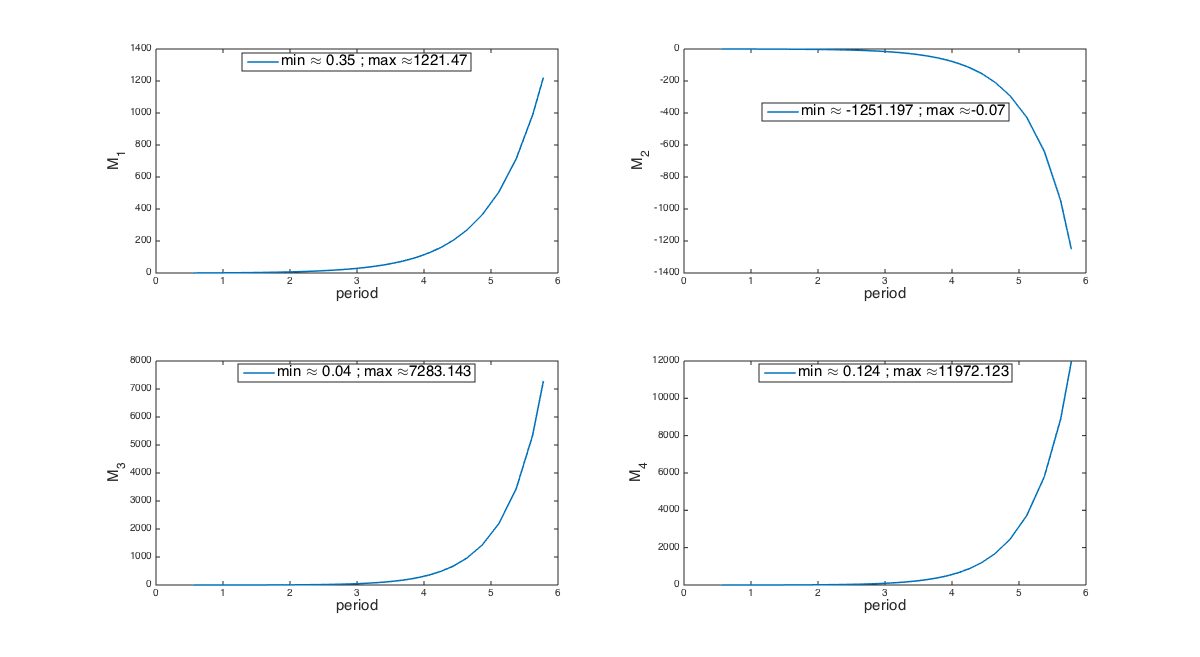}
\caption{The four principal minors: $\Mineur_1=\Action_{\lambda\lambda}$ (upper left);
$\Mineur_2 = \Action_{\mu \mu} \Action_{\lambda \lambda} - \Action_{\lambda \mu}^2$ (upper right);
$\Mineur_3$ (lower left); and
$\Mineur_4 = \det(\Hess\Action)$ (lower right),
as a function of the period for NLS case, with $j=1$, $\sigma=0$, $\mu=-2.5$ kept fixed. Finite difference step size: $\Delta \nu=10^{-5}$.}
\label{fig:det_nls}
\end{center}
\end{figure}
\begin{figure}[H]
\begin{center}
\includegraphics[width=7.5cm]{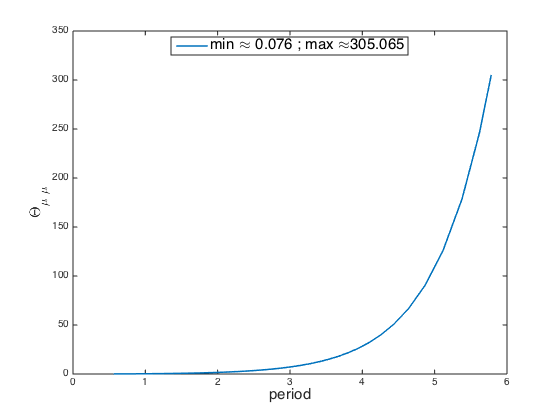}
\caption{$\Action_{\mu\mu}$ as a function of the period with the same data as in Figure \ref{fig:det_nls}.
}
\label{fig:conditions_nls}
\end{center}
\end{figure}

Again, the signs are not easily visible for periods approaching the small amplitude limit, but the extreme values of minors displayed in boxes show that 
$$\Action_{\mu\mu}> 0\,,\quad \Action_{\lambda\lambda}> 0\,,\quad \det(\Hess\Action)> 0\,,\quad \negsign(\Hess\Action)\,=\,2\,,$$
(the latter being a consequence of Sylvester's rule and the fact that  there are exactly two sign changes in the sequence of minors, since $\Mineur_2<0$ and all the others are positive),
which imply that both ${\bf (S_L)}$ and ${\bf (S_E)}$
are satisfied. This confirms, as known from \cite{GallayHaragus2}, that cubic NLS periodic waves are orbitally stable, including in the mass Lagrangian coordinates of the fluid formulation of NLS. 

\subsubsection{More qKdV test cases}

Let us now discuss (qKdV), with the aim of deriving new stability results in non integrable cases. We set again $\cap\equiv 1$ --- which means we actually concentrate on (gKdV) --- and we look at a pressure law given by 
$$ p(v) = e(\gamma+1) v^{\gamma} , \ \gamma \geq 2 \ , \ e = \pm 1\ .$$

The minus sign is usually referred to as the defocusing case. Its introduction is in fact irrelevant if $\gamma$ is an even integer, because in this case the symmetry $(x,t,v)\mapsto (-x,-t,-v)$ drives back the minus sign to a plus sign in (gKdV).

The case $\gamma =2$ corresponds to (KdV), while the plus sign with $\gamma=3$ corresponds to what is known as the \emph{modified} KdV equation (mKdV). This is another completely integrable case, for which the results are qualitatively similar to those of KdV as displayed on Figures \ref{fig:mkdv} - \ref{fig:conditions_mkdv} here after. According to \cite[Theorem 5.2]{AnguloPava_mKdV-NLS}, \textit{dnoidal} waves should be orbitally stable. We chose our parameters in order to observe this particular family of waves (see Figure 2.1 in \cite{Johnson}), and we find indeed the expected stability, the minors of $\Hess\action$ satisfying again \eqref{eq:Johnson}.

\begin{figure}[H]
\begin{center}
\includegraphics[width=7.5cm]{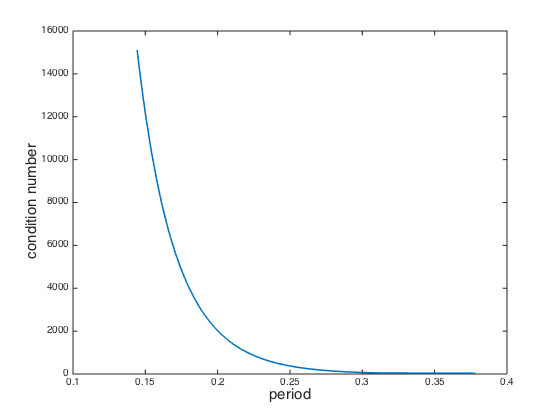}
\caption{Condition number of $\Hess \action$ as a function of the period for focusing (mKdV), that is $\gamma = 3$ and $e = 1$, with $c = 1000$ and $\lambda = -500$  kept fixed. Finite difference step size: $\Delta \nu=0.05$.}
\label{fig:mkdv}
\end{center}
\end{figure}

\begin{figure}[H]
\begin{center}
\hfill \includegraphics[width=16cm]{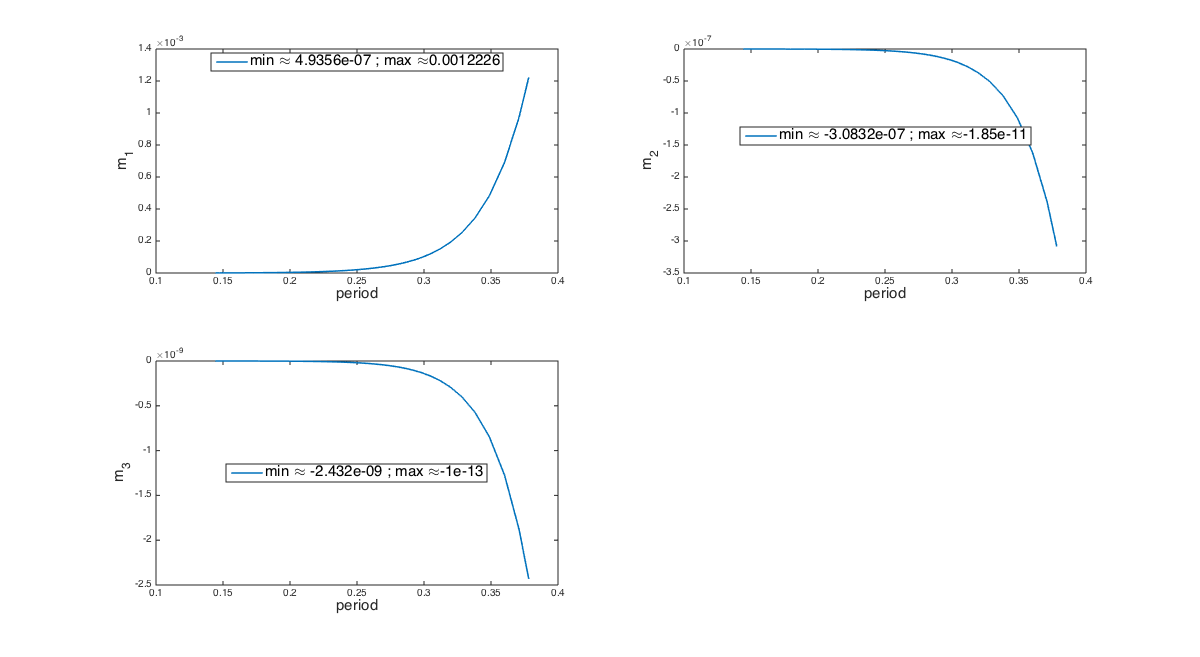}
\caption{Upper left: $\mineur_1 = \action_{\mu \mu}$; upper right: $\mineur_2 = \action_{\mu \mu} \action_{\lambda \lambda} - \action_{\lambda \mu}^2$; and lower left: $\mineur_3 = \det\Hess\action$,  as a function of the period for focusing (mKdV), with the same data as in Figure \ref{fig:mkdv}.}
\label{fig:conditions_mkdv}
\end{center}
\end{figure}

When going to the defocusing mKdV case (with a minus sign in the pressure and  $\gamma = 3$), we observe on Figure 
 \ref{fig:conditions_mkdv_f} that the intermediate minor $\mineur_2$ has the opposite sign, compared to what happens for the focusing mKdV. However, the negative signature of $\Hess\action$ remains equal to one, and this confirms
orbital stability for the defocusing mKdV equation, as expected from \cite{Deconinck-Nivala_mKdV}.

\begin{figure}[H]
\begin{center}
\includegraphics[width=7.5cm]{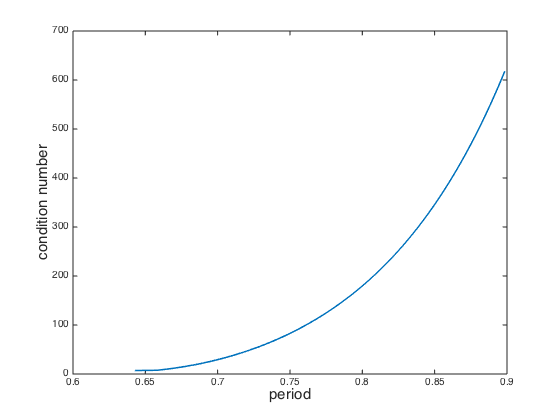}
\caption{Condition number of $\Hess \action$ as a function of the period for defocusing (mKdV), that is $\gamma = 3$ and $e = -1$, with $c = -100$ and $\lambda = -60$ kept fixed. Finite difference step size: $\Delta \nu = 0.005$.}
\label{fig:mkdv_f}
\end{center}
\end{figure}

\begin{figure}[H]
\begin{center}
\hfill \includegraphics[width=16cm]{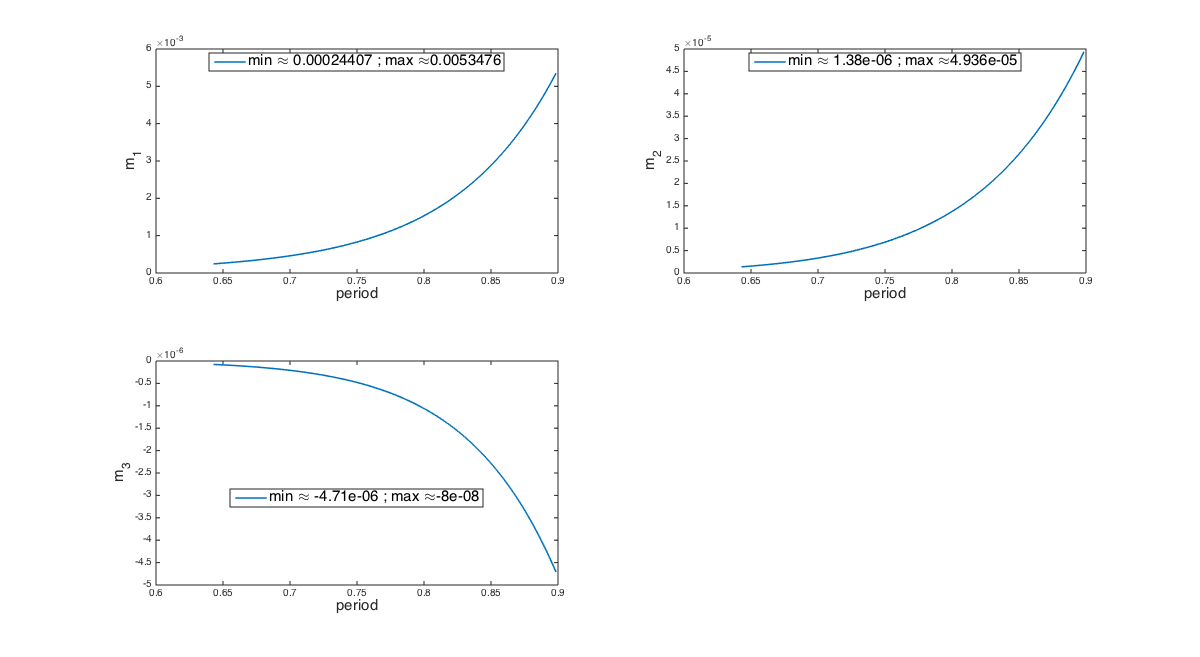}
\caption{Upper left: $\mineur_1 = \action_{\mu \mu}$; upper right: $\mineur_2 = \action_{\mu \mu} \action_{\lambda \lambda} - \action_{\lambda \mu}^2$; and lower left: $\mineur_3 = \det\Hess\action$,  as a function of the period for defocusing (mKdV), with the same data as in Figure \ref{fig:mkdv_f}.}
\label{fig:conditions_mkdv_f}
\end{center}
\end{figure}

Let us come back to the focusing gKdV, with this time $\gamma = 4$. This is a non integrable case, for which we are not aware of any analytical result regarding the stability of periodic waves. It is only known from \cite{PegoWeinstein} that solitary waves are orbitally stable, which is a necessary condition for the stability of periodic waves of large period,  by the work by Gardner \cite{Gardner97}.
Numerical difficulties arise here at both ends. We have indeed large values of the condition number in both the small amplitude limit and the soliton limit, as can be seen on Figure \ref{fig:gkdv3}. Nevertheless, there is numerical evidence for stability (Johnson's conditions in \eqref{eq:Johnson} are satisfied) at least for intermediate periods, for which the numerical results are most reliable, see Figure \ref{fig:conditions_gkdv3}. 

\begin{figure}[H]
\begin{center}
\includegraphics[width=7.5cm]{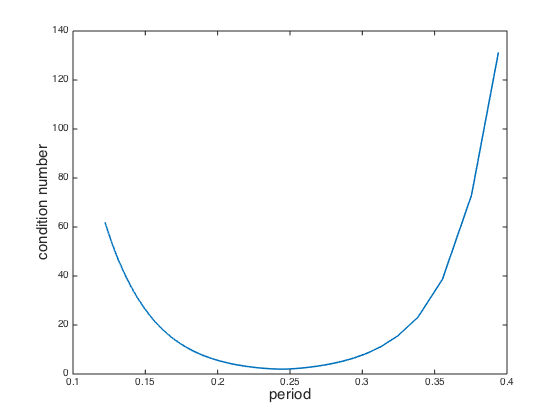}
\caption{Condition number of $\Hess \action$ as a function of the period for (gKdV) with $\gamma = 4$ and $e = 1$, and $c = 1000$, $\lambda = -500$ kept fixed. Finite difference step size: $\Delta \nu = 0.005$.}
\label{fig:gkdv3}
\end{center}
\end{figure}
\begin{figure}[H]
\begin{center}
\hfill \includegraphics[width=16cm]{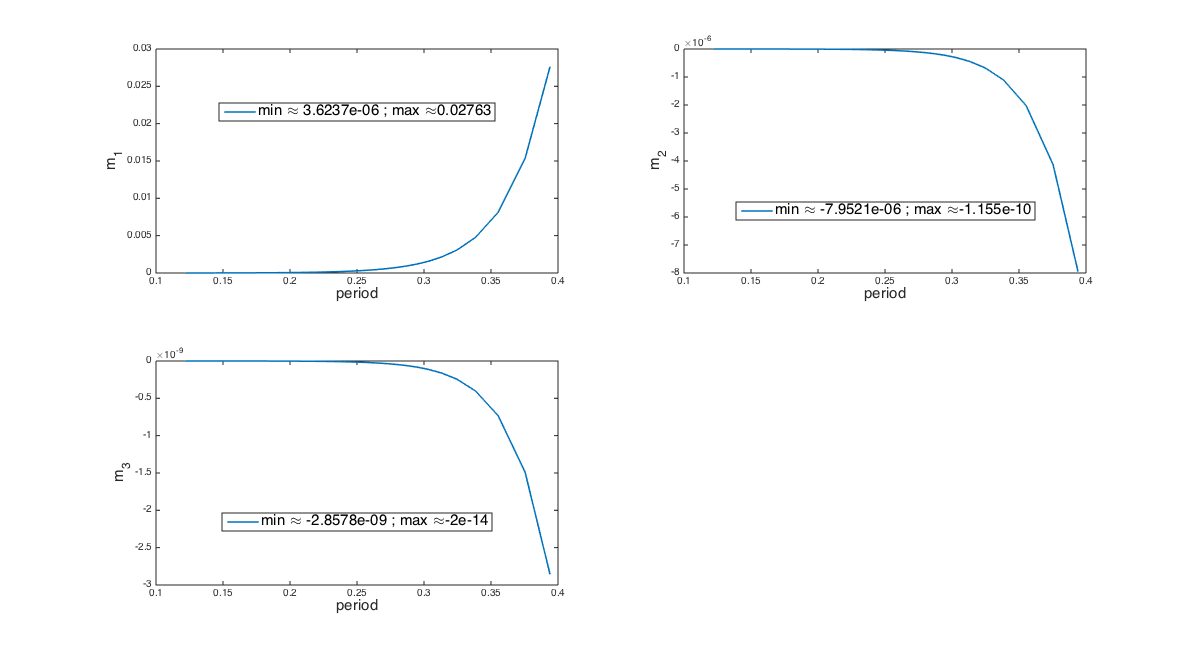}
\caption{Upper left: $\mineur_1 = \action_{\mu \mu}$; upper right: $\mineur_2 = \action_{\mu \mu} \action_{\lambda \lambda} - \action_{\lambda \mu}^2$; and lower left: $\mineur_3 = \det\Hess\action$, as a function of the period for (gKdV) with $\gamma = 4$ and $e = 1$, with the same data as in Figure \ref{fig:gkdv3}.}
\label{fig:conditions_gkdv3}
\end{center}
\end{figure}

\subsubsection{More EK test cases}

Let us now focus on the Euler--Korteweg system. As for the NLS case, we investigate the orbital stability conditions for both (EKL) and (EKE), so as to check that stability occurs at the same time in the two formulations. First, we consider a Boussinesq pressure law 
$$ p(v) = v - v^{\gamma} \ ,$$
with a constant capillarity $\cap \equiv 1$. In what follows, we  take $\gamma = 2$, which corresponds to the \textit{good} Boussinesq equation, as in \cite[\S~4.2.1]{BronskiJohnsonKapitulaII}.

Numerical results are displayed on Figures \ref{fig:bouss}-\ref{fig:conditions_bouss}. We observe a transition at period $\Upsilon_0 \simeq 3.68$, where $\det\Hess\Action$ changes sign (and $\negsign(\Hess \Action)$ passes from $2$ to $3$). For periods smaller than $\Upsilon_0$, we see that we are in the range of application of Theorems~\ref{thm:EKLorb} \&~\ref{thm:EKEorb}, which imply orbital stability for both (EKL) and (EKE).
For periods larger than $\Upsilon_0$, since $\det\Hess\Action<0$, Theorem \ref{thm:EvansEK} implies spectral instability.
We checked that the zone around $\Upsilon_0$ where the condition number becomes very high does not depend on our choice of discretization steps  --- unlike what happens in the KdV/NLS cases for periods approaching the harmonic one or going to infinity. The transition at $\Upsilon_0$ is not a numerical artifact. Our conclusions are thus consistent with those in \cite{HakkaevStanislavovaStefanov}, where it is pointed out that, at a fixed velocity, there exists a maximal period for the wave to be stable.

\begin{figure}[H]
\begin{center}
\includegraphics[width=7.5cm]{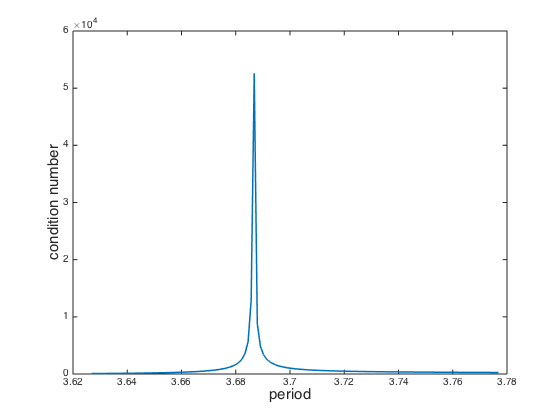} \hfill \includegraphics[width=7.5cm]{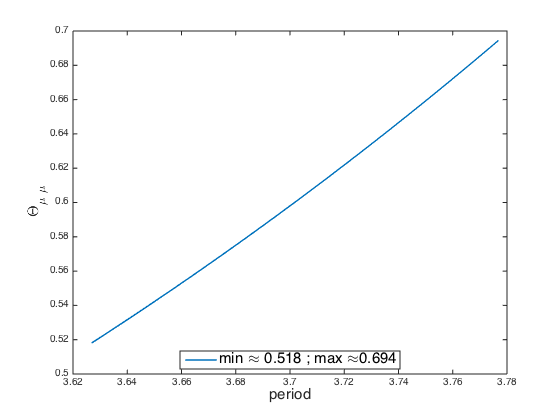}
\caption{Left: condition number of $\Hess \Action$ as a function of the period for EKL with Boussinesq pressure law with $\gamma = 2$, $j=-0.1$, $\sigma=0$, $\mu=-2$ kept fixed.
Right: $\Action_{\mu\mu}$. Finite difference step size: $\Delta \nu=0.5 \cdot 10^{-4}$.}
\label{fig:bouss}
\end{center}
\end{figure}

\begin{figure}[H]
\begin{center}
\includegraphics[width=16cm]{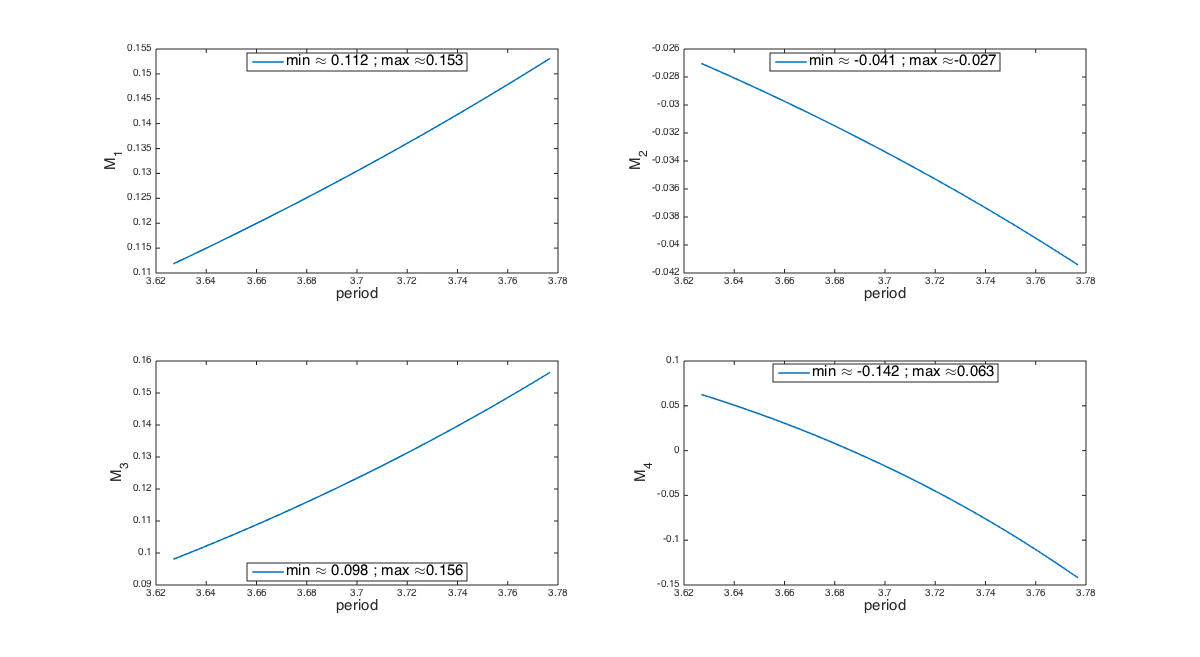}
\caption{The four principal minors: $\Mineur_1=\Action_{\lambda\lambda}$ (upper left);
$\Mineur_2 = \Action_{\mu \mu} \Action_{\lambda \lambda} - \Action_{\lambda \mu}^2$ (upper right);
$\Mineur_3$ (lower left); and
$\Mineur_4 = \det(\Hess\Action)$ (lower right), for EKL with Boussinesq pressure law with $\gamma = 2$ and the same data as on Figure \ref{fig:bouss}.}
\label{fig:conditions_bouss}
\end{center}
\end{figure}

Let us now consider a last case. The perfect gas pressure law 
$$p(v) = 1/2v \ ,$$
with a constant capillarity $\Cap(\rho) \equiv 1$, or equivalently, $\cap(v) = 1/v^5$.

\begin{figure}[H]
\begin{center}
\includegraphics[width=7.5cm]{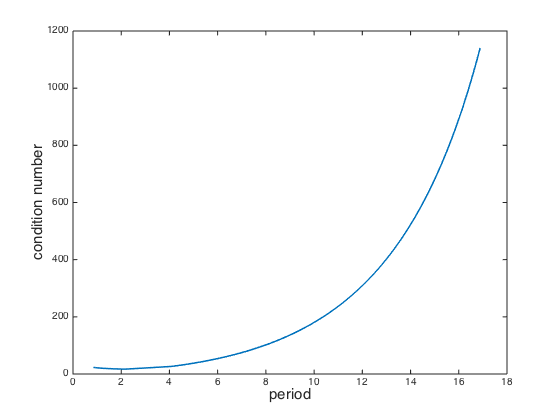} \hfill \includegraphics[width=7.5cm]{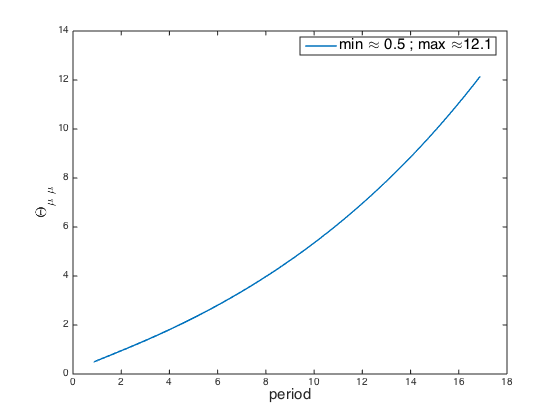}
\caption{Left: condition number of $\Hess \Action$ as a function of the period for EK with the perfect gas pressure law, with $j=-1$, $\sigma=0$, $\mu=-2.5$ kept fixed.
Right: $\Action_{\mu\mu}$. Finite difference step size: $\Delta \nu=0.5 \cdot 10^{-4}$.}
\label{fig:gp}
\end{center}
\end{figure}

\begin{figure}[H]
\begin{center}
\includegraphics[width=16cm]{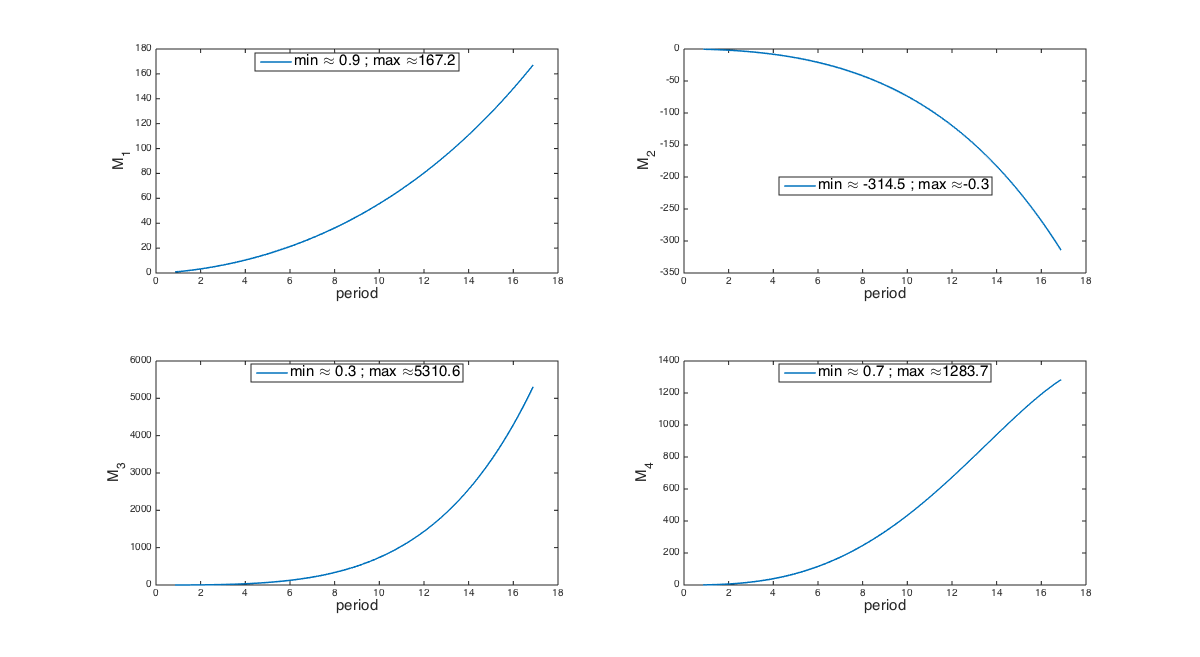}
\caption{The four principal minors: $\Mineur_1=\Action_{\lambda\lambda}$ (upper left);
$\Mineur_2 = \Action_{\mu \mu} \Action_{\lambda \lambda} - \Action_{\lambda \mu}^2$ (upper right);
$\Mineur_3$ (lower left); and
$\Mineur_4 = \det(\Hess\Action)$ (lower right), for EKL with the perfect gas pressure law and the same data as in Figure \ref{fig:gp}.}
\label{fig:conditions_gp}
\end{center}
\end{figure}

We observe that ${\bf (S_L)}$ and ${\bf (S_E)}$ are satisfied, so that both Theorems~\ref{thm:EKLorb} \&~\ref{thm:EKEorb} apply. The tested periodic waves in the Euler--Korteweg system with the perfect gas pressure law are orbitally stable in both mass Lagrangian coordinates and Eulerian coordinates.

\section*{Appendix}
\setcounter{subsection}{0}
\renewcommand{\thesubsection}{\Alph{subsection}}
\renewcommand{\thetheorem}{\Alph{subsection}{theorem}}

\subsection{Algebraic computations regarding $\Action$ and $\action$}\label{app:signs}
Recall from \eqref{eq:actionsEKLqKdV} that
$$\Action(\mu,\lambda,j,\sigma)\,=\,\action(\lambda-\tfrac12 \sigma^2,j\sigma-\mu,-j^2)\,,$$
and that the subscripts $\mu$, $\lambda$, $\speed$ will denote, when attached to $\action$, partial derivatives of $\action$ with respect to its first, second, and third variable respectively. Therefore, by applying twice the chain rule we obtain
$$ 
\begin{array}{rl}
\Hess\Action&\displaystyle
=
\left(\begin{array}{cccc} 
\Action_{\mu \mu} & \Action_{\lambda\mu} & \Action_{j \mu} & \Action_{\sigma \mu} \\
\Action_{\mu \lambda} & \Action_{\lambda\lambda} & \Action_{j \lambda} & \Action_{\sigma \lambda} \\
\Action_{\mu j} & \Action_{\lambda j} & \Action_{j j} & \Action_{\sigma j} \\
\Action_{\mu \sigma} & \Action_{\lambda \sigma} & \Action_{j \sigma} & \Action_{\sigma \sigma} \\
\end{array}\right)\\[2em]
&\displaystyle
= \left(\begin{array}{c|c|c|c} 
\action_{\lambda\lambda} & -\action_{\lambda\mu} & -\sigma\action_{\lambda\lambda}+2j\action_{\speed\lambda} & -j\action_{\lambda\lambda}+\sigma\action_{\lambda\mu}\\ \hline
& \action_{\mu\mu} & -2 j\action_{\speed\mu} +\sigma\action_{\lambda\mu} & -\sigma\action_{\mu\mu}+j\action_{\lambda\mu} \\ \hline
& & 4j^2\action_{\speed\speed} + \sigma^2 \action_{\lambda\lambda} & 
j\sigma\action_{\lambda\lambda} -2j^2\action_{\speed\lambda}+2j\sigma\action_{\speed\mu} \\ 
& & - 4 j \sigma \action_{\speed\lambda}-2\action_\speed& - \sigma^2\action_{\lambda\mu}+\action_\lambda \\ \hline 
& & &
j^2\action_{\lambda\lambda} + \sigma^2 \action_{\mu\mu}\\
& & & -2j\sigma\action_{\lambda\mu}-\action_\mu
\end{array}\right)\,,
\end{array}
$$
where the entries of  $\Hess\Action$ are evaluated at $(\mu,\lambda,j,\sigma)$, while the derivatives of $\action$ are meant to be evaluated at $(\lambda-\tfrac12 \sigma^2,j\sigma-\mu,-j^2)$ --- all along this section\footnote{In particular, the reader should keep in mind that with our conventions,
$$\action_{\lambda\lambda}=\Action_{\mu\mu}\,,\;\action_{\mu\mu}=\Action_{\lambda\lambda}\,.$$}---, and we have omitted to write the symmetric entries under the diagonal in order to save some space. 

By elementary manipulations on columns and rows --- as in Proposition \ref{prop:ActionC} --- we thus find a matrix 
${\bQ}\in {\sf SL}_{4}(\R)$ such that
$$ {\bQ} \,(\Hess\Action)\, \transp{{\bQ}}=
\left(\begin{array}{c|c|c|c} \action_{\lambda\lambda} & -\action_{\lambda\mu} & 2j\action_{\speed\lambda} & 0\\ \hline
-\action_{\lambda\mu} & \action_{\mu\mu} & -2 j\action_{\speed\mu}  & 0 \\ \hline
2j\action_{\speed\lambda} & -2 j\action_{\speed\mu} & 4j^2\action_{\speed\speed} -2\action_\speed& \action_\lambda \\ \hline 
0 & 0 & \action_\lambda & -\action_\mu
\end{array}\right)\,.$$
Assuming that $j\neq 0$ --- and recalling that $\Upsilon=\action_\mu>0$ 
--- we can perform further manipulations, and thus find 
${\bf R}\in {\sf GL}_{4}(\R)$ such that $\det {\bf R}=1/(2j)$ and
\begin{equation}\label{eq:Actionaction}
 {\bf R} \,(\Hess\Action)\, \transp{{\bf R}}=
\left(\begin{array}{c|c|c|c} \action_{\lambda\lambda} & \action_{\lambda\mu} & \action_{\speed\lambda} & 0\\ \hline
\action_{\lambda\mu} & \action_{\mu\mu} & \action_{\speed\mu}  & 0 \\ \hline
\action_{\speed\lambda} & \action_{\speed\mu} &  \action_{\speed\speed} - \eta & 0 \\ \hline 
0 & 0 & 0 & -\action_\mu
\end{array}\right)\,, \; \eta:=(2\action_\speed\action_\mu- \action_\lambda^2)/(4j^2\action_\mu)\,.
\end{equation}
Observe that the upper $3\times 3$ block in the right-hand side is
$\Hess\action -\eta J$, where 
$$J:= \left(\begin{array}{ccc} 0 & 0 & 0\\
0 & 0 & 0 \\
0 & 0 & 1\end{array}\right)\,.$$
In particular, \eqref{eq:Actionaction} yields
$$\frac{1}{4j^2}\,\det(\Hess\Action)\,=\,-\action_\mu\,\det (\Hess\action-\eta J)\,,$$
which equivalently reads
$$\det(\Hess\Action)=
(2\action_\speed\action_\mu -\action_\lambda^2)\,(\action_{\mu\mu}\action_{\lambda\lambda}-\action_{\lambda\mu}^2) - 4 j^2\,\action_\mu\,\det(\Hess \action)\,.
$$
Since $\action_\mu>0$,  \eqref{eq:Actionaction} also gives
$$\negsign(\Hess\Action) = 1 + \negsign (\Hess\action-\eta J)\,.$$
This proves Eqs.~\eqref{eq:detActionaction} and \eqref{eq:negsignActionaction}.

We can also draw some sign tables that yield the signatures of $\Hess\action$, $\bC_L$, and $\bC_E$ by inspection of their minors.

Let us first consider  $\bC_L$, written as in \eqref{eq:CL},
$$
\bC_L=- \frac{1}{\Action_{\lambda\lambda}}
\left(\begin{array}{ccc} 
\left|\begin{array}{cc}
\Action_{\mu\mu} & \Action_{\lambda\mu} \\
\Action_{\mu\lambda} & \Action_{\lambda\lambda}
\end{array}\right| & 
\left|\begin{array}{cc}
\Action_{\mu\sigma} & \Action_{\lambda\sigma} \\
\Action_{\mu\lambda} & \Action_{\lambda\lambda}
\end{array}\right|
& \left|\begin{array}{cc}
\Action_{\mu j} & \Action_{\lambda j} \\
\Action_{\mu\lambda} & \Action_{\lambda\lambda}
\end{array}\right| \\ [15pt]
\left|\begin{array}{cc}
\Action_{\sigma\mu} & \Action_{\lambda\mu} \\
\Action_{\sigma\lambda} & \Action_{\lambda\lambda}
\end{array}\right| & 
\left|\begin{array}{cc}
\Action_{\sigma\sigma} & \Action_{\lambda\sigma} \\
\Action_{\sigma\lambda} & \Action_{\lambda\lambda}
\end{array}\right|
& \left|\begin{array}{cc}
\Action_{\sigma  j} & \Action_{\lambda j} \\
\Action_{\sigma\lambda} & \Action_{\lambda\lambda}
\end{array}\right| \\ [15pt]
\left|\begin{array}{cc}
\Action_{j \mu} & \Action_{\lambda\mu} \\
\Action_{j \lambda} & \Action_{\lambda\lambda}
\end{array}\right| & 
\left|\begin{array}{cc}
\Action_{j \sigma} & \Action_{\lambda\sigma} \\
\Action_{j \lambda} & \Action_{\lambda\lambda}
\end{array}\right|
& \left|\begin{array}{cc}
\Action_{j j} & \Action_{\lambda j} \\
\Action_{j \lambda} & \Action_{\lambda\lambda}
\end{array}\right|
\end{array}\right)
$$
which equivalently reads
$$
\bC_L= - \frac{1}{\Action_{\lambda\lambda}}
\left(\begin{array}{ccc} 
\left|\begin{array}{cc}
\action_{\lambda\lambda} & -\action_{\lambda\mu} \\
-\action_{\mu\lambda} & \action_{\mu\mu}
\end{array}\right| & 
\left|\begin{array}{cc}
0 & 0 \\
-\action_{\mu\lambda} & \action_{\mu\mu}
\end{array}\right|
& \left|\begin{array}{cc}
2j\action_{\lambda\speed} & -2j\action_{\mu\speed} \\
-\action_{\mu\lambda} & \action_{\mu\mu}
\end{array}\right| \\ [15pt]
\left|\begin{array}{cc}
0 & -\action_{\lambda\mu} \\
0 & \action_{\mu\mu}
\end{array}\right| & 
\left|\begin{array}{cc}
-\action_{\mu} & 0 \\
0 & \action_{\mu\mu}
\end{array}\right|
& \left|\begin{array}{cc}
\action_{\lambda} & -2j\action_{\mu\speed} \\
0 & \action_{\mu\mu}
\end{array}\right| \\ [15pt]
\left|\begin{array}{cc}
2j\action_{\speed\lambda} & -\action_{\lambda\mu} \\
-2j\action_{\speed\mu} & \action_{\mu\mu}
\end{array}\right| & 
\left|\begin{array}{cc}
\action_{\lambda} & 0 \\
-2j\action_{\speed\mu} & \action_{\mu\mu}
\end{array}\right|
& \left|\begin{array}{cc}
4j^2\action_{\speed\speed} -2\action_\speed & -2j\action_{\mu\speed} \\
-2j\action_{\speed\mu} & \action_{\mu\mu}
\end{array}\right|
\end{array}\right)
$$
$$=
- \frac{1}{\Action_{\lambda\lambda}}
\left(\begin{array}{ccc} 
\left|\begin{array}{cc}
\action_{\lambda\lambda} & \action_{\lambda\mu} \\
\action_{\mu\lambda} & \action_{\mu\mu}
\end{array}\right| & 
0
& 2j \left|\begin{array}{cc}
\action_{\lambda\speed} & \action_{\mu\speed} \\
\action_{\lambda\mu} & \action_{\mu\mu}
\end{array}\right| \\ [15pt]
0 & 
-\action_{\mu}  \action_{\mu\mu}
& 
\action_{\lambda} \action_{\mu\mu}
 \\ [15pt]
2j\left|\begin{array}{cc}
\action_{\speed\lambda} & \action_{\mu\lambda} \\
\action_{\speed\mu} & \action_{\mu\mu}
\end{array}\right| & 
\action_{\lambda}  \action_{\mu\mu}
& 4j^2 \left|\begin{array}{cc}
\action_{\speed\speed}  & \action_{\mu\speed} \\
\action_{\speed\mu} & \action_{\mu\mu}
\end{array}\right| -2\action_\speed \action_{\mu\mu}
\end{array}\right)
$$
This is to be compared with the constraint matrix for (qKdV), which is given by
$$
\bc=- \frac{1}{\action_{\mu\mu}}
\left(\begin{array}{cc} 
\left|\begin{array}{cc}
\action_{\lambda\lambda} & \action_{\mu\lambda} \\
\action_{\lambda\mu} & \action_{\mu\mu}
\end{array}\right| & 
\left|\begin{array}{cc}
\action_{\lambda \speed} & \action_{\mu \speed} \\
\action_{\lambda\mu} & \action_{\mu\mu}
\end{array}\right|
\\ [15pt]
\left|\begin{array}{cc}
\action_{\speed \lambda} & \action_{\mu\lambda} \\
\action_{\speed \mu} & \action_{\mu\mu}
\end{array}\right| & 
\left|\begin{array}{cc}
\action_{\speed  \speed} & \action_{\mu \speed} \\
\action_{\speed \mu} & \action_{\mu\mu}
\end{array}\right|
\end{array}\right)
$$
Recalling that $\Action_{\lambda\lambda}=\action_{\mu\mu}$, we see that the first upper left minors of $\bC_L$ and $\bc$ are both equal to
$$\Delta_1:= - \frac{1}{\action_{\mu\mu}} \left|\begin{array}{cc}
\action_{\lambda\lambda} & \action_{\mu\lambda} \\
\action_{\lambda\mu} & \action_{\mu\mu}
\end{array}\right|\,=:\delta_1\,.$$
The second upper left minor of $\bC_L$ is 
$$\Delta_2:= - \action_\mu \left|\begin{array}{cc}
\action_{\lambda\lambda} & \action_{\mu\lambda} \\
\action_{\lambda\mu} & \action_{\mu\mu}
\end{array}\right|\,=\,\action_\mu\,\action_{\mu\mu}\,\delta_1 \,,$$
while the second upper left minor of $\bc$ is 
$$\delta_2:= \det \bc= \frac{1}{(\action_{\mu\mu})^2} \left|\begin{array}{cc} 
\left|\begin{array}{cc}
\action_{\lambda\lambda} & \action_{\mu\lambda} \\
\action_{\lambda\mu} & \action_{\mu\mu}
\end{array}\right| & 
\left|\begin{array}{cc}
\action_{\lambda \speed} & \action_{\mu \speed} \\
\action_{\lambda\mu} & \action_{\mu\mu}
\end{array}\right|
\\ [15pt]
\left|\begin{array}{cc}
\action_{\speed \lambda} & \action_{\mu\lambda} \\
\action_{\speed \mu} & \action_{\mu\mu}
\end{array}\right| & 
\left|\begin{array}{cc}
\action_{\speed  \speed} & \action_{\mu \speed} \\
\action_{\speed \mu} & \action_{\mu\mu}
\end{array}\right|
\end{array}\right|
\,=\,\frac{1}{\action_{\mu\mu}}\,\det(\Hess\action)\,.$$
As to the third and last first upper left minor of $\bC_L$, it reads
$$\Delta_3:=\det \bC_L = 
\frac{4j^2\action_\mu}{(\action_{\mu\mu})^2}\,\left|\begin{array}{cc} 
\left|\begin{array}{cc}
\action_{\lambda\lambda} & \action_{\mu\lambda} \\
\action_{\lambda\mu} & \action_{\mu\mu}
\end{array}\right| & 
\left|\begin{array}{cc}
\action_{\lambda \speed} & \action_{\mu \speed} \\
\action_{\lambda\mu} & \action_{\mu\mu}
\end{array}\right| 
\\ [15pt]
\left|\begin{array}{cc}
\action_{\speed \lambda} & \action_{\mu\lambda} \\
\action_{\speed \mu} & \action_{\mu\mu}
\end{array}\right| & 
\left|\begin{array}{cc}
\action_{\speed  \speed} & \action_{\mu \speed} \\
\action_{\speed \mu} & \action_{\mu\mu}
\end{array}\right|
\end{array}\right| 
\,-\,\frac{1}{\action_{\mu\mu}}\,(2\action_\speed\action_\mu -\action_\lambda^2)\,\left|\begin{array}{cc}
\action_{\lambda\lambda} & \action_{\mu\lambda} \\
\action_{\lambda\mu} & \action_{\mu\mu}
\end{array}\right|$$
that is,
$$\Delta_3= 4j^2\action_\mu \delta_2 + (2\action_\speed\action_\mu -\action_\lambda^2)\,\delta_1\,.$$
Since $\action_\mu=\Upsilon>0$ and $2\action_\speed\action_\mu -\action_\lambda^2>0$ (by Cauchy--Schwarz), we can compare $\negsign (\bC_L)$ and $\negsign (\bc)$ by drawing the following table, which relies on Sylvester's invariance theorem\footnote{This theorem implies that the negative signature of an $n\times n$ symmetric matrix is the number of sign changes in the sequence 
$(1,\Delta_1,\ldots,\Delta_n)$, where $\Delta_k$ denotes the matrix' $k$-th upper left minor.}, and where $+/-$ signs stand for positive/negative values:
\begin{table}[H]
$$
\begin{array}{|c|c|c|
c|c|c|c|c|c|c|c|c|}\hline
\action_{\mu\mu} & \action_{\mu\mu}\action_{\lambda\lambda}-\action_{\lambda\mu}^2& \det(\Hess\action) & \negsign(\Hess\action) 
& \delta_1 & \delta_2 & \negsign(\bc) 
& \Delta_{1} & \Delta_{2} & \Delta_{3} & \negsign(\bC_L) & \det(\Hess\Action) \\ \hline\hline
+ & + & + & 0 
& -  & + & 2 
& - & - & ? & \geq 1 & ?\\ \hline
+ & + & - & 1 
& -  & - & 1 
& - & - & - & 1 & +\\ \hline
+ & - & - & 1 
& +  & - & 1 
& + & + & ? & \geq 0 & ? \\ \hline
+ & - & + & 2 
& +  & + & 0 
 & + & + & + & 0 & - \\ \hline \hline
- & + & + & 2 
& +  & - & 1 
& + & - & ? & \geq 1 & ?\\ \hline
- & + & - & 3 
& +  & + & 0 
& + & - & + & 2 & +\\ \hline
- & - & - & 1 
& -  & + & 2 
& - & + & ? & \geq 2 & ? \\ \hline
- & - & + & 2 
& -  & - & 1 
& - & + & - & 3 & - \\ \hline
\end{array}$$
\caption{Possible values of signs of minors and negative signatures regarding $\bC_L$}\label{tb:signs}
\end{table}

We have used here above the additional observation (already made in \cite[Remark 2]{BNR-GDR-AEDP}) that
$$\left|\begin{array}{cc} 
\left|\begin{array}{cc}
\action_{\lambda\lambda} & \action_{\mu\lambda} \\
\action_{\lambda\mu} & \action_{\mu\mu}
\end{array}\right| & 
\left|\begin{array}{cc}
\action_{\lambda \speed} & \action_{\mu \speed} \\
\action_{\lambda\mu} & \action_{\mu\mu}
\end{array}\right| 
\\ [15pt]
\left|\begin{array}{cc}
\action_{\speed \lambda} & \action_{\mu\lambda} \\
\action_{\speed \mu} & \action_{\mu\mu}
\end{array}\right| & 
\left|\begin{array}{cc}
\action_{\speed  \speed} & \action_{\mu \speed} \\
\action_{\speed \mu} & \action_{\mu\mu}
\end{array}\right|
\end{array}\right| \,=\,\action_{\mu\mu}\,\det(\Hess\action)\,.$$

Similarly as for $\bC_L$,  
we can express 
$$ \bC_E=-\frac{1}{\Action_{\mu\mu}}  
\left(\begin{array}{ccc} 
\left|\begin{array}{cc}
\Action_{\lambda\lambda} & \Action_{\mu\lambda} \\
\Action_{\lambda\mu} & \Action_{\mu\mu}
\end{array}\right| & 
\left|\begin{array}{cc}
\Action_{\lambda j} & \Action_{\mu j} \\
\Action_{\lambda\mu} & \Action_{\mu\mu}
\end{array}\right|
& \left|\begin{array}{cc}
\Action_{\lambda \sigma} & \Action_{\mu \sigma} \\
\Action_{\lambda\mu} & \Action_{\mu\mu}
\end{array}\right| \\ [15pt]
\left|\begin{array}{cc}
\Action_{j \lambda} & \Action_{\mu\lambda} \\
\Action_{j \mu} & \Action_{\mu\mu}
\end{array}\right| & 
\left|\begin{array}{cc}
\Action_{j  j} & \Action_{\mu j} \\
\Action_{j \mu} & \Action_{\mu\mu}
\end{array}\right|
& \left|\begin{array}{cc}
\Action_{j  \sigma} & \Action_{\mu \sigma} \\
\Action_{j \mu} & \Action_{\mu\mu}
\end{array}\right| \\ [15pt]
\left|\begin{array}{cc}
\Action_{\sigma \lambda} & \Action_{\mu\lambda} \\
\Action_{\sigma \mu} & \Action_{\mu\mu}
\end{array}\right| & 
\left|\begin{array}{cc}
\Action_{\sigma  j} & \Action_{\mu j} \\
\Action_{\sigma \mu} & \Action_{\mu\mu}
\end{array}\right|
& \left|\begin{array}{cc}
\Action_{\sigma  \sigma} & \Action_{\mu \sigma} \\
\Action_{\sigma \mu} & \Action_{\mu\mu}
\end{array}\right|
\end{array}\right)
\,$$
in terms the derivatives of $\action$ as
$$
\bC_E= - \frac{1}{\action_{\lambda\lambda}}
\left(\begin{array}{ccc} 
\left|\begin{array}{cc}
\action_{\mu\mu} & -\action_{\mu\lambda} \\
-\action_{\lambda\mu} & \action_{\lambda\lambda}
\end{array}\right| & 
\left|\begin{array}{cc}
-2j\action_{\mu\speed} & 2j\action_{\lambda\speed} \\
-\action_{\lambda\mu} & \action_{\lambda\lambda}
\end{array}\right|
& 
\left|\begin{array}{cc}
0 & 0 \\
-\action_{\lambda\mu} & \action_{\lambda\lambda}
\end{array}\right|
 \\ [15pt]
\left|\begin{array}{cc}
-2j\action_{\mu\speed} & -\action_{\lambda\mu} \\
 2j\action_{\lambda\speed}& \action_{\lambda\lambda}
\end{array}\right| & 
\left|\begin{array}{cc}
4j^2\action_{\speed\speed} -2\action_\speed & 2j\action_{\lambda\speed} \\
2j\action_{\speed\lambda} & \action_{\lambda\lambda}
\end{array}\right|
& 
\left|\begin{array}{cc}
\action_{\lambda} & 0 \\
2j\action_{\lambda\speed} & \action_{\lambda\lambda}
\end{array}\right| \\ [15pt]
\left|\begin{array}{cc}
0 & -\action_{\lambda\mu} \\
0 & \action_{\lambda\lambda}
\end{array}\right| & 
\left|\begin{array}{cc}
\action_{\lambda} & 2j\action_{\lambda\speed} \\
0 & \action_{\lambda\lambda}
\end{array}\right|
& 
\left|\begin{array}{cc}
-\action_{\mu} & 0 \\
0 & \action_{\lambda\lambda}
\end{array}\right|
\end{array}\right)
$$
We thus find that the upper left minors of $\bC_E$ are
$$\Delta_{E,1}:= - \frac{1}{\action_{\lambda\lambda}} \left|\begin{array}{cc}
\action_{\lambda\lambda} & \action_{\mu\lambda} \\
\action_{\lambda\mu} & \action_{\mu\mu}
\end{array}\right|\,=\, \frac{\action_{\mu\mu}}{\action_{\lambda\lambda}}\;\delta_1\,,$$
$$\Delta_{E,2}\begin{array}[t]{l}:=\displaystyle \frac{4j^2}{(\action_{\lambda\lambda})^2}\,\left|\begin{array}{cc} 
\left|\begin{array}{cc}
\action_{\mu\mu} & \action_{\lambda\mu} \\
\action_{\mu\lambda} & \action_{\lambda\lambda}
\end{array}\right| & 
\left|\begin{array}{cc}
\action_{\mu \speed} & \action_{\lambda \speed} \\
\action_{\mu\lambda} & \action_{\lambda\lambda}
\end{array}\right| 
\\ [15pt]
\left|\begin{array}{cc}
\action_{\speed \mu} & \action_{\lambda\mu} \\
\action_{\speed \lambda} & \action_{\lambda\lambda}
\end{array}\right| & 
\left|\begin{array}{cc}
\action_{\speed  \speed} & \action_{\lambda \speed} \\
\action_{\speed \lambda} & \action_{\lambda\lambda}
\end{array}\right|
\end{array}\right| \,-\,\frac{2\action_\speed}{\action_{\lambda\lambda}} \,\left|\begin{array}{cc}
\action_{\mu\mu} & \action_{\lambda\mu} \\
\action_{\mu\lambda} & \action_{\lambda\lambda}
\end{array}\right|\\ [10pt]
=\,\displaystyle\frac{1}{\action_{\lambda\lambda}}\,(\,
4j^2\,\det\Hess\action\,+\,2\action_\speed\action_{\mu\mu}\,\delta_1\,)\,,\end{array}
$$
$$\Delta_{E,3}=\det \bC_E\,\begin{array}[t]{l}=\,\displaystyle\frac{4j^2\action_\mu}{(\action_{\lambda\lambda})^2}\,\left|\begin{array}{cc} 
\left|\begin{array}{cc}
\action_{\mu\mu} & \action_{\lambda\mu} \\
\action_{\mu\lambda} & \action_{\lambda\lambda}
\end{array}\right| & 
\left|\begin{array}{cc}
\action_{\mu \speed} & \action_{\lambda \speed} \\
\action_{\mu\lambda} & \action_{\lambda\lambda}
\end{array}\right| 
\\ [15pt]
\left|\begin{array}{cc}
\action_{\speed \mu} & \action_{\lambda\mu} \\
\action_{\speed \lambda} & \action_{\lambda\lambda}
\end{array}\right| & 
\left|\begin{array}{cc}
\action_{\speed  \speed} & \action_{\lambda \speed} \\
\action_{\speed \lambda} & \action_{\lambda\lambda}
\end{array}\right|
\end{array}\right| 
\,-\,\frac{1}{\action_{\lambda\lambda}}\,(2\action_\speed\action_\mu -\action_\lambda^2)\,\left|\begin{array}{cc}
\action_{\mu\mu} & \action_{\lambda\mu} \\
\action_{\mu\lambda} & \action_{\lambda\lambda}
\end{array}\right|\\ [15pt]
=\displaystyle\frac{1}{\action_{\lambda\lambda}}\,
({4j^2\action_\mu}\,\det\Hess\action\,+\,{(2\action_\speed\action_\mu -\action_\lambda^2)\action_{\mu\mu}}\,\delta_1\,)
\,.\end{array}$$

Furthermore, since $\action_\speed= \int_{0}^{\Upsilon} \frac12 \ubv^2 >0$, we can complement Table \ref{tb:signs} with a similar table in terms of the negative signature of $\bC_E$.

\begin{table}[H]
$$\begin{array}{|c|c|c|
c|c|c|c|c|c|}\hline
\action_{\lambda\lambda} & \action_{\lambda\lambda}\action_{\mu\mu}-\action_{\lambda\mu}^2
& \det(\Hess\action) & \negsign(\Hess\action) 
& \Delta_{E,1} & \Delta_{E,2} & \Delta_{E,3} & \negsign(\bC_E) & \det(\Hess\Action) \\ \hline\hline
+ & + & + & 0 
& - & ? & ? & \geq 1 & ?\\ \hline
+ & + & - & 1 
& - & - & - & 1 & +\\ \hline
+ & - & - & 1 
& + & ? & ? & \geq 0 & ? \\ \hline
+ & - & + & 2 
 & + & + & + & 0 & - \\ \hline \hline
- & + & + & 2 
& + & ? & ? & \geq 0 & ?\\ \hline
- & + & - & 3 
& + & + & + & 0 & +\\ \hline
- & - & - & 1 
& - & ? & ? & \geq 2 & ? \\ \hline
- & - & + & 2 
& - & - & - & 1 & - \\ \hline
\end{array}$$
\caption{Possible values of signs of minors and negative signatures regarding $\bC_E$}\label{tb:signsE}
\end{table}

\subsection{Proof of Lemma \ref{lem:Sturm}}\label{app:Sturm}

For convenience of the reader, let us recall the statement of this lemma.
\begin{lemma*}
\label{lem:Sturmapp} Assume that $\cap:I\to (0,+\infty)$ and
$\Potential:I\to \R$ are ${\class}^2$ on some open interval $I$ and such that the \emph{Euler--Lagrange} equation
$\Euler \enred  [\bv] =0$ associated with the energy
$$\enred: (\bv,\bv_x)\mapsto \tfrac{1}{2} \cap(\bv) \,\bv_x^2\,+\,\Potential (\bv)$$
admits a family of periodic solutions $\ubv$ taking values in $I$, parametrized by the energy level
$\mu \,=\,\Legendre \enred[\ubv]$, for $\mu\in J$, another open interval. If we denote by $\Upsilon$ the period of  $\ubv$, and assume that $\Upsilon_\mu$, its derivative with respect to $\mu$, does not vanish, then the self-adjoint differential operator $\linvar\,:=\,\Hess \enred[\ubv]$ has the following properties:
\begin{itemize}
\item the kernel of $\linvar$ on $L^2(\R/\Upsilon\Z)$ is the line spanned by $\ubv_x$;
\item the negative  signature $\negsign(\linvar)$ of $\linvar$ is given by the following rule:
\begin{itemize}
\item[$*$] if $\Upsilon_\mu>0$ then $\negsign(\linvar)=1$,
\item[$*$] if $\Upsilon_\mu<0$ then $\negsign(\linvar)=2$.
\end{itemize}
\end{itemize}
\end{lemma*}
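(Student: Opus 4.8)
The plan is to treat $\linvar$ by classical Floquet--Sturm oscillation theory, in the spirit of Lemma~1 in \cite{BronskiJohnsonKapitula}; the only genuinely new feature compared with that reference is the presence of the nonconstant but positive, bounded-away-from-zero weight $\cap(\ubv)$, which changes the Sturm--Liouville density without affecting the oscillation structure. First I would record the facts that are immediate from the setup: as noted just before the statement, $\linvar=-\partial_x\cap(\ubv)\partial_x+\Potential''(\ubv)+\tfrac12\cap''(\ubv)\ubv_x^2-\partial_x(\cap'(\ubv)\ubv_x)$ is a self-adjoint Sturm--Liouville operator with $\Upsilon$-periodic coefficients, hence has compact resolvent on $L^2(\R/\Upsilon\Z)$ and a discrete, real, bounded-below spectrum. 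Differentiating the (parameter-free) profile ODE $\Euler\enred[\ubv]=0$ in $x$ gives $\linvar\ubv_x=0$, so $\ubv_x$ is a null-eigenfunction; and since $\ubv$ is a nonconstant periodic orbit oscillating between a trough and a crest, where $\ubv_x=0$ but $\ubv_{xx}\neq0$, the function $\ubv_x$ has exactly two simple zeros per period.

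Next I would establish the simplicity of the kernel, which is where the hypothesis $\Upsilon_\mu\neq0$ enters. Differentiating the same $\mu$-independent profile ODE with respect to the energy level $\mu$ produces a \emph{second} solution of the homogeneous equation, namely $\linvar\ubv_\mu=0$. Differentiating the periodicity identity $\ubv(x+\Upsilon(\mu);\mu)=\ubv(x;\mu)$ in $\mu$ yields the secular jump relation
$$\ubv_\mu(x+\Upsilon)-\ubv_\mu(x)=-\,\Upsilon_\mu\,\ubv_x(x)\,.$$
When $\Upsilon_\mu\neq0$ this shows $\ubv_\mu$ is not $\Upsilon$-periodic, so $\{\ubv_x,\ubv_\mu\}$ is a basis of the two-dimensional solution space of $\linvar\phi=0$ of which only the line $\R\,\ubv_x$ is periodic; equivalently, the monodromy matrix of $\linvar\phi=0$ has a nontrivial Jordan block at the Floquet multiplier $1$. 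This proves that the kernel of $\linvar$ on $L^2(\R/\Upsilon\Z)$ is exactly the line spanned by $\ubv_x$, i.e. the first bullet, and in particular that $\nu=0$ is a \emph{simple} periodic eigenvalue.

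For the negative signature I would invoke the periodic Sturm oscillation theorem: ordering the periodic eigenvalues as $\nu_0<\nu_1\leq\nu_2<\nu_3\leq\cdots$, their eigenfunctions carry $0,2,2,4,4,\dots$ zeros per period. Since $\ubv_x$ is a periodic null-eigenfunction with exactly two zeros, $0\in\{\nu_1,\nu_2\}$, whence $\negsign(\linvar)\in\{1,2\}$: it is $1$ if $0=\nu_1$ (only $\nu_0<0$) and $2$ if $0=\nu_2$ (both $\nu_0,\nu_1<0$). To decide between these, I would locate $0$ among the band edges through the Floquet discriminant $\disc(\nu):=\mathrm{tr}\,M(\nu)$ of $\linvar-\nu$, for which $\disc(0)=2$ and, by the Jordan-block structure above, $\disc'(0)\neq0$: the sign of $\disc'(0)$ distinguishes a right band edge ($0=\nu_1$) from a left band edge ($0=\nu_2$). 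The crux is to compute this sign: using variation of parameters in the basis $\{\ubv_x,\ubv_\mu\}$ together with the jump relation and the constancy of the Wronskian weighted by $\cap>0$, one finds that $\disc'(0)$ has the same sign as $\Upsilon_\mu$, giving $\negsign(\linvar)=1$ when $\Upsilon_\mu>0$ and $\negsign(\linvar)=2$ when $\Upsilon_\mu<0$.

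The main obstacle is exactly this last sign identification. The oscillation count only pins $\negsign$ down to $\{1,2\}$, and the delicate point is to turn the \emph{analytic} datum $\Upsilon_\mu$ (a period derivative with respect to the energy level) into the \emph{spectral} datum $\mathrm{sign}\,\disc'(0)$ (the orientation of the band edge at $0$); this requires a careful bookkeeping of the Wronskian normalization and of the secular coefficient $-\Upsilon_\mu$, and is the step where the variable capillarity $\cap$ must be carried through, in contrast with the constant-$\cap$ computation of \cite{BronskiJohnsonKapitula}. Everything else is routine Sturm--Liouville theory, and the positivity and boundedness of $\cap$ guarantee that the oscillation theorem and band-edge description apply verbatim.
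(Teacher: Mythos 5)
Your plan is, step for step, the paper's own proof: the kernel statement via the pair $\ubv_x$, $\ubv_\mu$ of solutions of $\linvar\bv=0$ and the secular relation $\ubv_\mu(\cdot+\Upsilon)-\ubv_\mu=-\Upsilon_\mu\,\ubv_x$; the reduction $\negsign(\linvar)\in\{1,2\}$ by periodic Sturm--Liouville theory, since the periodic null-eigenfunction $\ubv_x$ has exactly two zeros per period; and the discriminant criterion, $\disc(0)=2$ with the sign of $\disc'(0)$ deciding whether $0$ is the second or the third periodic eigenvalue. All of this is correct (including your remark that geometric simplicity of the Floquet multiplier $1$ forces $\disc'(0)\neq0$, which is the classical coexistence theorem in \cite{Magnus-Winkler}).

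The gap is that the step you yourself call the crux, $\mathrm{sign}\,\disc'(0)=\mathrm{sign}\,\Upsilon_\mu$, is asserted (``one finds'') rather than proved, and this is exactly where the paper's appendix does its real work. Two points are not resolved by the ingredients you list. First, \emph{constancy} of the $\cap$-weighted Wronskian is not enough: the identity one derives has the form $\mathrm{sign}\,\disc'(0)=\mathrm{sign}\bigl(\bw(\Upsilon)\wedge\bw(0)\bigr)$ \emph{under the normalization} $\bbv\wedge\bw\equiv+1$ for the basis $\bbv=(\ubv_x,\ubv_\mu)$, $\bw=\cap(\ubv)(\ubv_{xx},\ubv_{\mu,x})$, and if that constant Wronskian were negative the conclusion would flip; the paper pins it down to $+1$ by differentiating the energy-level identity $\Legendre\enred[\ubv]=\mu$ with respect to $\mu$, which gives $\cap(\ubv)\,(\ubv_x\ubv_{\mu,x}-\ubv_{xx}\ubv_\mu)\equiv1$. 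Second, the variation-of-parameters computation of $\disc'(0)$ is not routine: one writes the $r$-derivatives of the monodromy entries by Duhamel, uses $\disc(0)=2$ and the unit Wronskian to produce the cancellations, and must split into the cases $\bbv(\Upsilon)\neq\bbv(0)$ and $\bbv(\Upsilon)=\bbv(0)$ before reaching $\mathrm{sign}\,\disc'(0)=\mathrm{sign}\bigl(\bw(\Upsilon)\wedge\bw(0)\bigr)$ whenever the right-hand side is nonzero; only then does evaluation at a base point with $\ubv_x(0)=0$, $\ubv_{xx}(0)\neq0$, combined with the differentiated jump relation $\ubv_{\mu,x}(\Upsilon)-\ubv_{\mu,x}(0)=-\Upsilon_\mu\,\ubv_{xx}(0)$, give $\bw(\Upsilon)\wedge\bw(0)=\cap(\ubv(0))^2\,\ubv_{xx}(0)^2\,\Upsilon_\mu$ and hence the stated rule. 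As written, your argument establishes the kernel statement and $\negsign(\linvar)\in\{1,2\}$, but not which of the two values occurs.
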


This lemma belongs to the classical periodic Floquet/Sturm-Liouville theory that may be found in \cite[Part~I-Chapter~II]{Magnus-Winkler}, \cite[Chapter~XIII-Section~16]{Reed-Simon-IV}, \cite[Part~1-Section~5.6]{Teschl} or partially in \cite{Neves}. Some complements in \cite{Neves} are also used there and in \cite{Natali-Neves} to address the question of co-periodic stability.

\begin{proof}
The first point follows from an elementary, ODE argument. Indeed,  the equation $\linvar \bv=0$ is a second order ODE, of which we already know two independent solutions, namely $\ubv_x$ and $\ubv_\mu$, obtained by differentiating the Euler--Lagrange equation $\Euler \enred[\ubv]=0$. In addition, by differentiating the null function $\ubv(\cdot+\Upsilon)-\ubv$ with respect to $\mu$, we see that $\ubv_\mu(\cdot+\Upsilon)-\ubv_\mu=- \Upsilon_{\!\mu} \ubv_{x}$.
Since $\ubv_x$ is not identically zero, and $\Upsilon_{\!\mu}\neq 0$  by assumption, $\ubv_\mu$ cannot be $\Upsilon$-periodic, nor can be any linear combination of $\ubv_\mu$ and $\ubv_{x}$, since the latter is $\Upsilon$-periodic. Therefore, the set of $\Upsilon$-periodic solutions of $\linvar \bv=0$ is the line spanned by $\ubv_{x}$.

The second point involves some classical spectral theory of self-adjoint operators in general, and of \emph{Sturm--Liouville} operators in particular, as well as a technical argument based on a computation that is hardly ever done in details. The general theory says --- see \cite[Theorem~5.37]{Teschl} --- that, since $\ubv_x$ is in the kernel of $\linvar$ and vanishes only twice on one period, $0$ is either the second or the third eigenvalue of the Sturm--Liouville operator 
$$\linvar \,=\,- \partial_x K \partial_x + q\,,\qquad K:= \cap(\ubv)\,,\qquad q:=\, \Potential''(\ubv)   + \tfrac{1}{2} \cap''(\ubv) \ubv_x^2   - \partial_x(\cap'(\ubv) \ubv_{x})\,.$$ So we are left with showing that $\Upsilon_\mu>0$ corresponds to the first case, and $\Upsilon_\mu<0$ to the second. This will follow from a computation based on the \emph{discriminant} associated with the eigenvalue equation
$$\linvar \bv = \evr \bv\,.$$
This discriminant is the function $\disc=\disc(\evr)$ defined as
$$\disc(\evr)=\mbox{tr } (\monod(\Upsilon;\evr))\,,$$
where $\monod(\cdot;\evr)$ denotes the fundamental solution of the eigenvalue equation $\linvar \bv = \evr \bv$ viewed as the first-order system
\begin{equation}\label{eq:Sturm-syst}
\left\{\begin{array}{l}
\dot\bv= w/K\,,\\
\dot{w} = (q\,-\,\evr)\, \bv\,.
\end{array}\right.
\end{equation}
(We warn the reader that $\monod$ and $w$ are local notations, which do not have the same meaning as in the main part of the paper.) The reason why $\disc$ plays a prominent role here is that the matrix value function $\monod$ has a determinant constantly equal to one --- this follows from Liouville's formula, since the matrix in System \eqref{eq:Sturm-syst} is traceless ---  so that $\monod$  
at $x=\Upsilon$ admits $1$ as an eigenvalue if and only if $\disc(\evr)=2$. 
In addition, 
this is exactly the condition under which \eqref{eq:Sturm-syst} has a $\Upsilon$-periodic solution.
Therefore, $\disc(\evr)=2$ is an iff condition for $\evr$ to be an eigenvalue of $\linvar$ on $L^2(\R/\Upsilon\Z)$. 
In particular, we already know that $0$ is an eigenvalue of $\linvar$, so $\disc(0)=2$.

The connection of $\disc$ with the ordering of eigenvalues arises through its derivative.
Indeed, the sign of $\disc'(0)$ tells us if $0$ is the second or the third eigenvalue of $\linvar$ on $L^2(\R/\Upsilon\Z)$. This is a consequence of a deeper knowledge on the spectrum of $\linvar$ on $L^2(\R)$. By \emph{Floquet--Bloch} decomposition --- see point (d) in \cite[Theorem~XIII-85]{Reed-Simon-IV} combined with \cite[Theorems~XIII-88 \&~XIII-89]{Reed-Simon-IV} for the semi-linear case, or \cite[p.30-31]{R} for a general argument ---,
$$\sigma_{L^2(\R)}(\linvar)\,=\,\bigcup_{\nu \in [0,2\pi)} \sigma(\linvar_\nu)\,,$$
where $\linvar_\nu$ is defined by the same formula as $\linvar$, that is $\linvar_\nu: =  \,- \partial_x K \partial_x + q$, but on the domain
$$\{\bv\in H^2(\R/\Upsilon\Z)\,;\;\bv(\Upsilon)=\ee^{i\nu}\,\bv(0)\}\,.$$
By a similar reasoning as for $\linvar$, we can see that the eigenvalues of $\linvar_\nu$ are those $\evr$ such that
$\ee^{i\nu}$ is an eigenvalue of $\monod(\Upsilon;\evr)$ 
and this is equivalent to $\disc(\evr)=2\cos\nu$. We thus see that
$\sigma_{L^2(\R)}(\linvar)$ is the union of closed intervals in which $\disc$ achieves values in $[-2,2]$ (see Figure \ref{fig:spectrum} below). 

\begin{figure}[H]
\begin{center}
\includegraphics[width=85mm]{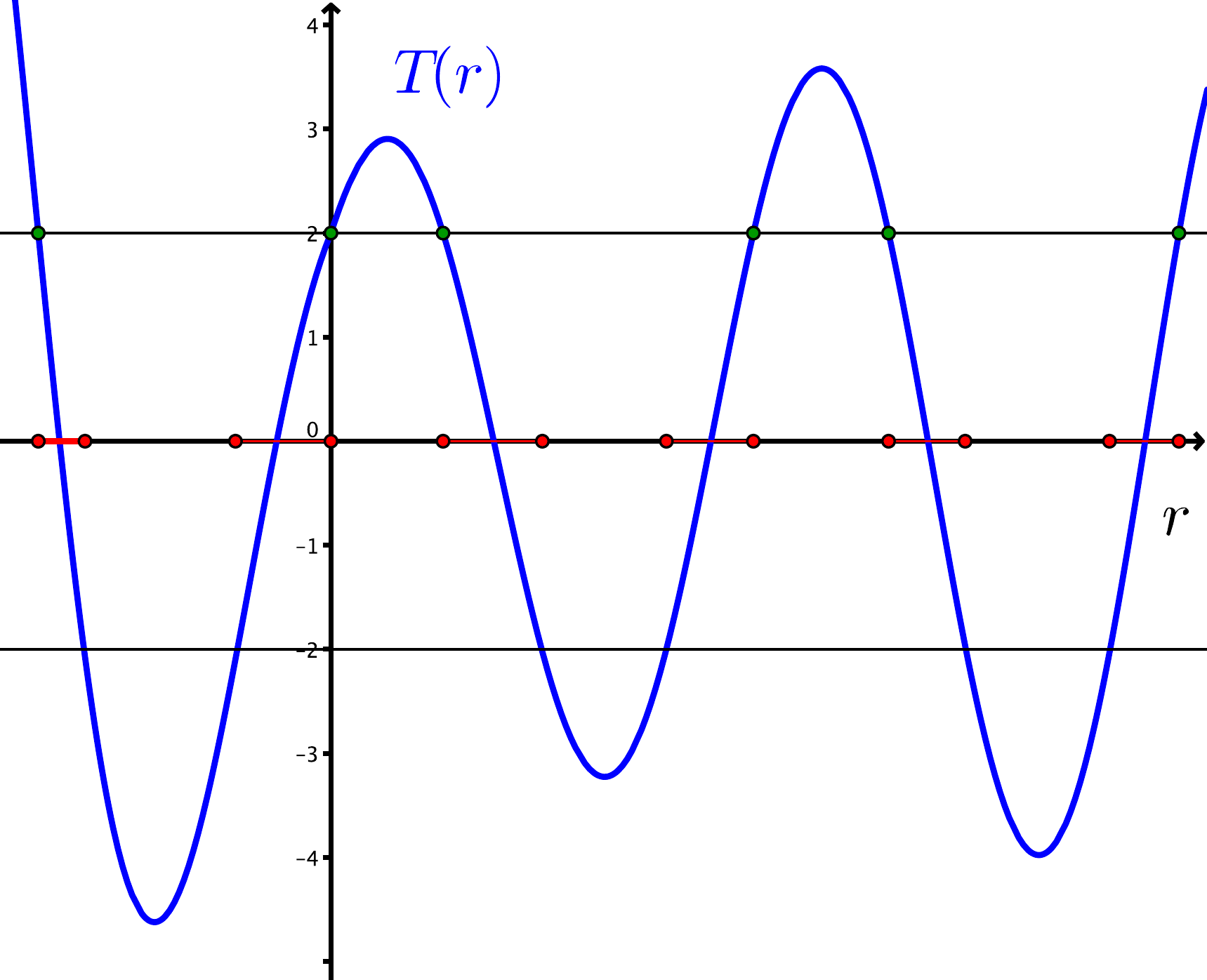} 
\end{center}
\caption{Oscillations of discriminant $\disc(\evr)$ (blue curve), spectrum of $\linvar_0$ (abscissas of green bullets), and (part of) spectrum of  $\linvar$ (red intervals).}\label{fig:spectrum}
\end{figure}

In particular, the eigenvalues of $\linvar$ on $L^2(\R/\Upsilon\Z)$ --- that is, of the operator $\linvar_0$ --- are found at the intersection of this graph with the line $\disc=2$. In addition, it is known --- see \cite[Theorem~5.33]{Teschl} --- that the
least eigenvalue of $\linvar_0$, say $\evr_1$, is in fact the lower bound for the spectrum of all the operators $\linvar_\nu$. By the mean value theorem, this implies $\disc(\evr) > 2$ for all $\evr<\evr_1$.
Paying closer attention to the variations of $\disc$, we thus infer that 
$0$ is the second eigenvalue of $\linvar_0$ if $T'(0)>0$, and the third one if $T'(0)<0$.

It remains to make the connection between the signs of $T'(0)$ and $\Upsilon_\mu$. This will follow from a fairly general computation, involving any two independent solutions $
(v_1,w_1)^{\sf T}$ and $
(v_2,w_2)$ of \eqref{eq:Sturm-syst}, smoothly parametrized by $r$ and such that
$$\left|\begin{array}{cc} v_1 & v_2 \\
w_1 & w_2\end{array}\right|\,\equiv 1\,.$$
(Again, this is possible since the matrix in System \eqref{eq:Sturm-syst} is traceless.)
The solution operator ${\bf s}(x,s;r)$, related to the fundamental solution by
$\monod(x;r)={\bf s}(x,0;r)$,
 is thus given by
$${\bf s}(x,s;r) = \left(\begin{array}{cc} \bbv(x)\wedge \bw(s) & - \bbv(x)\wedge \bbv(s) \\
\bw(x)\wedge \bw(s) & - \bw(x)\wedge \bbv(s)\end{array}\right)\,,$$
where we have introduced, for convenience, the vector-valued functions 
$$ \bbv:= \left(\begin{array}{c}v_1\\v_2\end{array}\right)\,,\;\bw:= \left(\begin{array}{c}w_1\\w_2\end{array}\right)\,,$$
and used the notation $\wedge$ as a shortcut for the determinant in $\R^2$.  Note in particular that the requirement made on the determinant of $(v_1,w_1)^{\sf T}$ and  $(v_2,w_2)^{\sf T}$ is equivalent to
$$\bbv\wedge \bw \equiv 1\,.$$
Furthermore, we have by definition
$$\disc(r)=\mbox{tr } ({\bf s}(\Upsilon,0;r)) = \bbv(\Upsilon;r)\wedge \bw(0;r) + \bbv(0;r) \wedge \bw(\Upsilon;r)\,.$$
Without loss of generality, we may assume that $\bbv(0;r)=\bbv(0;0)$ and  $\bw(0;r)=\bw(0;0)$ for all $r$, so that
$$\disc'(0)=\bbv_r(\Upsilon;r)\wedge \bw(0;0) + \bbv(0;0) \wedge \bw_r(\Upsilon;r)\,,$$
where the subscript $r$ stands for a partial derivative with respect to $r$. We claim that 
\begin{equation}
\label{eq:signdiscp}
\mbox{\rm sign}({\disc}'(0))=\mbox{\rm sign}(\bw(\Upsilon;0)\wedge \bw(0;0))\,.
\end{equation}
The proof of \eqref{eq:signdiscp} proposed here may be thought of as a slight variation on the one relying on Formula~(2.17) in \cite{Magnus-Winkler}.

In order to prove \eqref{eq:signdiscp}, let us first observe that,
for any solution $(v,w)^{\sf T}$ to \eqref{eq:Sturm-syst},  by differentiation  with respect to $r$,
$(v_r,w_r)^{\sf T}$ solves the system with source term
$$\left\{\begin{array}{l}
\dot\bv_r= w_r/K\,,\\
\dot{w}_r = (q\,-\,\evr)\, \bv_r \,-\,\bv\,,
\end{array}\right.$$
and is therefore given by the Duhamel formula associated with the solution operator ${\bf s}(x,s;r)$. This gives
$$\left\{\begin{array}{l}
\bv_r(x)=\int_{0}^{x} (\bbv(x)\wedge\bbv(s)) \bv(s)\,\dif s\\ [5pt]
{w}_r(x) = \int_{0}^{x} (\bw(x)\wedge\bbv(s)) \bv(s)\,\dif s
\end{array}\right.$$
where we have omitted to write the dependence on $r$ for simplicity, hence
$$\disc'(0)=\int_{0}^{\Upsilon}  (\bbv(\Upsilon)\wedge\bbv(s))  (\bbv(s) \wedge \bw(0)) + (\bbv(0) \wedge \bbv(s))(\bw(\Upsilon)\wedge\bbv(s))\,\dif s\,,$$
where all functions are evaluated at $r=0$.
The idea is now to find some cancellations in the integrand here above, by using the known constraint
$\bbv\wedge \bw \equiv 1$, together with the fact that $\disc(0)=2$, that is,
\begin{equation}\label{eq:T02}
\bbv(\Upsilon)\wedge \bw(0) + \bbv(0) \wedge \bw(\Upsilon)\,=\,2\,,
\end{equation}
where again all functions are evaluated at $r=0$.
By elementary algebra, these relations imply that
$$(\bbv(\Upsilon) -\bbv(0))\wedge (\bw(\Upsilon)-\bw(0))\,=\,0\,.$$
At least one of these two vectors $\bbv(\Upsilon) -\bbv(0)$ and $\bw(\Upsilon)-\bw(0)$ is nonzero,
because the solutions $(v_1,w_1)^{\sf T}$ and  $(v_2,w_2)^{\sf T}$ of \eqref{eq:Sturm-syst} cannot be both $\Upsilon$-periodic (recall that the kernel of $\linvar$ is one-dimensional).

Let us assume that $\bbu:= \bbv(\Upsilon) -\bbv(0)$ is nonzero. Then it is necessarily independent from 
$\bbv(0)$  --- otherwise, there should exist a real number $a$ such that $\bbv(\Upsilon) = a \bbv(0)$, and \eqref{eq:T02} would imply $a+1/a =2$, hence $a=1$ and $\bbu=0$. Then we may decompose
$$\bw(\Upsilon)= \bw(0)+\omega \bbu\,,\qquad \bbv(s)=\alpha(s) \bbv(0)+\beta(s)\bbu\,,\;\forall s\,.$$
We substitute these expressions in $T'(0)$, and by using the relation
$$ \bbu \wedge \bw(0)+ \omega\, \bbv(0)\wedge \bbu\,=\,0\,,$$
which comes from  $\bbv(\Upsilon)\wedge \bw(\Upsilon)= 1$ and  $\bbv(0)\wedge \bw(0)= 1$,
we find that all but one term cancel out, in such a way that
$$\disc'(0)=\int_{0}^{\Upsilon} \alpha(s)^2 \,\bbu\wedge \bbv(0)\,\dif s\,=\,\bbv(\Upsilon) \wedge \bbv(0)\; \int_{0}^{\Upsilon} \alpha(s)^2 \,\dif s\,.$$
This also gives
$$\omega^2 \disc'(0)=\bw(\Upsilon) \wedge \bw(0)\; \int_{0}^{\Upsilon} \alpha(s)^2 \,\dif s\,.$$
Observe in addition that $\alpha$ is not identically zero if we set $\alpha(0)=1$ (and $\beta(0)=0$), so that 
$\int_{0}^{\Upsilon} \alpha(s)^2 \,\dif s>0$.

The case $\bbv(\Upsilon) =\bbv(0)$ is even easier to deal with, because then
$$\disc'(0)=\int_{0}^{\Upsilon}  (\bbv(0)\wedge\bbv(s))  (\bw(\Upsilon)-\bw(0)) \wedge\bbv(s)\,\dif s\,.$$
If we decompose
$\bbv(s)=\alpha(s) \bbv(0)+\beta(s)\bw(0)$ with  $\alpha(0)=1$ and $\beta(0)=0$, which is possible because
$\bbv(0)\wedge \bw(0) =1$ so that $ \bbv(0)$ and $\bw(0)$ are not colinear, we can also write, using that 
$\bbv(0)\wedge (\bw(\Upsilon)-\bw(0))=0$ --- which comes from the difference between $\bbv(0)\wedge \bw(\Upsilon) = \bbv(\Upsilon)\wedge \bw(\Upsilon)=1$ and $\bbv(0)\wedge \bw(0)=1$,
$$\disc'(0)=\bw(\Upsilon) \wedge\bw(0)\,\int_{0}^{\Upsilon}  \beta(s)^2  \dif s\,.$$
We observe that $\beta(s)=\bbv(0)\wedge \bbv(s)$ is not identically zero, otherwise we would have 
$$0\,=\,\bbv(0)\wedge \bbv'(s)=\bbv(0)\wedge \bw(s) /K(s)$$
by the first equation in \eqref{eq:Sturm-syst} applied to both components of $\bbv$ and this would contradict $\bbv(0)\wedge \bw(0)=1$.

We thus have \eqref{eq:signdiscp} as soon as $\bw(\Upsilon)\wedge \bw(0)\neq 0$, and if  $\bw(\Upsilon)\wedge \bw(0)= 0$ we have $\mbox{\rm sign}({\disc}'(0))=\mbox{\rm sign}(\bbv(\Upsilon)\wedge \bbv(0))$.
We are in fact going to apply \eqref{eq:signdiscp} in the first case.

Let us now apply  \eqref{eq:signdiscp}  to
$$\bv_1(\cdot;0)=\ubv_x\,,\quad w_1(\cdot;0)=K\,\ubv_{xx}\,,\quad\bv_2(\cdot;0)=\ubv_\mu\,,\quad w_2(\cdot;0)=K\,\ubv_{\mu,x}\,.$$
By differentiating the equation $\Legendre \enred[\ubv]=\mu$ with respect to $\mu$, we see that
$$
- \,K \,\ubv_{xx} \ubv_\mu \,+\,K\,\ubv_x\,\ubv_{\mu,x}\,=\,1\,,$$
which exactly means that $\bbv\wedge \bw\equiv 1$, and 
$$\bw(\Upsilon;0)\wedge \bw(0;0)= K(0)^2\,\ubv_{xx}(0)\,(\ubv_{\mu,x}(0)-\ubv_{\mu,x}(\Upsilon))\,=\,
K(0)^2\,\ubv_{xx}(0)^2\,\Upsilon_{\mu}$$
by using $\ubv_{\mu,x}(\Upsilon)-\ubv_{\mu,x}(0)=-\Upsilon_{\mu}\,\ubv_{xx}(0)$, which comes from the differentiation with respect to $\mu$ of $\ubv_{x}(\Upsilon)-\ubv_{x}(0)=0$. Since $K$ does not vanish and $\ubv_{xx}(0)\neq 0$ --- provided that $\ubv$ is chosen such that $\ubv_x(0)=0$, this eventually shows that
$$\mbox{\rm sign}({\disc}'(0))=\mbox{\rm sign}(\Upsilon_{\mu})\,.$$
\end{proof}

\paragraph{Acknowledgement.} This work has been partly supported by the European Research Council 
{ERC Starting Grant 2009, project 239983- NuSiKiMo}, and by ANR project BoND (ANR-13-BS01-0009-01). The doctoral scholarship of the second author is directly supported by ANR-13-BS01-0009-01. The first author acknowledges the kind hospitality of the Gran Sasso Science Institute in L'Aquila, where this paper was completed. The third author thanks Mat Johnson for helpful guidance through the vast literature on orbital stability of periodic waves of (gKdV).


\begin{thebibliography}{DBGRN14}

\bibitem[AP07]{AnguloPava_mKdV-NLS}
Jaime Angulo~Pava.
\newblock Nonlinear stability of periodic traveling wave solutions to the
  {S}chr\"odinger and the modified {K}orteweg-de {V}ries equations.
\newblock {\em J. Differential Equations}, 235(1):1--30, 2007.

\bibitem[AP09]{AnguloPava}
Jaime Angulo~Pava.
\newblock {\em Nonlinear dispersive equations}, volume 156 of {\em Mathematical
  Surveys and Monographs}.
\newblock American Mathematical Society, Providence, RI, 2009.
\newblock Existence and stability of solitary and periodic travelling wave
  solutions.

\bibitem[APBS06]{AnguloPava-Bona-Scialom}
Jaime Angulo~Pava, Jerry~L. Bona, and Marcia Scialom.
\newblock Stability of cnoidal waves.
\newblock {\em Adv. Differential Equations}, 11(12):1321--1374, 2006.

\bibitem[Arr09]{Arruda}
Lynnyngs~Kelly Arruda.
\newblock Nonlinear stability properties of periodic travelling wave solutions
  of the classical {K}orteweg-de {V}ries and {B}oussinesq equations.
\newblock {\em Port. Math.}, 66(2):225--259, 2009.

\bibitem[BD09]{BoDe}
Nate Bottman and Bernard Deconinck.
\newblock Kd{V} cnoidal waves are spectrally stable.
\newblock {\em Discrete Contin. Dyn. Syst.}, 25(4):1163--1180, 2009.

\bibitem[BDN11]{BottmanDeconinckNivala}
Nate Bottman, Bernard Deconinck, and Michael Nivala.
\newblock Elliptic solutions of the defocusing {NLS} equation are stable.
\newblock {\em J. Phys. A}, 44(28):285201, 24, 2011.

\bibitem[Ben72]{Benjamin72}
T.~Brooke Benjamin.
\newblock The stability of solitary waves.
\newblock {\em Proc. Roy. Soc. (London) Ser. A}, 328:153--183, 1972.

\bibitem[Ben84]{Benjamin}
T.~Brooke Benjamin.
\newblock Impulse, flow force and variational principles.
\newblock {\em IMA J. Appl. Math.}, 32(1-3):3--68, 1984.

\bibitem[BG13]{SBG-DIE}
Sylvie Benzoni-Gavage.
\newblock Planar traveling waves in capillary fluids.
\newblock {\em Differential Integral Equations}, 26(3-4):439--485, 2013.

\bibitem[BGDD06]{BDD1d}
Sylvie Benzoni-Gavage, Rapha{\"e}l Danchin, and St{\'e}phane Descombes.
\newblock Well-posedness of one-dimensional {K}orteweg models.
\newblock {\em Electron. J. Differential Equations}, pages No. 59, 35 pp.
  (electronic), 2006.

\bibitem[BGDD07]{BDDmultiD}
Sylvie Benzoni-Gavage, Rapha{\"e}l Danchin, and St{\'e}phane Descombes.
\newblock On the well-posedness for the {E}uler-{K}orteweg model in several
  space dimensions.
\newblock {\em Indiana Univ. Math. J.}, 56(4):1499--1579, 2007.

\bibitem[BGDDJ05]{IFB05}
Sylvie Benzoni-Gavage, Rapha{\"e}l Danchin, St{\'e}phane Descombes, and Didier
  Jamet.
\newblock Structure of {K}orteweg models and stability of diffuse interfaces.
\newblock {\em Interfaces Free Boundaries}, 7:371--414, 2005.

\bibitem[BGMR15]{BMR2}
Sylvie Benzoni-Gavage, Colin Mietka, and L.~Miguel Rodrigues.
\newblock Modulated equations of {H}amiltonian {PDE}s and dispersive shocks.
\newblock Forthcoming, 2015.

\bibitem[BGNR13]{BNR-GDR-AEDP}
Sylvie Benzoni-Gavage, Pascal Noble, and L.~Miguel Rodrigues.
\newblock 
{Stability of  periodic waves in Hamiltonian PDEs}.
\newblock In {\em GDR Analyse des EDP}.
  {cedram}, 2013.

\bibitem[BGNR14]{BNR-JNLS14}
Sylvie Benzoni-Gavage, Pascal Noble, and L.~Miguel Rodrigues.
\newblock Slow modulations of periodic waves in {H}amiltonian {PDE}s, with
  application to capillary fluids.
\newblock {\em J. Nonlinear Sci.}, 24(4):711--768, 2014.

\bibitem[BJ10]{BronskiJohnson}
Jared~C. Bronski and Mathew~A. Johnson.
\newblock The modulational instability for a generalized {K}orteweg-de {V}ries
  equation.
\newblock {\em Arch. Ration. Mech. Anal.}, 197(2):357--400, 2010.

\bibitem[BJK11]{BronskiJohnsonKapitula}
Jared~C. Bronski, Mathew~A. Johnson, and Todd Kapitula.
\newblock An index theorem for the stability of periodic traveling waves of
  {K}orteweg-de {V}ries type.
\newblock {\em Proc. Roy. Soc. Edinburgh Sect. A}, 141(6):1141--1173, 2011.

\bibitem[BJK14]{BronskiJohnsonKapitulaII}
Jared Bronski, Mathew~A. Johnson, and Todd Kapitula.
\newblock An instability index theory for quadratic pencils and applications.
\newblock {\em Comm. Math. Phys.}, 327(2):521--550, 2014.

\bibitem[Bou72]{Boussinesq}
Joseph Boussinesq.
\newblock Th\'eorie des ondes et des remous qui se propagent le long d'un canal
  rectangulaire horizontal, en communiquant au liquide contenu dans ce canal
  des vitesses sensiblement pareilles de la surface au fond.
\newblock {\em J. Math. Pures Appl.}, 17(2):55--108, 1872.

\bibitem[BS88]{BonaSachs}
Jerry~L. Bona and Robert~L. Sachs.
\newblock Global existence of smooth solutions and stability of solitary waves
  for a generalized {B}oussinesq equation.
\newblock {\em Comm. Math. Phys.}, 118(1):15--29, 1988.

\bibitem[BSS87]{BonaSouganidisStrauss}
Jerry~L. Bona, Panagiotis~E. Souganidis, and Walter~A. Strauss.
\newblock Stability and instability of solitary waves of {K}orteweg-de {V}ries
  type.
\newblock {\em Proc. Roy. Soc. London Ser. A}, 411(1841):395--412, 1987.

\bibitem[CB15]{Chardard-Bridges}
Fr{\'e}d{\'e}ric Chardard and Thomas~J. Bridges.
\newblock Transversality of homoclinic orbits, the {M}aslov index and the
  symplectic {E}vans function.
\newblock {\em Nonlinearity}, 28(1):77--102, 2015.

\bibitem[Chi87]{Chicone}
Carmen Chicone.
\newblock The monotonicity of the period function for planar {H}amiltonian
  vector fields.
\newblock {\em J. Differential Equations}, 69(3):310--321, 1987.

\bibitem[DBGRN14]{DebievreGenoudRotaNodari}
Stephan De~Bievre, Fran{\c c}ois Genoud, and Simona Rota~Nodari.
\newblock {Orbital stability: analysis meets geometry}.
\newblock July 2014.

\bibitem[DK10]{Deconinck-Kapitula_KdV}
Bernard Deconinck and Todd Kapitula.
\newblock The orbital stability of the cnoidal waves of the {K}orteweg-de
  {V}ries equation.
\newblock {\em Phys. Lett. A}, 374(39):4018--4022, 2010.

\bibitem[DN11]{Deconinck-Nivala_mKdV}
Bernard Deconinck and Michael Nivala.
\newblock The stability analysis of the periodic traveling wave solutions of
  the m{K}d{V} equation.
\newblock {\em Stud. Appl. Math.}, 126(1):17--48, 2011.

\bibitem[Gar97]{Gardner97}
Robert~A. Gardner.
\newblock Spectral analysis of long wavelength periodic waves and applications.
\newblock {\em J. Reine Angew. Math.}, 491:149--181, 1997.

\bibitem[GH07a]{GallayHaragus2}
Thierry Gallay and Mariana H{\v{a}}r{\v{a}}gu{\c{s}}.
\newblock Orbital stability of periodic waves for the nonlinear {S}chr\"odinger
  equation.
\newblock {\em J. Dynam. Differential Equations}, 19(4):825--865, 2007.

\bibitem[GH07b]{GallayHaragus}
Thierry Gallay and Mariana H{\u{a}}r{\u{a}}gu{\c{s}}.
\newblock Stability of small periodic waves for the nonlinear {S}chr\"odinger
  equation.
\newblock {\em J. Differential Equations}, 234(2):544--581, 2007.

\bibitem[GP15]{GallayPelinovsky}
Thierry Gallay and Dmitry Pelinovsky.
\newblock Orbital stability in the cubic defocusing {NLS} equation: {I}.
  {C}noidal periodic waves.
\newblock {\em J. Differential Equations}, 258(10):3607--3638, 2015.

\bibitem[GSS87]{GrillakisShatahStrauss}
Manoussos Grillakis, Jalal Shatah, and Walter~A. Strauss.
\newblock Stability theory of solitary waves in the presence of symmetry. {I}.
\newblock {\em J. Funct. Anal.}, 74(1):160--197, 1987.

\bibitem[GV14]{GarijoVilladelprat}
Antonio Garijo and Jordi Villadelprat.
\newblock Algebraic and analytical tools for the study of the period function.
\newblock {\em J. Differential Equations}, 257(7):2464--2484, 2014.

\bibitem[HSS14]{HakkaevStanislavovaStefanov}
Sevdzhan Hakkaev, Milena Stanislavova, and Atanas Stefanov.
\newblock Linear stability analysis for periodic travelling waves of the
  boussinesq equation and the klein–gordon–zakharov system.
\newblock {\em Proceedings of the Royal Society of Edinburgh: Section A
  Mathematics}, 144:455--489, 6 2014.

\bibitem[Jen14]{Jenkins}
Robert Jenkins.
\newblock Regularization of a sharp shock by the defocusing nonlinear
  schr\"odinger equation.
\newblock arXiv:1402.4708, 2014.

\bibitem[JMMP14]{JMMP_Klein-Gordon}
Christopher K. R.~T. Jones, Robert Marangell, Peter~D. Miller, and Ram{\'o}n~G.
  Plaza.
\newblock Spectral and modulational stability of periodic wavetrains for the
  nonlinear {K}lein-{G}ordon equation.
\newblock {\em J. Differential Equations}, 257(12):4632--4703, 2014.

\bibitem[JNRZ14]{JNRZ-Inventiones}
Mathew~A. Johnson, Pascal Noble, L.~Miguel Rodrigues, and Kevin Zumbrun.
\newblock Behavior of periodic solutions of viscous conservation laws under
  localized and nonlocalized perturbations.
\newblock {\em Invent. Math.}, 197(1):115--213, 2014.

\bibitem[Joh09]{Johnson}
Mathew~A. Johnson.
\newblock Nonlinear stability of periodic traveling wave solutions of the
  generalized {K}orteweg-de {V}ries equation.
\newblock {\em SIAM J. Math. Anal.}, 41(5):1921--1947, 2009.

\bibitem[JZ10]{JZ-generic}
Mathew~A. Johnson and Kevin Zumbrun.
\newblock Nonlinear stability of periodic traveling wave solutions of systems
  of viscous conservation laws in the generic case.
\newblock {\em J. Differential Equations}, 249(5):1213--1240, 2010.

\bibitem[Kam00]{Kamchatnov}
A.~M. Kamchatnov.
\newblock {\em Nonlinear periodic waves and their modulations}.
\newblock World Scientific Publishing Co., Inc., River Edge, NJ, 2000.
\newblock An introductory course.

\bibitem[KKS04]{KapitulaKevrekidisSandstede}
Todd Kapitula, Panayotis~G. Kevrekidis, and Bj{\"o}rn Sandstede.
\newblock Counting eigenvalues via the {K}rein signature in
  infinite-dimensional {H}amiltonian systems.
\newblock {\em Phys. D}, 195(3-4):263--282, 2004.

\bibitem[KP13]{KapitulaPromislow}
Todd Kapitula and Keith Promislow.
\newblock {\em Spectral and dynamical stability of nonlinear waves}, volume 185
  of {\em Applied Mathematical Sciences}.
\newblock Springer, New York, 2013.
\newblock With a foreword by Christopher K. R. T. Jones.

\bibitem[KPV96]{KPV96}
Carlos~E. Kenig, Gustavo Ponce, and Luis Vega.
\newblock A bilinear estimate with applications to the {K}d{V} equation.
\newblock {\em J. Amer. Math. Soc.}, 9(2):573--603, 1996.

\bibitem[KS98]{KapitulaSandstede}
Todd Kapitula and Bj{\"o}rn Sandstede.
\newblock Stability of bright solitary-wave solutions to perturbed nonlinear
  {S}chr\"odinger equations.
\newblock {\em Phys. D}, 124(1-3):58--103, 1998.

\bibitem[Mie15]{Mietka}
Colin Mietka.
\newblock Well-posedness of a quasi-linear {K}orteweg-de {V}ries capillarity
  model.
\newblock Forthcoming, 2015.

\bibitem[MW79]{Magnus-Winkler}
Wilhelm Magnus and Stanley Winkler.
\newblock {\em Hill's equation}.
\newblock Dover Publications, Inc., New York, 1979.
\newblock Corrected reprint of the 1966 edition.

\bibitem[Nev09]{Neves}
Aloisio Neves.
\newblock Floquet's theorem and stability of periodic solitary waves.
\newblock {\em J. Dynam. Differential Equations}, 21(3):555--565, 2009.

\bibitem[NN14]{Natali-Neves}
F{\'a}bio Natali and Aloisio Neves.
\newblock Orbital stability of periodic waves.
\newblock {\em IMA J. Appl. Math.}, 79(6):1161--1179, 2014.

\bibitem[PSZ13]{PoganScheelZumbrun}
Alin Pogan, Arnd Scheel, and Kevin Zumbrun.
\newblock Quasi-gradient systems, modulational dichotomies, and stability of
  spatially periodic patterns.
\newblock {\em Differential Integral Equations}, 26(3-4):389--438, 2013.

\bibitem[PW92]{PegoWeinstein}
Robert~L. Pego and Michael~I. Weinstein.
\newblock Eigenvalues, and instabilities of solitary waves.
\newblock {\em Philos. Trans. Roy. Soc. London Ser. A}, 340(1656):47--94, 1992.

\bibitem[Rod13]{R}
L.~Miguel Rodrigues.
\newblock {\em Asymptotic stability and modulation of periodic wavetrains,
  general theory \& applications to thin film flows}.
\newblock Habilitation {\`a} diriger des recherches, Universit\'e Lyon 1, 2013.

\bibitem[Rod15]{R_linKdV}
L.~Miguel Rodrigues.
\newblock Linear asymptotic stability and modulation behavior near periodic
  waves of the korteweg--de vries equation.
\newblock Forthcoming, 2015.

\bibitem[RS78]{Reed-Simon-IV}
Michael Reed and Barry Simon.
\newblock {\em Methods of modern mathematical physics. {IV}. {A}nalysis of
  operators}.
\newblock Academic Press [Harcourt Brace Jovanovich, Publishers], New
  York-London, 1978.

\bibitem[Sch96]{S2}
Guido Schneider.
\newblock Diffusive stability of spatial periodic solutions of the
  {S}wift-{H}ohenberg equation.
\newblock {\em Comm. Math. Phys.}, 178(3):679--702, 1996.

\bibitem[Sch98]{S1}
Guido Schneider.
\newblock Nonlinear diffusive stability of spatially periodic
  solutions---abstract theorem and higher space dimensions.
\newblock In {\em Proceedings of the {I}nternational {C}onference on
  {A}symptotics in {N}onlinear {D}iffusive {S}ystems ({S}endai, 1997)},
  volume~8 of {\em Tohoku Math. Publ.}, pages 159--167, Sendai, 1998. Tohoku
  Univ.

\bibitem[Tes12]{Teschl}
Gerald Teschl.
\newblock {\em Ordinary differential equations and dynamical systems}, volume
  140 of {\em Graduate Studies in Mathematics}.
\newblock American Mathematical Society, Providence, RI, 2012.

\end{thebibliography}
\end{document}